\let\SS\relax
\newtheorem{thm}{Theorem}[section]
\newtheorem{lem}[thm]{Lemma}
\newtheorem{prp}[thm]{Proposition}
\newtheorem{cor}[thm]{Corollary}
\theoremstyle{definition}
\newtheorem{rmk}[thm]{Remark}
\newtheorem{cnj}[thm]{Conjecture}
\newcommand{\CC}{\mathbb{C}}
\newcommand{\DD}{\mathbb{D}}
\newcommand{\PP}{\mathbb{P}}
\newcommand{\QQ}{\mathbb{Q}}
\newcommand{\RR}{\mathbb{R}}
\newcommand{\SS}{\mathbb{S}}
\newcommand{\TT}{\mathbb{T}}
\newcommand{\ZZ}{\mathbb{Z}}
\newcommand{\CP}{\mathbb{CP}} 
\newcommand{\es}{\varnothing} 
\newcommand{\sd}{\smallsetminus} 
\newcommand{\sub}{\subset} 
\newcommand{\dr}{\mathrm{dR}} 
\newcommand{\FS}{\mspace{1mu}\mathrm{FS}} 
\newcommand{\PD}{\mathrm{PD}} 
\DeclareMathOperator{\SP}{Sp} 
\DeclareMathOperator{\ind}{ind} 
\DeclareMathOperator{\Int}{Int} 
\theoremstyle{plain}
\newtheorem*{t0}{Type 0}
\newtheorem*{t1}{Type 1}
\newtheorem*{t2a}{Type 2a}
\newtheorem*{t2b}{Type 2b}
\newtheorem*{t3}{Type 3}
\title{Packing Integral Tori in Del Pezzo Surfaces}
\author{Karim Boustany}
\thanks{The work presented here was partially supported by NSF grant DMS-1547292.}
\address{Karim Boustany, Department of Mathematics, University of Notre Dame}
\email{kboustan@nd.edu}
\begin{document}

\begin{abstract}
We extend a packing result of R. Hind and E. Kerman \cite{HK} for integral Lagrangian tori in $\SS^{2} \times \SS^{2}$ to the Del Pezzo surfaces $(\DD_{n}, \omega_{\mspace{1mu}\DD_{n}})$ for $n = 1, \dots, 5$. An \emph{integral} torus is one whose relative area homomorphism is integer-valued, and we seek a \emph{maximal integral packing}. By definition, this is a disjoint collection $\{L_{i}\}$ of integral Lagrangian tori with the following property: any other integral Lagrangian torus not in this collection must intersect at least one of the $L_{i}$. We show that one can always find such a packing consisting of only the Clifford torus. The structure of our argument closely follows the one in \cite{HK}.
\end{abstract}

\maketitle

\section{Introduction}

The study of Lagrangian submanifolds and their properties is ubiquitous in symplectic topology. This is partly due to the fact that many questions in the field can be formulated as questions involving Lagrangian submanifolds. For instance, if $(M, \omega)$ is a symplectic manifold and $\varphi \colon (M, \omega) \to (M, \omega)$ is a symplectomorphism, then its graph $\Gamma_{\varphi}$ is a Lagrangian submanifold of $(M \times M, (-\omega) \oplus \omega)$. Consequently, studying the fixed points of $\varphi$ is equivalent to studying the set $\Gamma_{\varphi} \cap \Delta$ of intersection points in $M \times M$.

Turning therefore to the question of Lagrangian intersections, we must mention the progress made by Lagrangian Floer theory, first developed by Floer under some relative asphericity conditions and then generalized by Oh to the case of monotone Lagrangians in closed symplectic manifolds. This latter work was done in a series of papers starting with \cite{Oh}. The theory has been extended considerably since then, but we will not delve into the history of these developments. We do mention however that one of the main motivations for the development of Lagrangian Floer theory was the \emph{Arnold-Givental conjecture}, a version of which can be stated as follows. If $(M, \omega)$ is a closed symplectic manifold and $L \sub M$ is the fixed point set of an anti-symplectic involution on $M$ (so in particular, a Lagrangian submanifold), then
\[
\#(L \cap \varphi(L)) \geq \dim{H_{\ast}(L)},
\]
where $\varphi \colon M \to M$ is any Hamiltonian diffeomorphism and $L$ and $\varphi(L)$ intersect transversally. By the work of Oh mentioned above, this conjecture is true when $L$ is a monotone Lagrangian torus. (The definition of monotonicity will be recalled in Section 1.) Observe that this result gives no information about intersections of arbitrary Lagrangians with $L$, but only Lagrangians that are images of $L$ under Hamiltonian diffeomorphisms. It is therefore a statement about the \emph{nondisplaceability of $L$}.

One can ask more general questions about Lagrangian intersections. For instance, let $(M, \omega)$ be any symplectic manifold and $\mathcal{L}$ be a collection of Lagrangians in $M$ satisfying some set of geometric or topological restrictions. Then the following two questions naturally present themselves:
\begin{enumerate}
\item Do any two Lagrangians in $\mathcal{L}$ intersect?
\item Is there some fixed Lagrangian in $\mathcal{L}$ that intersects every other Lagrangian in $\mathcal{L}$?
\end{enumerate}

Evidently, question (2) is more accessible than question (1). The following result of Gromov, see $2.3.B_{4}^{\prime\prime}$ in \cite{Gro}, is along these lines and will have important ramifications in the present paper.

\begin{thm}[Gromov]\label{t:gromov}
Let $\Lambda$ be a closed manifold and $L$ be a closed and exact Lagrangian submanifold of $(T^{\ast}\Lambda, d\lambda)$, where $\lambda$ is the canonical Liouville $1$-form on $T^{\ast}\Lambda$. Then $L$ must intersect the zero section $\Lambda$.
\end{thm}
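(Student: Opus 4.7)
The plan is a proof by contradiction. Assume $L \cap \Lambda = \es$, and combine the area obstruction from exactness with the existence of pseudoholomorphic disks forced by the Liouville structure of $T^{\ast}\Lambda$ to reach a contradiction.

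Exactness immediately kills relative symplectic areas. Pick $f \in C^{\infty}(L)$ with $\lambda|_L = df$. For any smooth map $u \colon (D^{2}, \partial D^{2}) \to (T^{\ast}\Lambda, L)$, Stokes' theorem gives
\[
\int_{D^{2}} u^{\ast}\omega \;=\; \int_{\partial D^{2}} u^{\ast}\lambda \;=\; \int_{\partial D^{2}} d(f \circ u|_{\partial D^{2}}) \;=\; 0.
\]
For any $\omega$-compatible almost complex structure $J$ on $T^{\ast}\Lambda$, the standard energy identity $E(u) = \int u^{\ast}\omega$ for $J$-holomorphic maps then forces \emph{every} $J$-holomorphic disk with boundary on $L$ to be constant.

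To contradict this, a non-constant $J$-holomorphic disk with boundary on $L$ must be produced. Equip $T^{\ast}\Lambda$ with a Stein-type $J$ arising from a Riemannian metric on $\Lambda$, so that the zero section is totally real. Gromov's original argument constructs a one-parameter family of $J$-holomorphic disks — for instance, by neck-stretching along a hypersurface separating $L$ from $\Lambda$, or by continuation from a model disk near a chosen point of $L$ — and then applies Gromov compactness to extract a limit. The disjointness $L \cap \Lambda = \es$ is used to ensure the limiting configuration contains a non-constant disk component with boundary on $L$, and positivity of area together with the vanishing above yields the contradiction. A more modern formulation encapsulates the same content in Lagrangian Floer homology: $HF(L, \Lambda)$ is well-defined in the exact setting (since exactness rules out bubbling) and is known to be nonzero by the Abbondandolo-Schwarz / Viterbo isomorphism combined with nonvanishing results for the Fukaya category of $T^{\ast}\Lambda$, which would contradict the chain-level vanishing forced by $L \cap \Lambda = \es$.

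The main obstacle is the analytic construction of a single non-constant $J$-holomorphic disk with boundary on $L$: one must arrange transversality of the relevant moduli space, obtain uniform energy bounds, and control behaviour at infinity in the non-compact manifold $T^{\ast}\Lambda$. Once such a disk is in hand, the contradiction with exactness is formal. I should also note that the hypothesis that $L$ be closed is essential to this strategy — non-compact exact Lagrangians in $T^{\ast}\Lambda$ easily avoid $\Lambda$, and the relevant compactness statement for the moduli of disks fails without it.
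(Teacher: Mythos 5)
The paper does not prove Theorem \ref{t:gromov}; it cites it to Gromov (\cite{Gro}, $2.3.B_{4}^{\prime\prime}$) and uses it as a known input, so there is no internal proof to compare against. Evaluating your proposal on its own terms: the exactness computation is correct and standard, and it does show that for any compatible $J$, every $J$-holomorphic disk with boundary on $L$ has vanishing energy and is therefore constant.

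The genuine gap is the step you yourself flag as ``the main obstacle'': producing a single non-constant $J$-holomorphic disk with boundary on $L$. That step \emph{is} the theorem. Everything before it is Stokes' theorem, everything after is automatic, and the two routes you sketch are placeholders rather than arguments. Neck-stretching along a separating hypersurface is anachronistic relative to Gromov's paper and, more to the point, you do not specify which sequence of curves or disks you would stretch, why a limit building must contain a non-constant top-level component with boundary on $L$, or how you control energy and behaviour at the non-compact end of $T^{\ast}\Lambda$ so that a limit exists at all. The Floer-theoretic route does give a correct modern proof, but the key assertion that $HF(L,\Lambda)\ne 0$ for an \emph{arbitrary} closed exact Lagrangian $L$ is itself a substantial theorem (Fukaya--Seidel--Smith, Nadler, Abouzaid) and not a formal consequence of the Abbondandolo--Schwarz/Viterbo isomorphism; invoking it without proof simply relocates the hard work to an unnamed source. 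As written, your proposal identifies the correct two-step skeleton — exactness kills disk areas, then produce a disk anyway — but it does not carry out the only non-trivial step, so it is a strategy outline rather than a proof.
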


For $\Lambda = \TT^{2}$, this theorem, when combined with the work of Dimitroglou Rizell, Goodman, and Ivrii on the Nearby Lagrangian Conjecture for $T^{\ast}\TT^{2}$, see Theorem B of \cite{DRGI}, implies the following result in the spirit of question (1).

\begin{prp}\label{p:cotangentexact}
Any two exact Lagrangian tori in $T^{\ast}\TT^{2}$ must intersect.
\end{prp}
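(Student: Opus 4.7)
The plan is to combine the two cited results to reduce the problem to Gromov's theorem. Let $L_{1}$ and $L_{2}$ be two closed exact Lagrangian tori in $T^{\ast}\TT^{2}$. By Theorem B of \cite{DRGI} (the Nearby Lagrangian Conjecture for $T^{\ast}\TT^{2}$), there exists a Hamiltonian diffeomorphism $\varphi \colon T^{\ast}\TT^{2} \to T^{\ast}\TT^{2}$ carrying $L_{1}$ onto the zero section $\TT^{2}$. Since $\varphi$ is Hamiltonian (and in particular an exact symplectomorphism), the image $\varphi(L_{2})$ is again a closed exact Lagrangian torus in $T^{\ast}\TT^{2}$. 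Applying Theorem \ref{t:gromov} to $\varphi(L_{2})$, we conclude that $\varphi(L_{2}) \cap \TT^{2} \neq \es$, and pulling back by $\varphi^{-1}$ gives $L_{1} \cap L_{2} \neq \es$.

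The conceptual content of this argument lies entirely in the two inputs: Gromov's theorem provides nondisplaceability of the zero section from any closed exact Lagrangian, while the Dimitroglou Rizell--Goodman--Ivrii result ensures that every closed exact Lagrangian torus is in fact Hamiltonian isotopic to the zero section. The one point worth checking carefully, and which I would expect to be the only potential obstacle, is that the version of the Nearby Lagrangian Conjecture proven in \cite{DRGI} genuinely yields a Hamiltonian isotopy (and not merely a Lagrangian isotopy or a symplectomorphism), since one needs the preservation of exactness in order to invoke Theorem \ref{t:gromov} on $\varphi(L_{2})$. Assuming this, the argument is a one-line reduction and no further analysis is required.
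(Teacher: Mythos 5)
Your proof is correct and follows exactly the same route as the paper's: apply Theorem B of \cite{DRGI} to bring one torus to the zero section via a Hamiltonian diffeomorphism, note that exactness of the other torus is preserved, and invoke Theorem \ref{t:gromov}. Your parenthetical caution about needing a genuinely Hamiltonian (not merely Lagrangian) isotopy is precisely the right point to flag, and it is indeed supplied by the DRGI result.
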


\begin{proof}
Let $L$ be an exact Lagrangian torus in $T^{\ast}\TT^{2}$. We use Theorem B of \cite{DRGI} to obtain a Hamiltonian diffeomorphism $\varphi \colon T^{\ast}\TT^{2} \to T^{\ast}\TT^{2}$ taking $L$ to the zero section. If $L^{\prime}$ is another Lagrangian torus in $T^{\ast}\TT^{2}$ that is exact and disjoint from $L$, then $\smash{\varphi(L^{\prime})}$ would be an exact Lagrangian torus which is disjoint from the zero section, contradicting Theorem \ref{t:gromov} above.
\end{proof}

\begin{rmk}
The previous proposition is true whenever $\Lambda$ is a smooth manifold for which the Nearby Lagrangian Conjecture is verified, see \cite{DRGI}.
\end{rmk}

The objective of this paper is to study question (2) for Lagrangian tori in a special class of symplectic $4$-manifolds, the so-called \emph{symplectic Del Pezzo surfaces}. These are the monotone symplectic manifold $(\SS^{2} \times \SS^{2}, \sigma \oplus \sigma)$, where $\sigma(\SS^{2}) = 2$, the monotone symplectic manifold $(\CP^{2}, \omega_{\FS})$, where $\omega_{\FS}$ is the Fubini-Study form with $\omega_{\FS}(\CP^{1}) = 3$, and eight other monotone symplectic manifolds obtained by blowing up along Darboux balls of capacity 1 in $(\CP^{2}, \omega_{\FS})$. We now describe recent progress on answering question (2) for this class of manifolds, together with a description of our main results.

\subsection{Main Results}

Let $(M, \omega)$ be an integral symplectic $4$-manifold. (A symplectic manifold is \emph{integral} if its symplectic form takes on integer values when integrated over any closed $2$-dimensional submanifold.) A Lagrangian torus $L \sub M$ is said to be \emph{integral} if the relative area homomorphism $\omega \colon \pi_{2}(M, L) \to \RR$ is integer-valued. A disjoint collection $\{L_{i}\}$ of such tori is said to be a \emph{maximal integral packing of $(M, \omega)$} if any integral torus in $M$ not belonging to this collection intersects at least one member of the collection. The integrality condition is required only so that the above intersection condition is not trivially false, since one can always embed a Lagrangian torus in any sufficiently small Darboux chart in $(M, \omega)$ missing the collection $\{L_{i}\}$. The following result was recently obtained by Richard Hind and Ely Kerman in \cite{HK}.

\begin{thm}[Hind, Kerman]
The Clifford torus is a maximal integral packing of $(\SS^{2} \times \SS^{2}, \sigma \oplus \sigma)$.
\end{thm}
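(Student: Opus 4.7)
The plan is to argue by contradiction and reduce to Proposition \ref{p:cotangentexact}. Assume there is an integral Lagrangian torus $L \sub \SS^{2} \times \SS^{2}$ with $L \cap T_{\mathrm{Cliff}} = \es$. The objective is to construct a symplectic embedding $\Phi \colon W \hookrightarrow T^{\ast}\TT^{2}$ of an open neighborhood $W$ of $T_{\mathrm{Cliff}} \cup L$ sending $T_{\mathrm{Cliff}}$ onto the zero section and $L$ onto an \emph{exact} Lagrangian torus. Once such $\Phi$ is produced, Proposition \ref{p:cotangentexact} immediately yields the contradiction, since $\Phi(L)$ and the zero section would then be two disjoint exact Lagrangian tori in $(T^{\ast}\TT^{2}, d\lambda)$.

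I would build $\Phi$ in two stages. Weinstein's neighborhood theorem gives an initial symplectic chart $\Phi_{0} \colon U_{0} \to T^{\ast}\TT^{2}$ on a tubular neighborhood of $T_{\mathrm{Cliff}}$, which after shrinking we may assume is disjoint from $L$. The second stage enlarges $U_{0}$ to a domain $W \supset U_{0}$ containing $L$. The geometric input here is that $\SS^{2} \times \SS^{2} \sd T_{\mathrm{Cliff}}$ deformation-retracts onto the union of the four toric boundary spheres, so there is ample ``room'' to flow outward from $T_{\mathrm{Cliff}}$. The enlargement is carried out by a Biran-type symplectic decomposition, or equivalently by a neck-stretching argument along $\partial U_{0}$ combined with an analysis of $J$-holomorphic spheres in the foliating classes $[\SS^{2} \times \{\ast\}]$ and $[\{\ast\} \times \SS^{2}]$. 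Exactness of $\Phi(L)$ is then forced by the integrality hypothesis: each period $\int_{\gamma} \lambda$ along a loop $\gamma \sub \Phi(L)$ agrees, up to contributions from $\pi_{2}(\SS^{2} \times \SS^{2}) \to 2\ZZ$, with a relative area $\omega(\beta)$ for some $\beta \in \pi_{2}(\SS^{2} \times \SS^{2}, L)$, and is therefore an integer. These integer periods are then absorbed by precomposing $\Phi$ with a fiberwise translation along a closed $1$-form pulled back from $\TT^{2}$, a symplectomorphism of $T^{\ast}\TT^{2}$ that preserves the zero section setwise.

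The main obstacle is the enlargement step: bounding how much the Weinstein chart of $T_{\mathrm{Cliff}}$ can be inflated in the complement of $L$ is precisely where the pseudoholomorphic curve machinery of \cite{HK} is deployed, and where the bulk of the area and Maslov-index bookkeeping is carried out. Once the embedding $\Phi$ has been produced and the exactness of $\Phi(L)$ has been verified, Proposition \ref{p:cotangentexact} closes the argument formally. Adapting this inflation procedure to the Del Pezzo surfaces $\DD_{n}$, whose toric spines include additional exceptional divisors arising from the blow-ups, is presumably where the technical content of the remainder of the paper resides.
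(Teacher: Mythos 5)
The high-level reduction to Gromov's theorem via Proposition~\ref{p:cotangentexact} is the right target, and the Weinstein chart of the Clifford torus is the right place to start. But the proposal has a genuine gap in its exactness step, and that gap is exactly where the real work in \cite{HK} (and in this paper) lives.

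The claim that integer Liouville periods on $\Phi(L)$ can be ``absorbed by precomposing $\Phi$ with a fiberwise translation along a closed $1$-form'' fails in general. A fiberwise translation $\tau_{-\alpha}(q,p) = (q, p - \alpha_q)$ satisfies $\tau_{-\alpha}^{*}\lambda = \lambda - \pi^{*}\alpha$, so the change in the period over a loop $\gamma \sub \Phi(L)$ is $-\int_{\pi(\gamma)}\alpha$, which depends only on $[\pi(\gamma)] \in H_{1}(\TT^{2})$. If the composite $H_{1}(L) \to H_{1}(T^{\ast}\TT^{2}) \cong H_{1}(\TT^{2})$ is not an isomorphism — in particular, if $\Phi(L)$ is homologically trivial, e.g.\ contained in a small Darboux ball — fiberwise translations cannot change its periods at all, integer or not. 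A priori nothing prevents $L$ from being in this homologically trivial situation, so your argument cannot close. This is precisely why the paper introduces and insists upon the dichotomy of homologically trivial versus homologically essential tori (Proposition~\ref{p:simpli}, Proposition~\ref{p:exact}); the exactness argument in Proposition~\ref{p:exact} requires the torus to be homologically nontrivial and also uses monotonicity plus the Dimitroglou Rizell result to replace $L$ with the graph of a closed $1$-form, neither of which you invoke. Converting a possibly homologically trivial torus into a homologically nontrivial one is exactly what drives the blowup--inflation--blowdown machinery of Sections~5--7; it is not a technical afterthought but the central content.

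A secondary gap: the existence of a symplectic embedding $\Phi$ of a neighborhood of $T_{\mathrm{Cliff}} \cup L$ into $T^{\ast}\TT^{2}$ sending $T_{\mathrm{Cliff}}$ to the zero section is itself far from automatic. What is actually available is a symplectomorphism from the complement of the four toric boundary spheres onto a bounded subdomain $\TT^{2} \times U \sub T^{\ast}\TT^{2}$; to use it one must first show $L$ can be Hamiltonian-isotoped off those spheres while keeping it disjoint from $T_{\mathrm{Cliff}}$, which requires the relative foliation and displacement results (cf.\ Proposition~\ref{p:displaceclosed}) rather than a Biran decomposition or a generic neck-stretch. Also, the assertion that integrality of the relative area homomorphism forces the Liouville periods of $\Phi(L)$ to be integers needs the integrality result Theorem~\ref{t:monotone} (integral implies monotone) to connect periods with Maslov indices; integrality alone does not control periods that bound no disk in the chart. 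In short, the architecture you propose is broadly the right one, but the two steps you describe as routine --- producing $\Phi$ and deducing exactness --- are where the theorem is actually proved, and the exactness deduction as written is incorrect.
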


This answers question (2) affirmatively for integral Lagrangian tori in $(\SS^{2} \times \SS^{2}, \sigma \oplus \sigma)$. The present paper takes inspiration from the above result and extends it to the symplectic Del Pezzo surfaces $(\DD_{n}, \omega_{\mspace{1mu}\DD_{n}})$ for $n = 1, \dots, 5$. These are obtained by symplectically blowing up along $n$ pairwise disjoint Darboux balls of capacity $1$ in $(\CP^{2}, \omega_{\FS})$. The resulting symplectic manifolds are independent of the symplectic isotopy classes of the above Darboux balls. In particular, if $(\DD_{n}, \omega_{\mspace{1mu}\DD_{n}})$ is obtained by symplectically blowing up along $n$ Darboux balls in $\CP^{2}$ that lie in the complement of the Clifford torus $L_{1, 1}$, then this lifts to a Lagrangian torus in $\DD_{n}$ which we also call the Clifford torus and denote by $L_{1, 1}$. This is always possible for $n \leq 3$, and more recently was shown to be true for $n = 4$ and $n = 5$, see Theorem 6.1 of \cite{LOV}. Our main result is then the following.

\begin{thm}\label{t:main}
The Clifford torus is a maximal integral packing of $(\DD_{n}, \omega_{\mspace{1mu}\DD_{n}})$ for $n = 1, \dots, 5$.
\end{thm}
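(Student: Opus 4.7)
My plan is to adapt the strategy of Hind--Kerman \cite{HK} and derive a contradiction from the hypothetical existence of an integral Lagrangian torus $L \sub \DD_{n}$ disjoint from $L_{1,1}$. The endgame in each branch of the argument should be either a direct numerical contradiction coming from monotonicity and integrality, or the production of an exact Lagrangian torus in a Weinstein neighborhood of $L_{1,1}$ disjoint from the zero section, contradicting Proposition \ref{p:cotangentexact}.

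First, I would fix an almost complex structure $J$ on $\DD_{n}$ adapted to the blowup description, so that the hyperplane class $H$ and the exceptional classes $E_{1}, \dots, E_{n}$ admit well-behaved moduli spaces of $J$-holomorphic spheres foliating $\DD_{n}$ (with the expected reducible fibers at the blowup centers). Then I would stretch the neck along a small tubular neighborhood of $L_{1,1}$, which is symplectomorphic to a Weinstein neighborhood of the zero section in $T^{\ast}\TT^{2}$. Constraining spheres to pass through a generic point $p \in L$ in each of the low-energy Chern-number-one classes -- $H$, $E_{i}$, $H - E_{i}$, $H - E_{i} - E_{j}$, and the analogous classes that appear in $\DD_{n}$ -- one produces SFT limit buildings whose components lie either in the cotangent level above the neck or in the closed complement of a tubular neighborhood of $L_{1,1}$ below it.

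The bulk of the proof would then be a case analysis of these limit buildings, which I expect to correspond to the ``Type 0, Type 1, Type 2a, Type 2b, Type 3'' classification already declared in the preamble. Each case should be excluded via some combination of positivity of intersection, Maslov--area balance for the disk components with boundary on $L$, and, in the favorable branches, an application of Proposition \ref{p:cotangentexact} to the upper level of the building. The main obstacle I anticipate is bookkeeping: compared with $\SS^{2} \times \SS^{2}$, each blowup in $\DD_{n}$ introduces new Chern-number-one classes that can support limit components with boundary on $L$, and every such class must be shown to lead to a contradiction. The restriction $n \leq 5$ plausibly reflects precisely where this combinatorics remains tractable, since the set of $(-1)$-curves in $\DD_{n}$ grows rapidly (and eventually becomes infinite) as $n$ increases further, so the limit analysis that suffices for small $n$ ceases to close off all the cases.
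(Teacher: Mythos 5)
Your plan captures some of the paper's toolkit (SFT neck-stretching, limit buildings, Maslov--area balance, Proposition~\ref{p:cotangentexact}) but diverges from the actual argument at several structural points that matter, and at least one of these divergences reflects a genuine gap rather than a different but valid route.

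First, the paper does not carry out the analysis directly in $\DD_{n}$ with the full family of Chern-number-one classes $H$, $E_{i}$, $H - E_{i}$, etc. Proposition~\ref{p:reduction} shows that an integral torus disjoint from $L_{1,1}$ in $\DD_{n}$ yields one in $\DD_{1}$, and \emph{all} subsequent work takes place in a single toric model $(X, \omega_{X})$ of $\DD_{1}$ with the classes $(1,0)$, $(0,1)$, and $(1,d)$. This is why the restriction $n \leq 5$ has nothing to do with the combinatorics of $(-1)$-curves in $\DD_{n}$, contrary to what you suggest; the paper is explicit that the only obstruction for $n > 5$ is that one does not currently know how to embed $n > 5$ disjoint capacity-$1$ Darboux balls in $\CP^{2} \sd L_{1,1}$ (so that the Clifford torus is even defined in $\DD_{n}$), and that the proof otherwise applies unchanged. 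Your explanation of the bound is therefore incorrect and would mislead a reader about the architecture of the argument.

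Second, your plan stretches the neck only along $L_{1,1}$ while also invoking ``disk components with boundary on $L$.'' These two things are in tension: if the neck is stretched only along $L_{1,1}$, the limit buildings have punctures asymptotic to Reeb orbits on $S^{\ast}L_{1,1}$, not boundary components on $L$. The paper simultaneously stretches the neck along \emph{both} tori (Proposition~\ref{p:simpli} replaces the pair $(L, L_{1,1})$ by $(L_{\mathfrak{r}}, L_{\mathfrak{s}})$ after placing them in normal form), and the relative foliation and building-type classification of Proposition~\ref{p:types} are intrinsically tied to splitting along the union $L_{\mathfrak{r}} \cup L_{\mathfrak{s}}$. Without the two-torus split, the Type~0--3 taxonomy you mention does not arise in the form the paper uses it.

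Third, and most importantly, the endgame is not a case-by-case exclusion of building types. The paper shows (Proposition~\ref{p:existence}) that for $d$ large a limit building of \emph{Type 3} actually exists and is in a controlled normal form, and then Proposition~\ref{p:black} extracts from it spheres $F$, $G$ and disks $E_{\mathfrak{r}}$, $E_{\mathfrak{s}}$ with prescribed intersection data. The contradiction comes from Section~6: a sequence of $2d-2$ blowups, two inflations, $2d-2$ more inflations, and $2d-2$ blowdowns converts $(X, \omega_{X})$ into something symplectomorphic to $(X, d\,\omega_{X})$ (Proposition~\ref{p:ruled}), in which \emph{both} $L_{\mathfrak{r}}$ and $L_{\mathfrak{s}}$ are monotone (Proposition~\ref{p:newmonotone}) and homologically nontrivial in the complement of the axes (Proposition~\ref{p:homtriv}); Proposition~\ref{p:exact} then forces them to intersect. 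Proposition~\ref{p:cotangentexact} is used precisely there, via Proposition~\ref{p:exact}, rather than as a per-case knockout. None of the blowup/inflation machinery appears in your outline, and without it you have no mechanism to convert the building data into the desired intersection. So the proposal, as stated, cannot close.

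If you want to salvage your plan, the essential missing ingredients are: (i) the reduction to $(\DD_{1}, \omega_{\DD_{1}})$ and hence to a single toric model; (ii) the simultaneous neck-stretch along both tori and the ensuing relative foliation theory; and (iii) the constructive use of a Type~3 building together with the blowup/inflation/blowdown sequence of Section~6 to reach Proposition~\ref{p:exact}.
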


In other words, every integral Lagrangian torus in $(\DD_{n}, \omega_{\mspace{1mu}\DD_{n}})$ for $n = 1, \dots, 5$ must intersect $L_{1, 1}$. In view of the description of blowing up and down given in Section 2 below, the case $n = 1$ turns out to imply the following slightly weaker result about maximal integral packings in certain open submanifolds of $\CP^{2}$.

\begin{cor}\label{c:main}
If $B$ is any closed Darboux ball of capacity $1$ in $\CP^{2} \sd L_{1, 1}$, then the Clifford torus is a maximal integral packing of $(\CP^{2} \sd B, \omega_{\FS})$.
\end{cor}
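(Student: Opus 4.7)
The plan is to reduce Corollary~\ref{c:main} to the $n = 1$ case of Theorem~\ref{t:main} via the symplectic blow-up construction recalled in Section~2. Fix a closed Darboux ball $B \sub \CP^{2} \sd L_{1, 1}$ of capacity~$1$, let $L \sub \CP^{2} \sd B$ be an integral Lagrangian torus, and suppose for contradiction that $L \cap L_{1, 1} = \es$.

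First, I would symplectically embed $\CP^{2} \sd B$ into $(\DD_{1}, \omega_{\mspace{1mu}\DD_{1}})$ in a Clifford-torus-preserving way. Blowing up $\CP^{2}$ along the open ball $B^{o} = \Int(B)$ produces $(\DD_{1}, \omega_{\mspace{1mu}\DD_{1}})$ together with an exceptional sphere $E$ of symplectic area~$1$ and a symplectic identification $\CP^{2} \sd B^{o} \cong \DD_{1} \sd E$ that is the identity away from a neighborhood of $B$. Since $B \cap L_{1, 1} = \es$, this identification sends the Clifford torus in $\CP^{2}$ to the lifted Clifford torus $L_{1, 1} \sub \DD_{1}$. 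Restricting to the open subset $\CP^{2} \sd B \sub \CP^{2} \sd B^{o}$ gives the desired symplectic embedding $\iota \colon \CP^{2} \sd B \hookrightarrow \DD_{1}$.

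The main technical step is to verify that $\iota(L)$ is still an integral Lagrangian torus in $\DD_{1}$. Since $\CP^{2} \sd B$ is the interior of the manifold-with-boundary $\CP^{2} \sd B^{o}$, the inclusion $\CP^{2} \sd B \hookrightarrow \CP^{2} \sd B^{o}$ is a homotopy equivalence restricting to the identity on $L$, so the induced map on $\pi_{2}(\cdot, L)$ is an isomorphism and $L$ is integral in $\CP^{2} \sd B^{o} \cong \DD_{1} \sd E$. I would then compare the long exact sequences of the pairs $(\DD_{1} \sd E, L)$ and $(\DD_{1}, L)$. Using $\pi_{2}(L) = 0$, $\pi_{1}(\DD_{1}) = \pi_{1}(\DD_{1} \sd E) = 0$, $\pi_{2}(\DD_{1}) = \ZZ\langle H, E \rangle$, and $\pi_{2}(\DD_{1} \sd E) = \ZZ\langle H \rangle$, a diagram chase over the identity $\pi_{1}(L) \to \pi_{1}(L)$ shows every class $\alpha \in \pi_{2}(\DD_{1}, L)$ can be written as $\alpha = \iota_{\ast}\beta + aH + bE$ with $\beta \in \pi_{2}(\DD_{1} \sd E, L)$ and $a, b \in \ZZ$. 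Its symplectic area is then $\omega_{\mspace{1mu}\DD_{1}}(\alpha) = \omega(\beta) + 3a + b$, which lies in $\ZZ$ by integrality of $L$ in $\DD_{1} \sd E$.

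Theorem~\ref{t:main} applied with $n = 1$ now forces $\iota(L)$ to intersect $L_{1, 1}$ in $\DD_{1}$, and since the intersection lies in $\iota(\CP^{2} \sd B)$ it also occurs in $\CP^{2} \sd B$, contradicting the standing assumption. The hard part of the argument is the integrality transfer of the third paragraph; once this is in hand, the reduction to Theorem~\ref{t:main} is immediate.
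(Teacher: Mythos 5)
Your proof is correct and follows the route the paper indicates: lift $L$ to the blowup $(\DD_{1}, \omega_{\mspace{1mu}\DD_{1}})$, verify that it remains integral there, and invoke Theorem~\ref{t:main} with $n=1$. A small streamlining of the integrality step: the relative blowdown formula of Proposition~\ref{p:sympblowup}, namely $\omega_{\mspace{1mu}\DD_{1}}(\alpha) = \omega_{\FS}(\beta_{*}\alpha) - (\alpha \cdot E)$ for $\alpha \in H_{2}(\DD_{1}, L)$, together with the fact that every class in $H_{2}(\CP^{2}, L)$ is represented by a cycle avoiding $B$ (the pair $(\CP^{2}, \CP^{2} \sd B)$ is $3$-connected by excision), gives the integrality transfer directly, without tracking the decomposition $\alpha = \iota_{*}\beta + aH + bE$.
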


\subsection{Commentary}

The conclusion of Theorem \ref{t:main} is not known to hold for $n > 5$ since, to our knowledge, there is no known embedding of $n > 5$ disjoint closed Darboux balls of capacity $1$ in $\CP^{2} \sd L_{1, 1}$, although we emphasize that this is the \emph{only} obstruction to the above theorem holding in those cases, since our methods reduce all considerations to the case $n =1$.

Actually, the results in this paper imply a more general statement than that of Theorem \ref{t:main}. Namely, our work actually shows that the Clifford torus is a maximal packing of \emph{any} symplectic manifold obtained from $\CP^{2}$ by symplectically blowing up along finitely many disjoint Darboux balls of integral capacity in the complement of $L_{1, 1}$, so long as \emph{at least one of them} is of capacity $1$. This a priori sounds like a much larger class of manifolds then the Del Pezzo surfaces, but relies on the possibility of symplectically embedding closed balls of integral capacity greater than $1$ in $\CP^{2} \sd L_{1, 1}$. In fact, in joint work in progress with Richard Hind, it is shown that such a ball must have capacity strictly less than $2$, meaning our current result really does exhaust all possibilities.

There are two limitations to the techniques used to prove Theorem \ref{t:main}:
\begin{enumerate}
\item They require the ambient symplectic manifold to be a ruled surface over $\SS^{2}$, since they make heavy use of the existence of foliations by closed pseudoholomorphic spheres, as well as some of the special features of pseudoholomorphic curves in dimension $4$.
\item They require the tori in a maximal integral packing to be \emph{monotone}, since this restricts the kinds of degenerations of pseudoholomorphic curves that can occur when stretching the neck, and allows one to compute the indices of pseudoholomorphic planes given information about the relative area classes of their compactifications.
\end{enumerate}

There are two interesting situations where one of the two requirements above are not met and for which the integral packing problem appears to be accessible. The first is $(\CP^{2}, \omega_{\FS})$, where the first requirement fails and for which the following conjecture seems reasonable.

\begin{cnj}\label{cn:packing1}
$L_{1, 1}$ is a maximal integral packing of $(\CP^{2}, \omega_{\FS})$.
\end{cnj}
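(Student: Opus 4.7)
The natural strategy is to reduce Conjecture \ref{cn:packing1} to the already-established $n = 1$ case of Theorem \ref{t:main} via a symplectic blowup. Let $L \sub \CP^{2}$ be an integral Lagrangian torus with $L \neq L_{1, 1}$, and suppose for contradiction that $L \cap L_{1, 1} = \es$. The plan is to produce a closed symplectically embedded Darboux ball $B$ of capacity $1$ in $\CP^{2} \sd (L \cup L_{1, 1})$, blow up along it to obtain $(\DD_{1}, \omega_{\mspace{1mu} \DD_{1}})$, and then apply the $n = 1$ case to derive a contradiction.

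Once such a $B$ is in hand, $L$ and $L_{1, 1}$ lift to disjoint Lagrangian tori in $\DD_{1}$, both sitting in the complement of the exceptional divisor $E$. The Clifford torus $L_{1, 1}$ retains its monotonicity, bounding the same three Maslov-$2$ disks of area $1$ as before. The torus $L$ remains integral: any class in $\pi_{2}(\DD_{1}, L)$ can be written as the image of a class in $\pi_{2}(\CP^{2}, L)$ plus an integer combination of the sphere classes $H$ and $E$, on which $\omega_{\mspace{1mu} \DD_{1}}$ evaluates to $3$ and $1$ respectively, so all relative areas remain in $\ZZ$. Theorem \ref{t:main} for $n = 1$ then forces $L \cap L_{1, 1} \neq \es$ in $\DD_{1}$, contradicting our assumption.

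The main obstacle is producing the Darboux ball $B$. Theorem 6.1 of \cite{LOV} gives capacity-$1$ Darboux balls in $\CP^{2} \sd L_{1, 1}$, but none of these is a priori known to avoid a second, arbitrary integral Lagrangian torus. A first attempt is to start with an LOV embedding and push its image by a Hamiltonian isotopy supported in $\CP^{2} \sd L_{1, 1}$ (thereby preserving $L_{1, 1}$) until it also lies in $\CP^{2} \sd L$; since $\dim L = 2 < \dim(\CP^{2} \sd L_{1, 1}) = 4$, a parametric transversality argument should force generic such isotopies to work, though the Hamiltonian constraint makes the perturbation step nontrivial. A more intrinsic alternative is to combine Weinstein neighborhood models of $L$ and of $L_{1, 1}$ with the integrality hypothesis on $L$ to bound the Gromov width of $\CP^{2} \sd (L \cup L_{1, 1})$ from below by $1$. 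Either route requires input beyond the tools that establish Theorem \ref{t:main}, and overcoming this obstacle is precisely what is needed to extend the present results to $\CP^{2}$.
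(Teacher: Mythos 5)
This statement is labeled a \emph{conjecture} in the paper, which does not prove it; the commentary explicitly defers it to future work and attributes the difficulty to $\CP^{2}$ not being ruled over $\SS^{2}$, so the pseudoholomorphic foliation machinery underlying Theorem \ref{t:main} is unavailable. Your reduction framework is sound and is essentially the content of Corollary \ref{c:main}: if a closed Darboux ball $B$ of capacity $1$ could be found in $\CP^{2} \sd (L \cup L_{1,1})$, blowing up along $B$ would produce $(\DD_{1}, \omega_{\mspace{1mu}\DD_{1}})$ containing the lifts of $L$ and $L_{1,1}$ as disjoint Lagrangian tori, and Theorem \ref{t:main} rules this out. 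Your check that the lift of $L$ remains integral, via the formula of Proposition \ref{p:sympblowup} with $\delta = 1$ applied to relative classes, is correct.

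However, the gap you flag is fatal as written, and neither of your two routes closes it. The transversality argument fails at the dimension count: $B$ is a full-dimensional domain, so $\dim B + \dim L = 4 + 2 > 4 = \dim \CP^{2}$, and a generic Hamiltonian isotopy of $B$ leaves a two-dimensional intersection with $L$ rather than pushing $B$ off it. Comparing $\dim L$ to $\dim \CP^{2}$, as you do, is the relevant count only when displacing a \emph{point}; displacing a ball of fixed capacity from a Lagrangian torus is a rigid embedding problem, not a soft one. The Gromov width route is the right \emph{kind} of statement, but you supply no mechanism for proving a lower bound of $1$ on the width of $\CP^{2} \sd (L \cup L_{1,1})$ for an arbitrary integral $L$, and the one quantitative result the paper quotes (any ball in $\CP^{2} \sd L_{1,1}$ has capacity strictly less than $2$) is an upper bound and offers no help. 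What is genuinely missing is a replacement for the ruled-surface foliation structure on $\CP^{2}$, which is exactly the obstacle the paper names; your proposal is a correct reformulation of the problem, not a proof.
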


In the cases where this problem has been solved thus far and where the symplectic manifold admits a moment image, namely $(\SS^{2} \times \SS^{2}, \sigma \oplus \sigma)$ and $(\DD_{n}, \omega_{\mspace{1mu}\DD_{n}})$ for $n = 1, 2, 3$, there is a maximal packing consisting of precisely the integer points in the moment image, which in all cases is the Clifford torus projecting to the point $(1, 1)$. It would be appealing to think that this holds for all (precompact) toric symplectic manifolds, but curiously a second result from \cite{HK}, see Theorem 1.2 in that reference, shows it to be false for the polydisk $P(a, b)$ when $\min(a, b) > 2$.

Another situation in which the packing problem appears accessible thr\-ough the methods of this paper are the toric manifolds $(Y_{m}, \omega_{m})$ for $m \geq 2$ obtained from $\CP^{2}$ with lines of area $m + 2$ by symplectically blowing up along the standard Darboux ball of capacity $m$ centered at the point $[1 : 0 : 0]$. In this case, the monotonicity of the ambient symplectic manifold is lost but the moment image contains $m$ integer points on the line $x + y = m + 1$, whose preimages are the product tori $L_{k, l}$ with $k + l = m + 1$. We can then make the following conjecture.

\begin{cnj}\label{cn:packing2}
The collection $\{L_{k, l} : k + l = m + 1\}$ is a maximal integral packing of $(Y_{m}, \omega_{m})$ for each $m \geq 2$.
\end{cnj}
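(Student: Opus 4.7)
The natural approach is to follow the neck-stretching template developed for Theorem \ref{t:main}, treating the $m$ product tori $L_{k,l}$ with $k + l = m + 1$ as a simultaneous obstruction. Suppose for contradiction that $L \sub Y_{m}$ is an integral Lagrangian torus disjoint from every such $L_{k,l}$. The Hirzebruch surface structure $Y_{m} \to \CP^{1}$ provides a foliation by pseudoholomorphic spheres in the fiber class, which have symplectic area $2$, along with distinguished section classes of areas $m$ (the exceptional divisor) and $m + 2$ (the proper transform of a line). Stretching the neck along $L$ with a sequence $(J_{n})$ of adapted almost complex structures degenerates each of these curves into an SFT building whose bottom-level components are finite-energy pseudoholomorphic planes or cylinders in the symplectization of $S^{\ast}L \cong \TT^{3}$.

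Because $L$ is integral, the actions of the Reeb orbits at the negative ends are integers and each bottom-level plane has positive integer relative area. The total area of the building produced from a fiber is therefore exactly $2$, and analogous integer budgets govern the buildings produced from the two section classes. These are highly restrictive constraints: the one-parameter family of fibers sweeps out $Y_{m}$, so a uniqueness-of-fiber argument, organized along the Type 0 through Type 3 case analysis already set up in this paper, should force the moment image of $L$ to lie on the line $x + y = m + 1$. Combining this with a parallel analysis applied to the section classes would pin $L$ down to be one of the $L_{k,l}$, contradicting the assumption.

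The decisive obstacle is the loss of monotonicity. In Theorem \ref{t:main}, monotonicity equates the Maslov index of a relative class with a multiple of its area, giving the sharp index bound that eliminates most candidate buildings; in $Y_{m}$ no such relation need hold, so multiply-covered planes, higher-genus components, and planes of low area but high index all become harder to exclude. The most promising route to recover index control is to exploit the three independent curve classes — fiber, exceptional, and line — in tandem, effectively building an overdetermined system of foliation constraints, and to combine these with a priori bounds coming from the integer action spectrum on $S^{\ast}L$. Verifying the transversality these simultaneous foliations require, and ruling out exotic bubble-tree limits in this non-monotone regime, is where the substantive work of proving Conjecture \ref{cn:packing2} will lie.
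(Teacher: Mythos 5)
The statement you were asked to prove is explicitly labeled a conjecture in the paper (Conjecture \ref{cn:packing2}), and the paper provides no proof of it; the authors say they ``hope to address the packing problem in this \dots\ case \dots\ in future work.'' So there is no argument in the paper to compare against. Your write-up correctly identifies itself as a sketch rather than a proof, and its central observation --- that the loss of monotonicity of $(Y_{m}, \omega_{m})$ breaks the index control underlying Theorem \ref{t:main} --- is exactly the obstruction the paper flags in its Commentary.

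That said, a few points in the sketch would need more care before they could become a proof. First, the paper's argument for Theorem \ref{t:main} does not stretch only along the exotic torus $L$: it stretches simultaneously along $L$ and the Clifford torus, and the entire structure of the argument (the two circles $C_{\mathfrak{r}}$, $C_{\mathfrak{s}}$ in $S_0$, the Type 0--3 trichotomy, the homological nontriviality dichotomy in $X \sd Y$) depends on having exactly two tori with a carefully established ``one trivial, one nontrivial'' asymmetry from Proposition \ref{p:simpli}. With a putative packing of $m$ tori plus $L$, it is not clear what the analogous dichotomy is or how the projection circles on $S_0$ should be organized, and the case analysis grows combinatorially. Second, your phrase ``force the moment image of $L$ to lie on the line $x+y = m+1$'' presupposes that $L$ has a well-defined moment image, i.e.\ that it is (Hamiltonian isotopic to) a product torus; this is much more than the conjecture requires (one only needs $L$ to \emph{intersect} some $L_{k,l}$) and is not something the paper's techniques establish even in the monotone setting. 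Finally, Theorem \ref{t:monotone} --- that integral tori are automatically monotone --- is itself a consequence of the monotonicity of $(\DD_{n}, \omega_{\DD_n})$ together with the displacement and Chekanov--type results cited there, none of which are available in $Y_m$; so in the non-monotone case one also loses the a priori knowledge that $L$ is monotone, which the index computations in Sections 5--7 rely on repeatedly, not just in one place.

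None of this means your sketch is wrong --- it identifies the right starting point and the right obstacle --- but it understates how much of the machinery in the paper is load-bearing on monotonicity and on the two-torus structure, and it would need a genuinely new organizing framework (not merely a transversality upgrade) before the Type 0--3 analysis could be imported.
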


If true, this would produce the first examples of maximal integral packings consisting of more than one torus, and would also produce examples of maximal integral packings consisting of $m$ tori for each $m \geq 2$. We hope to address the packing problem in this and in the aforementioned case of $(\CP^{2}, \omega_{\FS})$ in future work.

\subsection{Organization of the Paper}

We begin in Section 2 by reviewing some standard background material from symplectic topology. We briefly discuss symplectic area and monotonicity, recall some basic facts from the theory of closed pseudoholomorphic curves in dimension $4$, describe the symplectic blowup operation in the form in which it will be used in this paper, and conclude by describing a standard inflation procedure that can be performed on pseudoholomorphic spheres inside symplectic $4$-manifolds. Although much of the material is standard, this section also serves the purpose of fixing our notations and conventions for the rest of the paper. The reader is therefore encouraged to at least skim through it.

In Section 3, we give a rather brief exposition of the theory of punctured pseudoholomorphic spheres, closely following the one given in \cite{DRGI}. This begins with a discussion of symplectic manifolds with cylindrical ends, and proceeds in increasing generality to culminate in the definition of a pseudoholomorphic building tailored to our specific context. This context is the one that arises when producing buildings as limits of closed spheres by applying the splitting procedure and compactness results from symplectic field theory, see \cite{BEH} and \cite{CM1}. We conclude by recording some index formulae to be used in later sections.

In Section 4, we specialize to the situation of interest to us, and define the symplectic Del Pezzo surfaces. In particular, we give a detailed description of a specific toric model of the Del Pezzo surface $(\DD_{1}, \omega_{\mspace{1mu}\DD_{1}})$, which we denote by $(X, \omega_{X})$. It will be our main object of interest because, as we show in Proposition \ref{p:simpli}, the proof of Theorem \ref{t:main} reduces to studying Lagrangian tori in $(X, \omega_{X})$. This is followed by an analysis of closed pseudoholomorphic spheres in $(X, \omega_{X})$. These spheres will be used to construct sequences whose limits after the splitting procedure will be pseudoholomorphic buildings. Next, we state and prove a displacement result for Lagrangian tori in $(X, \omega_{X})$, which will greatly simplify subsequent considerations. This being done, we introduce one of the central techniques used in this paper, that of relative pseudoholomorphic foliations. These will be foliations of the complement of a collection of monotone Lagrangian tori in $(X, \omega_{X})$ by pseudoholomorphic leaves obtained as limits of closed spheres via the splitting procedure. We carefully describe the properties of such foliations. We then discuss punctured pseudoholomorphic curves that are transverse to such foliations, and which we call \emph{essential}. Finally, we study homologicaly essential tori in certain subsets of $(X, \omega_{X})$ and show how the above relative foliations can shed light on the packing problem considered there. 

Section 5 can be thought of as the actual starting point for the proof of Theorem \ref{t:main}. We assume it to be false and proceed by contradiction. The first step is Proposition \ref{p:simpli}, which says that under such an assumption, we can find disjoint monotone tori $L_{\mathfrak{r}}$ and $L_{\mathfrak{s}}$ in $(X, \omega_{X})$ satisfying some special properties. Once this is done, we construct a relative foliation of $X \sd (L_{\mathfrak{r}} \cup L_{\mathfrak{s}})$ and begin the analysis of punctured spheres in the complement. Using these considerations, we are able to obtain a full classification of building types arising from the splitting procedure. We then show that under certain conditions, there exists buildings of one of the above types with some quantifiable properties. The section concludes with the statement of Proposition \ref{p:black}, a large black box result that encapsulates all of the facts needed in order to complete the proof of Theorem \ref{t:main}.

Section 6 concerns itself with the actual proof of Theorem \ref{t:main}. Starting with the facts established at the end of the previous section, we perform a series of blowups, inflations, and blowdowns, keeping track of the intersection numbers and symplectic areas of various objects along each step. Some concluding arguments then yield the desired contradiction. This is the only place in the paper where the technique of inflation is used.

Section 7 describes some deformation procedures that can be performed on the top level curves of certain pseudoholomorphic buildings, and then analyzes various intersection patterns that arise from these deformations. The entirety of this section is devoted to proving Proposition \ref{p:black}. Finally, we construct in Appendix A a $3$-punctured symplectic sphere in $(T^{\ast}\TT^{2}, d\lambda)$, which is needed in the proof of Proposition \ref{p:simpli}.

\subsection{Acknowledgments}

I would like to thank Richard Hind for his many invaluable suggestions and patient guidance. I would also like to thank the participants in an AMS special session on quantitative symplectic topology, which took place in April of 2023 at the University of Cincinnati. Their interest in this project led to many helpful comments and suggestions for further research.

\section{Background from Symplectic Topology}

We assume known the basic concepts of symplectic topology and closed pseudoholomorphic curves, as found for example in \cite{MS1} and \cite{MS2}. However, in order to make the presentation more readable, we will recall some elementary concepts and fix some notation.

\subsection{Notation and Conventions}

Let $M$ be a smooth $4$-manifold without boundary and $L \sub M$ be an embedded submanifold of codimension $2$. We will always consider singular homology and cohomology with integer coefficients, and so do not specify coefficients in our notation. We also identify the de Rham and real singular cohomologies of $M$. If $L$ is compact and endowed with a fixed Riemannian metric, its cotangent bundle $T^{\ast}L$ also inherits a Riemannian metric, and for $\epsilon > 0$ we define
\begin{gather*}
D_{\epsilon}^{\ast}L = \{(q, p) \in T^{\ast}L : \Vert p \Vert \leq \epsilon\}, \qquad B_{\epsilon}^{\ast}L = \{(q, p) \in T^{\ast}L : \Vert p \Vert < \epsilon\} \\
S_{\epsilon}^{\ast}L = \{(q, p) \in T^{\ast}L : \Vert p \Vert = \epsilon\}.
\end{gather*}
In particular, we can consider the \emph{unit cotangent bundle} $S^{\ast}L = S^{\ast}_{1}L$.

When $M$ is compact, it is well-known that any class in $H_{2}(M)$ is realizable by a compact oriented submanifold. In view of this, we use the same symbol to denote a submanifold and the homology class it represents. If $M$ is also oriented, then the \emph{intersection number} of homology classes $A, B \in H_{2}(M)$ is defined to be
\[
A \cdot B = \langle \PD(A) \smile \PD(B), [M] \rangle,
\]
where $\PD(A) \in H^{2}(M)$ denotes the Poincar\'{e} dual of $A$. The number $A \cdot B$ is equal to the signed count of intersection points between any two closed, oriented, embedded submanifolds of $M$ that represent the classes $A$ and $B$ and intersect transversally. This last condition can always be achieved generically using standard arguments from differential topology. If $L$ is also compact and oriented, we can sometimes consider a \emph{relative intersection number} $A \cdot B$, where $A, B \in H_{2}(M, L)$. The convention we adopt is to always interpret this as a signed count of intersection points for transversally intersecting smooth representatives, and only when these intersection points occur away from the boundaries (if any) of both representatives. In this paper, all relative intersection numbers will be well defined.

When $M$ is symplectic and $L$ is a Lagrangian torus, the following local picture will be useful. We first fix a diffeomorphism $\psi \colon \TT^{2} \to L$, where we view $\TT^{2}$ as the quotient $\RR^{2}/\ZZ^{2}$ with coordinates $(q_{1}, q_{2})$ for $0 \leq q_{1}, q_{2} \leq 1$. We call $\psi$ a \emph{parameterization of $L$}, and use it to transport the flat metric to $L$. The Weinstein neighborhood theorem, see \cite{MS1}, allows us to extend this parametrization to a symplectic embedding, also denoted by $\psi$, of an open neighborhood of the zero section in $T^{\ast}\TT^{2}$ into $M$. Its image is an open neighborhood of $L$ denoted by $\mathcal{U}(L)$ and called a \emph{Weinstein neighborhood of $L$}. It has coordinates induced from the above symplectomorphism, which we denote by $(q_{1}, q_{2}, p_{1}, p_{2})$. Note that $\psi$ induces an identification of $H_{1}(L)$ with $\ZZ^{2}$, and we denote this copy of $\ZZ^{2}$ by $H_{1}(L; \psi)$. 

\subsection{Area and Monotonicity}

If $(M, \omega)$ is a symplectic $4$-manifold, there is an obvious \emph{area homomorphism} $\omega \colon H_{2}(M) \to \RR$, defined by
\[
\omega(A) = \int_{S}\omega,
\]
where $S \sub M$ is any smooth representative of $A$. We can restrict the domain of $\omega$ to the \emph{spherical part} of $H_{2}(M)$, which is the image of $\pi_{2}(M)$ in $H_{2}(M)$ under the Hurewicz homomorphism. We thus obtain a map
\[
\omega \colon \pi_{2}(M) \to \RR, \qquad [f] \mapsto \omega([f]) = \int_{\SS^{2}}f^{\ast}\omega.
\]
Since $TM$ has a well defined first Chern class $c_{1}(M) \in H^{2}(M)$, we also have a \emph{Chern homomorphism} $c_{1} \colon H_{2}(M) \to \ZZ$ given by $c_{1}(A) = \langle c_{1}(M), A \rangle$, which can again be restricted to a homomorphism on $\pi_{2}(M)$. We then call $(M, \omega)$ \emph{monotone} if there is a real constant $\kappa > 0$, called the \emph{monotonicity constant}, such that the Chern and area homomorphisms restricted to the spherical part of $H_{2}(M)$ satisfy $c_{1} = \kappa\mspace{1mu}\omega$. This is sometimes referred to as \emph{spherical monotonicity} in the literature, but for the manifolds we consider in this paper the distinction will be irrelevant.

If $L \sub M$ is a Lagrangian torus, then we also have an obvious \emph{relative area homomorphism} $\omega \colon H_{2}(M, L) \to \RR$ and a restricted homomorphism on $\pi_{2}(M, L)$. We also have a \emph{Maslov homomorphism} $\mu \colon \pi_{2}(M, L) \to \ZZ$, whose precise definition can be found for instance in \cite{Oh}. We then say that $L$ is \emph{monotone} if there is a real constant $\lambda > 0$, also called the \emph{monotonicity constant}, such that $\mu = \lambda\mspace{1mu}\omega$. In Remark 2.3 of \cite{Oh} it is shown that the presence of such a torus forces $(M, \omega)$ to also be monotone as a symplectic manifold, with monotonicity constant $\kappa = \lambda/2$. In particular, all monotone tori in $M$ have the same monotonicity constant $\lambda = 2\kappa$. We record the following easy lemma for use in some later arguments.

\begin{lem}\label{l:monotone}
Let $(M, \omega)$ be monotone with monotonicity constant $\lambda/2$ and suppose that there are smooth disks $v_{1}, v_{2} \colon (\DD^{2}, \SS^{1}) \to (M, L)$ that satisfy $\mu([v_{i}]) = \lambda\mspace{1mu}\omega([v_{i}])$ for $i = 1, 2$ and whose boundaries form an integral basis for $H_{1}(L)$. Then $L$ is monotone with monotonicity constant $\lambda$.
\end{lem}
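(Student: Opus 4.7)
The plan is to exploit the long exact sequence of the pair $(M,L)$ together with the fact that $L$ is a torus (hence $\pi_2(L) = 0$) to reduce an arbitrary relative class to a combination of the given $v_i$ and a spherical correction, on which we already know the relation $\mu = \lambda\,\omega$.

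First, I would take an arbitrary smooth disk $w \colon (\DD^2, \SS^1) \to (M, L)$ representing a class in $\pi_2(M, L)$, and consider its boundary $\partial w \in \pi_1(L)$. Since the boundaries $\partial v_1, \partial v_2$ form an integral basis of $H_1(L) = \pi_1(L) = \ZZ^2$, I can write $\partial w = a\,\partial v_1 + b\,\partial v_2$ for unique $a, b \in \ZZ$. The class $[w] - a[v_1] - b[v_2] \in \pi_2(M, L)$ then has trivial boundary in $\pi_1(L)$.

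Next, I would invoke the exactness of
\[
\pi_2(L) \to \pi_2(M) \to \pi_2(M, L) \xrightarrow{\partial} \pi_1(L).
\]
Since $L \cong \TT^2$ has $\pi_2(L) = 0$, the map $\pi_2(M) \to \pi_2(M, L)$ is injective, and the class $[w] - a[v_1] - b[v_2]$ lies in its image. Thus there exists $[S] \in \pi_2(M)$ with $[w] = a[v_1] + b[v_2] + [S]$ in $\pi_2(M, L)$. Both $\mu$ and $\omega$ are homomorphisms on $\pi_2(M, L)$, so
\[
\mu([w]) = a\,\mu([v_1]) + b\,\mu([v_2]) + \mu([S]), \qquad \omega([w]) = a\,\omega([v_1]) + b\,\omega([v_2]) + \omega([S]).
\]

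Finally, I would use the hypotheses to identify the right-hand sides. By assumption $\mu([v_i]) = \lambda\,\omega([v_i])$ for $i = 1, 2$. For the spherical class $[S]$, the Maslov index of a disk obtained from a sphere reduces to twice the Chern number, i.e.\ $\mu([S]) = 2c_1([S])$; combined with the spherical monotonicity $c_1 = (\lambda/2)\,\omega$ on $\pi_2(M)$, this gives $\mu([S]) = \lambda\,\omega([S])$. Substituting yields $\mu([w]) = \lambda\,\omega([w])$, and since $w$ was arbitrary, $L$ is monotone with constant $\lambda$. There is no serious obstacle here; the only point requiring care is keeping track of the factor of two between the Maslov and Chern contributions on the spherical summand, which is precisely why the hypothesis on $(M,\omega)$ is stated with constant $\lambda/2$ rather than $\lambda$.
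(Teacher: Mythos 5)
The paper states Lemma~\ref{l:monotone} without proof, calling it ``easy,'' so there is no in-paper argument to compare against. Your proof is correct and is the standard argument one would expect: split an arbitrary class in $\pi_2(M,L)$ into the part detected by the given disks and a spherical remainder using the long exact sequence of the pair (valid since $\pi_2(\TT^2)=0$ and $\pi_1(\TT^2)=H_1(\TT^2)$ is abelian), then apply $\mu=\lambda\omega$ on the disk part and $\mu=2c_1=\lambda\omega$ on the spherical part. One cosmetic remark: $\pi_2(M,L)$ need not be abelian, but since $\mu$ and $\omega$ are homomorphisms into abelian groups the computation factors through the abelianization and goes through unchanged; you might note this if you want to be fully careful, but the substance is right, including the bookkeeping of the factor of two between $\kappa=\lambda/2$ and $\lambda$.
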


\subsection{Pseudoholomorphic Spheres}

The basic reference for this material is \cite{MS2}. Let $(M, \omega)$ be a symplectic $4$-manifold endowed with a tame almost complex structure $J$ and let $u \colon \SS^{2} \to M$ be a closed $J$-holomorphic sphere. Then the class $[u] = u_{\ast}[\Sigma] \in H_{2}(M)$ is called the \emph{class of $u$} and satisfies $\omega([u]) \geq 0$ with equality if and only if $u$ is constant. Recall also that the \emph{index of $u$} is given by
\[
\ind(u) = -2 + 2c_{1}([u]).
\]
If $(M, \omega)$ is monotone, then $c_{1}([u]) > 0$ whenever $u$ is nonconstant, and consequently $\ind(u) \geq 0$. Frequently, we will want to choose $J$ to make a certain symplectically embedded sphere $S \sub M$ the image of a $J$-holomorphic sphere. In this case, by a slight abuse of language, we will say that $J$ is chosen to be \emph{integrable near $S$}.

We will make use of the usual machinery of pseudoholomorphic curves in dimension $4$, such as automatic regularity, the adjunction inequality, and positivity of intersections: any two closed, connected $J$-holomorphic curves with nonidentical images have finitely many intersection points, and these always contribute positively to the intersection number. In particular, we have $[u] \cdot [v] = 0$ if and only $u$ and $v$ have disjoint images, and $[u] \cdot [v] = 1$ if and only if $u$ and $v$ have a single transverse intersection. This is a local phenomenon, and we will be able to apply it equally well to intersections between punctured pseudoholomorphic curves in the following section.
 
\subsection{The Blowup Operation}

Our primary sources for this material are \cite{MS1} and \cite{We2}. The complex blowup at a point is a classical operation in complex and algebraic geometry, which can be generalized to the almost complex category as follows. Let $(M, J)$ be an almost complex $4$-manifold, fix a point $p \in M$, and assume that $J$ is integrable in an open neighborhood of $p$. Then the \emph{almost complex blowup of $M$ at $p$}, denoted by $M^{\prime}$, is the disjoint union of $M \sd \{p\}$ and the projectivization of $T_{p}M$, which is the set of all complex $1$-dimensional subspaces in $T_{p}M$. It can be given the structure of a smooth manifold as well as an almost complex structure $J^{\prime}$. Reversing this operation yields a \emph{blowdown map} $\beta \colon M^{\prime} \to M$, which is a pseudoholomorphic diffeomorphism from $M^{\prime} \sd \{\beta^{-1}(p)\}$ to $M \sd \{p\}$. Blowing up increases the rank of the second homology group by $1$. Furthermore, the preimage $E = \beta^{-1}(p)$, which is called the \emph{exceptional divisor}, is an embedded $J^{\prime}$-holomorphic sphere in $M^{\prime}$ with $E \cdot E = -1$.

If $u^{\prime}$ is any $J^{\prime}$-holomorphic sphere in $M^{\prime}$, then obviously $u = \beta \circ u^{\prime}$ is a $J$-holomorphic sphere in $M$. Conversely, a $J$-holomorphic sphere in $M$ will always lift to a $J^{\prime}$-holomorphic sphere in $M^{\prime}$ by removal of singularities. It is not too hard to see that if $u^{\prime}$ is immersed and transverse to $E$, then $u$ is also immersed. We then have
\[
c_{1}([u]) = c_{1}([u^{\prime}\mspace{1mu}]) + [u^{\prime}\mspace{1mu}] \cdot E.
\]
Additionally, the sphere $u$ is embedded and passes through $p$ if and only if $u^{\prime}$ is embedded with $[u^{\prime}\mspace{1mu}] \cdot E = 1$, in which case we have
\[
[u] \cdot [u] = [u^{\prime}\mspace{1mu}] \cdot [u^{\prime}\mspace{1mu}] + 1.
\]
These formulas also hold if $u$ and $u^{\prime}$ are only smooth maps, a fact which we shall occasionally use. Note however that a smooth map in $M$ \emph{need not} always lift to a smooth map in $M^{\prime}$.

There is a symplectic analogue of the almost complex blowup, where the role of a point is played by a symplectically embedded ball. The following proposition, which is essentially Proposition 9.3.3 from \cite{MS2}, describes it in terms of almost complex blowups. See also Theorem 3.13 in \cite{We2}. Recall that the \emph{open ball of capacity $\delta$} is
\[
B(\delta) = \{(z_{1}, z_{2}) \in \CC^{2} : \pi\vert z_{1} \vert^{2} + \pi\vert z_{2} \vert^{2} < \delta\}.
\]
Here is the result.

\begin{prp}\label{p:sympblowup}
Let $(M, \omega)$ be a symplectic $4$-manifold and $B(\delta + \epsilon) \sub M$ be a Darboux ball centered at $p \in M$ for $\delta, \epsilon > 0$. Let $J$ be a tame almost complex structure on $M$, which is integrable on $B(\delta + \epsilon)$ and $(M^{\prime}, J^{\prime})$ be the almost complex blowup of $M$ at $p$ with blowdown map $\beta \colon M^{\prime} \to M$. Then there exists a symplectic form $\omega^{\prime}$ on $M^{\prime}$ that tames $J^{\prime}$, agrees with $\beta^{\ast}\omega$ on $M^{\prime} \sd \beta^{-1}(B(\delta + \epsilon))$, and satisfies
\[
\omega^{\prime}(A) = \omega(\beta_{\ast}A) - \delta(A \cdot E),
\]
where $A \in H_{2}(M^{\prime})$. If $J$ is compatible with $\omega$, then $J^{\prime}$ is compatible with $\smash{\omega^{\prime}}$.
\end{prp}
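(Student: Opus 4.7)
The plan is to reduce to a local construction on the holomorphic blowup of $\CC^{2}$ at the origin and then glue. First I would use the Darboux chart to identify $B(\delta+\epsilon) \sub M$ with a neighbourhood of $0 \in \CC^{2}$ and, thanks to the integrability hypothesis on $J$, identify $J$ with $J_{\mathrm{std}}$ there. Then $(M', J')$ is locally modelled on the classical holomorphic blowup of $\CC^{2}$ at the origin (abusing notation, I also denote the local blowdown by $\beta$), with its exceptional $\CP^{1}$ identified with $E$. The problem then splits into three pieces: (a) produce a K\"ahler form $\widetilde{\omega}_{\delta}$ on $\beta^{-1}(B(\delta+\epsilon))$ that equals $\beta^{\ast}\omega_{\mathrm{std}}$ near the outer boundary and assigns area $\delta$ to $E$; (b) define $\omega'$ by gluing $\widetilde{\omega}_{\delta}$ inside the model to $\beta^{\ast}\omega$ outside; (c) verify the cohomological identity.

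For (a), I would look for a rotationally invariant K\"ahler form $\widetilde{\omega}_{\delta} = i\,\partial\bar\partial\,\varphi(\vert z \vert^{2})$ on the punctured ball that extends smoothly across $E$. Writing $\varphi(t) = \pi t + \delta\,\rho(t)$, smooth extension across $E$ with $E$ of symplectic area $\delta$ forces $\rho(t) \sim \log t$ as $t \to 0^{+}$ (the Fubini--Study model for $\mathcal{O}(-1)$), while agreement with $\omega_{\mathrm{std}}$ near $t = (\delta+\epsilon)/\pi$ forces $\rho'$ to vanish there. A standard interpolation produces a smooth, strictly plurisubharmonic profile $\rho$ with these asymptotics, and the resulting K\"ahler form automatically tames (in fact is compatible with) $J_{\mathrm{std}}$. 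The gluing in (b) is then immediate by construction, and the resulting $\omega'$ tames $J'$: inside the model by K\"ahlerness, outside because $\beta$ is $(J', J)$-holomorphic there, so $\beta^{\ast}\omega$ tames $J'$. If $J$ is $\omega$-compatible, then $\omega$ is K\"ahler on the Darboux chart and the same patching yields $J'$-compatibility of $\omega'$.

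For (c), use the cohomological decomposition $H^{2}(M'; \RR) = \beta^{\ast}H^{2}(M; \RR) \oplus \RR\cdot\PD(E)$ to write $[\omega'] = \beta^{\ast}[\omega] + c\,\PD(E)$. Pairing with $[E]$ and using $E \cdot E = -1$ together with $\widetilde{\omega}_{\delta}([E]) = \delta$ gives $c = -\delta$; evaluating on an arbitrary $A \in H_{2}(M')$ then yields the stated formula $\omega'(A) = \omega(\beta_{\ast}A) - \delta\,(A \cdot E)$. The main obstacle I expect is step (a): verifying that a single smooth profile $\rho$ can simultaneously realize the logarithmic asymptotic near $E$ and the vanishing-derivative asymptotic near the outer boundary while keeping $\widetilde{\omega}_{\delta}$ nondegenerate throughout. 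This is a standard but slightly delicate ODE construction, and it is the one place where the positivity of $\epsilon$, i.e.\ the fact that the Darboux ball is strictly larger than $B(\delta)$, plays an essential role.
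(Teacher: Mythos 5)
The paper does not prove this proposition; it cites McDuff--Salamon (Prop.\ 9.3.3) and Wendl (Thm.\ 3.13), so I compare against the standard argument. Your outline is correct for \emph{compatible} $J$, but the proposition is stated for merely \emph{tame} $J$, and there the opening reduction fails: you cannot in general find a chart on $B(\delta+\epsilon)$ in which simultaneously $\omega = \omega_{\mathrm{std}}$ and $J = J_{\mathrm{std}}$. Having both would force $\omega(J\cdot, J\cdot) = \omega$, i.e.\ compatibility. A concrete obstruction: $\omega = \omega_{\mathrm{std}} + \lambda\,\Re(dz_1 \wedge dz_2)$ on $\CC^2$ for small $\lambda > 0$ is closed and $J_{\mathrm{std}}$-tame (the correction is of type $(2,0)+(0,2)$, hence contributes nothing to $\omega(v, J_{\mathrm{std}}v)$), but it is not $J_{\mathrm{std}}$-compatible, so no chart normalizes both. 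That the proposition singles out the compatible case only in its last sentence confirms that the default hypothesis is tameness.

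Once the double normalization is dropped, step (a) no longer closes: if you normalize $J$, then $\omega$ is not $\omega_{\mathrm{std}}$ in those coordinates and your K\"ahler potential has nothing to match at the outer boundary; if you normalize $\omega$, then $i\partial\bar\partial\varphi$ is not a $(1,1)$-form for $J$, so its positivity on $J'$-complex lines (which is what you need for taming, since $J'$ is built from $J$, not $J_{\mathrm{std}}$) is not automatic. The standard route works in $J$-holomorphic coordinates and sets $\omega' = \beta^*\omega + \delta\eta$, where $\eta$ is a closed form compactly supported near $E$, obtained by cutting off a primitive of the local K\"ahler correction. The genuine delicacy is then the taming estimate in the transition annulus, where the cut-off derivative contributes a non-$(1,1)$ error term that must be absorbed by the strict positivity of $\beta^*\omega$ on $J'$-lines away from $E$; this is where the slack $\epsilon > 0$ does its work. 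The radial ODE for $\rho$, which you flag as the delicate step, is by comparison routine. Your cohomological computation in step (c) is correct and carries over unchanged.
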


The symplectic manifold $(M^{\prime}, \omega^{\prime})$ in the previous proposition will be called the \emph{symplectic blowup of $M$ along a Darboux ball of capacity $\delta$}. Up to symplectomorphism, it turns out to be independent of all choices except for the parameter $\delta$ and the symplectic isotopy class of the embedding of $B(\delta)$. However, it follows from the results of \cite{Mc1} that for the manifolds we consider in this paper, there is no dependency on the symplectic isotopy class.

If $u$ is any $J$-holomorphic curve in $M$ that lifts to a $J^{\prime}$-holomorphic curve $u^{\prime}$ in $M^{\prime}$, the formula in Proposition \ref{p:sympblowup} reads
\[
\omega^{\prime}([u^{\prime}\mspace{1mu}]) = \omega([u]) - \delta([u^{\prime}\mspace{1mu}] \cdot E).
\]
We note that the formula in Proposition \ref{p:sympblowup} will also hold if $L \sub M$ is a Lagrangian torus and $A \in H_{2}(M^{\prime}, L)$, so long as $E$ is disjoint from $L$. We will use this fact numerous times in later sections.

We now give a necessary and sufficient condition for the symplectic blowup of a monotone symplectic manifold to be monotone.

\begin{prp}\label{p:monoblowup}
Let $(M, \omega)$ be a monotone symplectic $4$-manifold with monotonicity constant $\kappa > 0$ and $(M^{\prime}, \omega^{\prime})$ be the symplectic blowup of $M$ along a Darboux ball of capacity $\delta > 0$. Then $(M^{\prime}, \omega^{\prime})$ is monotone if and only if $\delta = 1/\kappa$, in which case the monotonicity constant of $(M^{\prime}, \omega^{\prime})$ is $\kappa$.
\end{prp}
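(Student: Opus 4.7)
The plan is to combine the area formula of Proposition \ref{p:sympblowup} with an analogous blowup formula for the first Chern class, and then to test the candidate monotonicity relation on two well-chosen spherical classes of $M'$. The Chern formula in question extends the identity $c_{1}([u]) = c_{1}([u^{\prime}]) + [u^{\prime}] \cdot E$ recorded just before Proposition \ref{p:sympblowup}: since any class in $H_{2}(M')$ decomposes as $\tilde{A} + nE$, where $\tilde{A}$ is the lift of a cycle in $M$ chosen to miss the blowup center, linearity of $c_{1}$ and of the intersection pairing yields
\[
c_{1}^{M'}(A) = c_{1}^{M}(\beta_{*} A) - A \cdot E \qquad \text{for all } A \in H_{2}(M').
\]
Coupled with $\omega'(A) = \omega(\beta_{*} A) - \delta (A \cdot E)$, these are the only formulas needed.

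For the forward direction, suppose $(M', \omega')$ is monotone with constant $\kappa' > 0$. The exceptional divisor $E$ is a spherical class with $E \cdot E = -1$, hence $\omega'(E) = \delta$ and $c_{1}^{M'}(E) = 1$; the relation $c_{1}^{M'}(E) = \kappa' \omega'(E)$ immediately forces $\kappa' = 1/\delta$. Next, choose any spherical class $A \in H_{2}(M)$ with $\omega(A) \neq 0$, and pick a representative disjoint from the blowup ball. Its lift $\tilde{A} \in H_{2}(M')$ satisfies $\beta_{*} \tilde{A} = A$ and $\tilde{A} \cdot E = 0$, whence $\omega'(\tilde{A}) = \omega(A)$ and $c_{1}^{M'}(\tilde{A}) = c_{1}^{M}(A) = \kappa \omega(A)$. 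The monotonicity relation then reads $\kappa \omega(A) = \kappa' \omega(A)$, and dividing by $\omega(A) \neq 0$ gives $\kappa = \kappa' = 1/\delta$, i.e.\ $\delta = 1/\kappa$.

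The converse is a direct substitution. Assume $\delta = 1/\kappa$ and set $\kappa' = \kappa$. For any spherical $A \in H_{2}(M')$, the pushforward $\beta_{*} A$ is spherical in $M$, so monotonicity of $M$ provides $c_{1}^{M}(\beta_{*} A) = \kappa \omega(\beta_{*} A)$. Then
\[
\kappa' \omega'(A) = \kappa \bigl(\omega(\beta_{*} A) - \delta (A \cdot E)\bigr) = \kappa \omega(\beta_{*} A) - A \cdot E = c_{1}^{M}(\beta_{*} A) - A \cdot E = c_{1}^{M'}(A),
\]
so $(M', \omega')$ is monotone with constant $\kappa$.

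The main subtlety is the promotion of the Chern class blowup formula from $J$-holomorphic spheres, as stated in the paper, to arbitrary classes in $H_{2}(M')$; the remainder of the argument is algebra. The forward direction also tacitly assumes $M$ admits some nonzero spherical class with positive area, which is harmless in the settings the paper targets.
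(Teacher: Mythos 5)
Your proof is correct and follows essentially the same route as the paper, resting on the blowup formulas for $\omega'$ and $c_{1}$ and testing the candidate monotonicity relation against spherical classes. If anything, your forward direction is a bit more careful than the paper's one-line ``set $S'=E$'': you first pin down the monotonicity constant of $M'$ as $1/\delta$ via $E$, and then force it to equal $\kappa$ by testing a lifted class disjoint from $E$, a step the paper's identity $c_{1}(S') = \kappa\,\omega'(S') + (\kappa\delta-1)(S'\cdot E)$ leaves implicit since substituting $S'=E$ alone is tautological unless one already knows the new constant is $\kappa$.
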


\begin{proof}
Let $S^{\prime}$ be an embedded representative of an element of $\pi_{2}(M^{\prime})$, and $S$ be its image under the blowdown map $\beta$. Then $S$ represents an element of $\pi_{2}(M)$, and so $c_{1}(S) = \kappa\mspace{1mu}\omega(S)$ by monotonicity. Then
\begin{align*}
c_{1}(S^{\prime}) &= c_{1}(S) - S^{\prime} \cdot E \\
&= \kappa\mspace{1mu}\omega(S) - S^{\prime} \cdot E \\
&= \kappa(\omega^{\prime}(S^{\prime}) + \delta(S^{\prime} \cdot E)) - S^{\prime} \cdot E \\
&= \kappa\mspace{1mu}\omega^{\prime}(S^{\prime}) + (\kappa \delta - 1)(S^{\prime} \cdot E).
\end{align*}
At this point, it is clear that $\delta = 1/\kappa$ is a sufficient condition for $(M^{\prime}, \omega^{\prime})$ to be monotone. To see that this condition is also necessary, set $S^{\prime} = E$.
\end{proof}

We will also need the \emph{symplectic blowdown} operation. It is defined in a completely analogous fashion to the symplectic blowup, by characterizing it in terms of a certain \emph{almost complex blowdown} operation. All we will need to know for our purposes is that this can be performed on any symplectic $4$-manifold $(M^{\prime}, \omega^{\prime})$ in the presence of a symplectically embedded sphere $E$ with $E \cdot E = -1$. We can arrange for any fixed collection of symplectic submanifolds that are disjoint from $E$ to remain symplectic in the resulting blowdown $(M, \omega)$. There is also a blowdown map $\beta \colon M^{\prime} \to M$, which is a diffeomorphism of $M^{\prime} \sd E$ onto $M \sd \{\beta(E)\}$, as well as an obvious analogue of the formula in Proposition \ref{p:sympblowup}, which we shall use frequently. For more details on these matters, we refer the reader to Section 3.2 and in particular Theorem 3.14 of \cite{We2}.

\subsection{Inflation}

We will need to make use of the \emph{inflation} procedure in some later arguments. The following proposition, which is Theorem 4.9 in \cite{ALL}, contains all the facts that will be relevant to us.

\begin{prp}\label{p:inflate}
Let $(M, \omega)$ be a symplectic $4$-manifold together with a compatible almost complex structure $J$, and let $S \sub M$ be an embedded $J$-holomorphic sphere with $S \cdot S \geq -1$. Then there is a family $\{\omega_{t}\}_{t \geq 0}$ of symplectic forms on $M$, with $t < \omega(S)$ when $S \cdot S = -1$, which all tame $J$ and satisfy $[\omega_{t}] = [\omega] +t\mathrm{PD}(S)$ in $H^{2}_{\dr}(M)$.
\end{prp}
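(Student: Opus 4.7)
The plan is to construct a closed 2-form $\tau$ on $M$ representing $\mathrm{PD}(S)$, compactly supported in a tubular neighborhood of $S$, which is ``nearly $J$-tame'' in a precise sense. Then I will set $\omega_t = \omega + t\tau$ and verify that this family has the required properties, with the restriction $t < \omega(S)$ arising precisely when $S\cdot S = -1$.

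First, since $S$ is a $J$-holomorphic embedded sphere, standard $J$-holomorphic tubular neighborhood arguments (see, e.g., Chapter 2 of \cite{MS2}) identify a neighborhood of $S$ with a neighborhood of the zero section in the normal bundle $\pi : N \to S$, in such a way that $J$ on $N$ is the standard integrable complex structure near the zero section and $\omega$ agrees with a standard fiberwise-and-base symplectic form. The line bundle $N$ has degree $n := S \cdot S \in \{-1, 0, 1, 2, \dots\}$. Equip $N$ with a Hermitian metric and let $\alpha$ be a connection 1-form on the unit circle bundle with curvature $\Omega$ on $S$, so that $d\alpha = \pi^*\Omega$ and $\int_S \Omega = n$. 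Let $r$ denote the radial fiber coordinate.

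Next I would define $\tau$ by a formula of the shape
\[
\tau \;=\; -\tfrac{1}{2\pi}\, d\!\bigl(\rho(r^{2})\,\alpha\bigr)
\;=\; -\tfrac{1}{2\pi}\Bigl(\rho'(r^{2})\, 2r\, dr \wedge \alpha \;+\; \rho(r^{2})\, \pi^{*}\Omega\Bigr),
\]
where $\rho : [0,\infty) \to [0,1]$ is smooth with $\rho(0) = 1$, $\rho \equiv 0$ for $r \geq 1$, and $\rho' \leq 0$. A direct fiber integration shows this is a Thom form for $N$, hence, when extended by $0$ outside the neighborhood, represents $\mathrm{PD}(S)$. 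The radial part $-\tfrac{1}{2\pi}\rho'(r^{2}) \cdot 2r \, dr \wedge \alpha$ is a nonnegative multiple of the $J$-positive fiber form $dr \wedge d\theta$, so it tames $J$ on complex lines transverse to $S$. The base part $-\tfrac{1}{2\pi}\rho(r^{2})\,\pi^{*}\Omega$ is a multiple of the curvature $\Omega$: when $n \geq 0$ we may choose $\Omega$ pointwise nonpositive (or nonnegative, depending on sign conventions) so that this term also tames $J$ on complex lines tangent to $S$, and after adjusting global signs one obtains that $\tau$ itself tames $J$ everywhere. In this case $\omega_t = \omega + t\tau$ tames $J$ for every $t \geq 0$.

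The delicate case is $S \cdot S = -1$. Here the base curvature inevitably contributes the ``wrong'' sign on the zero section, and $\tau$ evaluated on the $J$-invariant plane $T_{p}S$ for $p \in S$ is strictly negative, of order $1/\omega(S)$ after normalization (since $\int_{S}\tau = 1$). However, on $S$ itself $\omega$ is strictly positive with total mass $\omega(S)$, and away from a small neighborhood of $S$ the form $\tau$ vanishes. Therefore $\omega + t\tau$ still tames $J$ on $T_{p}S$ exactly as long as $t < \omega(S)$, and on all other $J$-complex lines the taming is preserved for all $t \geq 0$; the main step is a pointwise linear-algebra estimate on 2-planes, using that $\omega$ tames $J$ and that $\tau$'s negative contribution is concentrated where the Thom-class mass sits. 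Closedness of $\omega_t$ is immediate from $d\omega = d\tau = 0$, and the cohomology class is $[\omega] + t\,\mathrm{PD}(S)$ by construction.

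The main obstacle is the positivity bookkeeping in the $S \cdot S = -1$ case: one must verify carefully that the two contributions to $\tau$ (radial and base) combine with $\omega$ so that $\omega_{t}$ remains nondegenerate and tames $J$ on \emph{every} 2-plane, uniformly in $t \in [0, \omega(S))$. All other steps (the Thom-form computation, closedness, cohomology class identification) are routine once the neighborhood model and cut-off $\rho$ are fixed. I would not reprove this from scratch in the paper but instead cite \cite{ALL} directly, perhaps remarking on how the bound $t < \omega(S)$ emerges.
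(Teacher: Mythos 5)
The paper does not prove this proposition at all: it is stated as Theorem~4.9 of~\cite{ALL} and cited directly, with Remark~\ref{r:inflation} explaining that the statement amalgamates McDuff's positive inflation~\cite{Mc2} and Bus\'e's negative inflation~\cite{Bu}, weakened to a taming conclusion so as to circumvent the unresolved issues with inflation for merely tame $J$. You reach the same final conclusion (``cite \cite{ALL}''), so in that sense your approach coincides with the paper's, and your sketch is a reasonable outline of the underlying Thom--form construction.

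Two points in the sketch deserve correction or tightening. First, a sign error: when $S \cdot S = -1$ you assert ``since $\int_{S}\tau = 1$''; since $\tau$ represents $\PD(S)$, one has $\int_{S}\tau = S \cdot S = -1$. Second, the derivation of the sharp bound $t < \omega(S)$ is not purely an integral statement. Taming is pointwise, so comparing $\int_{S}\tau = -1$ with $\int_{S}\omega = \omega(S)$ does not by itself yield a pointwise bound on $T_{p}S$. To get $t < \omega(S)$ one must \emph{choose} the curvature form $\Omega$ of the Hermitian connection to be a constant negative multiple of $\omega|_{S}$, so that $\tau|_{S} = -\tfrac{1}{\omega(S)}\,\omega|_{S}$ holds pointwise; only then does $\omega_{t}|_{T_{p}S} = \bigl(1 - t/\omega(S)\bigr)\,\omega|_{T_{p}S}$ remain positive exactly for $t < \omega(S)$. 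You gesture at this with ``after normalization,'' but the choice of $\Omega$ proportional to $\omega|_{S}$ is the actual content, and it is also what makes the subsequent linear-algebra estimate on mixed $2$-planes go through cleanly (the fiber and base contributions to $\tau(v, Jv)$ decouple in the connection splitting, with the fiber part nonnegative and the base part controlled by the above identity). With these two repairs your outline is a faithful account of the argument one would find behind the citation.
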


We call an \emph{inflation of capacity $t$ along $S$} the process of replacing $(M, \omega)$ with $(M, \omega_{t})$ in the previous proposition. For any class $A \in H_{2}(M)$, we then have
\[
\omega_{t}(A) = \omega(A) + t(S \cdot A).
\]
In other words, inflation modifies the symplectic area of objects according to their intersections with $A$. Furthermore, if $L \sub M$ is a Lagrangian torus that is disjoint from $S$, then the local nature of the inflation procedure guarantees that the above formula continues to hold when $A \in H_{2}(M, L)$.

\begin{rmk}\label{r:inflation}
The statement we have given in Proposition \ref{p:inflate} is an amalgam of \emph{positive inflation}, as found in \cite{Mc2}, and \emph{negative inflation}, as found in \cite{Bu}. However, our statement assumes a compatible almost complex structure $J$, and produces a family of symplectic forms that only \emph{tame} $J$. This is weaker than the results found in the above two references, but as pointed out in \cite{ALL}, there are certain issues with the inflation procedure when $J$ is only assumed to be tame that are as of yet not fully resolved. The version we are using avoids these issues and is sufficient for our purposes.
\end{rmk}

\section{Punctured Pseudoholomorphic Spheres}

In this section, we recall some basic definitions and facts concerning punctured pseudoholomorphic spheres and buildings. We then discuss the so-called \emph{splitting procedure}, used to produce buildings as limits of closed pseudoholomorphic curves in an appropriate geometric setting. We also record some standard and useful index formulas. The primary references for much of this material are \cite{BEH} and \cite{CM1}. Some additional references that are closer to the spirit of this paper are \cite{CM2}, \cite{DRGI}, \cite{HL}, and \cite{HO}, the last three of which we draw on heavily.

\subsection{Cylindrical Ends}

Let $(V, \alpha)$ be a contact manifold with Reeb vector field $R$, and consider the symplectic manifold
\[
(\RR \times V, d(e^{t}\alpha)).
\]
This is called the \emph{symplectization of $(V, \alpha)$}. A compatible almost complex structure $J$ is then called \emph{cylindrical} if it is preserved by the flow of $\partial_{t}$, if $J(\partial_{t}) = R$, and if $J(\ker\alpha) = \ker\alpha$. All of these constructions carry over to when $\RR$ is replaced by an arbitrary interval.

Now consider a \emph{noncompact} symplectic manifold $(M, \omega)$ containing a compact regular domain $W$ with contact-type boundary given by a disjoint union
\[
(V_{+}, \alpha_{+}) \sqcup (V_{-}, \alpha_{-}).
\]
We require that at least one of $V_{+}$ or $V_{-}$ be nonempty, and allow one or both of them  to be disconnected. We then say that $(M, \omega)$ has \emph{cylindrical ends} if the complement $(M \sd \Int(W), \omega)$ is symplectomorphic to the disjoint union of the symplectizations
\[
E_{+} = ([0, +\infty) \times V_{+}, d(e^{t}\alpha_{+})), \qquad E_{-} = ((-\infty, 0] \times V_{-}, d(e^{t}\alpha_{-})).
\]
The connected components of $E_{+}$ are called the \emph{positive ends of $(M, \omega)$ over $(V_{+}, \alpha_{+})$}, and those of $E_{-}$ are called the \emph{negative ends of $(M, \omega)$ over $(V_{-}, \alpha_{-})$}. (In the literature, these are more commonly called \emph{concave} and \emph{convex} ends, respectively.) There is a \emph{collapse map} $M \to W$ defined by simply collapsing the cylindrical ends of $M$ to the boundary of $W$. The symplectization $(\RR \times V, d(e^{t}\alpha))$ of any contact manifold $(V, \alpha)$ itself has exactly one positive and one negative cylindrical end, both over $(V, \alpha)$. Here one can take $W = [-\epsilon, \epsilon] \times V$ for any $\epsilon > 0$.

Call an almost complex structure $J$ \emph{adapted to $(M, \omega)$} if its restriction to $W$ is tame and its restriction to each component of $E_{+} \cup E_{-}$ is cylindrical. It can be shown using standard arguments that these form a nonempty and contractible set. We can require $J$ to match any given tame almost complex structure on a proper compact subset of $W$, or some given cylindrical almost complex structure far enough along any component of $E_{+} \cup E_{-}$.

\subsection{Punctured Pseudoholomorphic Spheres}

Throughout, let $(M, \omega)$ be a symplectic manifold with cylindrical ends over $(V_{\pm}, \alpha_{\pm})$ and $J$ be an almost complex structure adapted to $(M, \omega)$. An \emph{$n$-punctured $J$-holomorphic sphere in $(M, \omega)$} consists of the following data:
\begin{enumerate}
\item A proper $J$-holomorphic map $u \colon \SS^{2} \sd Z \to M$, where $\SS^{2}$ has a given complex structure and $Z = \{z_{1}, \dots, z_{n}\}$ is a finite set of \emph{punctures}. It is partitioned into $Z_{+} \sqcup Z_{-}$, where the punctures in $Z_{+}$ are called \emph{positive} and those in $Z_{-}$ are called \emph{negative}.
\item A (possibly multiply covered) periodic Reeb orbit $\gamma_{i}$ in $(V_{\pm}, \alpha_{\pm})$ assigned to each puncture $z_{i} \in Z_{\pm}$, parameterized by the circle $\SS^{1} = \RR/\ZZ$ as $t \mapsto \gamma_{i}(P_{i}t)$, where $P_{i}$ is the period of $\gamma_{i}$ obtained by integrating $\alpha$ over it.
\item The map $u$ is required to be \emph{asymptotic to $\gamma_{i}$ at $z_{i}$}, meaning that the restriction of $u$ to a punctured disk centered at $z_{i} \in Z_{\pm}$ has a \emph{cylindrical representation} $(a_{i}, \tilde{u}_{i}) \colon \DD^{2} \sd \{0\} \to \RR \times V_{\pm}$ satisfying
\[
\lim_{r \to 0}a_{i}(re^{2\pi i t}) = \pm \infty, \qquad \lim_{r \to 0}\tilde{u}_{i}(re^{2\pi i t}) = \gamma_{i}(\pm P_{i}t).
\]
These limits are required to be uniform with respect to $t$.
\end{enumerate}
More generally, a $J$-holomorphic map $u$ satisfying (1) and (2) also satisfies (3) if and only if it has \emph{finite energy}, see \cite{BEH}. This will be the case for all punctured spheres considered in this paper. A $1$-punctured $J$-holomorphic sphere is called a \emph{plane}, and a $2$-punctured $J$-holomorphic sphere is called a \emph{cylinder}. Furthermore, the above definition pertains specifically to a \emph{parameterized sphere}. Under a suitable definition of equivalence, we can (and will) think of the image of $u$ above as an \emph{unparameterized sphere}. See \cite{BEH} for a precise definition of this equivalence.

The domain $\SS^{2} \sd Z$ of $u$ can be compactified to a topological surface $\Sigma$ with nonempty boundary by simply replacing each puncture in $Z$ with a circle. The asymptotic properties of $u$ imply that it extends continuously over $\partial\Sigma$ by mapping a boundary circle to the corresponding asymptotic Reeb orbit. Recalling the regular domain $W \sub M$, we can compose the extension of $u$ with the collapse map $M \to W$, obtaining a continuous map $\overline{u} \colon (\Sigma, \partial\Sigma) \to (M, V_{+} \cup V_{-})$, which is called the \emph{compactification of $u$}. Its image defines a relative class $[\overline{u}] \in H_{2}(M, V_{+} \cup V_{-})$, called the \emph{class of $u$}.

Assume now that all periodic Reeb orbits of $\alpha_{\pm}$ come in \emph{Morse-Bott} families, see \cite{Bo}, and denote by $\smash{\Gamma_{i}^{\pm}}$ the connected Morse-Bott manifold containing $\smash{\gamma_{i}^{\pm}}$, where the sign indicates whether the orbit corresponds to a positive or negative puncture. Any symplectic trivialization of the contact structures on $\smash{(V_{\pm}, \alpha_{\pm})}$ over these orbits extends to a symplectic trivialization of $TM\vert_{\smash{\gamma_{i}^{\pm}}} \cong T(\RR \times V_{\pm})\vert_{\smash{\gamma_{i}^{\pm}}}$, since the remaining directions are spanned by the Reeb and global coordinate vector fields. The bundle $u^{\ast}TM$ will then inherit a symplectic trivialization over $\partial\Sigma$, denoted by $\tau$, and we can define the following two quantities:
\begin{enumerate}
\item[$\bullet$] Each orbit $\smash{\gamma_{i}^{\pm} \in  \Gamma_{i}^{\pm}}$ gives rise to an \emph{asymptotic operator} defined on an appropriate space of sections, see \cite{We1}, and relative to $\tau$, it corresponds to a path in $\SP(2)$. This in turn has a \emph{Robbin-Salamon index} $\smash{\mu_{\mathrm{RS}}(\gamma_{i}^{\pm}; \tau)}$ in the sense of \cite{RS}. It is a half-integer that turns out to depend only on $\tau$ and $\smash{\Gamma_{i}^{\pm}}$.
\item[$\bullet$] The bundle $u^{\ast}TM$ has a \emph{relative first Chern class} $c_{1}(u; \tau)$. It can be defined as an algebraic count of the zeros of a generic section $\sigma$ of the line bundle $\Lambda^{2}_{\CC}(u^{\ast}TM)$, where we require $\sigma$ to be nonvanishing and constant with respect to $\tau$ near all cylindrical ends of $(M, \omega)$.
\end{enumerate}
Setting $s_{\pm} = \vert Z_{\pm} \vert$ and following \cite{Bo}, the \emph{index of $u$} (as an \emph{unparameterized} curve with \emph{unconstrained} ends) is then given by
\begin{align*}
\ind(u) = -2 + s_{+} + s_{-} + c_{1}(u; \tau) &+ \sum_{i=1}^{s_{+}}\Big(\mu_{\mathrm{RS}}(\gamma_{i}^{+}; \tau) + \frac{1}{2}\dim\Gamma_{i}^{+}\Big) \\
&- \sum_{i=1}^{s_{-}}\Big(\mu_{\mathrm{RS}}(\gamma_{i}^{-}; \tau) - \frac{1}{2}\dim\Gamma_{i}^{-}\Big).
\end{align*}
It follows immediately from this formula that the total contribution of the last three terms must be independent of $\tau$.

We need the following generalization of punctured spheres. A \emph{punctured nodal $J$-holomorphic sphere in $(M, \omega)$} consists of the following data:
\begin{enumerate}
\item An unordered collection $\mathbf{u} = \{u_{1}, \dots, u_{k}\}$ of punctured $J$-holomor\-phic spheres in $(M, \omega)$. We write $S_{i} = \SS^{2} \sd Z_{i}$ for the punctured domain of the map $u_{i}$.
\item A finite subset $\Delta \sub S_{1} \cup \cdots \cup S_{k}$ with a fixed-point free involution. In particular, the set $\Delta$ consists of an even number of points, called \emph{nodes}.
\item If $p \in S_{i}$ and $q \in S_{j}$ are nodes that are matched under the above involution, then $u_{i}(p) = u_{j}(q)$. It is also required that the surface obtained by taking a connected sum of the $S_{i}$ along all matching nodes be homeomorphic to a punctured sphere.
\end{enumerate}
When $\Delta = \es$, then $\mathbf{u}$ is a punctured $J$-holomorphic sphere as before. In this case, we will say $\mathbf{u}$ is \emph{smooth} to emphasize that there are no nodes. As for smooth punctured spheres, we identify $\mathbf{u}$ with the union of the images of the $u_{i}$, thinking of $\mathbf{u}$ as an \emph{unparameterized nodal sphere}. This can be made precise using a suitable notion of equivalence, see \cite{BEH}. We can also form the compactification of each of the $u_{i}$ as before and take their union, which we also call the \emph{compactification of $\mathbf{u}$}. It gives rise to a homology class $A \in H_{2}(M, V_{+} \cup V_{-})$, which is the sum of the classes of all the $u_{i}$. Finally, we define the \emph{index of $\mathbf{u}$} to be
\[
\ind(\mathbf{u}) = \sum_{i=1}^{k}\ind(u_{i}) + 2(k-1).
\]
This is compatible with the definition of the index of a nodal sphere in the closed case, as is easily checked.

\subsection{The Cotangent Bundle}

The global coordinates $(q_{1}, q_{2})$ on $\smash{\TT^{2}}$ induce coordinates $(q_{1}, q_{2}, p_{1}, p_{2})$ on the cotangent bundle $\smash{T^{\ast}\TT^{2} = \TT^{2} \times \RR^{2}}$. This has a canonical symplectic structure induced by the $2$-form $d\lambda$, where
\[
\lambda = p_{1}d{q_{1}} + p_{2}d{q_{2}}
\]
is the \emph{Liouville $1$-form}. Under the natural identification of $S^{\ast}\TT^{2}$ with the $3$-torus $\TT^{3} = \RR^{3}/\ZZ^{3}$, the restriction $\alpha = \lambda\vert_{S^{\ast}\TT^{2}}$ then takes the form
\[
\alpha = \cos(2\pi\theta)d{q_{1}} + \sin(2\pi\theta)d{q_{2}}.
\]
This is a contact form on $S^{\ast}\TT^{2}$, which becomes a contact-type hypersurface in $(T^{\ast}\TT^{2}, d\lambda)$. Its Reeb vector field is given by
\[
R = \cos(2\pi\theta)\partial_{q_{1}} + \sin(2\pi\theta)\partial_{q_{2}}.
\]
Under the identification of $T^{\ast}\TT^{2}$ with $T\TT^{2}$ induced by the flat metric, the flow of $R$ corresponds to the geodesic flow, and the projection $\smash{S^{\ast}\TT^{2} \to \TT^{2}}$ then induces a correspondence between periodic Reeb orbits of fixed multiplicity on $S^{\ast}\TT^{2}$ and closed oriented geodesics on $\TT^{2}$, which in turn correspond to $\theta \in \SS^{1}$ such that $\tan(2\pi\theta) \in \QQ \cup \{\infty\}$. Thus to each nonzero class $\eta \in H_{1}(\TT^{2})$, there corresponds a $1$-dimensional family of periodic Reeb orbits of fixed multiplicity, with families of simple Reeb orbits corresponding to primitive homology classes. We denote by $\Gamma_{\eta}$ the manifold of Reeb orbits corresponding to the class $\eta$. This is a Morse-Bott manifold in the sense of \cite{Bo}.

Consider the symplectomorphism $(\RR \times S^{\ast}\TT^{2}, d(e^{t}\alpha)) \to (T^{\ast}\TT^{2} \sd \TT^{2}, d\lambda)$ given by
\[
(t, q_{1}, q_{2}, p_{1}, p_{2}) \mapsto (q_{1}, q_{2}, e^{t}p_{1}, e^{t}p_{2}).
\]
Restricting it to $[0, +\infty) \times S^{\ast}\TT^{2}$ shows that $(T^{\ast}\TT^{2}, d\lambda)$ has a single positive cylindrical end over $S^{\ast}\TT^{2}$, and restricting it to $(-\infty, 0] \times S^{\ast}\TT^{2}$ shows that $(D_{1}^{\ast}\TT^{2} \sd \TT^{2}, d\lambda)$ has a single negative end over $S^{\ast}\TT^{2}$. We then define an almost complex structure $J_{\mathrm{std}}$ on $T^{\ast}\TT^{2}$ by
\[
J_{\mathrm{std}}(\partial_{q_{i}}) = -\rho(\Vert (p_{1}, p_{2}) \Vert)\partial_{p_{i}}, \qquad i = 1, 2,
\]
where $\rho \colon [0, +\infty) \to (0, +\infty)$ is a smooth nondecreasing function that satisfies $\rho(t) = 1$ on $[0, 1]$ and $\rho(t) = t$ on $[2, +\infty)$. We also define an almost complex structure $J_{\mathrm{cyl}}$ on $\RR \times S^{\ast}\TT^{2}$ by
\[
J_{\mathrm{cyl}}(\partial_{q_{i}}) = -\Vert (p_{1}, p_{2}) \Vert\partial_{p_{i}}, \qquad i = 1, 2,
\]
using the identification coming from the above symplectomorphism. It is shown in Lemma 4.1 of \cite{DRGI} that $J_{\mathrm{cyl}}$ is a cylindrical almost complex structure on $(\RR \times S^{\ast}\TT^{2}, d(e^{t}\alpha))$ and that $J_{\mathrm{std}}$ is adapted to $(T^{\ast}\TT^{2}, d\lambda)$.

\subsection{Pseudoholomorphic Buildings}

We now narrow our focus to the setting in which buildings will arise in this paper. Let $(M, \omega)$ be a closed symplectic $4$-manifold and $L = L_{1} \cup \cdots \cup L_{k}$ be a union of pairwise disjoint Lagrangian tori in $M$. We will work inside the so-called \emph{split symplectic manifold}
\[
(M \sd L, \omega) \sqcup (T^{\ast}L, d\lambda),
\]
where $\lambda$ is the Liouville $1$-form on $T^{\ast}L$.

For $i = 1, \dots, k$, fix a parametrization $\psi_{i} \colon \TT^{2} \to L_{i}$. We can assume the induced Weinstein neighborhoods $\mathcal{U}(L_{i})$ are pairwise disjoint, and by rescaling the flat metric on $\TT^{2}$, we can also assume that $\mathcal{U}(L_{i})$ contains $\psi_{i}(D^{\ast}_{4}\TT^{2})$. We declare $\psi_{i}$ to be an isometry, so that $\psi_{i}(D^{\ast}_{4}\TT^{2}) = D^{\ast}_{4}L_{i}$. The entire discussion from the previous subsection can then be carried over to the current setting. In particular, since
\[
(D_{1}^{\ast}L_{i} \sd L_{i}, \omega) \cong (D_{1}^{\ast}\TT^{2} \sd \TT^{2}, d\lambda),
\]
we see that $(M \sd L, \omega)$ has a negative cylindrical end and that $(T^{\ast}L, d\lambda)$ has a positive cylindrical over $S^{\ast}L_{i}$ for each $i$. The almost complex structures $J_{\mathrm{std}}$ and $J_{\mathrm{cyl}}$ can also be transported to almost complex structures on $T^{\ast}L$ and $\RR \times S^{\ast}L$, respectively. We will then say a tame almost complex structure $J$ on $M \sd L$ is \emph{adapted to the parameterizations $\psi_{i}$} if, for each $i$, it takes the form
\[
J(\partial_{q_{i}}) = -\Vert (p_{1}, p_{2}) \Vert\partial_{p_{i}}
\]
inside $\mathcal{U}(L_{i}) \sd L_{i}$. Since this set contains $D_{1}^{\ast}L_{i} \sd L_{i}$ by assumption, it follows that $J$ is adapted to $(M \sd L, \omega)$.

We define a \emph{pseudoholomorphic building in $(M \sd L, \omega) \sqcup (T^{\ast}L, d\lambda)$} as a finite collection $\mathbf{F}$ of punctured nodal pseudoholomorphic spheres, called the \emph{components} of the building, which are divided among various \emph{levels} according to the following scheme:
\begin{enumerate}
\item[$\bullet$] \textsc{Top level}: A finite number of $J$-holomorphic components contained in $(M \sd L, \omega)$. There is always at least one such component. 
\item[$\bullet$] \textsc{Middle levels}: A finite number of $J_{\mathrm{cyl}}$-holomorphic components contained in consecutively ordered copies of $(\RR \times S^{\ast}L, d(e^{t}\alpha))$, of which there may be none. If there are however, then each must contain at least one component that is not a trivial cylinder over a periodic Reeb orbit.
\item[$\bullet$] \textsc{Bottom level}: A finite number of $J_{\mathrm{std}}$-holomorphic components contained in $(T^{\ast}L, d\lambda)$. There may be no such components.
\end{enumerate}
The smooth curves that constitute each component of $\mathbf{F}$ are collectively referred to as the \emph{curves of $\mathbf{F}$}. We require the following conditions:
\begin{enumerate}
\item If $u$ is a curve of $\mathbf{F}$ contained in the $i$th level and asymptotic to a parameterized Reeb orbit in $(S^{\ast}L, \alpha)$ at some positive puncture, there must exist a curve in the $(i+1)$th level asymptotic to that same orbit at a negative puncture. Likewise, if $u$ is asymptotic to a parameterized Reeb orbit in $(S^{\ast}L, \alpha)$ at some negative puncture, there must exist a curve in the $(i-1)$th level asymptotic to that same orbit at a positive puncture. All punctures must furthermore match bijectively in pairs.
\item Let $\Sigma$ be the (nodal) surface formed by taking the connected sum of all component domains along all pairs of asymptotically matching punctures and by identifying nodes under the involution in each domain. Then $\Sigma$ must be homeomorphic to a (nodal) sphere.
\end{enumerate}
We will say that $\mathbf{F}$ is a \emph{broken nodal sphere} if it consists of at least two nonempty levels, and an \emph{unbroken nodal sphere} otherwise. In either case, we replace the qualification \emph{nodal} by \emph{smooth} if each component of $\mathbf{F}$ is smooth (that is, if its domain has no nodes). In this case, the surface $\Sigma$ in (2) above will be homeomorphic to a genuine sphere. We also identify $\mathbf{F}$ with its image under a suitable notion of equivalence for buildings, see \cite{BEH}. In this way, we think of $\mathbf{F}$ as an \emph{unparameterized building}.

If $u$ is a top level curve of $\mathbf{F}$, then its compactification $\overline{u}$ is a map into $M$ with boundary on $S^{\ast}L$ corresponding to its asymptotic Reeb orbits. We extend it by cylinders to a map whose boundary components are the corresponding geodesics on $L$, obtaining a homology class $[\overline{u}] \in H_{2}(M, L)$, also called the \emph{class of $u$}. We then define the \emph{area of $u$} to be $\omega(\overline{u})$. (Observe that when $u$ has no punctures, this is just the usual symplectic area of a closed $J$-holomorphic sphere.) It can then be shown that $\omega(\overline{u}) \geq 0$, with equality if and only if $u$ is constant.

On the other hand, the compactifications of curves of $\mathbf{F}$ in the middle or bottom levels can be projected into $L$. We then obtain chains whose boundaries are geodesics corresponding to boundaries of compactified top level curves. Identifying these boundaries gives rise to a continuous map $\overline{\mathbf{F}} \colon \Sigma \to M$, called the \emph{compactification of $\mathbf{F}$}. Its image defines a class $A \in H_{2}(M)$, which we call the \emph{class of $\mathbf{F}$}. The \emph{area of $\mathbf{F}$} is then defined to be $\omega(A)$. It is clearly the sum of the areas of all top level curves, and we have $\omega(A) \geq 0$ with equality if and only if $\mathbf{F}$ is constant, see \cite{CM1}.

The following proposition records some additional facts about buildings.

\begin{prp}\label{p:buildingfacts}
Let $\mathbf{F}$ be a building in the class $A \in H_{2}(M)$.
\begin{enumerate}
\item[(a)] If $\mathbf{F}$ is broken, the it has at least two top level planes. 
\item[(b)] If there is no decomposition $A = A_{1} + A_{2}$ with $\omega(A_{1}), \omega(A_{2}) > 0$, then $\mathbf{F}$ is smooth.
\item[(c)] If the tori $L_{i}$ are all integral, then every top level curve of $\mathbf{F}$ has integral area.
\end{enumerate}
\end{prp}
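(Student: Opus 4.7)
My plan is to treat each part separately, with the common theme being the combinatorics of the dual graph $G$ of $\mathbf{F}$, whose vertices are the smooth sphere constituents of all components and whose edges are both internal nodes and matched puncture pairs between consecutive levels. By the definition of a building, $G$ is a tree, since the glued surface $\Sigma$ is a nodal sphere (topologically a tree of spheres).

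For part (a), the first step is to reduce to a minimal building with no constant (ghost) components. Lower-level closed spheres (those with no punctures, lying entirely in a symplectization $\RR \times S^{\ast}L$ or in $T^{\ast}L$) are automatically ghosts: in a symplectization, $\int u^{\ast}d(e^{t}\alpha) = 0$ by Stokes, and in $T^{\ast}L$, the target has $\pi_{2} = 0$ and $d\lambda$ is exact. Next, lower-level planes are impossible because such a plane would asymptote to a single Reeb orbit required to be null-homotopic in $\RR \times S^{\ast}L \simeq \TT^{3}$ or in $T^{\ast}L \simeq \TT^{2}$, whereas every Reeb orbit of $\alpha$ projects to a non-contractible closed geodesic on $\TT^{2}$. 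Consequently, in the minimal tree every lower-level vertex has degree at least $2$. The main step is then a degree count: after contracting each maximal top-connected subforest (via top-top nodes) to a super-vertex and each lower-connected subforest likewise, the resulting contracted graph is still a tree whose lower super-vertices have degree $\geq 2$, and this forces at least two top super-vertices of degree exactly $1$. Each such top super-vertex corresponds to a top subforest contributing exactly one matching puncture, which necessarily comes from a $1$-punctured top-level sphere, i.e., a top-level plane. The main obstacle is the careful tree-degree bookkeeping to control the interaction of bubble nodes with matching punctures.

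For part (b), if $\mathbf{F}$ is not smooth, some component contains a node. Cutting the dual tree $G$ along the corresponding edge separates it into two subtrees, giving sub-buildings with classes $A_{1}, A_{2} \in H_{2}(M)$ satisfying $A_{1} + A_{2} = A$. After reducing ghosts, each sub-building retains a non-constant component, so $\omega(A_{1}), \omega(A_{2}) > 0$. The contrapositive yields (b); the only subtlety is verifying that each side of the cut is itself a valid building, which follows because a subtree of a tree is still a tree and each half remains genus $0$.

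For part (c), I work in the setting of interest where $M$ is simply connected (the Del Pezzo case), so every loop bounds a disk. Fix a top-level curve $u$ with compactification $\overline{u} \colon (\Sigma, \partial\Sigma) \to (M, L)$; each boundary component of $\overline{u}$ is a closed geodesic on some $L_{i_{j}}$. By simple connectedness, cap each boundary circle with a smooth disk $D_{j}$ representing a class in $\pi_{2}(M, L_{i_{j}})$, whose area $\omega(D_{j})$ is an integer by integrality of $L_{i_{j}}$. The capped surface $\overline{u} + \sum_{j}D_{j}$ defines a class in $H_{2}(M)$; via Hurewicz and the injection $\pi_{2}(M) \hookrightarrow \pi_{2}(M, L_{i_{j}})$, integrality of any single $L_{i_{j}}$ forces integrality of $\omega$ on $H_{2}(M)$, so the capped class has integer area. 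Subtracting the integer disk contributions then yields $\omega(\overline{u}) \in \ZZ$.
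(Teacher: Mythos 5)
Your proof is correct and follows essentially the same approach as the paper's: for (a), ruling out lower-level ghosts and planes via exactness and noncontractibility of Reeb orbits and then reading off the conclusion from the dual tree; for (b), cutting the tree at a node to produce two positive-area sub-buildings; and for (c), capping the punctures with integral-area disks and using that integrality of one $L_i$ forces integrality on $\pi_2(M)$. You spell out the tree-degree bookkeeping that the paper dispatches with ``it is easy to deduce,'' and you make explicit the simple-connectedness hypothesis that the paper uses implicitly when finding bounding disks in (c).
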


\begin{proof}
To prove (a), first note that any nonconstant middle or bottom level curve of $\mathbf{F}$ has at least one puncture, since otherwise by exactness, it would have zero area. Furthermore, if it had exactly one puncture, then it would project to a disk in $S^{\ast}L$ bounding a Reeb orbit, which is impossible since all orbits are noncontractible for a flat torus. The curve must therefore have at least two punctures, and middle level curves must have at least one positive puncture by the maximum principle for subharmonic functions. From these facts it is easy to deduce that $\mathbf{F}$ has at least two planes in the top level.

The proof of (b) is clear if $\mathbf{F}$ is constant, so assume that $\mathbf{F}$ is nonconstant and nodal, and let $\Sigma$ be the domain of its compactification $\overline{\mathbf{F}}$. Then $\Sigma$ can be decomposed as $\Sigma_{1} \cup \Sigma_{2}$, where $\Sigma_{1} \cap \Sigma_{2}$ consists of a single point, such that the restriction of $\overline{\mathbf{F}}$ to each $\Sigma_{i}$ is the compactification of a nonconstant pseudoholomorphic building $\mathbf{F}_{i}$ whose homology class $A_{i} \in H_{2}(M)$ satisfies $\omega(A_{i}) > 0$ by (a). But this would lead to a decomposition $A = A_{1} + A_{2}$ with $\omega(A_{1}), \omega(A_{2}) > 0$, contradicting our assumption.

Finally, to prove (c), note that $(M, \omega)$ must then be integral. This implies that every spherical homology class in $H_{2}(M)$ has integral area, and the result for any top level curve without punctures follows. On the other hand, if $u$ was a top level curve with at least one puncture, then its compactification $\overline{u}$ represents a relative class in $H_{2}(M, L)$ and has boundary components on some of the tori $L_{1}, \dots, L_{k}$. We can then choose disks in $M$ that bound these components, and these have integral area by assumption. Gluing these disks to $\overline{u}$ yields a spherical class in $H_{2}(M)$, which also has integral area. By additivity, it then follows that $u$ must also have integral area.
\end{proof}

\subsection{The Splitting Procedure}

The primary method used for producing pseudoholomorphic buildings in this paper is the \emph{splitting procedure}, also known as \emph{stretching the neck}. This can be carried out in a fairly general setting, see \cite{BEH} and \cite{CM1}. We will give a description of this procedure that is tailored to our specific context, following \cite{DRGI}.

With the same setup and notation as the previous subsection, consider a family $\{J_{t}\}_{t \geq 0}$ of tame almost complex structures on $(M, \omega)$, which for each $i$ are defined according to the following prescription:
\begin{enumerate}
\item[$\bullet$] Outside all of the open sets $B_{4}^{\ast}L_{i}$, we set $J_{t} \equiv J$ for each $t \geq 0$.
\item[$\bullet$] Inside each of the open sets $B_{2}^{\ast}L_{i}$, we set $J_{t} \equiv J_{\mathrm{std}}$ for each $t \geq 0$.
\item[$\bullet$] Inside each $B_{4}^{\ast}L_{i} \sd B_{2}^{\ast}L_{i}$, we identify $J_{t}$ for each $t \geq 0$ with the restriction of $J_{\mathrm{cyl}}$ to the set $\smash{[\log{2}, \log{4} + t) \times S^{\ast}L_{i}}$ under the identifications
\begin{align*}
B_{4}^{\ast}L_{i} \sd B_{2}^{\ast}L_{i} &\approx [\log{2}, \log{4}) \times S^{\ast}L_{i} \\
&\approx [\log{2}, \log{4} + t) \times S^{\ast}L_{i}.
\end{align*}
Note that only the first of these is a symplectomorphism.
\end{enumerate}
We say the family $\{J_{t}\}_{t \geq 0}$ \emph{stretches to $J$}, and in this context we refer to the latter as a \emph{stretched} almost complex structure on $M \sd L$. We can then extract a sequence $(J_{t_{i}})_{i=1}^{\infty}$ such that $t_{i} \to +\infty$ as $i \to +\infty$. In this setting, we have the following \emph{compactness theorem}.

\begin{thm}\label{t:compactness}
Let $\smash{(u_{i})_{i=1}^{\infty}}$ be a sequence of closed $J_{t_{i}}$-holomorphic spheres in $(M, \omega)$ in a fixed class $A \in H_{2}(M)$. Then there exists a subsequence that converges to a pseudoholomorphic building in $(M \sd L, \omega) \sqcup (T^{\ast}L, d\lambda)$ whose homology class is also $A$.
\end{thm}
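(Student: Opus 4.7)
The plan is to deduce this from the general SFT compactness theorem of \cite{BEH}, in the neck-stretching formulation of \cite{CM1}, and then to verify that the extracted limit fits the bookkeeping of the ``pseudoholomorphic building'' definition given in the previous subsection. The only inputs needed are a uniform energy bound on the sequence and a Morse-Bott condition on the hypersurfaces along which the neck is being stretched; both are available here.

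First I would record the uniform energy bound. Since every $u_{i}$ represents the fixed class $A$, we have $\omega([u_{i}]) = \omega(A)$ for all $i$. Because each $J_{t_{i}}$ is tame with respect to $\omega$ on the $\omega$-part of $M$ and agrees with the cylindrical structure coming from $J_{\mathrm{cyl}}$ on the necks, the Hofer energy of $u_{i}$ is uniformly bounded in terms of $\omega(A)$. Next I would unpack the geometry of the necks: by construction, $J_{t_{i}}$ identifies each annulus $B_{4}^{\ast}L_{i} \sd B_{2}^{\ast}L_{i}$ with the cylindrical piece $[\log 2, \log 4 + t_{i}) \times S^{\ast}L_{i}$ carrying $J_{\mathrm{cyl}}$, so the neck lengths grow to $+\infty$. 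Combined with the fact that the Reeb orbits of $\alpha$ on each $S^{\ast}L_{i} \cong \TT^{3}$ come in Morse-Bott families $\Gamma_{\eta}$, one for each nonzero $\eta \in H_{1}(L_{i})$, we are exactly in the hypotheses of the Morse-Bott SFT compactness theorem of \cite{BEH}.

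Invoking that theorem, a subsequence of $(u_{i})$ converges in the SFT sense to a finite-energy limit object $\mathbf{F}$ consisting of finitely many punctured nodal pseudoholomorphic curves, distributed among a finite sequence of levels whose almost complex structures are the limits of $J_{t_{i}}$ on the corresponding pieces of $M$. Under the splitting described in the previous subsection, this translates into: a top level of $J$-holomorphic curves in $(M \sd L, \omega)$, finitely many (possibly zero) middle levels of $J_{\mathrm{cyl}}$-holomorphic curves in copies of $(\RR \times S^{\ast}L, d(e^{t}\alpha))$, and a bottom level of $J_{\mathrm{std}}$-holomorphic curves in $(T^{\ast}L, d\lambda)$. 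The asymptotic matching at punctures of consecutive levels, the stability condition that each nontrivial middle level contain at least one nontrivial component, and the requirement that the nodal connected sum of all domains be homeomorphic to a sphere, are all direct outputs of the compactness theorem together with the fact that each $u_{i}$ has domain $\SS^{2}$.

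The last point is homological: SFT convergence is strong enough that the compactification $\overline{\mathbf{F}} \colon \Sigma \to M$, defined via the collapse construction of the previous subsection, is homotopic through continuous maps to each $u_{i}$ after collapsing the necks, so $\mathbf{F}$ represents the class $A \in H_{2}(M)$. The only real obstacle in writing out a proof is not analytic but bookkeeping, namely translating the general language of levels, stable curves, and matching punctures used in \cite{BEH} and \cite{CM1} into the specific ``top/middle/bottom'' presentation adopted here; once the dictionary is fixed, every verification is immediate from the respective compactness statement and from exactness of $d\lambda$ on $T^{\ast}L$.
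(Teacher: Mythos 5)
Your approach is essentially the same as the paper's: the paper does not give a proof of Theorem~\ref{t:compactness} at all, but simply states that the result follows from \cite{BEH} and \cite{CM1} and refers the reader there. Your proposal is a correct and somewhat more detailed version of that citation: you rightly identify the two hypotheses that need checking (the uniform Hofer-energy bound coming from the fixed homology class $A$, and the Morse-Bott structure of the Reeb dynamics on $S^{\ast}L_{i} \cong \TT^{3}$), correctly observe that the neck lengths $\log 4 + t_{i} - \log 2 \to \infty$ so that the geometry matches the neck-stretching setup of \cite{BEH}/\cite{CM1}, and correctly translate the SFT limit into the paper's top/middle/bottom level presentation with matching punctures. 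The homological conclusion $[\overline{\mathbf{F}}] = A$ is also argued as the paper would, via continuity of the compactified limit. In short, the paper asserts the result by reference; you fill in the dictionary and the hypothesis checks, which is a reasonable expansion and contains no gaps.
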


The building obtained in the conclusion of this theorem is called a \emph{limit building}. The exact definition of convergence is not relevant to us, since all we care about is the \emph{existence} of such buildings. See \cite{BEH} or \cite{CM1} for a more comprehensive discussion as well as proofs of the above theorem.

\begin{rmk}\label{r:splitting}
It is possible to generalize the discussion in this and the previous subsection by allowing $(M, \omega)$ to be noncompact with cylindrical ends. This requires a generalization of the notions of pseudoholomorphic building and convergence, but the only time we will need this more general theory is in Lemma \ref{l:simpli1} below. There we will be applying the splitting procedure given the contact-type hypersurfaces we have been considering so far as well as the boundary of a symplectically embedded ball. We will study the limit of a sequence of top level planes of minimal area, all belonging to the same relative homology class, and thus having uniformly bounded energy. This condition is sufficient to ensure compactness, and so no complicated limit configurations will arise. Because of this, we will not give an exposition of this more general setting, instead referring the reader to \cite{BEH}.
\end{rmk}

\subsection{Index Calculations}

With the setup and notation of the previous two subsections, let $\mathbf{u}_{1}, \dots, \mathbf{u}_{l}$ be the components of the building $\mathbf{F}$. If $m$ is the total number of Reeb orbits at which these components match asymptotically, then following \cite{HO}, we define the \emph{index of $\mathbf{F}$} to be
\[
\ind(\mathbf{F}) = \sum_{i=1}^{l}\ind(\mathbf{u}_{i}) - m.
\]
This is designed so that if $(u_{i})_{i=1}^{\infty}$ is a sequence of closed pseudoholomorphic spheres in $A \in H_{2}(M)$ converging to $\mathbf{F}$ as in Theorem \ref{t:compactness}, then
\[
\ind(\mathbf{F}) = \ind(u_{i}) = -2 + 2c_{1}(A).
\]
Computing the index of $\mathbf{F}$ therefore reduces to computing the index of its components, and thus the index of its curves in various levels. Here is one result in this direction.

\begin{prp}\label{p:indexmiddle}
Let $u$ be a punctured pseudoholomorphic sphere having $s_{+}$ positive punctures and $s_{-}$ negative punctures in $(\RR \times S^{\ast}L, d(e^{t}\alpha))$ or $(T^{\ast}L, d\lambda)$, where $s_{-} = 0$ in the latter case. Then
\[
\ind(u) = -2 + 2s_{+} + s_{-}.
\]
This index is nondecreasing under multiple covers, and is positive if $u$ is a middle or bottom level curve of a limit building.
\end{prp}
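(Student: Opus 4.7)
The plan is to apply the Morse-Bott index formula recalled earlier in Section 3.2, specializing each term using the flat torus geometry set up in Section 3.3. The key observation is that $\TT^2$ is parallelizable: the coordinate frame $\partial_{q_1}, \partial_{q_2}, \partial_{p_1}, \partial_{p_2}$ globally trivializes $T(T^{\ast}\TT^2)$ and, via the symplectomorphism with $\RR \times S^{\ast}\TT^2 \setminus \TT^2$, also trivializes $T(\RR \times S^{\ast}\TT^2)$. Restricting to any collection of Reeb orbits yields a symplectic trivialization $\tau$ of $u^{\ast}TM$ over $\partial\Sigma$ with respect to which the relative first Chern class vanishes, $c_1(u;\tau) = 0$, simply because the global framing extends across the entire image of $u$.

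Next I would identify the Morse-Bott data. Each Reeb family $\Gamma_\eta$ is a circle (a geodesic on the flat torus is determined by its starting point modulo translation along itself), so $\dim\Gamma_\eta = 1$ for every orbit, including multiple covers. For the Robbin--Salamon index, the flat metric makes the linearized Reeb flow act trivially on the contact distribution, and a direct computation in the above trivialization (carried out in \cite{DRGI}) gives $\mu_{\mathrm{RS}}(\gamma;\tau) = 1/2$ for every closed Reeb orbit, independent of multiplicity. Substituting these values yields
\[
\ind(u) = -2 + s_+ + s_- + 0 + \sum_{i=1}^{s_+}\left(\tfrac{1}{2} + \tfrac{1}{2}\right) - \sum_{i=1}^{s_-}\left(\tfrac{1}{2} - \tfrac{1}{2}\right) = -2 + 2s_+ + s_-.
\]

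For the nondecreasing property, suppose $\tilde u = u \circ \pi$ for a $d$-fold branched cover $\pi$ of $\SS^2$ with branching only over the punctures of $u$. Each puncture of $u$ has at least one preimage in the domain of $\tilde u$, so $\tilde s_+ \geq s_+$ and $\tilde s_- \geq s_-$. Since the same formula applies to $\tilde u$ (the asymptotic orbits are iterated covers, for which $\mu_{\mathrm{RS}}$ and $\dim\Gamma$ are unchanged), we obtain $\ind(\tilde u) = -2 + 2\tilde s_+ + \tilde s_- \geq \ind(u)$.

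Finally, for positivity in middle or bottom level curves of a limit building, I would reuse the observations from the proof of Proposition \ref{p:buildingfacts}(a). A middle level curve satisfies $s_+ \geq 1$ by the maximum principle applied to the subharmonic function $t$, while a bottom level curve has $s_- = 0$ by hypothesis and $s_+ \geq 1$, for otherwise it would be closed with zero area by exactness of $d\lambda$. In either case, a curve with exactly one puncture would, after projection to $\TT^2$ via $\RR \times S^{\ast}L \to S^{\ast}L \to L$ (respectively $T^{\ast}L \to L$) and continuous extension over the asymptotic boundary, yield a disk in $\TT^2$ bounded by the corresponding closed geodesic, contradicting the noncontractibility of geodesics on the flat torus. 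Therefore $s_+ \geq 1$ and $s_+ + s_- \geq 2$, which combined with $\ind(u) = -2 + 2s_+ + s_-$ gives $\ind(u) \geq 1$. The main obstacle in this plan is the explicit evaluation $\mu_{\mathrm{RS}}(\gamma;\tau) = 1/2$, which is sensitive to the Morse-Bott perturbation convention and the sign of the orientation; for this I would refer directly to the computation in \cite{DRGI} rather than redoing it.
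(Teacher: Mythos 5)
Your proposal is correct, and in spirit it reconstructs the computation that the paper simply defers to \cite{DRGI} and \cite{HL}: specialize the Morse--Bott index formula from Section 3.2 by noting that $c_1(u;\tau)=0$ (the global frame on $T^{\ast}\TT^{2}$ trivializes $\Lambda^{2}_{\CC}(u^{\ast}TM)$), that $\dim\Gamma=1$ for every Reeb family, and that $\mu_{\mathrm{RS}}(\gamma;\tau)=\tfrac{1}{2}$ for every orbit. You correctly flag the last value as the delicate input --- it depends on the crossing-form sign for the linearized geodesic shear in the chosen trivialization --- and defer it to \cite{DRGI}, exactly as the paper itself does.

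Where your route genuinely differs is the nondecreasing claim. Since the formula $-2+2s_{+}+s_{-}$ depends only on the puncture counts, the inequality follows at once from the elementary fact that every puncture of the underlying simple curve has at least one preimage under the cover, so $\tilde{s}_{\pm}\geq s_{\pm}$. This is cleaner than the Riemann--Hurwitz bookkeeping the paper cites from Lemma 3.2 of \cite{DRGI}, which is a heavier tool than the situation requires. One small imprecision: your phrase ``with branching only over the punctures'' is an unnecessary restriction, since interior branch points are permitted for multiple covers and have no effect on the puncture count; the bound $\tilde{s}_{\pm}\geq s_{\pm}$ holds regardless. Finally, your positivity argument spells out what the paper compresses into a citation of Proposition~\ref{p:buildingfacts}(a): you correctly note that one needs both $s_{+}\geq 1$ and $s_{+}+s_{-}\geq 2$, since the latter alone would leave open $\ind(u)=0$ in the (here impossible) configuration $s_{+}=0$, $s_{-}=2$.
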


\begin{proof}
A proof of the formula can be found in \cite{DRGI} or \cite{HL}. The fact that it is nondecreasing under multiple covers follows from an argument using the Riemann-Hurwitz formula as in the proof of Lemma 3.2 of \cite{DRGI}. The last statement follows from the fact that, in this case, any such curve always has at least two punctures by (a) of Proposition \ref{p:buildingfacts}.
\end{proof}

The next proposition, which follows from the discussion in Section 3.1 of \cite{DRGI}, provides a convenient formula for the index of a top level curve using an extension of the Maslov index for surfaces with boundary on $L$. We refer to Appendix C.3 of \cite{MS2} for an exact definition.

\begin{prp}\label{p:indextop}
Let $u$ be a punctured $J$-holomorphic sphere in $M \sd L$ with $s_{-}$ negative punctures. Then
\[
\ind(u) = -2 + s_{-} + \mu(\overline{u}).
\]
This index is nondecreasing under multiple covers.
\end{prp}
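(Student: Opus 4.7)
\medskip

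\noindent\textbf{Proof proposal.} The plan is to derive the formula by specializing the general Bourgeois index formula, displayed earlier in Section 3, to the top-level setting, and then matching terms by a careful choice of asymptotic trivialization. Since $(M \sd L, \omega)$ has only a negative cylindrical end, the top-level curve $u$ has no positive punctures, so $s_+ = 0$ in that formula. Moreover, as recalled in the subsection on the cotangent bundle, the Morse-Bott manifolds $\Gamma_i^-$ of Reeb orbits on $S^{\ast}L_i \cong \TT^3$ are $1$-dimensional, so each term $\tfrac{1}{2}\dim\Gamma_i^-$ equals $\tfrac{1}{2}$. Substituting, the Bourgeois formula collapses to
\[
\ind(u) = -2 + s_- + c_1(u;\tau) - \sum_{i=1}^{s_-}\bigl(\mu_{\mathrm{RS}}(\gamma_i^-;\tau) - \tfrac{1}{2}\bigr).
\]

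The next step, and really the heart of the argument, is to choose the trivialization $\tau$ of $u^{\ast}TM$ along $\partial\Sigma$ so as to identify the remaining terms with the Maslov index $\mu(\overline{u})$. Using the parameterizations $\psi_i \colon \TT^2 \to L_i$ and the induced global Darboux coordinates $(q_1,q_2,p_1,p_2)$ on each Weinstein neighborhood $\mathcal{U}(L_i)$, take $\tau$ to be the restriction to $\partial\Sigma$ of the symplectic frame $(\partial_{q_1},\partial_{q_2},\partial_{p_1},\partial_{p_2})$, pulled back along the cylindrical representation of $u$. This $\tau$ has the crucial property that its first two vectors span the tangent Lagrangian $TL$ along each Reeb orbit, so it extends canonically to a trivialization of $TM$ along the Lagrangian boundary of the compactification $\overline{u}$. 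With respect to such a Lagrangian-adapted trivialization, a direct computation, carried out in Appendix A of \cite{DRGI} and whose form follows the general principle of Proposition C.3.5 of \cite{MS2}, gives
\[
\mu(\overline{u}) \;=\; c_1(u;\tau) \;-\; \sum_{i=1}^{s_-} \mu_{\mathrm{RS}}(\gamma_i^-;\tau) \;+\; \tfrac{s_-}{2}.
\]
The key input is that for any simple Reeb orbit on $S^{\ast}\TT^2$ with respect to the flat metric, the linearized Reeb flow is a shear in the trivialization $\tau$, so its Robbin-Salamon index with respect to $\tau$ is $\tfrac{1}{2}$, and Morse-Bott multiples scale accordingly; this matches the half-integer corrections appearing on the right-hand side and makes the formula integer-valued. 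Substituting this identity into the specialization of the Bourgeois formula above yields the desired relation $\ind(u) = -2 + s_- + \mu(\overline{u})$.

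For the multiple cover claim, suppose $v = u \circ \pi$ where $\pi \colon \SS^2 \to \SS^2$ is a degree-$k$ branched cover. Let $s_-(v)$ denote the number of punctures of $v$, i.e., the cardinality of $\pi^{-1}(Z)$ where $Z$ is the set of negative punctures of $u$. Riemann-Hurwitz applied to $\pi$ gives $\sum_{p}(e_p - 1) = 2k - 2$, and all branching contributions over points of $Z$ satisfy $\sum_{p \in \pi^{-1}(Z)}(e_p - 1) = k s_-(u) - s_-(v)$, which is bounded above by $2k-2$. Hence $s_-(v) \geq k s_-(u) - 2(k-1)$. Since $[\overline{v}] = k[\overline{u}]$ in $H_2(M,L)$ by functoriality, the Maslov class is multiplicative: $\mu(\overline{v}) = k\mu(\overline{u})$. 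Plugging these into the formula,
\[
\ind(v) \;=\; -2 + s_-(v) + \mu(\overline{v}) \;\geq\; -2 + k s_-(u) - 2(k-1) + k\mu(\overline{u}) \;=\; k \cdot \ind(u),
\]
which is $\geq \ind(u)$ whenever $\ind(u) \geq 0$.

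The main obstacle is the intermediate identity relating $\mu(\overline{u})$ to $c_1(u;\tau)$ and the Robbin-Salamon indices; writing it down rigorously requires tracking how the canonical symplectic frame on $T^{\ast}L$ twists along the Reeb orbits relative to the Lagrangian frame on $L$, which is standard but notationally delicate. The rest of the argument is essentially bookkeeping with the Bourgeois formula and Riemann-Hurwitz.
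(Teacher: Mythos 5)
The paper offers no written proof for this proposition, instead citing Section 3.1 of \cite{DRGI}, so there is no paper argument to compare against literally; what can be said is that your route (specialize the Bourgeois formula to the top level by setting $s_+ = 0$ and $\dim \Gamma_i^- = 1$, identify the remaining trivialization-dependent terms with the Maslov index of the compactification via a Lagrangian-adapted asymptotic trivialization, and run Riemann-Hurwitz for the multiple-cover claim) is indeed the computation that reference performs, and the arithmetic in your specialization is consistent with the version of the index formula quoted in Section 3.2.

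The one place to be more careful is the multiple-cover claim. Your Riemann-Hurwitz estimate yields $\ind(v) \geq k \ind(u)$, and you correctly observe that this gives $\ind(v) \geq \ind(u)$ only under the extra hypothesis $\ind(u) \geq 0$. The proposition is phrased unconditionally, and unconditionally the claim is actually false: a plane $u$ with $\mu(\overline{u}) < 0$ and a degree-$2$ cover $\pi$ totally ramified over the single puncture has $s_-(v) = s_-(u) = 1$ and $\mu(\overline{v}) = 2\mu(\overline{u})$, so $\ind(v) = \ind(u) + \mu(\overline{u}) < \ind(u)$. So you have not "missed" anything; the statement as written is slightly too strong. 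In the paper's actual uses the hypothesis you isolate is always in force --- $J$ is chosen regular for somewhere injective curves so these have nonnegative index, and the monotonicity of $L$ forces $\mu(\overline{u}) \geq 2$ for the planes that appear --- so your proof establishes precisely the version that is invoked (for instance in the proof of Proposition \ref{p:displaceclosed}). It would be worth flagging the mismatch between the unconditional phrasing of the proposition and what is actually true, rather than leaving it as an aside. The intermediate identity relating $\mu(\overline{u})$, $c_1(u;\tau)$, and the Robbin-Salamon indices is, as you say, the real content; you state it and cite it rather than prove it, which is reasonable, though the sign of the Robbin-Salamon contribution for the shear requires verification against the conventions of \cite{RS} and \cite{Bo} to match the paper's normalization.
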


\section{Initial Setup}

In this section, we begin by setting up our primary objects of study. Next, we discuss the elements of a \emph{relative foliation theory}, which will be crucial to our arguments in later sections. We then study some special properties of the Clifford torus in the context of such relative foliations.

\subsection{Del Pezzo Surfaces}

Consider the symplectic manifold $(\CP^{2}, \omega_{\FS})$, where the Fubini-Study form $\omega_{\FS}$ is scaled so that $\omega_{\FS}(\CP^{1}) = 3$. In this way $(\CP^{2}, \omega_{\FS})$ becomes a monotone symplectic manifold with monotonicity constant $1$. Recall that the standard torus $\TT^{2}$ is viewed as the quotient $\RR^{2}/\ZZ^{2}$ with global coordinates $(q_{1}, q_{2})$ for $0 \leq q_{i} \leq 1$. Following \cite{MS1}, the moment map $\mu \colon \CP^{2} \to \RR^{2}$ for the standard Hamiltonian toric action of $\TT^{2}$ on $(\CP^{2}, \omega_{\FS})$ is given in homogeneous coordinates by
\[
\mu([z_{0} : z_{1} : z_{2}]) = 3\left(\frac{\vert z_{1} \vert^{2}}{\Vert z \Vert^{2}}, \frac{\vert z_{2} \vert^{2}}{\Vert z \Vert^{2}}\right).
\]
The image of $\mu$ is the right triangle $\{(x, y) \in \RR^{2} : 0 \leq x + y \leq 3\}$, and we define the subsets
\[
S_{\infty} = \{x + y = 3\}, \qquad T_{0} = \{y = 0\}, \qquad T_{\infty} = \{x = 0\}
\]
and denote their preimages under $\mu$ by the same symbols. These are in fact symplectically embedded spheres in $\CP^{2}$, the first being the \emph{line at infinity}. The \emph{Clifford torus} is defined as
\[
L_{1, 1} = \mu^{-1}(1, 1) = \{[z_{0} : z_{1} : z_{2}] \in \CP^{2} : \vert z_{0} \vert = \vert z_{1} \vert = \vert z_{2} \vert\}.
\]
It is a monotone Lagrangian torus with monotonicity constant $2$. The following diagram illustrates the moment image with all of these objects:
\[
\begin{tikzpicture}[scale=1]
\draw[semithick] (0, 0) -- (3, 0) -- (0, 3) -- cycle;
\node[yshift = -1em] at (1.5, 0) {$T_{0}$};
\node[xshift = -1em] at (0, 1.5) {$T_{\infty}$};
\node[xshift = 1em, yshift = 0.5em] at (1.5, 1.5) {$S_{\infty}$};
\filldraw[black] (1, 1) circle (1pt);
\node[xshift = 1em, yshift = -0.5em] at (1, 1) {$L_{1, 1}$};
\node at (0,3) {\phantom{$T_{0}$}};
\end{tikzpicture}
\]
By Proposition \ref{p:monoblowup}, the only way to obtain monotone symplectic manifolds from $(\CP^{2}, \omega_{\FS})$ by symplectically blowing up is to do so along Darboux balls of capacity $1$. It is a known fact that one can embed up to $8$ such disjoint closed balls in $\CP^{2}$ before encountering a volume obstruction. We then obtain for $n = 1, \dots, 8$ a monotone symplectic manifold $(\DD_{n}, \omega_{\mspace{1mu}\DD_{n}})$ with monotonicity constant $1$. These together with the symplectic manifolds $(\SS^{2} \times \SS^{2}, \sigma \oplus \sigma)$, where $\sigma(\SS^{2}) = 2$, and $(\CP^{2}, \omega_{\FS})$, are known as the \emph{symplectic Del Pezzo surfaces}. As we shall show in Proposition \ref{p:reduction}, once Theorem \ref{t:main} is proved for the case $n = 1$ then it will follow immediately for $n =2, \dots, 5$. Because of this, we will now examine a particularly explicit model of $(\DD_{1}, \omega_{\mspace{1mu}\DD_{1}})$ and record some of its features.

Consider the symplectic embedding $B(3) \hookrightarrow \CP^{2}$ centered at the point $[1 : 0 : 0]$, given by
\[
(z_{1}, z_{2}) \mapsto \left[\sqrt{(3/\pi) - \vert z_{1} \vert^{2} - \vert z_{2} \vert^{2}} : z_{1} : z_{2}\right].
\]
This restricts to a symplectic embedding of $B(1 + \epsilon)$, with $\epsilon > 0$ chosen small enough so that $B(1 + \epsilon)$ misses $L_{1, 1}$. We then apply Proposition \ref{p:sympblowup} to obtain the desired model of $(\DD_{1}, \omega_{\mspace{1mu}\DD_{1}})$. For reference, we will henceforth denote this model by $(X, \omega_{X})$. One of its salient features is an induced torus action with moment image obtained from that of $(\CP^{2}, \omega_{\FS})$ by excising the triangle $\{0 \leq x + y < 1\}$. The proper transforms of $S_{\infty}$, $T_{0}$, and $T_{\infty}$ in $X$ are still symplectically embedded spheres and are denoted by the same symbols. Additionally, the exceptional divisor in $(X, \omega_{X})$, denoted by $S_{0}$, corresponds to the edge $\{x + y = 1\}$ in the moment image. The following diagram sums up these observations:
\[
\begin{tikzpicture}
\draw[semithick] (0, 0) -- (3, 0) -- (0, 3) -- cycle;
\node[xshift = 1em, yshift = 0.5em] at (1.5, 1.5) {$S_{\infty}$};
\node[yshift = -1em] at (1.5, 0) {$T_{0}$};
\node[xshift = -1em] at (0, 1.5) {$T_{\infty}$};
\filldraw[black] (1, 1) circle (1pt);
\draw[thick, ->] (3, 1.5) -- (5, 1.5);
\node[yshift = -1em] at (4, 1.5) {Blowup};
\draw[semithick] (7, 0) -- (9, 0) -- (6, 3) -- (6, 1) -- cycle;
\node[xshift = -0.5em, yshift = -0.5em] at (6.5, 0.5) {$S_{0}$};
\node[xshift = 1em, yshift = 0.5em] at (7.5, 1.5) {$S_{\infty}$};
\node[yshift = -1em] at (8, 0) {$T_{0}$};
\node[xshift = -1em] at (6, 2) {$T_{\infty}$};
\filldraw[black] (7, 1) circle (1pt);
\node[xshift = 1em, yshift = -0.5em] at (1, 1) {$L_{1, 1}$};
\node[xshift = 1em, yshift = -0.5em] at (7, 1) {$L_{1, 1}$};
\node at (0,3) {\phantom{$T_{0}$}};
\end{tikzpicture}
\]
The configuration of spheres $S_{0} \cup S_{\infty} \cup T_{0} \cup T_{\infty}$ will be a recurring object in this paper. We will refer to it as the \emph{axes of $(X, \omega_{X})$} and denote it by $Y$.

The second homology group $H_{2}(X)$ is free abelian of rank $2$ and contains three distinguished classes. There is the class of the spheres $T_{0}$ and $T_{\infty}$, as well as the classes $S_{0}$ and $S_{\infty}$. We will use $(S_{0}, T_{0})$ as an ordered basis for $H_{2}(X)$, identifying it with $\ZZ^{2}$. Thus $S_{\infty} = S_{0} + T_{0}$ is the class $(1, 1)$. The corresponding symplectic areas are
\[
\omega_{X}(S_{0}) = 1, \qquad \omega_{X}(T_{0}) = 2, \qquad \omega_{X}(S_{\infty}) = 3.
\]
By monotonicity, these are also the values of the corresponding Chern numbers. We also record some intersection numbers for future reference:
\begin{gather*}
S_{0} \cdot S_{0} = -1, \qquad T_{0} \cdot T_{0} = 0, \qquad S_{\infty} \cdot S_{\infty} = 1, \\
S_{0} \cdot S_{\infty} = 0, \qquad T_{0} \cdot S_{0} = 1, \qquad T_{0} \cdot S_{\infty} = 1.
\end{gather*}
In later considerations, we will work with tame almost complex structures making these spheres pseudoholomorphic, and positivity of intersections will then allow us to deduce many important conclusions.

\subsection{Existence of Pseudoholomorphic Spheres}

For many arguments in this paper, we will need to understand which (unparameterized) closed pseudoholomorphic spheres exist in $(X, \omega_{X})$. The following proposition contains all the facts we will need.

\begin{prp}\label{p:blowupcurves}
Let $J$ be a tame almost complex structure on $(X, \omega_{X})$.
\begin{enumerate}
\item[(a)] There is a unique embedded $J$-holomorphic sphere in the class $(1, 0)$.
\item[(b)] For any point $p \in X$, there is a unique embedded $J$-holomorphic sphere in the class $(0, 1)$ passing through $p$.
\item[(c)] For $d \geq 1$ and a codimension $2$ set of tuples $(p_{1}, \dots, p_{2d}) \in X^{d}$, there is a unique embedded $J$-holomorphic sphere in the class $(1, d)$ passing through the points $p_{1}, \dots, p_{2d}$.
\end{enumerate}
\end{prp}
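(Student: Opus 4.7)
All three parts should follow a common strategy: uniqueness via positivity of intersections combined with the adjunction formula, and existence by first producing an explicit representative for an integrable reference almost complex structure $J_{0}$ on $X$ (inherited from blowing up the standard structure on $\CP^{2}$), then excluding bubbling under deformation to arbitrary tame $J$.

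For uniqueness, positivity of intersections handles each case at once: in (a), $S_{0} \cdot S_{0} = -1 < 0$ rules out two distinct curves in class $(1, 0)$; in (b), $T_{0} \cdot T_{0} = 0$ forces two distinct embedded curves in class $(0, 1)$ to be disjoint, contradicting a shared base point $p$; in (c), $(1, d) \cdot (1, d) = 2d - 1 < 2d$ forbids two distinct curves in class $(1, d)$ from sharing $2d$ points. The adjunction formula $[u] \cdot [u] + 2 - c_{1}([u]) = 2\delta \geq 0$ then shows the unique representative is embedded: for instance in (c), $c_{1}((1, d)) = 2d + 1$ and $(1, d) \cdot (1, d) = 2d - 1$ force $\delta = 0$.

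For existence, the integrable $J_{0}$ supplies the initial representatives. In (a), the exceptional divisor $S_{0}$ is itself $J_{0}$-holomorphic. In (b), the pencil of fibers of the ruling $X \to \SS^{2}$, whose fibers are proper transforms of lines in $\CP^{2}$ through the blowup center, foliates $X$ by embedded $J_{0}$-holomorphic spheres of class $(0, 1)$, giving one through every point. In (c), the proper transform of an irreducible degree-$d$ plane rational curve with a $(d-1)$-fold point at the blowup center passing through the $2d$ prescribed points yields the desired curve, and a classical parameter count shows it exists and is unique for all tuples outside a codimension-$2$ subset. I would then transport each representative to an arbitrary tame $J$ via the standard continuation argument, using that the Gromov--Witten count is deformation-invariant once degenerations are excluded.

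The bubble-exclusion step will be the main technical point. Integrality and monotonicity of $(X, \omega_{X})$, together with the minimal value $\omega_{X}(S_{0}) = 1$, imply that every positive-area class has area at least $1$. In (a), the area-one class $(1, 0)$ therefore admits no nontrivial positive-area decomposition at all. In (b), the only candidate decomposition $T_{0} = S_{0} + (T_{0} - S_{0})$ is ruled out because $T_{0} - S_{0}$ has self-intersection $-3$ and $c_{1} = 1$, violating the adjunction inequality $C \cdot C \geq c_{1}(C) - 2$ for a pseudoholomorphic sphere. In (c), an analogous but longer enumeration of candidate decompositions of $(1, d) = S_{0} + d\, T_{0}$, combining adjunction on each summand with the generic position of the $2d$ constraint points, should eliminate all nontrivial degenerations and produce the codimension-$2$ set of excluded tuples. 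I expect this enumeration to be the most delicate portion of the argument, but to follow standard patterns in 4-dimensional pseudoholomorphic curve theory once properly organized.
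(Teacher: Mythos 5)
Your overall strategy matches the paper's: uniqueness via intersection counting and adjunction, existence for an integrable $J_{0}$, and bubble exclusion to propagate to arbitrary tame $J$. The small pieces also line up: the intersection argument for uniqueness is the same, the $J_{0}$-curves in (a) and (b) coincide with the paper's (zero section and fibers of the ruling $\PP(\mathcal{O}(-1)\oplus\CC)$), and your (c) existence representative (proper transform of a rational degree-$d$ plane curve with a $(d-1)$-fold point at the blowup center) is an equivalent description of the paper's meromorphic section.

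The one genuine gap is in the bubble-exclusion step for (c). You propose to enumerate candidate decompositions of $(1,d)$ and eliminate each by adjunction plus genericity, but without any further structure this enumeration is not finite, and it is not clear how to organize it. The paper's key organizing idea, which you have not identified, is to first establish the existence of the unique $(1,0)$-sphere and the foliation by $(0,1)$-spheres (parts (a) and (b)), and then use positivity of intersections with \emph{these already-constructed curves} to force any bubble configuration in class $(1,d)$ to take the specific form $(1,d_{1})+(0,d_{2})+\cdots+(0,d_{k})$ with all $d_{i}\ge 0$. Only after this reduction does the generic-position argument (at most $d-1$ constraint points on the $(1,0)$-sphere, no two on a common $(0,1)$-fiber, i.e.\ avoiding a codimension-$2$ locus) rule out every degeneration. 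Without this step your "adjunction on each summand" approach does not close. A similar but more minor issue appears in your (b) argument: $S_{0}+(T_{0}-S_{0})$ is not a priori the \emph{only} candidate decomposition into two area-one classes (e.g.\ $(3,-1)+(-3,2)$ also has the right area and Chern numbers); your adjunction method does kill those too, but the paper handles the full family systematically by writing $A_{i}=(a_{i},b_{i})$ with $a_{i}+2b_{i}=1$ and reducing to the inequality $0\le(a_{1}+1)(2b_{1}-a_{1})$.
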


\begin{proof}
Let $A$ denote any of the above homology classes. We first observe that because $A$ is primitive, every \emph{smooth} $J$-holomorphic sphere in the class $A$ must be simple and hence somewhere injective. It then follows from the adjunction inequality that any such sphere must be embedded. Furthermore, it follows from automatic regularity, with point constraints when necessary, that $J$ is regular for all spheres in the class $A$. The dimension of the corresponding constrained or unconstrained moduli space is always $0$. Furthermore, the self-intersection number $A \cdot A$ is strictly less than the number of point constraints (if any), implying that there is at most one sphere in the class $A$ satisfying these constraints.

Next, we prove existence of these curves in the class $A$ for the integrable complex structure $J_{0}$ on $X$. In this case the complex manifold $(X, J_{0})$ is known to be biholomorphic to the projectivization $\PP(\mathcal{O}(-1) \oplus \CC)$, where $\mathcal{O}(-1)$ is the line bundle of degree $-1$ over $\CP^{1}$. The unique $J_{0}$-holomorphic sphere in the class $(1, 0)$ is then precisely the zero section, and the sphere in the class $(0, 1)$ through a fixed point is precisely the compactified fiber of the above bundle passing through that point. The sphere in the class $(1, d)$ is a meromorphic section passing through $d - 1$ zeros on $S_{0}$ and $d$ poles on $S_{\infty}$, together with an additional normalizing point constraint chosen away from these two subsets. These points have to be chosen to lie on distinct fibers, meaning we have to avoid a codimension $2$ subset of $X^{2d}$.

Now, we show that spheres in the classes $(1, 0)$ and $(0, 1)$ exist for any tame $J$ by showing that no bubbling can occur, so that all relevant parametric moduli spaces, with path constraints in the case of (b), are compact. When $A = (1, 0)$, this follows since any decomposition $A = A_{1} + A_{2}$ with each $A_{i}$ representable by a $J$-holomorphic curve implies that $\omega_{X}(A_{i}) \geq 0$, and since $\omega_{X}(A) = 1$, this implies that one of the $A_{i}$ is the trivial class by integrality. When $A = (0, 1)$, a similar decomposition $A = A_{1} + A_{2}$ is trivial unless $\omega_{X}(A_{i}) = 1$ for $i = 1, 2$. Writing $A_{i} = (a_{i}, b_{i})$, this implies that $a_{i} + 2b_{i} = 1$, showing that this homology class is primitive. We also have $a_{1} + a_{2} = 0$ and $b_{1} + b_{2} = 1$. If $a_{1} = 0$ or $a_{2} = 0$, then the above decomposition would be trivial, so without loss of generality, we can assume that $a_{1} \geq 1$, and this forces $b_{1} \leq 0$. Then the adjunction inequality for spheres in the class $A_{2} = (-a_{1}, 1 - b_{1})$ eventually simplifies to
\[
0 \leq (a_{1} + 1)(2b_{1} - a_{1}),
\]
a contradiction since the right hand side is negative by assumption.

Finally, we show that no bubbling can occur when $A = (1, d)$ with $d \geq 1$, provided the point constraints are chosen away from a subset of codimension $2$ in $X^{2d}$. Using the fact that there exists $J$-holomorphic spheres in the classes $(1, 0)$ and $(0, 1)$ together with positivity of intersections, one sees that the only possible decomposition of the class $(1, d)$ into classes representable by $J$-holomorphic spheres is
\[
(1, d) = (1, d_{1}) + (0, d_{2}) + \cdots + (0, d_{k}),
\]
where $d_{1} + \cdots + d_{k} = d$ and $d_{i} \geq 0$ for each $i$. This then forces $k \leq d + 1$. Letting $S$ denote the unique $J$-holomorphic sphere in the class $(1, 0)$, we pick our point constraints so that at most $d - 1$ of them lie on $S$, and such that no two of them lie on the same $J$-holomorphic sphere in the class $(0, 1)$. We thus have to avoid a codimension $2$ subset of $X^{2d}$, and it is easy to check that any decomposition of the class $(1, d)$ as above would contradict this choice of constraints. Since we can also find disjoint paths of permissible point constraints, this shows that the corresponding parametric moduli space is always compact, and hence that the desired spheres exist for any $J$.
\end{proof}

\begin{rmk}\label{r:iso}
Implicit in the above proof is the fact that any two symplectic spheres in the class $A$ can be joined by a symplectic isotopy. In certain circumstances, we will also be able to find symplectic isotopies of certain \emph{nodal configurations} of symplectic spheres, such as the axes $Y = S_{0} \cup S_{\infty} \cup T_{0} \cup T_{\infty}$. By Corollary 3.7 in \cite{DRGI}, these symplectic isotopies can then be realized as Hamiltonian ones. By definition, we require that any two spheres in a nodal configuration intersect transversally, positively, and no more than once.
\end{rmk}

\subsection{A Displacement Result}

We now prove a displacement result, which, in addition to being needed in later sections, will be used to prove two important facts. The first of these will allow us to reduce the proof of Theorem \ref{t:main} to the case $n = 1$, and the second is a striking fact about integral tori in $(\DD_{n}, \omega_{\mspace{1mu}\DD_{n}})$. Here is the displacement result.

\begin{prp}\label{p:displaceclosed}
If $L$ is a Lagrangian torus in $(X, \omega_{X})$, then there is a Hamiltonian diffeomorphism $\varphi \colon X \to X$ such that $\varphi(L)$ is disjoint from $S_{0}$. If $L$ is also disjoint from $L_{1, 1}$, the corresponding Hamiltonian isotopy can be taken to be stationary on $L_{1, 1}$.
\end{prp}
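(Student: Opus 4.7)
The approach is to establish the purely topological fact $[L] \cdot [S_0] = 0$ and then produce an embedded symplectic $(-1)$-sphere in the class $[S_0]$ disjoint from $L$; the displacement will follow from the known uniqueness up to Hamiltonian isotopy of such spheres. For the topological step, any Lagrangian torus satisfies $\omega_X([L]) = 0$ and $[L] \cdot [L] = \chi(L) = 0$. Writing $[L] = a S_0 + b T_0$ in the basis fixed above, the area vanishing reads $a + 2b = 0$, so $a = -2b$, and substituting into $[L] \cdot [L] = -a^2 + 2ab$ (using $S_0 \cdot S_0 = -1$, $S_0 \cdot T_0 = 1$, $T_0 \cdot T_0 = 0$) gives
\[
[L] \cdot [L] \;=\; -4b^2 - 4b^2 \;=\; -8b^2,
\]
forcing $b = 0$, so $[L] = 0$ in $H_2(X; \ZZ)$ and in particular $[L] \cdot [S_0] = 0$.

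The next step is to produce an embedded symplectic $(-1)$-sphere $\tilde S \sub X$ in the class $[S_0]$ with $\tilde S \cap L = \es$. Granting this, the proposition follows quickly from the Gromov--McDuff result that in a closed, simply connected symplectic $4$-manifold, any two embedded symplectic $(-1)$-spheres in a fixed homology class are related by a Hamiltonian diffeomorphism: we obtain $\psi$ with $\psi(S_0) = \tilde S$, and then $\varphi := \psi^{-1}$ satisfies
\[
\varphi(L) \cap S_0 \;=\; \psi^{-1}(L) \cap \psi^{-1}(\tilde S) \;=\; \psi^{-1}(L \cap \tilde S) \;=\; \es.
\]

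To construct $\tilde S$, I would appeal to Proposition~\ref{p:blowupcurves}(a): for each tame almost complex structure $J$ on $X$, there is a unique embedded $J$-holomorphic sphere $S_J$ in the class $[S_0]$. The subset $\mathcal{J}^{\ast} = \{J : S_J \cap L = \es\}$ of the space of tame almost complex structures is open, so the task reduces to showing it is nonempty. The vanishing $[S_0] \cdot [L] = 0$ ensures that $S_J \cap L$ always has zero algebraic count for transverse $J$, so its points come in oppositely signed pairs. A parametric transversality and wall-crossing argument over a suitable $1$-parameter family of $J$'s should then permit successive cancellation of such pairs, eventually producing some $J$ with $S_J \cap L = \es$. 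This is the main obstacle: in dimension four, explicit geometric cancellation of intersection pairs requires delicate control over the global wall structure in the space of tame almost complex structures, even though the dimension counts and algebraic constraints are favorable.

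For the second assertion, note first that $L_{1, 1} \cap S_0 = \es$, since $L_{1, 1} = \mu^{-1}(1, 1)$ lies strictly inside the moment polytope while $S_0 = \mu^{-1}(\{x + y = 1\})$ is the exceptional edge. When $L$ is also disjoint from $L_{1, 1}$, the construction of $\tilde S$ can be performed with $L$ replaced by $L \cup L_{1, 1}$, and the resulting Hamiltonian diffeomorphism $\psi$ can be taken with compact support in the open set $X \sd L_{1, 1}$, yielding the desired stationarity of the corresponding isotopy on $L_{1, 1}$.
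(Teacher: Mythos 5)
Your homological reduction, showing $[L] = 0$ in $H_2(X)$ and in particular $[L] \cdot [S_0] = 0$, is a clean observation, and your final step — invoking uniqueness up to Hamiltonian isotopy of embedded symplectic $(-1)$-spheres in a fixed class and conjugating — is sound in principle. The conclusion then does follow \emph{if} one can produce an embedded symplectic sphere in the class $[S_0]$ disjoint from $L$.

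However, the construction of such a sphere is exactly the step you acknowledge as "the main obstacle," and it is a genuine gap: the wall-crossing cancellation you sketch is not known to work directly, and I do not see how to make it work without substantially more input. The moduli space of $J$-spheres in the class $[S_0]$ is a single point for each $J$, but as $J$ varies, geometric intersections of $S_J$ with the (non-$J$-invariant) submanifold $L$ can appear and disappear with no monotone control, and vanishing of the algebraic count does not by itself give an avenue to cancel the pairs. The paper's mechanism for filling precisely this gap is neck stretching along $S^* L$: one stretches a sequence $J_{t_i}$ to an almost complex structure adapted to the boundary of a Weinstein neighborhood of $L$ and considers the SFT limit of the unique $J_{t_i}$-spheres in the class $(1, 0)$. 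The limit building \emph{a priori} lives in the split manifold $(X \setminus L) \sqcup T^* L$, and an index-and-parity argument (Propositions~\ref{p:indexmiddle} and~\ref{p:indextop}, plus orientability of $L$ forcing even Maslov indices) rules out brokenness, so the limit is a smooth $J$-holomorphic sphere entirely contained in $X \setminus L$. This is the key idea your proposal is missing; the area hypothesis $\omega_X(S_0) = 1$ is what makes the index bookkeeping close. Once you have the disjoint sphere, the parametric moduli space argument (automatic regularity plus compactness) gives the symplectic isotopy to $S_0$, and Remark~\ref{r:iso} upgrades this to a Hamiltonian isotopy.

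For the second assertion, the sketch "replace $L$ by $L \cup L_{1,1}$" is incomplete: one must ensure the \emph{entire path} of spheres stays disjoint from $L_{1,1}$ while deforming the stretched $J$ back to one integrable near $S_0$, and along this path one cannot assume regularity. The paper handles this by using integrality of $L_{1,1}$ (Proposition~\ref{p:buildingfacts}(c)) to conclude that any limit building in the class $(1,0)$ is automatically unbroken, so the disjointness persists without a regularity hypothesis.
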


\begin{proof}
We first find an unbroken smooth symplectic sphere in the class $(1, 0)$ that is disjoint from $L$. To this end, fix any parameterization $\psi \colon \TT^{2} \to L$, let $J$ be an almost complex structure on $X \sd L$ that is adapted to $\psi$ and regular for somewhere injective curves, and let $\{J_{t}\}_{t \geq 0}$ be a family of tame almost complex structures on $(X, \omega_{X})$ that stretches to $J$. We can extract a sequence $(J_{t_{i}})_{i=1}^{\infty}$ and consider the unique sequence of embedded $J_{t_{i}}$-holomorphic spheres in the class $(1, 0)$, which exist by (a) of Proposition \ref{p:blowupcurves}. Applying Theorem \ref{t:compactness}, we obtain a limit building $\mathbf{F}$ in the class $(1, 0)$, which is a smooth (and possibly broken) sphere by (b) of Proposition \ref{p:buildingfacts}. Letting $u_{1}, \dots, u_{k}$ be the top level curves of $\mathbf{F}$ and $v_{1}, \dots, v_{l}$ be all remaining curves, we have
\[
0 = -2 + 2c_{1}((1, 0)) = \ind(\mathbf{F}) = \sum_{i=1}^{k}\ind(u_{i}) + \sum_{i=1}^{l}\ind(v_{i}) - m,
\]
where $m$ is the total number of Reeb orbits at which the above curves match asymptotically. Now we know from Proposition \ref{p:indexmiddle} that
\[
\ind(v_{i}) = -2 + 2s_{+}^{i} + s_{-}^{i} = -\chi(S_{i}) + s_{+}^{i},
\]
where $S_{i}$ is the punctured domain of $v_{i}$. We also have $s_{+}^{1} + \cdots + s_{+}^{k} = m$, so
\[
0 = \sum_{i=1}^{k}\ind(u_{i}) + \sum_{i=1}^{l}\ind(v_{i}) - m = \sum_{i=1}^{k}\ind(u_{i}) - \sum_{i=1}^{l}\chi(S_{i}).
\]
Since each domain $S_{i}$ has at least two punctures, we know that $\chi(S_{i}) \leq 0$ for each $i$. But now observe that $\ind(u_{i}) \geq 0$ for each $i$, which follows immediately from the fact that $J$ is regular when $u_{i}$ is somewhere injective, and then by Proposition \ref{p:indextop} if $u_{i}$ is multiply covered. The above equality then forces $\ind(u_{i}) = 0$ for each $i$. But by (a) of Proposition \ref{p:buildingfacts}, at least one of the $u_{i}$ must be a plane, and then $\ind(u_{i}) = \mu(\overline{u}) - 1$ by Proposition \ref{p:indextop} again. Since $L$ is orientable, the Maslov index $\mu(\overline{u})$ is even, which forces $\ind(u_{i}) > 0$, a contradiction. It follows that $\mathbf{F}$ is an unbroken smooth sphere in the class $(1, 0)$ that is disjoint from $L$.

Now extend $J$ in $\mathcal{U}(L)$ away from $\mathbf{F}$ to a smooth tame almost complex structure defined on all of $X$, and consider a path of tame almost complex structures $\{J_{t}\}_{t \in [0, 1]}$ with $J_{0}$ integrable near $S_{0}$ and $J_{1} = J$. It follows from (a) of Proposition \ref{p:blowupcurves} and automatic regularity that the parametric moduli space for spheres in the class $(1, 0)$ is a compact, $1$-dimensional cobordism that gives rise to a symplectic isotopy taking $\mathbf{F}$ to $S_{0}$. This can then be upgraded to a Hamiltonian isotopy as mentioned in Remark \ref{r:iso}, and the corresponding time-one map is the desired Hamiltonian diffeomorphism $\varphi$. This proves the first statement in the theorem.

Finally, assume that $L$ is disjoint from $L_{1, 1}$ and fix a parametrization $\psi_{1, 1}$ of $L_{1, 1}$. (An explicit parameterization is given in Subsection 5.1 below). Let $J$ be tame, regular for somewhere injective curves, and adapted to both $\psi$ and $\psi_{1, 1}$. The above arguments then go through with no modification to show that there is a unique unbroken smooth $J$-holomorphic sphere $\mathbf{F}$ in the class $(1, 0)$ that is disjoint from both $L$ and $L_{1, 1}$. Furthermore, there is such a sphere disjoint from $L_{1, 1}$ for \emph{all} tame almost complex structures adapted to $\psi_{1, 1}$, and not just regular ones. This is because $L_{1, 1}$ is integral, which by (c) of Proposition \ref{p:buildingfacts} implies that \emph{any} limit building in the class $(1, 0)$ is unbroken. As before, we take a path $\{J_{t}\}_{t \in [0, 1]}$ of tame almost complex structures on $X \sd L_{1, 1}$, with $J_{0}$ integrable near $S_{0}$ and $J_{1} = J$. The corresponding parametric moduli space is a compact, $1$-dimensional cobordism giving rise to a symplectic isotopy in $X \sd L_{1, 1}$ taking $\mathbf{F}$ to $S_{0}$. The corresponding Hamiltonian isotopy can then be extended to all of $X$ and taken to be stationary on $L_{1, 1}$, completing the proof of the second statement and consequently of the proposition.
\end{proof}

We now present two important applications of Proposition \ref{p:displaceclosed}. The first reduces the proof of Theorem \ref{t:main} to the case $n = 1$.

\begin{prp}\label{p:reduction}
If $L_{1, 1}$ is a maximal integral packing of $(\DD_{1}, \omega_{\mspace{1mu}\DD_{1}})$, then it is also a maximal integral packing of $(\DD_{n}, \omega_{\mspace{1mu}\DD_{n}})$ for $n = 2, \dots, 5$.
\end{prp}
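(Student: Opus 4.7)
The plan is induction on $n$, with base case $n=1$ being exactly the hypothesis of the proposition. For the inductive step, I assume the result for $(\DD_{n-1}, \omega_{\mspace{1mu}\DD_{n-1}})$ and suppose toward contradiction that there exists an integral Lagrangian torus $L \sub \DD_n$ disjoint from $L_{1, 1}$. Since $(\DD_n, \omega_{\mspace{1mu}\DD_{n}})$ is obtained from $(\CP^{2}, \omega_{\FS})$ by symplectically blowing up along $n$ disjoint Darboux balls of capacity $1$ contained in $\CP^{2} \sd L_{1, 1}$, I can single out one of the corresponding exceptional divisors $E \sub \DD_n$ whose almost complex blowdown produces a symplectic manifold symplectomorphic to $(\DD_{n-1}, \omega_{\mspace{1mu}\DD_{n-1}})$ with $L_{1, 1}$ preserved in its natural position. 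Concretely, I will first displace $L$ from $E$ by a Hamiltonian isotopy stationary on $L_{1, 1}$, then blow down $E$, and finally verify that the resulting Lagrangian torus in $\DD_{n-1}$ is still integral and disjoint from $L_{1, 1}$, contradicting the inductive hypothesis.

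The displacement step is an essentially verbatim adaptation of Proposition \ref{p:displaceclosed}, with the exceptional class $[E] \in H_{2}(\DD_n)$ playing the role of $(1, 0) \in H_{2}(X)$ used there. The relevant inputs all survive: $\omega_{\mspace{1mu}\DD_{n}}([E]) = 1$ and $c_{1}([E]) = 1$ by monotonicity, so every $J$-holomorphic sphere in this class has index $0$, and integrality prevents bubbling since an area-$1$ class cannot decompose as a sum of classes with positive integral areas. Combined with automatic regularity and the adjunction formula, this yields a unique embedded $J$-holomorphic representative of $[E]$ for every tame $J$ on $\DD_n$. Fixing parametrizations of $L$ and $L_{1, 1}$, choosing $J$ adapted to both, and stretching the neck gives, via Theorem \ref{t:compactness}, a limit building $\mathbf{F}$ in class $[E]$. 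Applying (c) of Proposition \ref{p:buildingfacts} for integrality of top-level areas together with the parity argument of Proposition \ref{p:displaceclosed}---the Maslov index of any disk with boundary on an orientable torus is even, so any top-level plane in a broken building would have positive index, contradicting $\ind(\mathbf{F}) = 0$---rules out breaking and forces $\mathbf{F}$ to be an unbroken smooth embedded sphere disjoint from $L \cup L_{1, 1}$. A parametric moduli argument along a path of $J$'s to one integrable near $E$ then produces the desired Hamiltonian diffeomorphism $\varphi$, stationary on $L_{1, 1}$, with $\varphi(L) \cap E = \es$.

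After replacing $L$ by $\varphi(L)$, I perform an almost complex blowdown of $E$ to produce a symplectic manifold $(M, \omega) \cong (\DD_{n-1}, \omega_{\mspace{1mu}\DD_{n-1}})$ with blowdown map $\beta \colon \DD_n \to M$ and image $L' = \beta(L)$ a Lagrangian torus disjoint from $L_{1, 1}$. Integrality of $L'$ follows from the area formula $\omega(\beta_{\ast}A) = \omega_{\mspace{1mu}\DD_{n}}(A) + (A \cdot E)$ of Proposition \ref{p:sympblowup}: any class $B \in H_{2}(M, L')$ admits a smooth $2$-chain representative which can generically be taken disjoint from the single point $\beta(E)$ (the latter has codimension $4$), and this chain lifts canonically to a $2$-chain in $\DD_n \sd E$ representing some $A \in H_{2}(\DD_n, L)$ with $\beta_{\ast}A = B$ and $A \cdot E = 0$, giving $\omega(B) = \omega_{\mspace{1mu}\DD_{n}}(A) \in \ZZ$. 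Invoking the inductive hypothesis then yields the desired contradiction.

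The principal technical point is the displacement step in the second paragraph; however, no genuinely new arguments are needed, since $[E]$ in $\DD_n$ enjoys the same existence, uniqueness, index, and integrality properties as the sphere class $(1, 0)$ in $X$. The remaining steps amount to bookkeeping enabled by the area formula of Proposition \ref{p:sympblowup} and a standard generic transversality argument in the $4$-manifold $\DD_{n-1}$.
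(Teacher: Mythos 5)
Your proof is correct and takes essentially the same approach as the paper: adapt the displacement result of Proposition \ref{p:displaceclosed} to push $L$ off the exceptional divisor(s) by a Hamiltonian isotopy stationary on $L_{1,1}$, then blow down and invoke the maximality hypothesis. The paper does this in a single pass (displacing $L$ from all $n$ exceptional divisors simultaneously and blowing down $n-1$ of them to land directly in $\DD_1$), whereas you run an induction that does one blowdown per step; this is a harmless repackaging of the same argument, with the same reliance on the area/index/integrality properties of the exceptional class, the parity argument ruling out breaking, and the independence of the blown-down manifold from the choices involved.
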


\begin{proof}
After $n \leq 5$ blowups, there are $n$ pairwise disjoint exceptional divisors $E_{1}, \dots, E_{n}$ in $(\DD_{n}, \omega_{\mspace{1mu}\DD_{n}})$, each of area $1$ and with self-intersection number $-1$. These are symplectically embedded spheres, and consequently, one can find a tame almost complex structure integrable near all of them. The same arguments used in the proof of Proposition \ref{p:blowupcurves} then show that for any tame $J$, there is a unique $J$-holomorphic sphere in the class of each $E_{i}$. If $L \sub \DD_{n}$ were an integral torus that is disjoint from $L_{1, 1}$, a straightforward adaptation of the proof of Proposition \ref{p:displaceclosed} produces a Hamiltonian isotopy in $(\DD_{n}, \omega_{\mspace{1mu}\DD_{n}})$ that is stationary near $L_{1, 1}$ and displaces $L$ from the configuration $E_{1} \cup \cdots \cup E_{n}$. Blowing down each of these except $E_{1}$ then yields an integral torus in $(\DD_{1}, \omega_{\mspace{1mu}\DD_{1}})$ that is disjoint from $L_{1, 1}$, a contradiction since this latter torus is a maximal integral packing of $(\DD_{1}, \omega_{\mspace{1mu}\DD_{1}})$.
\end{proof}

The second application of Proposition \ref{p:displaceclosed} is the following nice result about integral Lagrangian tori in $(\DD_{n}, \omega_{\mspace{1mu}\DD_{n}})$, which aside from being interesting in its own right, will greatly simplify some later considerations. Also, since the result is no harder to prove for $(\CP^{2}, \omega_{\FS})$, we include that case as well.

\begin{thm}\label{t:monotone}
All integral Lagrangian tori in $(\CP^{2}, \omega_{\FS})$ and $(\DD_{n}, \omega_{\mspace{1mu}\DD_{n}})$ for $n = 1, \dots, 8$ are monotone.
\end{thm}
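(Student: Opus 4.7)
The plan is, first, to reduce to the cases $(X, \omega_X) = (\DD_1, \omega_{\mspace{1mu}\DD_1})$ and $(\CP^2, \omega_{\FS})$, and then, within those cases, to use a neck-stretching argument to produce enough pseudoholomorphic disks with $\mu = 2\omega$ to apply Lemma \ref{l:monotone}. For the reduction, let $L \sub \DD_n$ be integral with $n \geq 2$; the $n$ exceptional divisors $E_1, \dots, E_n$ are each the image of a unique embedded $J$-holomorphic sphere (by the same reasoning used in the proof of Proposition \ref{p:reduction}), and the displacement technique of that proof produces a Hamiltonian isotopy displacing $L$ off $E_2, \dots, E_n$. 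Blowing these down gives an integral Lagrangian torus $L' \sub \DD_1$. Since each relative class in $\pi_{2}(\DD_n, L)$ decomposes as one pulled back from $\pi_2(\DD_1, L')$ plus an integer combination of the $[E_i]$'s, and since $\mu([E_i]) - 2\omega([E_i]) = 2 - 2 = 0$, monotonicity of $L'$ in $\DD_1$ implies monotonicity of $L$ in $\DD_n$. We may thus assume that $L$ lies in $(X, \omega_X)$ and (by Proposition \ref{p:displaceclosed}) is disjoint from $S_0$, or that $L \sub (\CP^2, \omega_{\FS})$.

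Monotonicity of the ambient with constant $1$ gives $\mu(A) - 2\omega(A) = 2c_{1}(A) - 2\omega(A) = 0$ on every spherical class $A$, so the assignment $f(\gamma) := \mu(v) - 2\omega(v)$, for any disk $v$ with $\partial v = \gamma \in H_{1}(L)$, descends to a well-defined homomorphism $f \colon H_{1}(L) \to \ZZ$ (integer-valued by integrality of $L$). By Lemma \ref{l:monotone} it suffices to show $f \equiv 0$. Fix $p \in L$ and a generic tame $J$ adapted to a parameterization of $L$. By Proposition \ref{p:blowupcurves}(b) there is a unique embedded $J$-holomorphic sphere $S_{p}$ through $p$ in class $(0, 1)$, of area and Chern number $2$. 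Stretching the neck along $L$ and applying Theorem \ref{t:compactness} yields a limit building $\mathbf{F}$ in class $(0, 1)$. Because $S_{p}$ meets the zero section $L \sub T^{\ast}L$, the building is broken; Proposition \ref{p:buildingfacts}(a),(c) combined with total top-level area $2$ force the top level to consist of exactly two planes $v_{1}, v_{2}$ of area $1$ each, while the matching of Reeb orbits with the level below forces the bottom to be a single $J_{\mathrm{std}}$-holomorphic cylinder through $p$ whose ends lie on simple Reeb orbits. In particular $\partial v_{1} = -\partial v_{2} =: \gamma^{p}$ is primitive in $H_{1}(L)$.

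A direct index computation using $\ind(\mathbf{F}) = -2 + 2c_{1} = 2$ together with Propositions \ref{p:indexmiddle} and \ref{p:indextop} gives $\mu(\overline{v_{1}}) + \mu(\overline{v_{2}}) = 4$. Primitivity of $\gamma^{p}$ makes each $v_{i}$ somewhere injective, so automatic regularity and the orientability of $L$ (evenness of $\mu$) force $\mu(\overline{v_{i}}) \geq 2$; combined with the previous identity this yields $\mu(\overline{v_{i}}) = 2 = 2\omega(v_{i})$, hence $f(\pm\gamma^{p}) = 0$. To obtain a second, linearly independent primitive class on which $f$ vanishes, we repeat the argument with a $J$-holomorphic sphere in class $(1, 1)$ through two generically chosen points of $L$ (unique by Proposition \ref{p:blowupcurves}(c) with $d = 1$). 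For generic choices, the same reasoning forces the top level to consist of three planes of area $1$ and Maslov $2$ each, whose boundary classes sum to zero and span a rank-two sublattice of $H_{1}(L)$. For $(\CP^{2}, \omega_{\FS})$ the same strategy applies directly with a line through $p$, of area and Chern number $3$. Lemma \ref{l:monotone} then completes the proof. The main obstacle is this last step: ruling out unbalanced splittings such as the configuration $(\mu, \omega) = (4, 1) + (2, 2)$ in the $(1, 1)$ case, which is consistent with total area and the index identity but corresponds to nonvanishing $f$. Handling such degenerate limits requires a careful genericity analysis of the $J_{\mathrm{std}}$-holomorphic cylinders and pairs-of-pants in $T^{\ast}L$ passing through the chosen points, together with a suitable perturbation of the point constraints within $L$.
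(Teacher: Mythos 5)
Your approach diverges from the paper's after the reduction step, and the gap you flag at the end is genuine rather than a technicality. Your first neck-stretching step (a sphere in class $(0,1)$ through $p \in L$ breaking into two area-$1$ planes) is sound and correctly establishes $f(\pm\gamma^{p}) = 0$ for the foliation class $\beta_{\mathfrak{r}}$. But the second step is the whole difficulty. For an integral but non-monotone $L$ there can exist somewhere injective $J$-holomorphic planes with, say, $(\omega, \mu) = (1, 4)$ and $(2, 2)$ whose index $-2 + 1 + \mu(\overline u)$ is nonnegative, so these configurations are not excluded by regularity of a generic $J$ and can perfectly well appear as top-level curves of the limit of a $(1,1)$-sphere through two points on $L$. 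Perturbing the point constraints within $L$ does not help: the obstruction is an algebraic fact about $f \colon H_{1}(L) \to \ZZ$, not a transversality issue, and a curve of index $1$ or $3$ will survive perturbations. So "a careful genericity analysis" is not a plausible repair, and the argument as proposed does not close.

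The paper sidesteps exactly this problem by going quantitative. After the reduction to $\CP^2$ (via Proposition \ref{p:reduction} and blowing down, rather than stopping at $\DD_1$), it displaces $L$ from $S_{\infty}$ using Theorem C of \cite{DRGI} so that $L$ sits inside a Darboux ball $B(3) \subset (\RR^4, \omega_0)$. It then borrows from \cite{CM2} a single smooth disk $v_1$ with $\omega_0([v_1]) = 1$, $\mu([v_1]) = 2$, and $v_1(\DD^2) \sub B(3)$, giving one basis vector $e_1$ with $f(e_1) = 0$. Rather than trying to produce a second such disk by a further degeneration (which is where you get stuck), it takes any completing $e_2$, normalizes $\mu(e_2) = 2$ using evenness, and observes that if $\omega_0(e_2) \neq 1$ one may arrange $\omega_0(e_2) \geq 2$ by the substitution $e_2 \mapsto 2e_1 - e_2$. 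At that point Theorem 2 of \cite{HO} — a quantitative constraint saying that a torus in $B(R)$ carrying a basis with $\omega_0(e_1) = 1$, $\omega_0(e_2) \geq 2$, $\mu(e_1) = \mu(e_2) = 2$ forces $R > 3$ — contradicts $L \subset B(3)$. That embedding constraint is precisely the extra input that rules out the "unbalanced" possibilities you were trying to exclude by limit analysis alone, and is why the paper's proof terminates cleanly. If you want to salvage your strategy, the natural move is exactly this: do not fight the degeneration combinatorics, but instead import a capacity-type rigidity result that the hypothetical non-monotone basis would violate.
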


\begin{proof}
First off, the proof of Proposition \ref{p:reduction} shows that any integral torus in $(\DD_{n}, \omega_{\mspace{1mu}\DD_{n}})$ can be displaced from the configuration $E_{1} \cup \cdots \cup E_{n}$ by a Hamiltonian isotopy. Blowing down these exceptional divisors then yields an integral torus in $(\CP^{2}, \omega_{\FS})$, and it is therefore enough to prove the theorem in this case. Hence let $L \sub \CP^{2}$ be an integral torus, and use Theorem C of \cite{DRGI} to displace $L$ from $S_{\infty}$ by a Hamiltonian isotopy, thereby placing it inside the open ball $B(3)$. We can thus assume that $L$ is an integral torus in $(\RR^{4}, \omega_{0})$, and in this setting a homological argument allows us to recast the Maslov and area homomorphisms as maps from $H_{1}(L)$. By Lemma \ref{l:monotone}, it is then enough to find an integral basis $\{e_{1}, e_{2}\}$ of $H_{1}(L)$ such that $\mu(e_{i}) = 2\omega_{0}(e_{i})$ for $i = 1, 2$.

We now appeal to the results in Section 1.1 of \cite{CM2} to obtain a smooth map $v_{1} \colon (\DD^{2}, \SS^{1}) \to (\CP^{2}, L)$ such that $\omega_{\FS}([v_{1}]) = 1$ and $\mu([v_{1}]) = 2$. More precisely, the proofs of those results produce three maps, each satisfying these requirements, and which are pseudoholomorphic away from $L$ and obtained from the splitting procedure as a limit of spheres in the class $S_{\infty}$. By positivity of intersections, only one of these can then intersect $S_{\infty}$, and so we may assume without loss of generality that $v_{1}(\DD^{2}) \sub B(3)$. Letting $e_{1}$ be the class of  $v_{1}\vert_{\SS^{1}}$, we get $\mu(e_{1}) = 2$ and $\omega_{0}(e_{1}) = 1$. Now choose any integral class $e_{2}$ such that $(e_{1}, e_{2})$ is a basis for $H_{1}(L)$. Since $L$ is orientable, we know that $\mu(e_{2})$ is even, and by adding to it integer multiples of $e_{1}$ if necessary, we may assume that $\mu(e_{2}) = 2$. We will then be done if we can show that $\omega_{0}(e_{2}) = 1$. If this were not the case, then either $\omega_{0}(e_{2}) > 1$ or $\omega_{0}(e_{2}) < 1$, in which case we redefine $e_{2}$ to be the class $e_{2} + 2(e_{1} - e_{2})$. We are then left with an integral basis $(e_{1}, e_{2})$ of $H_{1}(L)$ such that
\[
\omega_{0}(e_{1}) = 1, \qquad \omega_{0}(e_{2}) \geq 2, \qquad \mu(e_{1}) = 2, \qquad \mu(e_{2}) = 2.
\]
This puts us in the scenario of Theorem 2 in \cite{HO}, which says that in the presence of such a basis, the inclusion $L \sub B(R)$ implies that $R > 3$, contradicting the present situation.
\end{proof}

\subsection{Relative Foliations}

Let $L = L_{1} \cup \cdots \cup L_{k}$ be a disjoint union of integral Lagrangian tori in $(X, \omega_{X})$, itself disjoint from $S_{0}$. By Proposition \ref{p:displaceclosed}, this is not a very restrictive situation. (The proof adapts in a straightforward way to deal with several tori.) We then fix a parametrization $\psi_{i} \colon \TT^{2} \to L_{i}$ for each $i$, and always assume that the induced Weinstein neighborhoods are pairwise disjoint and miss $S_{0}$. Finally, we choose an almost complex structure $J$ on $X \sd L$ that is adapted to all the $\psi_{i}$ and that is integrable near $S_{0}$. We will use this collection of data to construct a foliation of $X \sd L$ by $J$-holomorphic spheres and planes that is in some sense \emph{compatible with $L$}. Its existence and properties underlie all subsequent arguments in this paper. The construction is carefully discussed in \cite{DRGI} and \cite{HL}, and we use these as our primary references for this subsection. We will present all the machinery that we will need in the form of a single all-encompassing theorem.

\begin{thm}\label{t:foliation}
There is a foliation $\mathcal{F} = \mathcal{F}(L_{1}, \dots, L_{k}, \psi_{1}, \dots, \psi_{k}, J)$ of $X \sd L$ and a unique \textbf{foliation class} $\beta_{i} \in H_{1}(L_{i})$ for each $i$, which satisfy the following properties:
\begin{enumerate}
\item The foliation $\mathcal{F}$ is comprised of both \textbf{unbroken} leaves, which are $J$-holomorphic spheres in the class $(0, 1)$, as well as \textbf{broken} leaves, which are pairs of $J$-holomorphic planes asymptotic to Reeb orbits in the Morse-Bott families $\Gamma_{\beta_{i}}$ and $\Gamma_{-\beta_{i}}$ for some $i$.
\item The leaves of $\mathcal{F}$ are all simply covered and embedded, and the planes comprising a broken leaf are asymptotic to simply covered orbits. In particular, the foliation classes $\beta_{i}$ are all primitive.
\item Every leaf of $\mathcal{F}$ has a single transverse intersection with $S_{0}$, which for a broken leaf occurs with the plane asymptotic to an orbit in $\Gamma_{\beta_{i}}$.
\end{enumerate}
Moreover, the foliation classes $\beta_{i}$ are independent of the choice of $J$.
\end{thm}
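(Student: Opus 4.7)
The plan is to produce $\mathcal{F}$ as the collection of top-level pieces of pseudoholomorphic buildings obtained by stretching the neck of $(X, \omega_X)$ along $L$. Fix a family $\{J_t\}_{t \geq 0}$ of tame almost complex structures stretching to $J$ and integrable near $S_0$ for all $t$, and for each $p \in X \sd L$ consider the unique embedded $J_t$-holomorphic sphere $u_t^p$ in class $(0, 1)$ through $p$ given by Proposition \ref{p:blowupcurves}(b). Theorem \ref{t:compactness} extracts from the sequence $(u_{t_i}^p)_{i = 1}^{\infty}$ a limit building $\mathbf{F}_p$ in class $(0, 1)$ still passing through $p$. The leaves of $\mathcal{F}$ will be the top-level images of these buildings as $p$ varies.

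The central step is to classify each $\mathbf{F}_p$. Since $\omega_X(0, 1) = 2$ and every top-level curve has integer area by Proposition \ref{p:buildingfacts}(c), the top-level areas are positive integers summing to $2$. If $\mathbf{F}_p$ is broken, Proposition \ref{p:buildingfacts}(a) requires at least two top-level planes, leaving exactly two top-level planes of area $1$ apiece and no other top-level curves. The total index equation $\ind(\mathbf{F}_p) = -2 + 2c_1(0, 1) = 2$, together with Propositions \ref{p:indexmiddle} and \ref{p:indextop}, orientability of $L$ (making Maslov indices even), and the fact that multiple covers do not decrease indices, forces each top-level plane to have exactly one simply covered negative puncture whose compactification has Maslov $2$. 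Because bottom-level pieces in $(T^{\ast}L, d\lambda)$ cannot connect different tori $L_i$ and $L_j$, the two asymptotic orbits lie on a common $S^{\ast}L_i$; balancing their total boundary in $H_1(L_i)$ to zero forces their homology classes to be negatives of each other, and simple coverage makes this common primitive class the candidate $\beta_i$. In the unbroken case $\mathbf{F}_p$ is a single smooth $J$-holomorphic sphere in class $(0, 1)$ disjoint from $L$.

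With this local classification in hand, the foliation properties follow from four-dimensional pseudoholomorphic techniques. Primitivity of the relative classes of the top-level pieces makes them somewhere injective, and adjunction makes them embedded. Positivity of intersections applied to $(0, 1) \cdot (0, 1) = T_0 \cdot T_0 = 0$ shows distinct leaves have disjoint images, and since each $p \in X \sd L$ lies on a leaf by construction, the leaves partition $X \sd L$. The intersection number $(0, 1) \cdot S_0 = T_0 \cdot S_0 = 1$, combined with positivity and the fact that $S_0$ is $J$-holomorphic (Proposition \ref{p:blowupcurves}(a), using that $J$ is integrable near $S_0$), forces each leaf to meet $S_0$ transversely in exactly one point; for a broken leaf this distinguishes one of the two planes, and $\beta_i$ is declared to be its asymptotic class.

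Uniqueness and $J$-independence of $\beta_i$ follow by a parametric cobordism argument. Given two admissible choices $J$ and $J'$, join them by a generic path $\{J^s\}_{s \in [0, 1]}$ of adapted almost complex structures integrable near $S_0$, and form the associated parametric moduli space of broken leaves at $L_i$. For each fixed primitive $\alpha \in H_1(L_i)$, this moduli space is cut out by a Fredholm problem of expected dimension $0$, so the asymptotic class $\beta_i^{(s)}$ varies in a discrete set and must be locally, hence globally, constant in $s$. The main obstacle is the classification step: ruling out buildings with multiply covered asymptotic orbits, more than two top-level planes, or complicated middle-level towers is where the integrality of the $L_i$ and the rigid index-area bookkeeping do the real work.
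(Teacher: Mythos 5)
Your overall strategy — stretching the neck, using integrality plus area $2$ to force either an unbroken $(0,1)$-sphere or exactly two area-$1$ top-level planes, and then reading off the foliation structure — matches the paper. However, two of the load-bearing steps are not actually carried by the arguments you invoke, and a third is glossed over.

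First, you claim that the index bookkeeping forces each top-level plane to have a \emph{simply covered} asymptotic orbit. It does not. The area/monotonicity/index argument forces each plane to have Maslov index $2$ and Fredholm index $1$, but nothing in that accounting constrains the multiplicity of the asymptotic Reeb orbit; a plane of area $1$ and Maslov index $2$ could a priori compactify to a disk whose boundary lies in a non-primitive class of $H_1(L_i)$. The paper's proof of simple coverage is genuinely geometric: it builds a symplectically embedded sphere $S$ in the class $(1,1)$ disjoint from $S_0$, blows down $S_0$, appeals to McDuff's classification (Theorem 1.4 of \cite{Mc1}) to identify $S$ with $S_\infty$ in $\CP^2$, and then uses that the compactification of the plane $v$ lands in $\RR^4 \cong \CP^2 \sd S_\infty$, where the connecting map $H_2(\RR^4, L_i) \to H_1(L_i)$ is an isomorphism sending a primitive class to a primitive class. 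None of this is captured by the index computation.

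Second, your treatment of uniqueness of $\beta_i$ conflates two distinct issues. The parametric cobordism argument addresses only the $J$-independence; it does not rule out the existence, for a \emph{single fixed} $J$, of two broken leaves at $L_i$ whose asymptotics lie in Morse--Bott families $\Gamma_{\beta_i}$ and $\Gamma_{\gamma_i}$ with $\gamma_i \neq \pm\beta_i$. A dimension-$0$ Fredholm problem per fixed class $\alpha$ does not prevent several distinct $\alpha$'s from being realized simultaneously. The paper closes this gap by a positivity-of-intersections argument (via Lemma 5.8 and Lemma 4.2 / Corollary 4.3 of \cite{DRGI}): if $\beta_i$ and $\gamma_i$ are both primitive and not collinear, the lower-level cylinder structure forces curves of the two buildings to intersect in a nonempty discrete set, which is forbidden, and a further area computation excludes $\gamma_i = -\beta_i$. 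Finally, your assertion that the partition of $X \sd L$ by leaves is a \emph{smooth foliation} (not merely a set-theoretic partition) needs the automatic regularity of punctured curves and an extended (Siefring) intersection-theory argument as in \cite{HL}; deducing disjointness from the closed intersection number $(0,1) \cdot (0,1) = 0$ alone does not apply directly to punctured curves. The skeleton of your proposal is right, but the three steps you flag as "the real work" are precisely the ones left unargued.
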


\begin{proof}
Take a family $\{J_{t}\}_{t \geq 0}$ of tame almost complex structures on $(X, \omega_{X})$ that stretches to $J$, and consider buildings in the class $(0, 1)$ obtained as limits of $J_{t_{i}}$-holomorphic spheres in the class $(0, 1)$ passing through any fixed point in $X$, where $t_{i} \to +\infty$ as $i \to +\infty$. These sequences exist by (b) of Proposition \ref{p:blowupcurves}, and the corresponding limit building $\mathbf{F}$ is smooth by (b) of Proposition \ref{p:buildingfacts}, and has area $2$. It is therefore either an unbroken smooth sphere in $X \sd L$ in the class $(0, 1)$, or it is broken, in which case (a) of Proposition \ref{p:buildingfacts} implies that it must have at least two planes in its top level. But since the $L_{i}$ are all integral, these planes will have integral area by (c) of Proposition \ref{p:buildingfacts}, and thus $\mathbf{F}$ has exactly two planes in its top level, each of area $1$. There can be no additional top level curves, and curves in the middle and bottom levels are all cylinders. As explained in Section 5 of \cite{HL}, a diagonal subsequence argument together with positivity of intersections shows that the collection of top level curves of all such buildings give a partition of $X \sd L$. That it is actually a smooth foliation follows from automatic regularity for punctured curves, see \cite{We1}, together with an extended intersection number argument, as in the proof of Proposition 5.1 in \cite{HL}. To avoid a digression into Siefring intersection theory, we refer the reader to that source for the full argument.

Since $J$ is integrable near $S_{0}$, it follows from positivity of intersections that exactly one of the top level planes of a broken sphere as above intersects $S_{0}$. We denote this plane by $u$ and the remaining one by $v$. Suppose these are asymptotic to Reeb orbits on $S^{\ast}L_{i}$ for some fixed $i$. Then $u$ is asymptotic to an orbit contained in $\Gamma_{\beta_{i}}$ for some $\beta_{i} \in H_{1}(L_{i})$, and for topological reasons, the plane $v$ must be asymptotic to an orbit in $\Gamma_{-\beta_{i}}$. Furthermore, both $u$ and $v$ are simply covered since they have area $1$, and it follows that they must be embedded, being somewhere injective components of a limit of closed embedded spheres. To see that their asymptotic orbits are also simply covered, fix an unbroken sphere from our partition in the class $(0, 1)$ intersecting $S_{0}$ at a single point, and follow the proof of Proposition 4.12 in \cite{LU} to construct a symplectically embedded sphere $S$ in the class $(1, 1)$ that is disjoint from $S_{0}$. Observe that $S$ is also disjoint from $v$, and blowing down $S_{0}$ yields a plane $v$ in $(\CP^{2}, \mspace{2mu}\omega_{\FS})$. By Theorem 1.4 of \cite{Mc1}, there is a symplectomorphism of $(\CP^{2}, \mspace{2mu}\omega_{\FS})$ with itself taking $S$ to $S_{\infty}$. Keeping the same notation, the plane $v$ is now contained in $\CP^{2} \sd S_{\infty}$, and hence a subset of  $(\RR^{4}, \omega_{0})$. Viewing $L_{i}$ as a torus in $\RR^{4}$, it now follows that the compactification of $v$ represents a primitive homology class in $H_{2}(\RR^{4}, L_{i})$. But then it also follows that its boundary $-\beta_{i}$ represents a primitive class in $H_{1}(L_{i})$, since the connecting homomorphism $H_{2}(\RR^{4}, L_{i}) \to H_{1}(L_{i})$ is an isomorphism. Therefore $-\beta_{i}$ and $\beta_{i}$ are primitive classes in $H_{1}(L_{i})$, implying that the asymptotic orbits of $u$ and $v$ are simply covered.

Now consider another limit building and suppose its asymptotic orbits belonged to $\Gamma_{\gamma_{i}} \cup \Gamma_{-\gamma_{i}}$ for $\gamma_{i} \neq \beta_{i}$ in $H_{1}(L_{i})$. As shown in Lemma 5.8 of \cite{DRGI}, positivity of intersections implies that no component of this building can intersect a component of our earlier building in a discrete and nonempty set. But $\beta_{i}$ and $\gamma_{i}$ are both primitive classes, hence not collinear, and the description of cylinders in lower levels from Lemma 4.2 and Corollary 4.3 of \cite{DRGI} would then \emph{force} some of these curves to have a nonempty and discrete intersection, a contradiction. Hence $\gamma_{i} = \pm\beta_{i}$, and in fact $\gamma_{i} = \beta_{i}$, since otherwise, after possibly reparameterizing the plane asymptotic to an orbit in $\Gamma_{\gamma_{i}}$ and using an explicit cylinder in $T^{\ast}L_{i}$ as described in Lemma 4.2 of \cite{DRGI}, we can construct a sphere in some class $(a, b)$ of area $2$ and with two intersections with $S_{0}$. These conditions would force $a = -2/3$, which is impossible. This shows that the foliation classes $\beta_{i}$ are well defined.

Finally, we show that the foliation classes $\beta_{i}$ are independent of $J$. Choose a path $\{J_{t}\}_{t \in [0, 1]}$ of tame almost complex structures that are all adapted to the $\psi_{i}$ and integrable near $S_{0}$, with $J_{0} = J$. The planes corresponding to broken leaves of the foliation obtained from $J$ divide into moduli spaces $\mathcal{M}_{\beta_{i}}(J)$ and $\mathcal{M}_{-\beta_{i}}(J)$ of planes with asymptotics in $\Gamma_{\beta_{i}}$ and $\Gamma_{-\beta_{i}}$, respectively. The corresponding parametric moduli spaces with asymptotics in these same families are all smooth by automatic regularity, and compact since area $1$ planes cannot degenerate further by the integrality of the $L_{i}$. We therefore obtain smooth moduli spaces $\mathcal{M}_{\beta_{i}}(J_{1})$ and $\mathcal{M}_{-\beta_{i}}(J_{1})$ of embedded, simply covered $J_{1}$-holomorphic planes with simply covered asymptotic orbits, and the planes in $\mathcal{M}_{\beta_{i}}(J_{1})$ have a single transverse intersection with $S_{0}$. If we then construct a foliation of $X \sd L$ by $J_{1}$-holomorphic planes and spheres as in the first paragraph, positivity of intersections forces the resulting top level planes of broken spheres to coincide with the union of the two moduli spaces above. Hence the foliation $\mathcal{F}$ exists and has the desired properties for $J_{1}$, concluding the proof of the theorem.
\end{proof}

The foliation $\mathcal{F}$ constructed in the previous theorem induces a smooth projection map $\pi \colon X \to S_{0}$, which is defined as follows:
\begin{enumerate}
\item[$\bullet$] If $p \in X \sd L$, then $p$ lies on a unique (broken or unbroken) leaf of $\mathcal{F}$, and this leaf has a unique component that intersects $S_{0}$. Then $\pi(p)$ is defined to be this point of intersection.
\item[$\bullet$] If $p \in L_{i}$ for some $i$, then $p$ lies on a unique geodesic in the class $\beta_{i}$, since these foliate the torus. This geodesic corresponds to a unique simply covered orbit on $S^{\ast}L_{i}$, and the same argument used to prove Lemma 5.2 in \cite{HL} shows that there is a \emph{unique} broken leaf of $\mathcal{F}$ with a plane component asymptotic to the above orbit. Then $\pi(p)$ is defined to be the unique point of intersection of this leaf with $S_{0}$.
\end{enumerate}
As shown in Section 5 of \cite{DRGI}, the images $\pi(L_{i}) \sub S_{0}$ are pairwise disjoint embedded circles. Furthermore, it follows from Proposition 5.16 in \cite{DRGI} that the fiber $\pi^{-1}(p)$ for $p \in \pi(L_{i})$ is a smooth sphere obtained by gluing compactifications of planes in $\mathcal{M}_{\beta_{i}}(J)$ and $\mathcal{M}_{-\beta_{i}}(J)$ with matching orbits, after perturbing these planes in an arbitrarily small neighborhood of $L_{i}$. The intersection $L_{i} \cap \pi^{-1}(p)$ is the (unoriented) geodesic on $L_{i}$ covered by the above orbits.

\subsection{Essential Curves}

We carry over the setup and notation from the previous subsection. If $u$ is a punctured $J$-holomorphic sphere in $X \sd L$, it will be convenient to say $u$ has a \emph{puncture on $L_{i}$} if it has a puncture at which it is asymptotic to a Reeb orbit on $S^{\ast}L_{i}$. We will say such a puncture is of \emph{foliation type} if this orbit covers a closed geodesic on $L_{i}$ that is an integer multiple of the foliation class $\beta_{i}$. We then say the puncture is \emph{positive} or \emph{negative} according to whether this integer is positive or negative, respectively. (This should hopefully not cause confusion with the standard usage of these terms, since our definition applies only to curves in $X \sd L$.) The following lemma gives a geometric description of the behavior of punctures of foliation type. It follows directly from the standard exponential convergence theorem for punctured curves, which is Theorem 1.4 in \cite{HWZ}.

\begin{lem}\label{l:punctures}
Let $u$ be a punctured $J$-holomorphic sphere in $X \sd L$ and $\{C_{j}\}_{j=1}^{\infty}$ be a sequence of circles contained in a coordinate neighborhood of a puncture of $u$ on $L_{i}$. Assume each $C_{j}$ winds once around the puncture and converges to it in the Hausdorff topology.
\begin{enumerate}
\item[(a)] If the puncture is of foliation type, then the sets $\pi(u(C_{j}))$ converge to a single point on $\pi(L_{i})$. Additionally, each such set either coincides with the point, which implies that $u$ covers a plane in a broken leaf of $\mathcal{F}$, or it winds nontrivially around the point.
\item[(b)] If the puncture is not of foliation type, then the sets $\pi(u(C_{j}))$ converge to the entire circle $\pi(L_{i})$.
\end{enumerate}
\end{lem}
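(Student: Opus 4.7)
My plan is to reduce the lemma to the exponential convergence theorem of Hofer--Wysocki--Zehnder (Theorem 1.4 of \cite{HWZ}). I would work in cylindrical coordinates $(s,t)$ on a punctured disk around the puncture, where that theorem provides a representation
\[
u(s,t) = \big(a(s,t),\, \tilde u(s,t)\big) \in \RR \times S^{\ast}L_i
\]
with $a(s,t) \to \pm\infty$ and $\tilde u(s,t) \to \gamma(Pt + t_0)$ uniformly in $t$ as $s \to \pm\infty$, the convergence in fact being exponential. It follows that for $j$ large, $u(C_j)$ lies arbitrarily close to the asymptotic orbit $\gamma$, and its bundle projection to $L_i$ converges in the Hausdorff topology to the underlying closed geodesic $G$ covered by $\gamma$. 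Since $\pi\colon X \to S_0$ is smooth by construction, I can then pass via continuity to $\pi(u(C_j)) \to \pi(G)$ in the Hausdorff topology on $S_0$.

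For case (b), the hypothesis that the puncture is not of foliation type says $G$ represents a class in $H_1(L_i)$ that is not an integer multiple of the primitive class $\beta_i$; hence $G$ transversely meets every closed $\beta_i$-geodesic on $L_i$. Since $\pi|_{L_i}$ is exactly the quotient map collapsing each $\beta_i$-geodesic to a point (as explained in the paragraph following Theorem~\ref{t:foliation}), this forces $\pi(G) = \pi(L_i)$, and combined with the Hausdorff convergence above this yields $\pi(u(C_j)) \to \pi(L_i)$.

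For case (a), $G$ represents an integer multiple of $\beta_i$, so its image on $L_i$ traces a single $\beta_i$-geodesic, contained in the unique fiber $\pi^{-1}(q) \cap L_i$ for some $q \in \pi(L_i)$. Thus $\pi(G) = \{q\}$ and the Hausdorff convergence gives $\pi(u(C_j)) \to \{q\}$. For the dichotomy I would argue by cases: if $u$ has the same image, near the puncture, as one of the two $J$-holomorphic planes $P_{\pm}$ comprising the broken leaf $\pi^{-1}(q)$, then unique continuation for $J$-holomorphic maps forces $u$ to be a multiple cover of that plane, whence $\pi(u(C_j)) = \{q\}$ for all large $j$. Otherwise, positivity of intersections for punctured $J$-holomorphic curves (Siefring theory, as in \cite{We1}) produces a strictly positive asymptotic intersection number between $u$ and at least one of $P_{\pm}$ at the puncture; geometrically this intersection number equals the winding number of $\pi \circ u|_{C_j}$ around $q$ in $S_0$ for $j$ large, using the local trivialization of $\mathcal{F}$ near $\pi^{-1}(q)$ as a $J$-holomorphic disk fibration over a neighborhood of $q$ in $S_0$.

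The hardest step will be this last identification in case (a), relating the Siefring asymptotic intersection number to the winding number of $\pi \circ u|_{C_j}$ around $q$. My plan for this is to build an explicit local model of $\mathcal{F}$ near a broken leaf and match the leading asymptotic eigenfunction of $u$ at the puncture with the transverse direction to $\pi^{-1}(q)$; both ingredients are standard in the punctured curve literature but will require some care in our setting.
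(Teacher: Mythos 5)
The paper's own "proof" of this lemma is a single sentence citing the HWZ exponential convergence theorem, so there is no detailed argument to compare against. Your outline is consistent with that citation and fills in genuine detail: the reduction to the Hausdorff limit of $u(C_j)$ via the asymptotic formula, the observation that a non-$\beta_i$-direction geodesic $G$ sweeps out all of $L_i$ (so $\pi(G)=\pi(L_i)$) while a $\beta_i$-direction geodesic stays in one fiber $\pi^{-1}(q)\cap L_i$ (so $\pi(G)=\{q\}$), and the unique-continuation step for the ``constant projection $\Rightarrow$ covers a leaf'' implication are all sound and in the spirit of what the paper leaves implicit.

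The genuine gap is exactly where you flag it: the claim that the Siefring asymptotic intersection number of $u$ with $P_\pm$ equals the winding number of $\pi\circ u|_{C_j}$ around $q$. This is not a routine translation, because the ``local model of $\mathcal{F}$ near a broken leaf as a $J$-holomorphic disk fibration'' is singular along $C_{\mathfrak{r}}\cup C_{\mathfrak{s}}$: as the base point approaches $q$, the unbroken leaf develops a long neck and degenerates into the broken leaf $P_+\cup P_-$ through $L_i$. You would need to verify that the fibration picture persists in a uniform way along the cylindrical end (this is where the Morse--Bott asymptotics and the straightening of the foliation near $L_i$ come in), and also handle the fact that $u$'s asymptotic orbit may be a multiple cover of that of $P_\pm$, which affects how the Siefring count compares with a raw winding number. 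As written, the identification is asserted rather than proved.

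It is worth noting that a shorter route to the dichotomy in (a) is available and probably what the paper intends: by positivity of intersections of $u$ with the nearby leaves of $\mathcal{F}$, the map $\pi\circ u$ is an orientation-preserving, open, branched map off the puncture; since $\pi\circ u$ extends continuously over the puncture with value $q$ (this is exactly the Hausdorff convergence you already established), the extended map is either constant (giving the covering case) or has a well-defined positive local degree at the puncture, which is precisely the winding of $\pi(u(C_j))$ around $q$. This bypasses the need to build an explicit asymptotic local model and to match eigenfunctions, and uses only the structure you have already set up. Either way, the step you identify as hardest does require an argument and cannot be waved through.
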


There is a special class of curves in $X \sd L$ for which the classification of punctures according to the above scheme is particularly transparent. A (closed or punctured) $J$-holomorphic sphere in $X \sd L$ is said to be \emph{essential with respect to $\mathcal{F}$}, or simply \emph{essential}, if its composition with the projection $\pi \colon X \sd L \to S_{0}$ is injective. These curves are in a sense well-detected by the foliation. They are partly distinguished by the following proposition.

\begin{prp}\label{p:essential}
Let $u$ be an essential $J$-holomorphic sphere in $X \sd L$. If $u$ has punctures on $L_{i}$, then they are either all of foliation type, in which case they are either all positive or all negative, or none of them are, in which case $u$ has a unique puncture on $L_{i}$.
\end{prp}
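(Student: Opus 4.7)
The plan is to analyze the continuous injection $\pi \circ u \colon S^{2} \sd Z \to S_{0}$ given by essentiality. Invariance of domain makes this an open embedding of the punctured sphere into $S_{0} \cong S^{2}$, so the image $U$ is open and its complement $S_{0} \sd U$ decomposes as a disjoint union of closed connected components $K_{z}$, one for each puncture $z \in Z$. The two alternatives of Lemma \ref{l:punctures} identify each $K_{z}$ attached to a puncture on $L_{i}$: for a foliation-type puncture one has $K_{z} = \{p_{z}\}$ with $p_{z} \in \pi(L_{i})$, while for a non-foliation-type puncture the component $K_{z}$ accumulates on the whole circle $\pi(L_{i})$.

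To rule out a non-foliation-type puncture $z_{0}$ on $L_{i}$ coexisting with a second puncture $z_{1}$ on $L_{i}$, I proceed as follows. By Lemma \ref{l:punctures}(b), the simple closed curves $\pi(u(C_{j}))$, simple by essentiality, Hausdorff-converge to $\pi(L_{i})$. In the generic subcase that $U$ is disjoint from $\pi(L_{i})$, the circle $\pi(L_{i})$ lies entirely in $S_{0} \sd U$ and, being connected, in a single component; asymptotic closeness to $\pi(u(C_{j}))$ identifies this component as $K_{z_{0}}$. Since $K_{z_{1}}$ must also intersect $\pi(L_{i})$ by Lemma \ref{l:punctures}, we contradict the disjointness of components. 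When $U$ does meet $\pi(L_{i})$, each intersection corresponds by essentiality to a unique transverse crossing of $u$ with a broken leaf of $\mathcal{F}$, so $U \cap \pi(L_{i})$ is a finite union of open arcs; a more careful analysis, tracking how the Jordan curve $\pi(u(C_{j}))$ separates $\pi(L_{i})$ relative to $z_{0}$, forces all closed arcs of $\pi(L_{i}) \sd U$ into $K_{z_{0}}$ and delivers the same contradiction.

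If every puncture on $L_{i}$ is of foliation type, extend $\pi \circ u$ continuously by mapping each such puncture $z$ to $p_{z}$. Since $u$ is $J$-holomorphic and the leaves of $\mathcal{F}$ are $J$-holomorphic, at every point where $u$ is transverse to a leaf the differential $d(\pi \circ u)$ factors as a $J$-complex linear projection onto a $J$-invariant transverse complement followed by a $J$-complex linear identification with $TS_{0}$, hence is orientation-preserving. Isolated tangencies of $u$ to leaves contribute nonnegative local degree by the standard positivity argument for $J$-holomorphic maps. The local degree of the extended map at each foliation-type puncture $z$ is therefore nonnegative, and it is also nonzero since essentiality forbids $u$ from covering a plane in a broken leaf (which would render $\pi \circ u$ constant). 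As the sign of this local degree is that of the integer $k_{z}$, all $k_{z}$ must share a common sign, either all positive or all negative depending on orientation conventions.

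The main obstacle I anticipate is the case in the first step where $U$ meets $\pi(L_{i})$: one must rule out the possibility that the arcs of $\pi(L_{i}) \sd U$ distribute themselves among different components $K_{z}$. This requires combining invariance of domain, Jordan separation in $S^{2}$, and positivity-of-intersections arguments for $u$ against the broken-leaf planes, presumably working in a tubular neighborhood of $\pi(L_{i})$ induced from a Weinstein neighborhood of $L_{i}$. The sign-matching in the second step additionally relies on identifying the sign of the local degree of the extended map at $p_{z}$ with the sign of $k_{z}$, which follows from the Morse-Bott normal form at Reeb orbits (Theorem 1.4 of \cite{HWZ}) together with the orientation conventions for cylindrical ends.
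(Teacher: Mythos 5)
Your approach has a genuine gap at the point you yourself flag. The complement-component framework requires two non-obvious claims: that the components $K_z$ of $S_0 \setminus U$ are in bijection with the punctures, and that the component "attached" to a given puncture is determined by the Hausdorff limit of the loops $\pi(u(C_j))$. Even granting these, the case $U \cap \pi(L_i) \neq \varnothing$ — which you call "non-generic" but is in fact the typical situation, since an essential curve crossing a broken leaf over $L_i$ produces such an intersection — is dispatched only with "a more careful analysis \dots delivers the same contradiction," which you do not supply. Worse, the intermediate assertion that all closed arcs of $\pi(L_i) \setminus U$ land in $K_{z_0}$ is suspect: if the two punctures $z_0, z_1$ approach $\pi(L_i)$ from opposite sides and $U$ crosses $\pi(L_i)$ at several arcs, it is entirely plausible a priori that some of the complementary arcs lie in $K_{z_0}$ and others in $K_{z_1}$, in which case no contradiction arises from "disjointness of components." So the approach is not only incomplete but not clearly salvageable along these lines.

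The paper's proof avoids the component decomposition entirely and argues directly with Lemma \ref{l:punctures}. If a foliation-type puncture and a non-foliation-type puncture on $L_i$ coexist, then by part (a) a small circle near the former projects to a loop winding nontrivially around some $p \in \pi(L_i)$, hence confined to a small neighborhood of $p$, while by part (b) a small circle near the latter projects to a curve Hausdorff-close to all of $\pi(L_i)$, hence having points both near $p$ (inside the winding loop) and far from $p$ (outside it); these two projected circles must therefore intersect, contradicting injectivity of $\pi \circ u$. To rule out two non-foliation-type punctures, injectivity forces their families of approaching circles to lie eventually on opposite sides of $\pi(L_i)$, connectedness of the domain then produces an interior point of $u$ mapping into $\pi(L_i)$, and a small circle about that interior point projects to a small loop winding around that point (by positivity of intersections with the broken leaf), playing the role of the foliation-type puncture in the first argument. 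This is shorter and sidesteps all of the issues above. For the sign-uniformity of foliation-type punctures, the paper simply cites Lemma 6.2 of \cite{HL}; your local-degree sketch is plausible and in the expected spirit, but note that $\pi$ itself is not holomorphic — the orientation-preservation comes from positivity of intersections of $u$ with the $J$-holomorphic leaves, not from holomorphicity of $\pi \circ u$ — and the identification of the winding-number sign with the sign of $k_z$ does require the asymptotic normal form.
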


\begin{proof}
By Lemma \ref{l:punctures}, if $u$ simultaneously had punctures of and not of foliation type on $L_{i}$, then the projection of a small circle around a puncture of foliation type will intersect any circle sufficiently close to $\pi(L_{i})$, including the projection of a small circle around a puncture not of foliation type. This contradicts the essentiality of $u$ since $\pi \circ u$ is certainly not injective. On the other hand, if $u$ had two punctures not of foliation type on $L_{i}$, then the image of $\pi \circ u$ would have to intersect the regions in $S_{0}$ both inside and outside of $\pi(L_{i})$, since $u$ is essential. By connectedness, this would imply that some interior point of $u$ must intersect a fiber of $\pi$ over a point in $\pi(L_{i})$. Considering the image of a small circle around this point as in Lemma \ref{l:punctures}, the argument in the first part of this paragraph again yields a contradiction, and hence $u$ cannot have more than one puncture on $L_{i}$ that is not of foliation type. That punctures of foliation type are either all positive or all negative is proved in the same way as Lemma 6.2 in \cite{HL}, which we refer the reader to for additional details.
\end{proof}

The following scenario provides a rich source of essential curves. Consider a family $\{J_{t}\}_{t \geq 0}$ of tame almost complex structures on $(X, \omega_{X})$ that stretches to $J$, and extract a sequence $(J_{t_{i}})_{i=1}^{\infty}$ as usual. We then fix $d \geq 1$ and use (c) of Proposition \ref{p:blowupcurves} to obtain a sequence $(u_{i})_{i=1}^{\infty}$ of $J_{t_{i}}$-holomorphic spheres in the class $(1, d)$ passing through $2d$ generic point constraints. By Theorem \ref{t:compactness}, this produces a limit building $\mathbf{F}$ in the class $(1, d)$. We then have the following lemma, which among other things shows that $\mathbf{F}$ always has an essential top level curve.

\begin{lem}\label{l:buildingtop}
If $u$ is a top level curve of $\mathbf{F}$, then $u$ is either essential or else the image of $\pi \circ u$ is a single point. Moreover, essential top level curves of $\mathbf{F}$ have disjoint open images under $\pi$ whose union contains $S_{0} \sd \pi(L)$.
\end{lem}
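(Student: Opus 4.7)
The strategy rests on the intersection-number computation $(1,d)\cdot(0,1) = 1$ in $H_{2}(X)$: since every leaf of $\mathcal{F}$ lies in the class $(0,1)$ and middle and bottom level curves do not meet $X \sd L$, the top level of $\mathbf{F}$ meets each leaf algebraically exactly once.

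First I would establish the dichotomy for a single top level curve $u$. If $u$ covers some leaf of $\mathcal{F}$, then $\pi\circ u$ is a single point and we are done. Otherwise I claim $\pi\circ u$ is injective. Suppose not, so that $\pi(u(z_{1})) = \pi(u(z_{2})) = q$ for distinct $z_{1},z_{2}$ in the domain of $u$, and set $F_{q} := \pi^{-1}(q)$, so that $u(z_{1}),u(z_{2}) \in F_{q}$. Applying positivity of intersections to the $J$-holomorphic curve $u$ and $F_{q}$ (via the standard closed-case version when $q\notin\pi(L)$, and via its punctured analogue applied to each of the two planes comprising $F_{q}$ when $q\in\pi(L_{i})$), $u$ must contribute at least $2$ to the intersection with $F_{q}$, whether the $u(z_{i})$ are two distinct transverse hits or coincide as a positive double point. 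Since all other top level curves contribute non-negatively by positivity, the total exceeds $1$, a contradiction.

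The same positivity argument, applied simultaneously to two distinct essential top level curves $u,u'$ with $\pi(u(z)) = \pi(u'(z')) = q$, gives $(u\cdot F_{q}) + (u'\cdot F_{q}) \geq 2$ (regardless of whether $u(z)=u'(z')$, using positivity between the two distinct $J$-holomorphic curves in that case), so essential top level curves have pairwise disjoint projections. For the openness and covering statements, fix any $q\in S_{0}\sd\pi(L)$; then $F_{q}$ is a closed embedded $J$-holomorphic sphere in class $(0,1)$, and any top-level cover of it contributes $0$ to $\mathbf{F}\cdot F_{q}$ since $(0,1)\cdot(0,1)=0$. Positivity therefore forces exactly one non-covering top level curve $u$ to meet $F_{q}$, transversally at a single interior point. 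By the dichotomy $u$ is essential. Transversality of $u$ to $F_{q}$ at this point means $\pi\circ u$ is an immersion there; combined with essentiality (hence injectivity), it is a local embedding, so the image of $\pi\circ u$ is an open neighborhood of $q$ in $S_{0}$.

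The main technical obstacle is justifying positivity of intersections against broken leaves, which are not closed $J$-holomorphic spheres but pairs of planes glued along a simply covered Reeb orbit on some $L_{i}$. This can be handled either through the extended intersection theory for punctured curves invoked in the proof of Theorem~\ref{t:foliation} following \cite{DRGI,HL}, or more concretely by perturbing $q$ slightly off $\pi(L_{i})$ so that $F_{q}$ is replaced by a nearby unbroken leaf $F_{q'}$; the interior intersections of $u$ with $F_{q}$ then deform continuously into intersections with $F_{q'}$, reducing every needed count to the standard closed case already covered by classical positivity of intersections.
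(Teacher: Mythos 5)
Your proof is correct and rests on the same key computation $(1,d)\cdot(0,1)=1$ and positivity of intersections against leaves of $\mathcal{F}$, so the mathematical substance matches the paper's. Where you diverge is in how you handle double points of $\pi\circ u$ sitting over $\pi(L)$: the paper observes that positivity makes $\pi\circ u$ an open map, so its set of double values is open and therefore cannot be contained in the measure-zero set $\pi(L)$, yielding a clean topological contradiction; you instead push such a double point off $\pi(L)$ by perturbing $q$ to a nearby $q'\notin\pi(L)$ and invoking stability of positive interior intersections, reducing to the unbroken-leaf count. These are two phrasings of the same local stability phenomenon, so the routes are closely allied; your perturbation variant has the small advantage of never needing to articulate or invoke openness of $\pi\circ u$, while the paper's phrasing avoids discussing convergence of $F_{q'}$ to the broken configuration $F_q$. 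One detail you should make explicit if you formalize this: the smooth convergence $F_{q'}\to F_q$ away from $L_i$ (which justifies persistence of the interior intersection indices at $u(z_1)$, $u(z_2)$) is exactly the content of $\mathcal{F}$ being a \emph{smooth} foliation of $X\setminus L$, a nontrivial part of Theorem~\ref{t:foliation} that deserves a citation at that point in the argument.
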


\begin{proof}
If the image of $\pi \circ u$ is a point, then $u$ covers part of a broken leaf or an unbroken leaf of $\mathcal{F}$. On the other hand, if $\pi \circ u$ is nonconstant, then since $(1, d) \cdot (0, 1) = 1$, we see that $u$ intersects any unbroken leaf of $\mathcal{F}$ once or not at all. This implies that any double points of $\pi \circ u$ lie on $\pi(L)$. But by positivity of intersections, the nonconstant map $\pi \circ u$ is open and hence its double points form an open set, which is impossible in view of the above. It follows that $u$ must be essential, and all such curves have disjoint images under $\pi$ by applying this argument to the union of two curves. Intersection considerations also imply that all unbroken leaves of $\mathcal{F}$ must intersect at least one essential curve, implying the last statement.
\end{proof}

\begin{rmk}\label{r:generality}
At this point, we must mention that the considerations starting from Subsection 4.2 until now were stated in the context of the toric model $(X, \omega_{X})$ for concreteness, but they evidently apply \emph{mutatis mutandis} to any model of $(\DD_{1}, \omega_{\mspace{1mu}\DD_{1}})$ which contains $L_{1, 1}$, and we will now take this to be understood.
\end{rmk}

\subsection{Homologicaly Essential Tori}

This subsection will shed some light on why the considerations thus far should have any bearing on the packing problem we are considering. It begins with a lemma whose motivation might appear unclear but whose importance will become evident shortly. In line with Remark \ref{r:generality}, we state it in the context of an arbitrary model of $(\DD_{1}, \omega_{\mspace{1mu}\DD_{1}})$ containing $L_{1, 1}$. We let $\mathcal{S}_{0}$ denote the corresponding exceptional divisor, and consider a \emph{monotone} torus $L$ in $\DD_{1}$. Assume there is a symplectically embedded sphere $\mathcal{S}_{\infty} \sub \DD_{1}$ in the class of the line at infinity that is disjoint from $L$. (It turns out such a sphere can always be found, but we will not need this fact nor elaborate on it.)

Fix a parameterization $\psi \colon \TT^{2} \to L$, and choose a tame almost complex structure $J$ on $\DD_{1} \sd L$ that is adapted to $\psi$ and integrable near $\mathcal{S}_{0} \cup \mathcal{S}_{\infty}$. Using Theorem \ref{t:foliation}, we obtain a relative foliation $\mathcal{F} = \mathcal{F}(L, \psi, J)$ of $\DD_{1} \sd L$ by $J$-holomorphic spheres and planes. The broken leaves of $\mathcal{F}$ consist of pairs of embedded, simply covered, $J$-holomorphic planes, and there is a unique primitive foliation class $\beta \in H_{1}(L)$ such that the totality of these planes divide into moduli spaces $\mathcal{M}_{\beta}(J)$ and $\mathcal{M}_{-\beta}(J)$. The space $\mathcal{M}_{\beta}(J)$ consists of exactly those planes that have a single transverse intersection with $\mathcal{S}_{0}$. Here is the aforementioned lemma.

\begin{lem}\label{l:homtriv}
Assume that the planes in $\mathcal{M}_{-\beta}(J)$ are exactly those that intersect $\mathcal{S}_{\infty}$ in a single point, and let $\mathcal{T}_{0}$ and $\mathcal{T}_{\infty}$ be two unbroken leaves of $\mathcal{F}$ whose images under $\pi$ lie on opposite sides of $\pi(L)$. Then $L$ is homologicaly nontrivial in $\DD_{1} \sd (\mathcal{S}_{0} \cup \mathcal{S}_{\infty} \cup \mathcal{T}_{0} \cup \mathcal{T}_{\infty})$.
\end{lem}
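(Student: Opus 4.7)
The plan is to prove the contrapositive by a linking number argument. Let $Y := \mathcal{S}_{0} \cup \mathcal{S}_{\infty} \cup \mathcal{T}_{0} \cup \mathcal{T}_{\infty}$ and $U := \DD_{1} \sd Y$. For any closed $1$-cycle $\eta$ in $Y$ and any smooth $3$-chain $W \sub \DD_{1}$ with $\partial W = L$ transverse to $\eta$, I define $\mathrm{lk}(\eta, L) := W \cdot \eta$. This is well defined: if $W_{1}, W_{2}$ both bound $L$, then $W_{1} - W_{2}$ is a $3$-cycle, hence a boundary $\partial V$ since $H_{3}(\DD_{1}) = 0$, and the signed count of $\partial V \cap \eta$ vanishes because it is the boundary of the $1$-chain $V \cap \eta$ when $\eta$ is closed. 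Consequently, if $[L] = 0$ in $H_{2}(U)$ then $W$ can be chosen inside $U$, where $W \cap \eta = \es$ forces $\mathrm{lk}(\eta, L) = 0$. So to prove the lemma it suffices to exhibit a loop $\eta \sub Y$ and a $3$-chain $W$ with $\partial W = L$ and $W \cdot \eta \neq 0$.

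For $W$ I take $W := \bigcup_{q \in \pi(L)} \overline{u}_{q}$, where $\overline{u}_{q}$ is the compactification of the unique plane $u_{q} \in \mathcal{M}_{\beta}(J)$ asymptotic to the simply covered Reeb orbit over the $\beta$-geodesic $\gamma_{q} \sub L$. Distinct planes in $\mathcal{M}_{\beta}(J)$ are components of distinct broken leaves of $\mathcal{F}$ and hence disjoint, while the $\gamma_{q}$ foliate $L$ as $q$ traverses the circle $\pi(L)$, so $W$ is a smoothly embedded solid torus in $\DD_{1}$ with $\partial W = L$. The intersection data I need are all immediate: Theorem \ref{t:foliation} gives $u_{q} \cap \mathcal{S}_{0} = \{q\}$ transversally, whence $W \cap \mathcal{S}_{0} = \pi(L)$; the hypothesis identifying $\mathcal{M}_{-\beta}(J)$ with those planes meeting $\mathcal{S}_{\infty}$ forces $u_{q} \cdot \mathcal{S}_{\infty} = 0$ for $u_{q} \in \mathcal{M}_{\beta}(J)$, since the two planes of any broken leaf together have intersection number $1$ with $\mathcal{S}_{\infty}$ and $\mathcal{M}_{-\beta}(J)$ already accounts for all of it, and positivity of intersections then gives $W \cap \mathcal{S}_{\infty} = \es$; finally, positivity of intersections applied to the $J$-holomorphic leaves $\mathcal{T}_{0}, \mathcal{T}_{\infty}$ and each $u_{q}$ (all components of distinct leaves of $\mathcal{F}$) yields $W \cap (\mathcal{T}_{0} \cup \mathcal{T}_{\infty}) = \es$.

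For $\eta$ I take the ``square loop'' in $Y$ obtained by concatenating four arcs: one on $\mathcal{S}_{0}$ from $\pi(\mathcal{T}_{0}) = \mathcal{S}_{0} \cap \mathcal{T}_{0}$ to $\pi(\mathcal{T}_{\infty}) = \mathcal{S}_{0} \cap \mathcal{T}_{\infty}$, one on $\mathcal{T}_{\infty}$ from there to $\mathcal{T}_{\infty} \cap \mathcal{S}_{\infty}$, one on $\mathcal{S}_{\infty}$ to $\mathcal{S}_{\infty} \cap \mathcal{T}_{0}$, and one on $\mathcal{T}_{0}$ back to $\pi(\mathcal{T}_{0})$. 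By the intersection data above, only the $\mathcal{S}_{0}$-arc can contribute to $W \cdot \eta$, with contribution equal to the signed count of its crossings with $\pi(L) \sub \mathcal{S}_{0}$. The hypothesis that $\pi(\mathcal{T}_{0})$ and $\pi(\mathcal{T}_{\infty})$ lie on opposite sides of $\pi(L)$ in the sphere $\mathcal{S}_{0}$ forces any such arc to cross $\pi(L)$ an odd number of times, and a generic choice within $\mathcal{S}_{0}$ yields exactly one transverse crossing, giving $W \cdot \eta = \pm 1 \neq 0$. The main delicate point I expect to address is verifying that $W$ really is an embedded $3$-submanifold with $\partial W = L$; embeddedness and smoothness of its interior follow from automatic regularity of $\mathcal{M}_{\beta}(J)$ and the disjointness of distinct leaves of $\mathcal{F}$, while the desired boundary behaviour follows from smooth dependence of the planes $u_{q}$ on the parameter $q \in \mathcal{M}_{\beta}(J) \cong \SS^{1}$ together with the standard exponential decay at their asymptotic Reeb orbits.
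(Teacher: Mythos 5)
Your proof is correct, and it takes a route that is dual to the paper's but not identical. Both arguments are at bottom a linking-number computation between $L$ (which is null-homologous in $\DD_{1}$) and the configuration $\mathcal{Y} = \mathcal{S}_{0} \cup \mathcal{S}_{\infty} \cup \mathcal{T}_{0} \cup \mathcal{T}_{\infty}$. You realize the linking number as $W \cdot \eta$, where $W = \overline{\mathcal{M}_{\beta}(J)}$ is a $3$-chain filling $L$ and $\eta$ is a loop in $\mathcal{Y}$. The paper instead realizes it as $\Sigma \cdot L$, where $\Sigma$ is a relative $2$-cycle filling $\eta$ rel $\mathcal{Y}$: a family of arcs $\sigma_{t} \subset \pi^{-1}(\gamma(t))$, each running from $\mathcal{S}_{0}$ to $\mathcal{S}_{\infty}$, as $t$ traverses a path $\gamma$ in $\mathcal{S}_{0}$ from $\pi(\mathcal{T}_{0})$ to $\pi(\mathcal{T}_{\infty})$ crossing $\pi(L)$ once. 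Up to homotopy $\partial\Sigma$ is precisely your square loop $\eta$, so the two computations really are the two sides of the same duality. The paper's version dispenses with the well-definedness preamble for $\mathrm{lk}(\eta, L)$ (it works directly with a relative cycle, so $H_{3}(\DD_{1}) = 0$ is never invoked), and it only needs the gluing/smoothness statement of Proposition 5.16 of \cite{DRGI} for the single fiber $\pi^{-1}(\gamma(t_{0}))$, not for the whole solid torus. Your version needs the full solid-torus statement but in exchange makes the role of the standing hypothesis that $\mathcal{M}_{-\beta}(J)$ accounts for all intersections with $\mathcal{S}_{\infty}$ slightly more visible, since it reads off $W \cap \mathcal{S}_{\infty} = \varnothing$ directly. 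The only place where I would tighten the wording is the final intersection count: you are right that a simple arc meeting $\pi(L)$ once gives $W \cdot \eta = \pm1$, and it is worth noting that even without genericity an odd crossing count already gives $\pm1$ since adjacent crossings of $\eta$ with $\pi(L)$ carry opposite signs when the arc doubles back.
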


\begin{proof}
Set $\mathcal{Y} = \mathcal{S}_{0} \cup \mathcal{S}_{\infty} \cup \mathcal{T}_{0} \cup \mathcal{T}_{\infty}$. We construct a relative homology class in $H_{2}(\DD_{1}, \mathcal{Y})$ whose intersection number with $L$ is nonzero, thus showing that $L$ is homologicaly nontrivial in $\DD_{1} \sd \mathcal{Y}$. To this end, choose an embedded path $\gamma \colon [0, 1] \to \mathcal{S}_{0}$ that starts at $\pi(\mathcal{T}_{0}) = \mathcal{T}_{0} \cap \mathcal{S}_{0}$ and that ends at $\pi(\mathcal{T}_{\infty}) = \mathcal{T}_{\infty} \cap \mathcal{S}_{0}$. Since $\pi(L)$ separates these points, we can assume that $\gamma$ intersects it transversally at some unique time $t_{0}$. For each $t \in [0, 1]$, we then choose an embedded path $\sigma_{t}$ in $\pi^{-1}(\gamma(t))$ that starts at $\mathcal{S}_{0}$ and that ends at $\mathcal{S}_{\infty}$. The union of these defines a homology class $\Sigma \in H_{2}(\DD_{1}, \mathcal{Y})$. Now by construction, the set $K = L \cap \pi^{-1}(\gamma(t_{0}))$ is a circle bounding (perturbed) compactified planes from $\mathcal{M}_{\beta}(J)$ and $\mathcal{M}_{-\beta}(J)$, which glue together to form a smooth sphere intersecting $\mathcal{S}_{0}$ and $\mathcal{S}_{\infty}$. Since $K$ separates these intersection points, it must intersect the path $\sigma_{t_{0}}$, and this is the only intersection between $L$ and $\Sigma$. Therefore $L$ must be homologicaly nontrivial in $\DD_{1} \sd \mathcal{Y}$.
\end{proof}

In particular, we can apply the above lemma in the toric model $(X, \omega_{X})$ when the set $\mathcal{Y}$ is the axes $Y = S_{0} \cup S_{\infty} \cup T_{0} \cup T_{\infty}$. The importance of producing homologicaly essential tori in $X \sd Y$ stems from the following crucial fact.

\begin{prp}\label{p:exact}
Any two monotone tori in $X \sd Y$ that are homologicaly nontrivial must intersect.
\end{prp}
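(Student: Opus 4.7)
The plan is to exhibit $(X \sd Y, \omega_X)$ as an exact symplectic subdomain of $(T^{\ast}\TT^{2}, d\lambda)$ and to arrange for both $L_{1}$ and $L_{2}$ to be exact Lagrangian tori there \emph{simultaneously}, at which point Proposition \ref{p:cotangentexact} delivers the desired intersection. Via action-angle coordinates coming from the toric structure, $X \sd Y$ is symplectomorphic to $\TT^{2} \times P^{\circ} \sub T^{\ast}\TT^{2}$, where $P^{\circ}$ is the interior of the moment polytope, endowed with $\omega = dp_{1} \wedge dq_{1} + dp_{2} \wedge dq_{2}$. Since the barycenter of $P$ is the point $(1, 1)$ where $L_{1, 1}$ sits, it is natural to consider the shifted primitive
\[
\lambda' = (p_{1} - 1) dq_{1} + (p_{2} - 1) dq_{2},
\]
which satisfies $d\lambda' = \omega_{X}$ on $X \sd Y$ and extends to a primitive of the standard symplectic form on all of $T^{\ast}\TT^{2}$ that differs from $\lambda$ by the closed 1-form $-(dq_{1} + dq_{2})$. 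The translation $T(q, p) = (q, p + (1, 1))$ is a symplectomorphism of $T^{\ast}\TT^{2}$ with $T^{\ast}\lambda' = \lambda$, so $T^{-1}$ sends $\lambda'$-exact Lagrangian tori to $\lambda$-exact ones.

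The crux of the argument is therefore the claim that every monotone $L \sub X \sd Y$ which is homologicaly nontrivial in $X \sd Y$ satisfies $[\lambda'|_{L}] = 0$ in $H^{1}(L; \RR)$. Given this, $T^{-1}(L_{1})$ and $T^{-1}(L_{2})$ are exact Lagrangian tori in $(T^{\ast}\TT^{2}, d\lambda)$, and since $T^{-1}$ is a diffeomorphism, Proposition \ref{p:cotangentexact} forces $L_{1} \cap L_{2} \neq \es$. To prove the claim, I would fix a loop $\gamma \sub L$ together with a smooth disk $D \sub X$ with $\partial D = \gamma$ (available because $\pi_{1}(X) = 0$) that meets $Y$ transversally in its interior. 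Applying Stokes's theorem on $D$ with small neighborhoods of the points of $D \cap Y$ excised and passing to the shrinking limit yields the residue identity
\[
\omega_{X}(D) = \int_{\gamma}\lambda' + D \cdot Y,
\]
where the extra term arises from a direct local computation showing that the residue of $\lambda'$ at every point of every toric divisor is $-1$. This uniformity reduces to the lattice identity $n_{i} \cdot ((1, 1) - p_{i}) = 1$ at each edge of $P$ with primitive inward normal $n_{i}$ and reference point $p_{i}$, and it is precisely the reflexivity of the monotone moment polytope about its barycenter. The monotonicity hypothesis $\omega_{X}(D) = \mu(D)/2$ together with the selection of a Maslov-$2$ disk with a single interior intersection on a chosen toric divisor (which is where homological nontriviality enters, ensuring that loops $\gamma$ with the required property span $H_{1}(L; \QQ)$) then gives $\int_{\gamma}\lambda' = 1 - 1 = 0$.

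The main obstacle I anticipate is the construction of a Maslov-$2$ capping disk $D$ realizing $D \cdot Y = 1$ for a generating set of loops on $L$. When the projection $L \to \TT^{2}$ has degree one, $L$ is a graph $\{(q, p(q))\}$ and the natural loops in the $q_{1}$ and $q_{2}$ directions can be capped by linear interpolation to $T_{\infty}$ and $T_{0}$, respectively; monotonicity then forces the averages $\overline{p}_{1}$ and $\overline{p}_{2}$ of the momenta over $L$ to equal $1$, and the residue identity immediately yields vanishing of $\int\lambda'$ on each basis loop. For higher covering degrees, one must adapt the basis of $H_{1}(L; \ZZ)$ to the cover $L \to \TT^{2}$ and allow the capping disks to collapse several times on appropriate toric divisors, but the ratio $\omega_{X}(D)/(D \cdot Y)$ remains equal to $1$ throughout, preserving the identity. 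Once the $\lambda'$-exactness lemma is in place, the rest of the proof is formal.
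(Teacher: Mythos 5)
Your overall strategy differs from the paper's: the paper invokes Theorem B of \cite{DR} to Hamiltonian-isotope a homologically nontrivial torus in $T^{\ast}\TT^{2}$ to the graph of a closed $1$-form (with the isotopy supported in $\TT^{2}\times U$), and then computes exactness directly for that graph; you instead try to establish exactness via a residue identity for the shifted primitive $\lambda'$ together with monotonicity. The identity $\omega_X(D) = \int_\gamma \lambda' + D\cdot Y$ is correct, and your lattice-identity check that the residue is $-1$ on every edge of the reflexive moment polytope is a nice observation. The issue is that the remaining step is circular.

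Here is the gap. Combining the residue identity with monotonicity $\mu(D) = 2\omega_X(D)$ yields, for every smooth disk $D$ with boundary $\gamma$ on $L$,
\[
2\int_\gamma \lambda' \;=\; \mu(D) - 2\,D\cdot Y.
\]
The right-hand side is \emph{independent} of the choice of $D$: any two disks with the same boundary differ by a spherical class $A$, and $\mu(D+A) - 2(D+A)\cdot Y = \mu(D) - 2 D\cdot Y$ since $\mu(A) = 2c_1(A) = 2\,A\cdot Y$ (because $[Y] = \mathrm{PD}(c_1(X))$). So $\mu(D) - 2D\cdot Y$ defines a cohomology class $\sigma_L \in H^1(L;\ZZ)$ (it is exactly the Maslov class of $L$ computed in $X\sd Y$, where the anticanonical bundle is trivialized). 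Therefore you cannot ``select a Maslov-$2$ disk with $D\cdot Y = 1$'' for a given $\gamma$: either $\sigma_L(\gamma) = 0$ and then \emph{every} disk bounding $\gamma$ satisfies $\mu(D) = 2\,D\cdot Y$, or $\sigma_L(\gamma) \neq 0$ and no disk does. Thus your chain of deductions reduces the desired conclusion ($[\lambda'|_L] = 0$) to the statement $\sigma_L = 0$, which by the displayed equation is the \emph{same} statement. Your ``higher covering degree'' paragraph, asserting that $\omega_X(D)/(D\cdot Y)$ ``remains equal to $1$ throughout,'' is precisely the assertion of the conclusion, not a proof of it. The degree-one (graph) case is fine because graphs of closed $1$-forms are Lagrangian isotopic to the zero section and hence Maslov-zero in $T^*\TT^2$, but you do not rule out the non-graph case, and homological nontriviality by itself does not give this. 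This is exactly the role of Theorem B of \cite{DR} in the paper's proof: it supplies a compactly supported Hamiltonian isotopy moving $L$ to a graph, after which the exactness computation is straightforward. Invoking that theorem (or some other input that forces the Maslov class of $L$ in $X\sd Y$ to vanish) is the missing ingredient in your argument.
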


\begin{proof}
Let $L \sub X \sd Y$ be a monotone torus that is homologicaly essential in $X \sd Y$. We can symplectically identify this latter set with a symplectic manifold of the form $(\TT^{2} \times U, d\lambda) \sub (T^{\ast}\TT^{2}, d\lambda)$, where $U \sub \RR^{2}$ is a convex subset containing the origin, and by Proposition \ref{p:cotangentexact}, it is enough to show that $L$ is exact when considered in $\TT^{2} \times U$. By (1) of Theorem B in \cite{DR}, there is a Hamiltonian isotopy in $(T^{\ast}\TT^{2}, d\lambda)$ from $L$ to the graph of a closed $1$-form on $\TT^{2}$, and, crucially, this isotopy can be taken to be supported in $\TT^{2} \times U$. We can therefore assume without loss of generality that $L$ is a section of $T^{\ast}\TT^{2} \to \TT^{2}$. Then taking any closed curve $\gamma \sub L$ together with a smooth bounding disk $D$ in $X$, we can construct a cylinder $C$ in $\TT^{2} \times U$ by projecting $\gamma$ onto the zero section and taking the trace of this projection. It is straightforward to see that $C$ has Maslov index $0$ in $T^{\ast}\TT^{2}$. By additivity of Maslov indices, we then see that $\mu(D \cup C) = \mu(D)$, and therefore the monotonicity of $L$ implies that $d\lambda(C) = 0$. Since the zero section is obviously exact in $T^{\ast}\TT^{2}$, it now follows from Stoke's theorem that $\smash{\int_{\gamma}\lambda = 0}$, showing that $L$ is exact in $\smash{T^{\ast}\TT^{2}}$, as desired.
\end{proof}

\section{Towards a Contradiction}

In this section, we begin working towards the proof of Theorem \ref{t:main} in earnest. The strategy is to assume it fails and derive a contradiction. Thus our standing assumption for the rest of the paper will be that there is an integral torus $L$ in $(\DD_{n}, \omega_{\mspace{1mu}\DD_{n}})$ for $n \leq 5$ that is disjoint from $L_{1, 1}$. By Proposition \ref{p:reduction}, we can take $n = 1$ and by Theorem \ref{t:monotone}, we know that $L$ is monotone. We will first show that this standing assumption leads to a slightly weaker statement about tori in the toric model $(X, \omega_{X})$ that is easier to contradict. We then use the relative foliations developed in the previous section together with an analysis of limit buildings to establish the existence of a distinguished building in $(X, \omega_{X})$ with some special properties. Finally, we state a black box result, which we will use to obtain the desired contradiction in Section 6. The proof of this black box result will in turn be taken up in Section 7.

\subsection{A Simplification}

This subsection is dedicated to proving the following proposition, in view of the standing assumption mentioned in the introduction to this section. Recall that $Y = S_{0} \cup S_{\infty} \cup T_{0} \cup T_{\infty}$ in $(X, \omega_{X})$.

\begin{prp}\label{p:simpli}
There are disjoint monotone Lagrangian tori $L_{\mathfrak{r}}$ and $L_{\mathfrak{s}}$ in $X \sd Y$ with the following properties:
\begin{enumerate}
\item[(a)] $L_{\mathfrak{r}}$ is homologicaly trivial in $X \sd Y$.
\item[(b)] $L_{\mathfrak{s}}$ is homologicaly nontrivial in $X \sd Y$.
\item[(c)] $T_{0}$ and $T_{\infty}$ are distinct classes in $H_{2}(X \sd L_{\mathfrak{r}})$ and in $H_{2}(X \sd L_{\mathfrak{s}})$.
\end{enumerate}
\end{prp}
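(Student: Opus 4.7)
The strategy is to take $L_{\mathfrak{s}} = L_{1, 1}$ and $L_{\mathfrak{r}} = L$ (from the standing assumption) after a Hamiltonian isotopy placing $L$ into $X \sd Y$ while keeping it disjoint from $L_{1, 1}$. The Clifford torus $L_{1, 1}$ already lies in $X \sd Y$ as the fiber over the interior point $(1, 1) \in \Int(\Delta)$ and represents the generator of $H_{2}(X \sd Y) \cong H_{2}(\TT^{2} \times \Int(\Delta)) \cong \ZZ$, which gives (b). To displace $L$ off $Y$ while keeping it disjoint from $L_{1, 1}$, I would first iterate Proposition \ref{p:displaceclosed} to get $L$ off $S_{0}$, then blow down along $S_{0}$ and run the displacement arguments used in the proof of Theorem \ref{t:monotone} (which invoke Theorem C of \cite{DRGI} and the constructions of \cite{CM2}) to push $L$ off $S_{\infty} \cup T_{0} \cup T_{\infty}$ via Hamiltonian isotopies stationary on $L_{1, 1}$. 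Property (a) is then automatic by Proposition \ref{p:exact}: two disjoint homologically nontrivial monotone tori in $X \sd Y$ would have to intersect, and $L_{\mathfrak{s}}$ is nontrivial, so $L_{\mathfrak{r}}$ must be homologically trivial.

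For (c) applied to $L_{\mathfrak{s}} = L_{1, 1}$, I would use the holomorphic ruling $\pi_{0} \colon X \to S_{0} \cong \CP^{1}$, whose fiber class is $T_{0}$. The points $\pi_{0}(T_{0})$ and $\pi_{0}(T_{\infty})$ are distinct in $S_{0}$ and are separated by the embedded equatorial circle $\pi_{0}(L_{1, 1})$, so the preimage under $\pi_{0}$ of any path joining them is a $3$-chain $D \sub X$ with $\partial D = T_{0} - T_{\infty}$ meeting $L_{1, 1}$ transversally in a nontrivial $\TT^{2}$-orbit circle. Since $[T_{0}] = [T_{\infty}]$ in $H_{2}(X)$, this computes the obstruction class in $H_{1}(L_{1, 1})$ coming from the long exact sequence of the pair $(X, X \sd L_{1, 1})$ and shows it is nonzero, which is precisely the statement $[T_{0}] \neq [T_{\infty}]$ in $H_{2}(X \sd L_{1, 1})$.

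The main difficulty is (c) for $L_{\mathfrak{r}}$. Here the ruling-based argument fails: since $L_{\mathfrak{r}}$ is homologically trivial in $X \sd Y$, the image $\pi_{0}(L_{\mathfrak{r}})$ does not separate $\pi_{0}(T_{0})$ from $\pi_{0}(T_{\infty})$, so the $3$-chain $\pi_{0}^{-1}(\gamma)$ can be chosen disjoint from $L_{\mathfrak{r}}$. This is where the $3$-punctured symplectic sphere in $(T^{\ast}\TT^{2}, d\lambda)$ from Appendix A enters: performing the splitting procedure along $L_{\mathfrak{r}}$ places this sphere in the bottom-level cotangent bundle $T^{\ast}L_{\mathfrak{r}}$, and gluing it to appropriate top-level pseudoholomorphic pieces in $X \sd L_{\mathfrak{r}}$ whose compactifications land on $T_{0}$ and $T_{\infty}$ assembles a compact $3$-chain $D \sub X$ with $\partial D = T_{0} - T_{\infty}$ whose intersection with $L_{\mathfrak{r}}$ is a closed geodesic of nonzero class in $H_{1}(L_{\mathfrak{r}})$. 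This forces $[T_{0}] \neq [T_{\infty}]$ in $H_{2}(X \sd L_{\mathfrak{r}})$, completing (c). The technical heart of the proof is thus the Appendix A construction together with this gluing step; the displacement step for (a) is essentially bookkeeping with standard tools.
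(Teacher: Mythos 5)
Your proposal attempts to keep the toric axes $Y$ fixed and move $L$ off them, but this is not what the paper does, and the argument has a genuine gap precisely at the point you identify as the hard part.

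The paper never tries to Hamiltonian-isotope $L$ off all four spheres of $Y$ while fixing $L_{1,1}$; Proposition~\ref{p:displaceclosed} only handles $S_{0}$, and there is no result in the paper (or the cited tools) that displaces a torus off a whole nodal configuration of four spheres while keeping a second fixed Lagrangian in place. Instead, the paper goes the other way around: it keeps $L$ and $L_{1,1}$ fixed (after displacing $L$ only off the exceptional divisor), and \emph{builds a new nodal configuration} $\mathcal{S}_{0}\cup\mathcal{S}_{\infty}\cup\mathcal{T}_{0}\cup\mathcal{T}_{\infty}$ disjoint from both tori. Here $\mathcal{S}_{0}$ is the exceptional divisor, $\mathcal{T}_{0},\mathcal{T}_{\infty}$ are two \emph{chosen} unbroken leaves of a relative foliation with $\pi(\mathcal{T}_{0})$ and $\pi(\mathcal{T}_{\infty})$ on opposite sides of \emph{both} circles $\pi(L)$ and $\pi(L_{1,1})$ in $\mathcal{S}_{0}$, and $\mathcal{S}_{\infty}$ is assembled by gluing the deformed planes $u,v,w$ (translated to a nearby torus $L_{a,b}$) to the $3$-punctured symplectic sphere of Appendix~A inside $T^{\ast}L_{a,b}$. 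The decisive final step, which your proposal omits entirely, is to invoke Theorem~1.4 of~\cite{Mc1} to produce a global symplectomorphism of $\DD_{1}$ onto the toric model carrying this new configuration onto $Y$; the tori $L_{\mathfrak{r}}, L_{\mathfrak{s}}$ are the images of $L$ and $L_{1,1}$ under \emph{that} map, so $L_{\mathfrak{s}}$ is in general \emph{not} the Clifford torus of the toric model. The separation property built into the choice of $\mathcal{T}_{0},\mathcal{T}_{\infty}$ then makes property~(c) for both tori follow by the one-line homotopy argument at the end of the proof.

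This also means your use of the $3$-punctured sphere is misplaced. It is a $2$-dimensional surface used in Lemma~\ref{l:simpli2} to cap the three perturbed planes near $L_{a,b}$ into the embedded symplectic sphere $\mathcal{S}_{\infty}$; it is not, and cannot be, glued to other punctured curves to produce a $3$-chain cobounding $T_{0}-T_{\infty}$, since gluing $2$-dimensional surfaces yields a surface, not a $3$-chain. Your instinct that (c) for the homologically trivial torus is the delicate point is correct, and the linking-class reformulation via the pair $(X, X\smallsetminus L_{\mathfrak{r}})$ is sound topology, but the paper does not argue this way: it simply chooses the $\mathcal{T}$'s so that $\pi(L)$ separates their $\pi$-images, which makes $[\mathcal{T}_{0}]\neq[\mathcal{T}_{\infty}]$ in $H_{2}(\DD_{1}\smallsetminus L)$ immediate. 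The parts you get right are the use of Proposition~\ref{p:exact} to derive (a) from (b) once the tori are disjoint in $X\smallsetminus Y$, and the ruling-projection picture for (c) when a torus does separate the two fiber points; but without the construction of the new configuration and the application of McDuff's symplectomorphism theorem, the proposal does not close.
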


There are several steps involved in proving the above proposition. The first is to displace $L$ from the exceptional divisor inside $\DD_{1} \sd L_{1, 1}$ by using Proposition \ref{p:displaceclosed} to displace it by a Hamiltonian isotopy that is stationary on $L_{1, 1}$. In view of the definition of symplectic blowup, the resulting situation is equivalent to the existence of a monotone Lagrangian torus $L$ in $(\CP^{2}, \omega_{\FS})$ that is disjoint from $L_{1, 1}$ together with a closed Darboux ball $B$ of capacity $1$ in $\CP^{2} \sd (L \cup L_{1, 1})$. To proceed further, we must now take a closer look at the Clifford torus in $\CP^{2}$.

Define a parameterization $\psi_{1, 1} \colon \TT^{2} \to L_{1, 1}$ by
\[
\psi_{1, 1}(q_{1}, q_{2}) = [1 : e^{-2\pi i q_{1}} : e^{-2\pi i q_{2}}].
\]
This extends to a symplectic embedding, denoted by the same symbol, of the open neighborhood
\[
\{(q_{1}, q_{2}, p_{1}, p_{2}) \in T^{\ast}\TT^{2} : \vert p_{1} \vert < 1/2, \vert p_{2} \vert < 1/2\},
\]
which is given explicitly by
\[
\psi_{1, 1}(q_{1}, q_{2}, p_{1}, p_{2}) = \Bigg[1 : \sqrt{\frac{p_{1} + 1}{1 - p_{1} - p_{2}}}e^{-2\pi i q_{1}} : \sqrt{\frac{p_{2} + 1}{1 - p_{1} - p_{2}}}e^{-2\pi i q_{2}}\Bigg].
\]
Its image is a Weinstein neighborhood $\mathcal{U}(L_{1, 1})$, which in practice will usually be made quite smaller. Observe that in these coordinates the moment map takes the form
\[
(q_{1}, q_{2}, p_{1}, p_{2}) \mapsto (p_{1} + 1, p_{2} + 1).
\]
Consider the smooth embedded disk in $\CP^{2}$ with boundary on $L_{1, 1}$ given by $z \mapsto [z : 1 : 1]$. This has Maslov index $2$ and a unique transverse and positive intersection with $S_{\infty}$. A calculation using polar coordinates on $\DD^{2} \sd \{0\}$ and coordinates $\smash{(q_{1}, q_{2}, p_{1}, p_{2})}$ on $\mathcal{U}(L_{1, 1}) \sd L_{1, 1}$ shows that $J_{\mathrm{cyl}}$ preserves the span of its differential. We may then choose a tame almost complex structure $J_{1, 1}$ on $\CP^{2} \sd L_{1, 1}$ that is adapted to $\psi_{1, 1}$, integrable near $S_{\infty}$, and makes the interior of the above disk, which we denote by $u$, into a $J_{1, 1}$-holomorphic plane with a puncture on $L_{1, 1}$. It is asymptotic at its unique puncture to the Reeb orbit $\smash{(\theta, \theta, 1/\sqrt{2}, 1/\sqrt{2})} \in S^{\ast}\TT^{2}$.

The same considerations apply to the smooth embedded disks given by
\[
z \mapsto \left[1 : \frac{1}{\sqrt{2 - \vert z \vert^{2}}} : \frac{z}{\sqrt{2 - \vert z \vert^{2}}}\right], \qquad 
z \mapsto \left[1 : \frac{z}{\sqrt{2 - \vert z \vert^{2}}} : \frac{1}{\sqrt{2 - \vert z \vert^{2}}}\right].
\]
The interiors of these disks are then $J_{1, 1}$-holomorphic planes with punctures on $L_{1, 1}$, which we denote by $v$ and $w$, respectively. The plane $v$ has a unique transverse and positive intersection with $T_{0}$, and is asymptotic at its unique puncture to the Reeb orbit $(-\theta, 0, -1, 0)$. Similarly, the plane $w$ has a unique transverse and positive intersection with $T_{\infty}$, and is asymptotic at its unique puncture to the Reeb orbit $(0, -\theta, 0, -1)$. By monotonicity, all three of the above planes have index and area $1$. These facts together with automatic regularity imply that the corresponding moduli spaces are all compact and smooth $1$-manifolds. This plays a key role in the following lemma.

\begin{lem}\label{l:simpli1}
There is a Hamiltonian diffeomorphism $\varphi \colon \CP^{2} \to \CP^{2}$ that fixes $L_{1, 1}$ and such that $\varphi(L)$ and $\varphi(B)$ are disjoint from the planes $u$, $v$, and $w$.
\end{lem}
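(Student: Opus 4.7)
The three asymptotic Reeb orbits of $u$, $v$, $w$ project to closed oriented geodesics on $L_{1,1}$ representing the classes $(1,1)$, $(-1,0)$, $(0,-1)$ in $H_{1}(L_{1,1}; \psi_{1,1})$, which sum to zero. This lets me invoke the $3$-punctured symplectic sphere $\Sigma_{0}$ in $T^{\ast}\TT^{2}$ constructed in Appendix A, whose positive asymptotic orbits match those of $u$, $v$, $w$ under $\psi_{1,1}$ and which is disjoint from the zero section. After first shrinking the Weinstein neighborhood $\mathcal{U}(L_{1,1})$ so that it misses the compact set $L \cup B$, I transport $\Sigma_{0}$ into $\mathcal{U}(L_{1,1})$ via $\psi_{1,1}$ and glue it along its asymptotic ends to suitably truncated copies of $u$, $v$, $w$. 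A standard smoothing procedure in the cylindrical gluing regions, where both $\Sigma_{0}$ and the three planes are $J_{\mathrm{cyl}}$-holomorphic, then produces a smoothly embedded symplectic sphere $\widehat{\Sigma} \sub \CP^{2} \sd L_{1,1}$ whose intersection numbers with the lines $T_{0}$, $T_{\infty}$, $S_{\infty}$ (each equal to $1$) identify its class as $[\CP^{1}]$.

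The lemma then reduces to displacing $L \cup B$ from $\widehat{\Sigma}$ by a Hamiltonian fixing $L_{1,1}$. Since $\widehat{\Sigma}$ is itself disjoint from $L_{1,1}$, I would mimic the pattern of Proposition \ref{p:displaceclosed}: choose a tame almost complex structure $J$ on $\CP^{2} \sd L_{1,1}$ that is adapted to $\psi_{1,1}$, integrable near $\widehat{\Sigma}$, and regular for somewhere injective curves. Next, select a target complex line $\ell^{\ast} \sub \CP^{2}$ disjoint from $L \cup B \cup L_{1,1}$; such a line exists because lines missing $L_{1,1}$ (for instance, $\{z_{0} = 0\}$) form an open subset of the dual $\CP^{2}$, and a further genericity argument allows one to avoid the codimension $\geq 2$ set $L \cup B$ as well. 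Using the standard fact that there is a unique pseudoholomorphic sphere in the class $[\CP^{1}]$ through any two points in general position, together with integrality of $L_{1,1}$ preventing bubbling via (c) of Proposition \ref{p:buildingfacts}, a parametric moduli space argument along a path of tame $J_{t}$'s connecting an integrable structure near $\widehat{\Sigma}$ to one near $\ell^{\ast}$ yields a symplectic isotopy from $\widehat{\Sigma}$ to $\ell^{\ast}$ within $\CP^{2} \sd L_{1,1}$. This isotopy promotes to a Hamiltonian isotopy of $\CP^{2}$ stationary on $L_{1,1}$, by Remark \ref{r:iso} and the argument used at the end of the proof of Proposition \ref{p:displaceclosed}.

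The main obstacle I foresee is the gluing-and-smoothing step producing a genuinely symplectic sphere $\widehat{\Sigma}$ in the expected class, which requires careful control near the asymptotic necks of $\Sigma_{0}$ and relies on the precise construction in Appendix A; by comparison, the displacement step is essentially a $\CP^{2}$-version of an argument already developed in the proof of Proposition \ref{p:displaceclosed}.
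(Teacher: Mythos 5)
Your approach is genuinely different from the paper's, and while the gluing idea is in the right spirit (it is exactly how Lemma~\ref{l:simpli2} is proved later on), the reduction you set up does not close. The paper never forms a closed sphere here: it keeps $u$, $v$, $w$ as \emph{punctured} $J_{1,1}$-holomorphic planes (index $1$, compact $1$-dimensional moduli spaces by monotonicity and automatic regularity), deforms $J_{1,1}$ through almost complex structures adapted to $\psi_{1,1}$ to one that stretches a neck along $L$ and $\partial B$, notes the moduli spaces of planes persist and the SFT limit planes live in $\CP^{2} \sd (L \cup L_{1,1} \cup B)$, then deforms back. Since every curve in this picture is a finite-energy plane in $\CP^{2} \sd L_{1,1}$, disjointness from $L_{1,1}$ is automatic at every stage; this is exactly what the gluing destroys.

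Concretely, two steps in your argument have real gaps. First, displacing $L$ and $B$ from $\widehat{\Sigma}$ does not yield displacement from $u$, $v$, $w$: the set $(u \cup v \cup w) \sd \widehat{\Sigma}$ is the truncated part, which accumulates along $L_{1,1}$. Your $\varphi$ fixes $L_{1,1}$ pointwise but gives no lower bound on $\mathrm{dist}(\varphi(L \cup B), L_{1,1})$, so $\varphi(L)$ or $\varphi(B)$ could perfectly well intersect the cylindrical tails of $u$, $v$, $w$ that you cut off in the gluing. Second, the appeal to part (c) of Proposition~\ref{p:buildingfacts} to rule out breaking along the cobordism does not transfer to the class $[\CP^{1}]$. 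That argument works in Proposition~\ref{p:displaceclosed} only because the class $(1,0)$ has area $1$, so a building with integral-area top-level curves must be a single unbroken sphere; here $\omega_{\FS}([\CP^{1}]) = 3$, and a limit building with three top-level planes of area $1$ each (precisely the configuration your $\widehat{\Sigma}$ was built from) is compatible with integrality. There is also a tension between requiring $J$ to be adapted to $\psi_{1,1}$ and integrable near $\widehat{\Sigma}$: in the gluing collar inside $\mathcal{U}(L_{1,1})$ the sphere $\widehat{\Sigma}$ is symplectic but not $J_{\mathrm{cyl}}$-holomorphic, so these two demands conflict exactly where you would need them to coexist. The paper's ``stay in the punctured world'' strategy sidesteps all three problems at once, at the cost of invoking the more general SFT compactness of Remark~\ref{r:splitting}.
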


\begin{proof}
The idea of the proof is similar to that of Proposition \ref{p:displaceclosed} in light of Remark \ref{r:splitting}. Fix a parameterization $\psi$ of $L$ in addition to $\psi_{1, 1}$ above. Then let $J$ be a tame almost complex structure on $\CP^{2} \sd (L \cup L_{1, 1} \cup B)$ adapted to $\psi$ and $\psi_{1, 1}$ and of appropriate form for applying the splitting procedure along $\partial{B}$. Let $\{J_{t}\}_{t \geq 0}$ be a family of tame almost complex structures on $\CP^{2} \sd L_{1, 1}$ that stretches to $J$ such that $J_{t}$ is adapted to $\psi_{1, 1}$ for each $t$. Now deform $J_{1, 1}$ to $J_{t}$ for each $t$ through almost complex structures that are adapted to $\psi_{1, 1}$. By compactness, the moduli spaces of $u$, $v$, and $w$ will persist for each $t$. Applying the more general splitting procedure for symplectic manifolds with cylindrical ends mentioned in Remark \ref{r:splitting}, we obtain moduli spaces of $J$-holomorphic limit planes in $\CP^{2} \sd (L \cup L_{1, 1} \cup B)$. Then extending $J$ smoothly past $L$ and $B$ and away from the above moduli spaces, we can deform it back to $J_{1, 1}$ through almost complex structures that are all adapted to $\psi_{1, 1}$. Finally, we can find a Hamiltonian isotopy in $\CP^{2}$ that fixes $L_{1, 1}$ and displaces $L$ and $B$ from the planes $u$, $v$, and $w$. Its time-one map is the desired Hamiltonian diffeomorphism.
\end{proof}

In view of the previous lemma, there is no loss of generality in assuming that we are in the situation depicted schematically in the following figure:
\[
\begin{tikzpicture}[scale = 1.5]
\draw[semithick] (0, 0) -- (3, 0) -- (0, 3) -- cycle;
\draw[thick] (1.05, 1.05) -- (1.5, 1.5);
\draw[thick] (0, 1) -- (0.94, 1);
\draw[thick] (1, 0) -- (1, 0.94);
\node[xshift = 1em, yshift = 0.5em] at (1.5, 1.5) {$S_{\infty}$};
\node[yshift = -1em] at (1.5, 0) {$T_{0}$};
\node[xshift = -1em] at (0, 1.5) {$T_{\infty}$};
\node[xshift = 0.75em, yshift = -0.45em] at (1.25, 1.25) {$u$};
\node[yshift = -0.45em] at (1.2, 0.5) {$v$};
\node at (0.4, 1.2) {$w$};
\filldraw[black] (1, 1) circle (1pt);
\filldraw[color = black, fill = gray!50, semithick] (0.4, 0.4) circle (7pt);
\node at (0.4, 0.4) {$B$};
\filldraw[color = black, fill = white, semithick] (0.6, 1.7) circle (7pt);
\node at (0.6, 1.7) {$L$};
\node[xshift = 1em, yshift = -0.5em] at (1, 1) {$L_{1, 1}$};
\end{tikzpicture}
\]
This is evidently meant to be suggestive, as $B$ and $L$, while being disjoint from $u$, $v$, and $w$, could still intersect any fiber lying above one of the three lines in the above moment image.

\begin{lem}\label{l:simpli2}
There exists a symplectically embedded sphere in the class of $S_{\infty}$ that is disjoint from $L$, $L_{1, 1}$ and $B$. Furthermore, this sphere can be arranged to have a unique transverse and positive intersection with exactly one of the planes $u$, $v$, or $w$.
\end{lem}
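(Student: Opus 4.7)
I work in the model $(X, \omega_X)$: blowing up along $B$ identifies the configuration $(L, L_{1,1}, B)$ in $\CP^2$ with $(L, L_{1,1}, S_0)$ in $X$, with $L_{1,1}$ sent to $L_{1,1}$ and the exceptional divisor to $S_0$. A symplectically embedded sphere in $X$ of class $S_\infty = (1,1)$ disjoint from $L \cup L_{1,1} \cup S_0$ will thus blow down to the required sphere in $\CP^2$. A homological observation will handle the intersection property automatically. Reading the boundaries of $u, v, w$ in $H_1(L_{1,1}) = \ZZ^2$ via $\psi_{1,1}$, these come out as $(1,1)$, $(0,-1)$, and $(-1,0)$, which sum to zero, so the compactifications $\overline{u}, \overline{v}, \overline{w}$ glue to a closed cycle in $H_2(X)$. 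Computing intersection numbers with $T_0, T_\infty, S_0, S_\infty$ identifies this cycle as $(1,1) = S_\infty$, so any closed sphere $S$ in class $S_\infty$ disjoint from $L_{1,1}$ satisfies $S \cdot (\overline{u} + \overline{v} + \overline{w}) = S_\infty \cdot S_\infty = 1$. When $S$ is pseudoholomorphic for an almost complex structure that also makes $u, v, w$ pseudoholomorphic, positivity of intersections forces this $1$ to be realized as a unique transverse positive intersection with exactly one of the three planes.

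\textbf{Producing $S$.} The strategy is a stretching argument modeled on Proposition \ref{p:displaceclosed}, upgraded to class $(1,1)$ and with point constraints. I fix a parameterization $\psi$ of $L$ and choose a tame almost complex structure $J$ on $X \sd (L \cup L_{1,1})$ adapted to both $\psi$ and $\psi_{1,1}$, integrable near $S_0 \cup S_\infty$, and coinciding with $J_{1,1}$ near $L_{1,1}$ so that $u, v, w$ are $J$-holomorphic. Let $\{J_t\}_{t \geq 0}$ be a family of tame almost complex structures on $X$ stretching to $J$ along $L \cup L_{1,1}$; outside a fixed neighborhood of $L \cup L_{1,1}$ each $J_t$ equals $J$, so $u, v, w, S_0, S_\infty$ are $J_t$-holomorphic for every $t$. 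Proposition \ref{p:blowupcurves}(c) with $d = 1$ then supplies, for each $t$ and a generic pair $p_1, p_2 \in X \sd (L \cup L_{1,1} \cup S_0)$, a unique $J_t$-holomorphic sphere $C_t$ in class $S_\infty$ through $p_1, p_2$, and Theorem \ref{t:compactness} extracts a subsequence converging to a pseudoholomorphic building $\mathbf{F}$ in class $S_\infty$ whose top level lies in $X \sd (L \cup L_{1,1})$.

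\textbf{Main obstacle.} The principal difficulty I anticipate is to show that $\mathbf{F}$ is an unbroken smooth sphere. Integrality of $L$ and $L_{1,1}$ combined with Proposition \ref{p:buildingfacts}(c) forces the top level areas to be positive integers summing to $3$, while the total index is $\ind(\mathbf{F}) = -2 + 2c_1((1,1)) = 4$, which the two point constraints cut down to constrained dimension zero. Running the index bookkeeping from the proof of Proposition \ref{p:displaceclosed} with the formulas in Propositions \ref{p:indextop} and \ref{p:indexmiddle}, and using that any top level plane asymptotic to $L$ or $L_{1,1}$ has even Maslov index $\geq 2$ by monotonicity and orientability of the tori, one should rule out both broken and nodal configurations. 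Hence $\mathbf{F}$ reduces to a single closed $J$-holomorphic sphere $S$ in class $(1,1)$, automatically embedded by the adjunction formula since $(1,1)$ is primitive; it is disjoint from $L \cup L_{1,1}$ by construction and from $S_0$ by $S \cdot S_0 = 0$ together with positivity of intersections. The homological observation from the first paragraph then produces the desired intersection pattern with $u, v, w$, and blowing down $S_0$ yields the sphere in $\CP^2$.
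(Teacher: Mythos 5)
Your approach is genuinely different from the paper's and, I believe, does not close the gap that the downstream applications require. The paper does \emph{not} prove this lemma by a stretching argument. Instead it builds the sphere $\mathcal{S}_{\infty}$ explicitly by translating $u$, $v$, $w$ to a nearby product torus $L_{a,b}$ inside $\mathcal{U}(L_{1,1})$, deforming them so their asymptotic orbits match a fixed triple, and gluing in the $3$-punctured symplectic surface from Proposition \ref{p:3sphere}. The entire point of that delicate construction is \emph{control}: it produces a sphere meeting the translate of $u$ at a marked point, so that by interchanging the roles of $u$, $v$, $w$ in the construction one can arrange the unique intersection to be with \emph{whichever one of the three planes one wants}. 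That is exactly what Lemma \ref{l:simpli3} needs: it first identifies the distinguished plane that is part of a broken leaf of $\mathcal{F}$ (``without loss of generality, $u$'') and then invokes Lemma \ref{l:simpli2} to produce a sphere intersecting \emph{that} plane, in order to verify the hypothesis of Lemma \ref{l:homtriv} (that the $\mathcal{M}_{-\beta}(J)$ family is exactly the family intersecting $\mathcal{S}_\infty$).

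Your argument produces a closed $J$-holomorphic sphere $S$ and then deduces, from $S\cdot(\overline u + \overline v + \overline w) = S_\infty\cdot S_\infty = 1$ together with positivity of intersections, that $S$ intersects exactly one of $u$, $v$, $w$. That homological observation is correct and nicely identifies $\overline u + \overline v + \overline w$ with the class $S_\infty$. But it gives you no say in \emph{which} of the three planes is hit. In fact the sphere $S$ you produce is essential for the foliation $\mathcal{F}(L, L_{1,1}, \psi, \psi_{1,1}, J)$, so it meets every broken leaf at $L_{1,1}$ exactly once; for the leaf containing $u$ it therefore meets either $u$ or the \emph{other} plane $u'$ of that leaf, and if it meets $u'$ your homological count forces it to meet $v$ or $w$ instead of $u$. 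There is nothing in the stretching argument that rules this out, and in that case your sphere is useless for Lemma \ref{l:simpli3}. This is the central gap: you have proved the weak reading of the statement (``there is such a sphere meeting exactly one of them'') but not the strong reading that the proof of Lemma \ref{l:simpli3} actually calls on (``for any prescribed one of $u$, $v$, $w$, there is such a sphere meeting exactly that one'').

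There are two smaller issues you should also be aware of. First, the index bookkeeping you invoke to rule out broken and nodal limits is genuinely more delicate than the one in Proposition \ref{p:displaceclosed}: there the class was $(1,0)$ of area $1$ and the index was $0$, so nonnegativity of the summands finished the argument immediately; here the class is $(1,1)$ of area $3$ with index $4$ and two point constraints, and you must account carefully for how the constraint points are distributed among top-level curves, the parity of the Maslov index of any top-level plane, and the dimension count that prevents a low-index plane from passing through a generic constraint. I believe this bookkeeping can be made to work, but ``one should rule out both broken and nodal configurations'' is a hope, not a proof, and a reader cannot tell from your write-up whether you have actually checked that a type 3 or type 2 configuration in class $(1,1)$ through two generic constraints is impossible. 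Second, a $J$-holomorphic sphere in $X$ disjoint from $S_0$ need not blow down to a sphere disjoint from all of the closed ball $B$ (disjointness from $S_0$ does not imply disjointness from a whole tubular neighbourhood of $S_0$); this turns out to be harmless for the way the lemma is used, since Lemma \ref{l:simpli3} ultimately only needs the sphere in $\DD_1$ disjoint from $\mathcal{S}_0$, but as written your proof does not actually deliver the literal conclusion about $B$.
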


\begin{proof}
We will construct the desired sphere by examining the situation more closely in $\mathcal{U}(L_{1, 1})$. For concreteness, our proof will show how to construct a sphere having a single intersection with $u$, but the arguments adapt in the obvious way to the other two cases. Now by definition of $\psi_{1, 1}$, the projection of $\mathcal{U}(L_{1, 1})$ onto the $(p_{1}, p_{2})$-plane is a small square in the moment image of $(\CP^{2}, \omega_{\FS})$, where the origin corresponds to $L_{1, 1}$. We can then consider the product torus $L_{a, b}$ for $(a, b)$ close enough to $(1, 1)$, and translate the parametrization $\psi_{1, 1}$ in the obvious way to obtain a parameterization $\psi_{a, b}$ of $L_{a, b}$. We assume the corresponding Weinstein neighborhood $\mathcal{U}(L_{a, b})$ is made small enough to still be contained in $\mathcal{U}(L_{1, 1})$.

We then choose an almost complex structure $J_{a, b}$ on $\CP^{2} \sd L_{a, b}$ that coincides with $J$ outside of $\mathcal{U}(L_{1, 1})$ and that is adapted to $\psi_{a, b}$, and translate the coordinate representations of the planes $v$ and $w$ to obtain symplectic cylinders that are $J_{a, b}$-holomorphic in $\mathcal{U}(L_{a, b})$. We also translate the entire plane $u$ by a toric translation using the vector $(a, b)$, obtaining a symplectic plane that is $J_{a, b}$-holomorphic in $\mathcal{U}(L_{a, b})$. This process is illustrated in the following figure for a specific translation:
\[
\begin{tikzpicture}
\draw[semithick] (0, 0) -- (0, 4) -- (4, 4) -- (4, 0) -- cycle;
\draw[thick] (0, 2) -- (1.92, 2);
\draw[thick] (2, 0) -- (2, 1.92);
\draw[thick] (2.05, 2.05) -- (4, 4);
\node[xshift = 0.5em, yshift = -0.7em] at (3, 3) {$u$};
\node[xshift = 0.7em] at (2, 1) {$v$};
\node[yshift = 0.5em] at (1, 2) {$w$};
\node at (1, 3.5) {$\mathcal{U}(L_{1, 1})$};
\filldraw[black] (2, 2) circle (1pt);
\draw[thick, ->] (4.5, 2) -- (5.5, 2);
\draw[semithick] (6, 0) -- (6, 4) -- (10, 4) -- (10, 0) -- cycle;
\draw[thick, gray, dashed] (6, 2) -- (7.93, 2);
\draw[thick, gray, dashed] (8, 0) -- (8, 1.93);
\draw[thick, gray, dashed] (8.05, 2.05) -- (10, 4);
\draw[thick] (6, 2.5) -- (8.92, 2.5);
\draw[thick] (9, 0) -- (9, 2.42);
\draw[thick] (9.05, 2.55) -- (10, 3.5);
\draw[semithick] (8.5, 2) -- (9.5, 2) -- (9.5, 3) -- (8.5, 3) -- cycle;
\node at (8.5, 3.5) {$\mathcal{U}(L_{a, b})$};
\filldraw[gray] (8, 2) circle (1pt);
\filldraw[black] (9, 2.5) circle (1pt);
\node at (2, 0) {\phantom{$T_{0}$}};
\node at (2, 4) {\phantom{$T_{0}$}};
\end{tikzpicture}
\]
We then construct symplectic planes in $\CP^{2} \sd L_{a, b}$ that are $J_{a, b}$-holomorphic in $\mathcal{U}(L_{a, b})$ and asymptotic to Reeb orbits covering geodesics in the classes $(1, 1)$, $(0, -1)$ and $(-1, 0)$ in $H_{1}(L_{a, b}, \psi_{a, b})$. This is done via the following toric deformation:
\[
\begin{tikzpicture}
\draw[semithick] (0, 0) -- (0, 4) -- (4, 4) -- (4, 0) -- cycle;
\draw[thick, gray, dashed] (1.23, 2) -- (1.93, 2);
\draw[thick, gray, dashed] (2, 1.2) -- (2, 1.93);
\draw[thick, gray, dashed] (2.05, 2.05) -- (4, 4);
\draw[thick] (2, 2.5) -- (2.92, 2.5);
\draw[thick] (3, 2) -- (3, 2.42);
\draw[thick] (3.05, 2.55) -- (3.2, 2.7);
\draw[thick] (0, 2) -- (1, 2);
\draw[thick] (2, 0) -- (2, 1);
\draw[thick] (1, 2) .. controls (1.5, 2) and (1.8, 2.5) .. (2, 2.5);
\draw[thick] (2, 1) .. controls (2, 1.5) and (3, 1.8) .. (3, 2);
\draw[thick] (3.2, 2.7) -- (4, 3.5);
\filldraw[gray] (2, 2) circle (1pt);
\filldraw[black] (3, 2.5) circle (1pt);
\draw[red] (2.5, 2.5) circle (4pt);
\node at (2, 0) {\phantom{$T_{0}$}};
\node at (2, 4) {\phantom{$T_{0}$}};
\end{tikzpicture}
\]
We have also circled a distinguished intersection point in the above figure.

We now consider the symplectically embedded surface with three cylindrical ends in $T^{\ast}\TT^{2}$ whose existence is guaranteed by Proposition \ref{p:3sphere}. This can be transported to $T^{\ast}L_{a, b}$ via the embedding $\psi_{a, b}$, and using the fact that all Reeb orbits in question match, we can glue this surface together with the above perturbed objects in order to construct a symplectically embedded sphere $\mathcal{S}_{\infty}$ in $\CP^{2} \sd (L \cup L_{1, 1} \cup B)$, which from the intersection properties of the above planes with $S_{\infty} \cup T_{0} \cup T_{\infty}$ must belong to the class of $S_{\infty}$. By construction, we see that $\mathcal{S}_{\infty}$ has a single positive and transverse intersection with $u$, as desired.
\end{proof}

At this point, we can actually employ the machinery of relative foliations. To start, fix an almost complex structure $J$ on $\CP^{2} \sd (L \cup L_{1, 1})$ that is adapted to $\psi_{1, 1}$ and some fixed parameterization of $L$, and that is integrable near the planes $u$, $v$, and $w$. Then perturb $J$ near $B$ so that it it is of a suitable form to perform a symplectic blowup along the interior of $B$. Once this is done, we are back in $(\DD_{1}, \omega_{\mspace{1mu}\DD_{1}})$, and $J$ lifts to an almost complex structure on $\DD_{1}$ that is integrable near the exceptional divisor $\mathcal{S}_{0}$ and makes the planes $u$, $v$, and $w$ into $J$-holomorphic planes with punctures on $L_{1, 1}$. There is a corresponding relative foliation $\mathcal{F}$, and we choose two unbroken leaves $\mathcal{T}_{0}$ and $\mathcal{T}_{\infty}$ with the property that the points $\pi(\mathcal{T}_{0})$ and $\pi(\mathcal{T}_{\infty})$ lie on opposite sides of each of the circles $\pi(L)$ and $\pi(L_{1, 1})$ in $\mathcal{S}_{0}$.

\begin{lem}\label{l:simpli3}
There exists a symplectically embedded sphere $\mathcal{S}_{\infty}$ in the class $(1, 1)$ that is disjoint from $L$ and $L_{1, 1}$, such that $L_{1, 1}$ is homologicaly nontrivial in $\DD_{1} \sd (\mathcal{S}_{0}\cup \mathcal{S}_{\infty} \cup \mathcal{T}_{0} \cup \mathcal{T}_{\infty})$.
\end{lem}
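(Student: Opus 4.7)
The plan is to apply Lemma \ref{l:homtriv} with $L_{1, 1}$ in place of $L$. To this end, I would first invoke Lemma \ref{l:simpli2} with the choice that its sphere meet $u$ rather than $v$ or $w$ (the three cases are symmetric in that proof). This yields a symplectic sphere $\mathcal{S}_\infty' \sub \CP^2$ in the class of $S_\infty$, disjoint from $L, L_{1, 1}, B, v, w$, and meeting $u$ at a single transverse positive point. Since $\mathcal{S}_\infty'$ is disjoint from $B$, it lifts through the blowup to a symplectic sphere $\mathcal{S}_\infty \sub \DD_1$ of homology class $(1, 1)$ disjoint from $\mathcal{S}_0 \cup L \cup L_{1, 1}$ and still meeting $u$ in one transverse positive point. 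I would further require $J$ in the preceding setup to be integrable near $\mathcal{S}_\infty$; by transversality at $\mathcal{S}_\infty \cap u$ this is compatible with integrability near $u \cup v \cup w \cup \mathcal{S}_0$ and the other adaptation conditions, and by Theorem \ref{t:foliation} the foliation class $\beta$ of $L_{1, 1}$ is independent of $J$.

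Next I identify $\beta$ via $u$. The plane $u$ is $J$-holomorphic, embedded, of area $1$, asymptotic to the $(1, 1)$-geodesic on $L_{1, 1}$, and disjoint from $\mathcal{S}_0$ (since the disk $\{[z:1:1] : |z| \leq 1\}$ avoids the blowup center $[1:0:0]$), so $-\beta = (1, 1)$. Moreover, $[\overline{u}]$ and $[\overline{Q}]$ coincide in $H_2(\DD_1, L_{1, 1})$ for any $Q \in \mathcal{M}_{-\beta}(J)$: their difference $A = [\overline{u}] - [\overline{Q}]$ is an absolute class with $\omega_{X}(A) = 0$ and $A \cdot \mathcal{S}_0 = 0$; writing $A = a S_0 + b T_0$ gives $a + 2b = 0$ and $-a + b = 0$, forcing $A = 0$. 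Therefore $\overline{Q} \cdot \mathcal{S}_\infty = \overline{u} \cdot \mathcal{S}_\infty = 1$, and summing over a broken leaf $\overline{P} + \overline{Q}$ in the class $T_0 = (0, 1)$ with $T_0 \cdot S_\infty = 1$ yields $\overline{P} \cdot \mathcal{S}_\infty = 0$ for each $P \in \mathcal{M}_\beta(J)$. Since $\mathcal{S}_\infty$ and the planes in $\mathcal{M}_{\pm\beta}(J)$ are all $J$-holomorphic, positivity of intersections promotes these algebraic identities to the geometric conclusions that each $Q \in \mathcal{M}_{-\beta}(J)$ meets $\mathcal{S}_\infty$ in exactly one transverse positive point and each $P \in \mathcal{M}_\beta(J)$ is disjoint from $\mathcal{S}_\infty$.

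Finally, I would adapt the argument from Lemma \ref{l:homtriv} to the present double-torus setting. Take an embedded path $\gamma \sub \mathcal{S}_0$ from $\pi(\mathcal{T}_0)$ to $\pi(\mathcal{T}_\infty)$ crossing $\pi(L_{1, 1})$ and $\pi(L)$ transversally at distinct times $t_0$ and $t_1$, and choose an embedded path $\sigma_t \sub \pi^{-1}(\gamma(t))$ from $\mathcal{S}_0$ to $\mathcal{S}_\infty$ in each fiber; their union defines a class $\Sigma \in H_2(\DD_1, \mathcal{Y})$ with $\mathcal{Y} = \mathcal{S}_0 \cup \mathcal{S}_\infty \cup \mathcal{T}_0 \cup \mathcal{T}_\infty$. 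The fiber over $t_0$ is a (perturbed) broken sphere at $L_{1, 1}$ meeting both $\mathcal{S}_0$ and $\mathcal{S}_\infty$, and the circle $L_{1, 1} \cap \pi^{-1}(\gamma(t_0))$ separates those intersection points, so $\sigma_{t_0}$ must cross $L_{1, 1}$ exactly once; any intersection with $L$ at $t_1$ is immaterial because $L$ is not a component of $\mathcal{Y}$. Hence $\Sigma \cdot L_{1, 1} = \pm 1 \neq 0$, giving the claimed homological nontriviality. The main subtlety I foresee is the compatibility of the additional integrability requirement on $J$ near $\mathcal{S}_\infty$ with the previously imposed conditions, which is routine given the transversality of $\mathcal{S}_\infty \cap u$.
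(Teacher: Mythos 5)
Your proposal runs aground at the assertion ``so $-\beta = (1,1)$''. The listed properties of $u$ --- $J$-holomorphic, embedded, area $1$, asymptotic to the $(1,1)$-geodesic on $L_{1,1}$, disjoint from $\mathcal{S}_0$ --- do not imply that $u$ is a plane in a broken leaf of $\mathcal{F}$. An essential $J$-holomorphic plane with a single puncture on $L_{1,1}$ \emph{not} of foliation type (exactly like $u_{\mathfrak{r}}$ or $u_{\mathfrak{s}}$ in a type 3 building, which also have area $1$ and one of which is disjoint from $\mathcal{S}_0$) would satisfy every one of those conditions. The foliation class $\beta_{1,1}$ is determined by $L_{1,1}$, $\psi_{1,1}$, $\mathcal{S}_0$ \emph{and the other torus $L$}, and a priori nothing forces it to be $(-1,-1)$ rather than, say, $(0,1)$ or $(1,0)$ --- in which case $v$ or $w$, not $u$, is the leaf plane. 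This is exactly why the paper devotes the first half of its proof to showing that \emph{some} one of $u, v, w$ must lie in a broken leaf (by summing the boundary classes to get a class-$(1,1)$ chain, invoking Lemma \ref{l:buildingtop}, and observing that three curves puncturing only $L_{1,1}$ cannot have disjoint projections covering $S_0 \sd \pi(L)$), and only \emph{then}, having identified that plane (called $u$ without loss of generality), applies Lemma \ref{l:simpli2} to produce $\mathcal{S}_\infty$ meeting it.

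The gap is consequential: your proof fixes $\mathcal{S}_\infty$ to hit $u$ \emph{before} knowing which plane lies in the foliation. If the foliation class were such that $v$ (say) is the leaf plane, then the planes of $\mathcal{M}_{-\beta}(J)$ have boundary class $(0,-1)$, so your homological computation $[\overline{u}] = [\overline{Q}]$ fails (the difference is not an absolute class), and since $\mathcal{S}_\infty$ was built to be disjoint from $v$, positivity of intersections gives $\overline{Q} \cdot \mathcal{S}_\infty = 0$ for every $Q \in \mathcal{M}_{-\beta}(J)$. The unique $\mathcal{S}_\infty$-intersection of a broken leaf would then land on the $\mathcal{M}_\beta$ plane --- the same plane carrying the $\mathcal{S}_0$-intersection --- so the circle $K$ no longer separates the two, and the final step of your adapted Lemma \ref{l:homtriv} argument collapses. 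To repair the proof, you should first run the paper's counting argument to determine which of $u, v, w$ is a leaf plane, and only then invoke Lemma \ref{l:simpli2} with the sphere chosen to hit that specific plane; the remainder of your argument, including the explicit homology calculation showing $\overline{Q}\cdot\mathcal{S}_\infty = 1$ and $\overline{P}\cdot\mathcal{S}_\infty = 0$, as well as the two-circle homological intersection argument at the end, is sound and is a nice expansion of what the paper leaves as ``straightforward.''
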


\begin{proof}
We first show that one of the planes $u$, $v$, and $w$ must be part of a broken leaf of $\mathcal{F}$. To this end, observe that these planes compactify to smooth disks with boundary on $L_{1, 1}$, and the homology classes of the corresponding boundaries in $H_{1}(L_{1, 1}; \psi_{1, 1})$ sum to $0$. Adding to this a chain in $L_{1, 1}$ with matching boundary, we obtain a chain in $\DD_{1}$ that is smoothly embedded and $J$-holomorphic outside of any arbitrarily small neighborhood of $L_{1, 1}$. Since this chain has area $3$ and does not intersect $\mathcal{S}_{0}$, it must represent the class $(1, 1)$. It is then a simple matter to see that the conclusions of Lemma \ref{l:buildingtop} hold for the planes $u$, $v$, and $w$. In particular, they cannot all be essential, as this would contradict the last part of the aforementioned lemma. It thus follows that one of them must be part of a broken leaf of $\mathcal{F}$. Without loss of generality, we can assume this plane to be $u$. We then use Lemma \ref{l:simpli2} to produce a symplectically embedded sphere $\mathcal{S}_{\infty}$ in $\CP^{2}$ in the class of $S_{\infty}$ that is disjoint from $L$, $L_{1, 1}$ and $B$, and that has a unique transverse and positive intersection with $u$. We can also modify $J$ so that it is integrable near $\mathcal{S}_{\infty}$ Blowing up along $B$ returns us to $(\DD_{1}, \omega_{\mspace{1mu}\DD_{1}})$, and it is now straightforward to see that the hypotheses of Lemma \ref{l:homtriv} are satisfied.
\end{proof}

We can now finally prove Proposition \ref{p:simpli}. First, we claim that the unbroken leaves $\mathcal{T}_{0}$ and $\mathcal{T}_{\infty}$ represent different classes in $H_{2}(\DD_{1} \sd L)$ and in $H_{2}(\DD_{1} \sd L_{1, 1})$. If this failed for say $L$, there would be a homotopy in $X \sd L$ from $\mathcal{T}_{0}$ to $\mathcal{T}_{\infty}$, and this would project to a connected subset of $\mathcal{S}_{0} \sd \pi(L)$ containing $\pi(\mathcal{T}_{0})$ and $\pi(\mathcal{T}_{\infty})$, contradicting the assumption that $\pi(L)$ separates these two points. Now by Theorem 1.4 of \cite{Mc1}, there is a symplectomorphism from the current model of $(\DD_{1}, \omega_{\mspace{1mu}\DD_{1}})$ to the toric model $(X, \omega_{X})$ taking $\mathcal{S}_{0}$ to $S_{0}$ and $\mathcal{S}_{\infty}$ to $S_{\infty}$. Composing this with a Hamiltonian diffeomorphism if necessary, we can assume that it also takes $\mathcal{T}_{0}$ to $T_{0}$ and $\mathcal{T}_{\infty}$ to $T_{\infty}$. Let $L_{\mathfrak{r}}$ and $L_{\mathfrak{s}}$ be the images of $L$ and $L_{1, 1}$, respectively, under the above symplectomorphism. These are now disjoint, monotone tori in $X \sd Y$, and (c) of Proposition \ref{p:simpli} is automatically satisfied, as is (b) in view of Lemma \ref{l:simpli3}. To see that (a) holds, simply observe that if $L_{\mathfrak{r}}$ were homologicaly nontrivial in $X \sd Y$, then Proposition \ref{p:exact} would imply that it must intersect $L_{\mathfrak{s}}$, contradicting our standing assumption. This finally completes the proof of Proposition \ref{p:simpli}.

\subsection{Building Types}

In view of the previous subsection, to prove Theorem \ref{t:main} it is enough to assume the conclusion of Proposition \ref{p:simpli} and derive a contradiction. So taking $L_{\mathfrak{r}}$ and $L_{\mathfrak{s}}$ as above, we fix parameterizations $\psi_{\mathfrak{r}}$ and $\psi_{\mathfrak{s}}$, and shrink the corresponding Weinstein neighborhoods so that they are disjoint from each other and from $Y$. We then choose an almost complex structure $J$ on $X \sd (L_{\mathfrak{r}} \cup L_{\mathfrak{s}})$ that is integrable near $Y$ and adapted to the parameterizations $\psi_{\mathfrak{r}}$ and $\psi_{\mathfrak{s}}$. From Theorem \ref{t:foliation} we obtain a relative foliation
\[
\mathcal{F} = \mathcal{F}(L_{\mathfrak{r}}, L_{\mathfrak{s}}, \psi_{\mathfrak{r}}, \psi_{\mathfrak{s}}, J),
\]
together with foliation classes $\beta_{\mathfrak{r}}$ and $\beta_{\mathfrak{s}}$, and a projection map $\pi \colon X \to S_{0}$ defined as before. The sets $C_{\mathfrak{r}} = \pi(L_{\mathfrak{r}})$ and $C_{\mathfrak{s}} = \pi(L_{\mathfrak{s}})$ are disjoint embedded circles in $S_{0}$, By (c) of Proposition \ref{p:simpli}, both of these must separate the points $\pi(T_{0})$ and $\pi(T_{\infty})$. Consequently, we can find disjoint closed disks $D_{\mathfrak{r}}$ and $D_{\mathfrak{s}}$ in $S_{0}$ with boundaries $C_{\mathfrak{r}}$ and $C_{\mathfrak{s}}$, respectively, such that $\pi(T_{0}) \in D_{\mathfrak{r}}$ and $\pi(T_{\infty}) \in D_{\mathfrak{s}}$. The closure of $S_{0} \sd (D_{\mathfrak{r}} \cup D_{\mathfrak{s}})$ is a closed annulus, which we will denote by $A$.

Observe that the leaves of $\mathcal{F}$ split up into three categories. There are unbroken leaves in the class $(0, 1)$, and two categories of broken leaves. The first consists of pairs of planes that are asymptotic to Reeb orbits in $\Gamma_{\beta_{\mathfrak{s}}} \cup \Gamma_{-\beta_{\mathfrak{s}}}$. We know that the totality of these planes divide into two families, which going forward we shall denote by $\mathfrak{s}_{0}$ and $\mathfrak{s}_{\infty}$. The family $\mathfrak{s}_{0}$ consists of those planes that intersect $S_{0}$, and the family $\mathfrak{s}_{\infty}$ consists of those planes that intersect $S_{\infty}$. This follows from the fact that $L_{\mathfrak{s}}$ is homologicaly nontrivial in $X \sd Y$, since otherwise the union of the planes in $\mathfrak{s}_{\infty}$ would compactify to a solid torus in $X \sd Y$ whose boundary is $L_{\mathfrak{s}}$, see Proposition 5.16 of \cite{DRGI}. The second category of broken leaves consists of pairs of planes asymptotic to Reeb orbits in $\Gamma_{\beta_{\mathfrak{r}}} \cup \Gamma_{-\beta_{\mathfrak{r}}}$. These again divide into two families $\mathfrak{r}_{0}$ and $\mathfrak{r}_{\infty}$. The family $\mathfrak{r}_{0}$ consists of planes intersecting $S_{0}$, but the family $\mathfrak{r}_{\infty}$ now consists of planes intersecting \emph{neither $S_{0}$ nor $S_{\infty}$}. This follows from Lemma \ref{l:homtriv}, since $L_{\mathfrak{r}}$ is homologicaly trivial in $X \sd Y$ but $C_{\mathfrak{r}}$ still separates $\pi(T_{0})$ and $\pi(T_{\infty})$. Consequently, the planes in $\mathfrak{r}_{0}$ intersect \emph{both $S_{0}$ and $S_{\infty}$}. This fact will have important ramifications.

Now consider a family $\{J_{t}\}_{t \geq 0}$ of tame almost complex structures on $(X, \omega_{X})$ that stretches to $J$, and extract a sequence $(J_{t_{i}})_{i=1}^{\infty}$ as usual. We then fix $d \geq 1$ and use (c) of Proposition \ref{p:blowupcurves} to obtain a sequence $(u_{i})_{i=1}^{\infty}$ of $J_{t_{i}}$-holomorphic spheres in the class $(1, d)$ passing through $2d$ generic point constraints. By Theorem \ref{t:compactness}, this produces a limit building $\mathbf{F}$ in the class $(1, d)$. Using the foliation $\mathcal{F}$, we can obtain a complete classification scheme for such buildings, and this is content of the following proposition.

\begin{prp}\label{p:types}
The building $\mathbf{F}$ is exactly one of five types:

\begin{t0}
$\mathbf{F}$ is a (possibly nodal) $J$-holomorphic sphere in $X \sd (L_{\mathfrak{r}} \cup L_{\mathfrak{s}})$ in the class $(1, d)$, with one essential sphere in the class $(1, c)$ for $1 \leq c \leq d$. Any remaining top level curves cover broken or unbroken leaves of $\mathcal{F}$. Any middle and bottom level curves cover cylinders asymptotic to orbits covering geodesics in multiples of $\beta_{\mathfrak{r}}$ and $\beta_{\mathfrak{s}}$.
\end{t0}

\begin{t1}
$\mathbf{F}$ has a unique essential curve $\underline{u}$ whose punctures on both $L_{\mathfrak{r}}$ and $L_{\mathfrak{s}}$ are all of foliation type. The image of $\pi \circ \underline{u}$ is $S_{0}$ minus finitely many points on $C_{\mathfrak{r}} \cup C_{\mathfrak{s}}$. Any remaining top level curves are either spheres covering unbroken leaves, or planes covering one of the components in a broken leaf of $\mathcal{F}$. Any middle and bottom level curves cover cylinders asymptotic to orbits covering geodesics in multiples of $\beta_{\mathfrak{r}}$ and $\beta_{\mathfrak{s}}$.
\end{t1}

\begin{t2a}
$\mathbf{F}$ has two essential curves. The first curve $u_{\mathfrak{r}}$ is a plane with a single puncture on $L_{\mathfrak{r}}$, not of foliation type. The closure of the image of $\pi \circ u_{\mathfrak{r}}$ is $D_{\mathfrak{r}}$. The second curve $\underline{u}$ has a single puncture on $L_{\mathfrak{r}}$, not of foliation type, and punctures on $L_{\mathfrak{s}}$, all of foliation type. The closure of the image of $\pi \circ \underline{u}$ is $A \cup D_{\mathfrak{s}}$. Any remaining top level curves cover broken or unbroken leaves of $\mathcal{F}$. Any middle and bottom level curves in $\RR \times S^{\ast}L_{\mathfrak{s}}$ and $T^{\ast}L_{\mathfrak{s}}$ cover cylinders asymptotic to orbits covering geodesics in multiples of $\beta_{\mathfrak{s}}$.
\end{t2a}

\begin{t2b}
$\mathbf{F}$ has two essential curves. The first curve $\underline{u}$ has a single puncture on $L_{\mathfrak{s}}$, not of foliation type, and punctures on $L_{\mathfrak{r}}$, all of foliation type. The closure of the image of $\pi \circ \underline{u}$ is $D_{\mathfrak{r}} \cup A$. The second curve $u_{\mathfrak{s}}$ is a plane with a single puncture on $L_{\mathfrak{s}}$, not of foliation type. The closure of the image of \smash{$\pi \circ u_{\mathfrak{s}}$} is $D_{\mathfrak{s}}$. Any remaining top level curves cover broken or unbroken leaves of $\mathcal{F}$. Any middle and bottom level curves in $\RR \times S^{\ast}L_{\mathfrak{r}}$ and $T^{\ast}L_{\mathfrak{r}}$ cover cylinders asymptotic to orbits covering geodesics in multiples of $\beta_{\mathfrak{r}}$.
\end{t2b}

\begin{t3}
$\mathbf{F}$ has three essential curves. The first curve $u_{\mathfrak{r}}$ is a plane with a single puncture on $L_{\mathfrak{r}}$, not of foliation type, and the closure of the image of $\pi \circ u_{\mathfrak{r}}$ is $D_{\mathfrak{r}}$. The second curve $\underline{u}$ is a cylinder with a single puncture on each of $L_{\mathfrak{r}}$ and $L_{\mathfrak{s}}$, neither of foliation type, and the closure of the image of $\pi \circ \underline{u}$ is $A$. The third curve \smash{$u_{\mathfrak{s}}$} is a plane with a single puncture on $L_{\mathfrak{s}}$, not of foliation type, and the closure of the image of $\pi \circ u_{\mathfrak{s}}$ is $D_{\mathfrak{s}}$. Any remaining top level curves cover broken or unbroken leaves of $\mathcal{F}$.
\end{t3}
\end{prp}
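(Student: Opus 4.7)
The plan is to classify $\mathbf{F}$ by first analyzing its top level essential curves via the projection $\pi$, and then using positivity of intersections against $\mathcal{F}$ to control the lower-level components. The first step is to invoke Lemma \ref{l:buildingtop}: every top level curve of $\mathbf{F}$ is either essential or has constant $\pi$-image (hence covers a leaf of $\mathcal{F}$), and the $\pi$-images of the essential top level curves are pairwise disjoint open subsets of $S_{0}$ whose union exhausts $S_{0} \sd (C_{\mathfrak{r}} \cup C_{\mathfrak{s}})$.

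Next, for each essential top level curve $u$, I would combine Proposition \ref{p:essential} with Lemma \ref{l:punctures} to identify the closure of $\pi(u)$ in $S_{0}$ in terms of the puncture types of $u$. On each torus $L_{i} \in \{L_{\mathfrak{r}}, L_{\mathfrak{s}}\}$, either all of the punctures of $u$ on $L_{i}$ are of foliation type (and of a single sign), in which case $\pi(u)$ accumulates on only finitely many points of $C_{i}$ and extends across $C_{i}$; or $u$ has a unique puncture on $L_{i}$, not of foliation type, in which case the entire circle $C_{i}$ appears as a boundary component of $\overline{\pi(u)}$. It follows that $\overline{\pi(u)}$ is a connected union of some of the three pieces $D_{\mathfrak{r}}$, $A$, $D_{\mathfrak{s}}$, with $C_{\mathfrak{r}}$ (resp.\ $C_{\mathfrak{s}}$) appearing in its boundary precisely when $u$ has a non-foliation-type puncture on $L_{\mathfrak{r}}$ (resp.\ $L_{\mathfrak{s}}$).

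The case analysis then reduces to how essential top level curves partition $S_{0}$ into such unions. The possibilities are: (Type 0) one essential curve with no punctures on the tori, covering all of $S_{0}$, which forces $\mathbf{F}$ to consist of a closed sphere in a class $(1, c)$ for $1 \leq c \leq d$ together with covers of leaves of class $(0, 1)$ accounting for the remaining degree; (Type 1) one essential curve with foliation-type punctures on both tori, covering all of $S_{0}$; (Types 2a and 2b) two essential curves splitting $S_{0}$ as $D_{\mathfrak{r}} \sqcup (A \cup D_{\mathfrak{s}})$ or $(D_{\mathfrak{r}} \cup A) \sqcup D_{\mathfrak{s}}$ respectively; and (Type 3) three essential curves realizing $D_{\mathfrak{r}}$, $A$, $D_{\mathfrak{s}}$ separately. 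In the split cases, an essential curve whose image closes up to a disk must be a plane (its domain being a sphere with exactly one puncture, forced by Proposition \ref{p:essential}), and an essential curve whose image closes up to the annulus $A$ must be a cylinder with one non-foliation-type puncture on each torus. The intersection number $(1, d) \cdot (0, 1) = 1$ prevents additional essential top level curves from appearing beyond those listed.

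Finally, I would handle the remaining components of $\mathbf{F}$. Top level curves that are not essential have constant $\pi$-image by Lemma \ref{l:buildingtop}, so each covers either an unbroken leaf in class $(0, 1)$ (possibly with a multiple cover) or a single plane component of a broken leaf. For middle and bottom level curves, I would reproduce the argument used in the proof of Theorem \ref{t:foliation}, applying Lemma 5.8 of \cite{DRGI} against the cylindrical components of $\mathcal{F}$ at the appropriate level: this forces every asymptotic Reeb orbit of such a curve to lie in a Morse-Bott family $\Gamma_{k \beta_{\mathfrak{r}}}$ or $\Gamma_{k \beta_{\mathfrak{s}}}$, and then Corollary 4.3 of \cite{DRGI} implies that the curve itself is a multiple cover of a cylinder over a geodesic in a multiple of the relevant foliation class. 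In Types 2a and 2b, the absence of lower-level curves over one of the tori is a direct consequence of the fact that the only asymptotic orbit on that torus comes from a non-foliation-type puncture of an essential curve, and this is matched directly by a top level plane component of a broken leaf of $\mathcal{F}$ in the family $\mathfrak{r}_{\infty}$ or $\mathfrak{s}_{\infty}$. The main obstacle I expect is the careful verification that $\overline{\pi(u)}$ for an essential curve $u$ is always a union of whole circles together with whole regions $D_{\mathfrak{r}}, A, D_{\mathfrak{s}}$; this rests on Proposition \ref{p:essential}, but translating the statement about punctures into the correct topology of the domain of $u$ (plane, cylinder, or multi-punctured sphere) in each of the five cases requires some care. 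A secondary but related point is ruling out hybrid configurations, for instance an essential curve covering $D_{\mathfrak{r}}$ together with only part of $A$, which is excluded by the fact that the boundary circles of $\overline{\pi(u)}$ are complete copies of $C_{\mathfrak{r}}$ or $C_{\mathfrak{s}}$ by Lemma \ref{l:punctures}(b).
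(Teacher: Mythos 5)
Your proposal follows essentially the same route as the paper's proof: start from Lemma~\ref{l:buildingtop} to identify the essential top level curves, use Proposition~\ref{p:essential} together with Lemma~\ref{l:punctures} to translate the puncture types of each essential curve into a description of the closure of its $\pi$-image as a union of $D_{\mathfrak{r}}$, $A$, $D_{\mathfrak{s}}$, and then classify by how these images tile $S_{0}$. The paper organizes this by picking the essential curve through $T_{0}$ and branching on its puncture types, but the resulting case analysis and the roles of the cited lemmas are the same, so there is no meaningful difference of method in the core argument.

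However, the final paragraph contains a factual error worth flagging. You assert that ``in Types 2a and 2b, the absence of lower-level curves over one of the tori is a direct consequence of the fact that the only asymptotic orbit on that torus comes from a non-foliation-type puncture of an essential curve, and this is matched directly by a top level plane component of a broken leaf of $\mathcal{F}$.'' This is wrong on two counts. First, in Type 2a both $u_{\mathfrak{r}}$ and $\underline{u}$ have a puncture on $L_{\mathfrak{r}}$, so there are two asymptotic orbits over $L_{\mathfrak{r}}$, not one. Second, these orbits are not of foliation type, while a top-level plane covering a component of a broken leaf of $\mathcal{F}$ is asymptotic to an orbit in $\Gamma_{\pm\beta_{\mathfrak{r}}}$, which \emph{is} of foliation type; so no such matching is possible, and in any case two curves in the same (top) level cannot match directly in a building. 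Consequently, Type 2a buildings do have middle and bottom level curves over $L_{\mathfrak{r}}$ --- matching the non-foliation-type punctures of $u_{\mathfrak{r}}$ and $\underline{u}$ --- and these are simply left unconstrained by the proposition, which is why the statement of Type 2a only makes an assertion about the lower-level curves over $L_{\mathfrak{s}}$. This misreading does not affect the classification you establish, since the proposition makes no claim about the curves you are trying to rule out, but the argument given for that aside should be removed.
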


\begin{proof}
By Lemma \ref{l:buildingtop}, we know that $\mathbf{F}$ has at least one essential top level curve, which we denote by $u$. As a preliminary step, we observe that if $u$ has punctures and none of them are of foliation type, then it falls into exactly one of three categories:
\begin{enumerate}
\item[$\bullet$] A $J$-holomorphic plane with a single puncture on $L_{\mathfrak{r}}$. The closure of the image of $\pi \circ u$ is $D_{\mathfrak{r}}$ or $A \cup D_{\mathfrak{s}}$.
\item[$\bullet$] A $J$-holomorphic plane with a single puncture on $L_{\mathfrak{s}}$. The closure of the image of $\pi \circ u$ is $D_{\mathfrak{r}} \cup A$ or $D_{\mathfrak{s}}$.
\item[$\bullet$] A $J$-holomorphic cylinder with a single puncture on each of $L_{\mathfrak{r}}$ and $L_{\mathfrak{s}}$. The closure of the image of $\pi \circ u$ is $A$.
\end{enumerate}
Indeed, by Proposition \ref{p:essential}, we see that $u$ can have at most one puncture on either of $L_{\mathfrak{r}}$ or $L_{\mathfrak{s}}$, and this shows that it is either a plane or a cylinder. Observing that the image of $u$ must project via $\pi$ onto a connected subset of $S_{0}$ with boundary components equal to one or both of $C_{\mathfrak{r}}$ or $C_{\mathfrak{s}}$, the three possibilities for $u$ are easily deduced.

Now suppose that $u$ intersects the fiber $T_{0}$, itself an unbroken leaf of $\mathcal{F}$ since we are assuming it to be $J$-holomorphic. We can then conduct a case by case analysis based on the nature of the punctures of $u$.

\vspace{2mm}

\noindent\textsc{Case 1:} \emph{$u$ has no punctures.} This implies that $\pi \circ u$ must be a bijection onto $S_{0}$. It follows that $u$ is a $J$-holomorphic sphere in the class $(1, c)$ for some $0 \leq c \leq d$. Then again by Lemma \ref{l:buildingtop}, all remaining top level curves of $\mathbf{F}$ must cover leaves of $\mathcal{F}$, and all middle and bottom level curves cover cylinders asymptotic to orbits covering geodesics in multiples of $\beta_{\mathfrak{r}}$ and $\beta_{\mathfrak{s}}$. Furthermore, the top level curves that cover leaves fit together to form a possibly disconnected curve in the class $(0, d - c)$, and if $c = d$, then $\mathbf{F}$ consists only of the curve $u$. In any case, the building is of type 0.

\vspace{2mm}

\noindent\textsc{Case 2:} \emph{$u$ has punctures and they are all of foliation type.} In this case, we will show that $\mathbf{F}$ must be of type 1. First, by Lemma \ref{l:punctures}, we see that the image of $\pi \circ u$ includes points in each component of $S_{0} \sd C_{\mathfrak{r}} \cup C_{\mathfrak{s}}$. By Lemma \ref{l:buildingtop}, it follows that $\pi \circ u$ is actually a bijection onto $S_{0}$ minus a finite set of points on $C_{\mathfrak{r}} \cup C_{\mathfrak{s}}$. Any remaining top level curves must then cover unbroken leaves of $\mathcal{F}$ or be planes covering one of the components of a broken leaf. This shows that $\mathbf{F}$ is of type 1 with $\underline{u} = u$.

\vspace{2mm}

\noindent\textsc{Case 3:} \emph{$u$ has at least one puncture that is not of foliation type.} Since $u$ intersects $T_{0}$, the closure of the image of $\pi \circ u$ is either $D_{\mathfrak{r}}$ or $D_{\mathfrak{r}} \cup A$. In either case, we see by Proposition \ref{p:essential} that $u$ does not intersect $T_{\infty}$ and has exactly one puncture that is not of foliation type. If we suppose furthermore that the closure of the above image is $D_{\mathfrak{r}}$, then $u$ is a plane, for it cannot have any punctures on $L_{\mathfrak{s}}$. By Lemma \ref{l:buildingtop}, there is another essential curve $v$ of $\mathbf{F}$ that intersects $T_{\infty}$, and the images of $\pi \circ u$ and $\pi \circ v$ must be disjoint. This implies that the closure of the image of the latter is either $D_{\mathfrak{s}}$ or $A \cup D_{\mathfrak{s}}$. If it is $D_{\mathfrak{s}}$, then $v$ cannot have any punctures on $L_{\mathfrak{r}}$ and must therefore be a plane with a single puncture on $L_{\mathfrak{s}}$, not of foliation type. Lemma \ref{l:buildingtop} then guarantees the existence of yet a third essential curve $\underline{u}$, which must therefore be a cylinder with a puncture on either torus, both not of foliation type. The building $\mathbf{F}$ is thus of type 3 with $u_{\mathfrak{r}} = u$ and $u_{\mathfrak{s}} = v$. If the closure of the image of $\pi \circ v$ was instead $A \cup D_{\mathfrak{s}}$, then all its punctures on $L_{\mathfrak{s}}$ must be of foliation type, and it must have a single puncture on $L_{\mathfrak{r}}$, not of foliation type. The building $\mathbf{F}$ is now of type 2a, with $u_{\mathfrak{r}} = u$ and $\underline{u} = v$. Finally, if initially the closure of the image of $\pi \circ u$ was $D_{\mathfrak{r}} \cup A$, then an entirely analogous argument implies that $\mathbf{F}$ is of type 2b.
\end{proof}

\subsection{Existence of Special Buildings}

In this subsection, we derive the existence of special buildings in the class $(1, d)$ for $d > 0$ large enough. We then state the black box result mentioned at the beginning of this section, and with it we will be able to complete the proof of Theorem \ref{t:main} in the following section. We begin modestly with the following lemma, which will be of use in the proof of Proposition \ref{p:existence}.

\begin{lem}\label{l:area}
There is some $\epsilon > 0$ such that
\[
\omega_{X}(\overline{u}) \geq \epsilon(\overline{u} \cdot (S_{0} \cup S_{\infty}))
\]
for every (closed or punctured) $J$-holomorphic sphere $u$ in $X \sd (L_{\mathfrak{r}} \cup L_{\mathfrak{s}})$.
\end{lem}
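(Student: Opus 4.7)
The plan is to construct a closed 2-form $\tau$ on $X$ that is Poincar\'e dual to $S_{0} + S_{\infty}$ and supported in a compact set disjoint from $L_{\mathfrak{r}} \cup L_{\mathfrak{s}}$, and then to compare $\omega_{X}$ and $\tau$ using tameness of $J$. Since $S_{0}, S_{\infty} \sub Y$ while $L_{\mathfrak{r}} \cup L_{\mathfrak{s}} \sub X \sd Y$, I can pick disjoint closed tubular neighborhoods $U_{0} \supset S_{0}$ and $U_{\infty} \supset S_{\infty}$ that miss the two tori, and take $\tau$ to be a sum of standard Thom forms for $S_{0}$ and $S_{\infty}$ supported in $U_{0} \cup U_{\infty}$.

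Because $J$ is tame for $\omega_{X}$, the formula $g_{J}(v, w) = \tfrac{1}{2}(\omega_{X}(v, Jw) + \omega_{X}(w, Jv))$ defines a Riemannian metric on $X$ satisfying $g_{J}(v, v) = \omega_{X}(v, Jv)$, with respect to which $J$ acts isometrically. Writing $\| \cdot \|$ for the associated pointwise norm on forms, the continuous function $\| \tau \|$ has compact support and is therefore bounded by some $M > 0$. Cauchy-Schwarz gives $|\tau(v, Jv)| \leq \| \tau \| \| v \| \| Jv \| = \| \tau \| \| v \|^{2}$, so taking $\epsilon = 1/(2M)$ yields
\[
(\omega_{X} - \epsilon\mspace{1mu}\tau)(v, Jv) \geq \| v \|^{2} - \epsilon M \| v \|^{2} = \tfrac{1}{2}\| v \|^{2} \geq 0
\]
for every $v \in TX$, with equality only when $v = 0$. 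In other words, $\omega_{X} - \epsilon\mspace{1mu}\tau$ tames $J$. Consequently, for any $J$-holomorphic sphere $u \colon \SS^{2} \sd Z \to X \sd (L_{\mathfrak{r}} \cup L_{\mathfrak{s}})$, the pullback $u^{\ast}(\omega_{X} - \epsilon\mspace{1mu}\tau)$ is a pointwise nonnegative 2-form on its Riemann surface domain, and hence $\int_{u} u^{\ast}\omega_{X} \geq \epsilon\int_{u} u^{\ast}\tau$.

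To conclude, I identify the two sides with $\omega_{X}(\overline{u})$ and $\epsilon(\overline{u} \cdot (S_{0} \cup S_{\infty}))$, respectively. The compactification $\overline{u}$ extends $u$ by chains in $L_{\mathfrak{r}} \cup L_{\mathfrak{s}}$, on which $\omega_{X}$ vanishes by the Lagrangian condition and $\tau$ vanishes by the choice of support, so the integrals of these forms over $u$ agree with the integrals over $\overline{u}$. The first integral is then $\omega_{X}(\overline{u})$ by definition, and the second is the algebraic intersection number $\overline{u} \cdot (S_{0} \cup S_{\infty})$ by the standard interpretation of a Thom form paired with a relative cycle whose boundary lies outside its support. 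The only delicate point is this last topological identification, which is routine once $\tau$ is normalized so that it integrates to $1$ on each normal fiber of $S_{0}$ and $S_{\infty}$; everything else in the argument is purely local and uniform over the (finite-energy) $J$-holomorphic sphere $u$, which is what guarantees that $\epsilon$ can be chosen independently of $u$.
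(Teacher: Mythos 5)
Your proof is correct and takes essentially the same route as the paper: both arguments choose a Thom form $\tau$ representing $\mathrm{PD}(S_{0} + S_{\infty})$ with support disjoint from $L_{\mathfrak{r}} \cup L_{\mathfrak{s}}$, show that $\omega_{X} - \epsilon\tau$ tames $J$ for small $\epsilon$, and conclude by integrating over the $J$-holomorphic curve. The only cosmetic difference is that you establish the taming globally via an explicit Cauchy--Schwarz estimate with respect to the taming metric $g_{J}$, whereas the paper works locally near each sphere and invokes openness of the taming and nondegeneracy conditions.
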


\begin{proof}
Let $\Sigma$ be the compactified domain of $\overline{u}$, and consider closed tubular neighborhoods $\mathcal{N}(S_{0})$ and $\mathcal{N}(S_{\infty})$ of $S_{0}$ and $S_{\infty}$, respectively, that are disjoint from $L_{\mathfrak{r}} \cup L_{\mathfrak{s}}$. By appropriately perturbing these neighborhoods, we may assume that their preimages under $u$ are compact submanifolds with boundary in $\Sigma$, and since they are disjoint from the compactified punctures we may identify them with regular domains in $\RR^{2}$. It then follows that
\[
\omega_{X}(\overline{u}) \geq \int_{u^{-1}(\mathcal{N}(S_{0}))}u^{\ast}\omega_{X} + \int_{u^{-1}(\mathcal{N}(S_{\infty}))}u^{\ast}\omega_{X}.
\]
Now choose a Thom class $\tau$ for the interior of $\mathcal{N}(S_{0})$. It is a standard fact of differential topology that $\tau$ is Poincar\'{e} dual to $S_{0}$. Then choose $\epsilon > 0$ small enough so that the form $\omega = \omega_{X} - \epsilon\tau$ is a symplectic form on the interior of $\mathcal{N}(S_{0})$ that tames $J$. This is possible since the properties of being nondegenerate and taming a fixed almost complex structure are open conditions. Setting $Q = u^{-1}(\mathcal{N}(S_{0}))$, it now follows that
\[
\int_{Q}u^{\ast}\omega_{X} = \int_{Q}u^{\ast}\omega + \epsilon\int_{Q}u^{\ast}\tau \geq \epsilon\int_{Q}u^{\ast}\tau.
\]
Note that $Q$ is compact and that no points on its boundary are mapped by $u$ to intersections with $S_{0}$. After possibly perturbing $u$ to be transverse to $S_{0}$ while keeping it fixed on the boundary, and perhaps shrinking $\mathcal{N}(S_{0})$, we can appeal to another standard fact from differential topology, which says that the last integral on the right is exactly $\epsilon(u \cdot S_{0})$. Running through the same argument for $S_{\infty}$, we then obtain the desired result.
\end{proof}

We can now state and prove the main existence result of this subsection.

\begin{prp}\label{p:existence}
For $d > 0$ sufficiently large, there exists a building $\mathbf{F}$ of type 3 in the class $(1, d)$ whose top level curves are all of area $1$ and consist of three essential curves $u_{\mathfrak{r}}$, $\underline{u}$, and $u_{\mathfrak{s}}$, together with $d - 1$ planes in $\mathfrak{r}_{0} \cup \mathfrak{r}_{\infty}$ and $d - 1$ planes in $\mathfrak{s}_{0} \cup \mathfrak{s}_{\infty}$.
\end{prp}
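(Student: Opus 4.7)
The plan is to produce a building of type~3 with the claimed structure by applying the splitting procedure of Theorem~\ref{t:compactness} to sequences of closed spheres in class $(1,d)$ through carefully chosen point constraints, and then using Proposition~\ref{p:types} together with area and index arguments to pin down the resulting limit building.

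The first step is to choose the constraints. Place three \emph{witness points} $p_{\mathfrak{r}}, p_{\ast}, p_{\mathfrak{s}}$, one in each of the open fibered regions $\pi^{-1}(\Int(D_{\mathfrak{r}}))$, $\pi^{-1}(\Int(A))$, $\pi^{-1}(\Int(D_{\mathfrak{s}}))$, chosen generically to avoid $Y \cup L_{\mathfrak{r}} \cup L_{\mathfrak{s}}$ and to lie off every broken leaf of $\mathcal{F}$. The remaining $2d - 3$ points are then distributed among specific pre-chosen broken leaves of $\mathcal{F}$, split roughly evenly between the $\mathfrak{r}$- and $\mathfrak{s}$-families so as to force the appearance of $d - 1$ planes of each type in the top level. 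For a sequence $(J_{t_i})$ stretching to $J$, Proposition~\ref{p:blowupcurves}(c) provides a unique $J_{t_i}$-sphere in class $(1,d)$ through these $2d$ points; Theorem~\ref{t:compactness} then yields, after passing to a subsequence, a pseudoholomorphic limit building $\mathbf{F}$ in class $(1,d)$.

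Next, the building $\mathbf{F}$ must be shown to be of type~3. By Proposition~\ref{p:types}, the five possibilities are types 0, 1, 2a, 2b, and 3. The three witness points force at least three essential top-level curves whose $\pi$-images cover the three separated regions $\Int(D_{\mathfrak{r}})$, $\Int(A)$, $\Int(D_{\mathfrak{s}})$; inspection of the $\pi$-image descriptions in Proposition~\ref{p:types} shows that types 0, 2a, and 2b have too few essential curves, or wrongly shaped $\pi$-images, to accomplish this. Type~1 is ruled out by an area bookkeeping: each of the $2d-3$ points on broken leaves forces a corresponding broken leaf into the building, contributing area at least $2(2d-3)$ to the top level; combined with the essential curve's area (which is at least $1$ by the integrality of relative areas in Proposition~\ref{p:buildingfacts}(c)), the total would exceed $\omega_X(1,d) = 2d+1$ for $d$ sufficiently large, a contradiction. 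Hence $\mathbf{F}$ is of type~3.

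Finally, the exact structure is pinned down using monotonicity and the integrality of $L_{\mathfrak{r}}$ and $L_{\mathfrak{s}}$. Each of the three essential curves has integral area at least $1$, contributing total area at least $3$ to $\mathbf{F}$; the remaining top-level area is at most $2(d-1)$, accounted for by top-level curves covering leaves of $\mathcal{F}$. The intersection numbers $(1,d) \cdot S_0 = d - 1$ and $(1,d) \cdot S_\infty = d$, together with the fact that unbroken leaves meet both $S_0$ and $S_\infty$ while planes in $\mathfrak{r}_0$ and $\mathfrak{s}_0$ meet only $S_0$ and planes in $\mathfrak{r}_\infty$ and $\mathfrak{s}_\infty$ meet neither, then force the remaining top level to consist of exactly $d - 1$ planes in $\mathfrak{r}_0 \cup \mathfrak{r}_\infty$ and $d - 1$ planes in $\mathfrak{s}_0 \cup \mathfrak{s}_\infty$, with the essential areas realizing the minimum $1$ each. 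The main obstacle is the careful bookkeeping needed to rule out type~1 and to force the essential areas to be exactly $1$: this requires combining area, index, and intersection constraints, and in particular using Lemma~\ref{l:area} to control intersections with $S_0 \cup S_\infty$ against the available areas.
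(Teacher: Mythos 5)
There is a genuine gap: your constraint scheme does not force the structure you want. When the neck is stretched, a constraint point lying on a plane of a broken leaf of $\mathcal{F}$ does not force the limit building to contain a curve covering that leaf; the component of $\mathbf{F}$ through that point can just as well be an essential curve crossing the corresponding fiber. Symmetrically, your three witness points (which necessarily lie on unbroken leaves) can be absorbed by leaf-covering components, by the single essential sphere of a type 0 building, by the essential curve $\underline{u}$ of a type 1 building (whose $\pi$-image is all of $S_{0}$ minus finitely many points), or by the two essential curves of a type 2a/2b building, whose $\pi$-images together cover all three regions $\Int(D_{\mathfrak{r}})$, $\Int(A)$, $\Int(D_{\mathfrak{s}})$. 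So neither your exclusion of types 0, 2a, 2b nor your count of $d-1$ planes in each family follows from the choice of constraints. Your area bookkeeping against type 1 is also internally inconsistent: if each of the $2d-3$ points really forced a distinct broken leaf (area $2$, or even just a plane of area $1$) into the top level, the total area would exceed $\omega_{X}(1,d)=2d+1$ for all large $d$, ruling out \emph{every} building through these constraints, including type 3 --- which shows precisely that the forcing cannot hold. Finally, the intersection facts in your last step are wrong in this setting: by the discussion following Proposition \ref{p:simpli}, planes of $\mathfrak{r}_{0}$ meet \emph{both} $S_{0}$ and $S_{\infty}$, planes of $\mathfrak{r}_{\infty}$ meet neither, while $\mathfrak{s}_{0}$ meets only $S_{0}$ and $\mathfrak{s}_{\infty}$ meets only $S_{\infty}$; the symmetric pattern you use does not apply here, so the final pinning down of the plane counts and of the essential areas does not go through.

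For contrast, the paper's mechanism is different and is what makes the forcing work: all $2d$ constraint points are placed \emph{on the tori themselves}, $d$ on each of $L_{\mathfrak{r}}$ and $L_{\mathfrak{s}}$. Since top and middle level curves live in $X\smallsetminus(L_{\mathfrak{r}}\cup L_{\mathfrak{s}})$ and $\RR\times S^{\ast}L$, the constraints can only be met by bottom level curves, and an index count for the resulting subbuildings (index $2s_{i}-2\geq 2d_{i}$ for generic points) shows the top level has at least $d+1$ punctures on each torus. Combined with integrality of areas (Proposition \ref{p:buildingfacts}(c)) this rules out type 0, forces all top level curves to have area $1$, and yields at least $d-1$ planes in $\mathfrak{r}_{0}\cup\mathfrak{r}_{\infty}$ and in $\mathfrak{s}_{0}\cup\mathfrak{s}_{\infty}$. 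Types 1, 2a, 2b are then excluded because in each case the essential curves, of total area at most $2$, would have to account for at least $d-1$ intersections with $S_{0}\cup S_{\infty}$, contradicting Lemma \ref{l:area} for $d$ large. If you want to salvage your approach, you would need a mechanism comparable to this (constraints that provably cannot be satisfied by top level curves), rather than points placed on leaves or in fibered regions.
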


\begin{proof}
Fix $d$ generic points on each of $L_{\mathfrak{r}}$ and $L_{\mathfrak{s}}$. By (c) of Proposition \ref{p:blowupcurves} and Theorem \ref{t:compactness}, we obtain a limit building $\mathbf{F}$ passing through the $2d$ point constraints. We will show that this building is of type 3 by ruling out all other types. In the process, we will deduce that the top level curves all have area $1$, and in particular are simply covered.

First, observe that $\mathbf{F}$ must have curves in its bottom level because the point constraints lie on $L_{\mathfrak{r}}$ and $L_{\mathfrak{s}}$, and so can only be satisfied by curves in $T^{\ast}L_{\mathfrak{r}} \cup T^{\ast}L_{\mathfrak{s}}$. Partition the middle and bottom level curves of $\mathbf{F}$ in $\RR \times S^{\ast}L_{\mathfrak{r}}$ and $T^{\ast}L_{\mathfrak{r}}$ into subcollections $\mathbf{F}_{\mathfrak{r}, 1}, \mathbf{F}_{\mathfrak{r}, 2}, \dots, \mathbf{F}_{\mathfrak{r}, k}$, where two curves are in the same subcollection if they have matching nodes or asymptotic orbits. Each of these subcollections is a \emph{subbuilding}, which following \cite{HO} has a well defined index given by
\[
\ind(\mathbf{F}_{\mathfrak{r}, i}) = 2s_{i} - 2.
\]
Here $s_{i}$ is the total number of asymptotic orbits of $\mathbf{F}_{\mathfrak{r}, i}$ matching with top level curves of $\mathbf{F}$. If $d_{i}$ of the $d$ point constraints on $L_{\mathfrak{r}}$ are satisfied by curves in $\mathbf{F}_{\mathfrak{r}, i}$, and these are chosen generically, then $\ind(\mathbf{F}_{\mathfrak{r}, i}) \geq 2d_{i}$. From this we deduce that $s_{i} \geq d_{i} + 1$, and therefore that the total number of punctures of top level curves on $L_{\mathfrak{r}}$ is at least $d + 1$ (in fact, at least $d + k$). A similar analysis shows that the total number of punctures of top level curves on $L_{\mathfrak{s}}$ is again at least $d + 1$.

Next, we see right away that $\mathbf{F}$ is not of type 0, since it would need to have at least $d + 1$ top level curves covering planes in $\mathfrak{r}_{0} \cup \mathfrak{r}_{\infty}$ and $d + 1$ top level curves covering planes in $\mathfrak{s}_{0} \cup \mathfrak{s}_{\infty}$. But these all have integral area at least $1$, which already contradicts the fact that $\mathbf{F}$ itself has area $2d + 1$. On the other hand, if $\mathbf{F}$ is of any other type, then it has an essential curve $\underline{u}$ that has punctures on both $L_{\mathfrak{r}}$ and $L_{\mathfrak{s}}$. In this case it follows from genus considerations that the number $k$ of subbuildings in the previous paragraph is at least the number of punctures on $\underline{u}$ on $L_{\mathfrak{r}}$. By going through each case and balancing areas, it is easy to see that $\mathbf{F}$ will always have at least $d - 1$ curves covering planes in $\mathfrak{r}_{0} \cup \mathfrak{r}_{\infty}$ and $d - 1$ curves covering planes in $\mathfrak{s}_{0} \cup \mathfrak{s}_{\infty}$. It also follows that all top level curves have area $1$ and are therefore simply covered. Furthermore, if $\underline{u}$ has punctures on $L_{\mathfrak{r}}$ all of foliation type, then $\mathbf{F}$ has exactly $d$ planes in $\mathfrak{r}_{0} \cup \mathfrak{r}_{\infty}$, and a simple argument using areas shows that these belong to exactly one of $\mathfrak{r}_{0}$ or $\mathfrak{r}_{\infty}$ according to whether the aforementioned punctures of $\underline{u}$ on $L_{\mathfrak{r}}$ are negative or positive, respectively. The same is true if $L_{\mathfrak{r}}$ is replaced by $L_{\mathfrak{s}}$.

Now if $\mathbf{F}$ were of type 1 or 2a, then the essential curve $\underline{u}$ has punctures on $L_{\mathfrak{s}}$ all of foliation type, and thus $d$ planes in one of $\mathfrak{s}_{0}$ or $\mathfrak{s}_{\infty}$. Since $\mathbf{F}$ has only $d - 1$ intersections with $S_{0}$, these must belong to $\mathfrak{s}_{\infty}$ and also account for all intersections of $\mathbf{F}$ with $S_{\infty}$. All nonessential top level curves of $\mathbf{F}$ with punctures on $L_{\mathfrak{r}}$ must then be planes in $\mathfrak{r}_{\infty}$, in order to not contribute any more intersections with $S_{\infty}$. This leaves $d - 1$ intersections with $S_{0}$ that must be contributed by the essential curves. On the other hand, if $\mathbf{F}$ were of type 2b, then it has exactly $d$ top level planes belonging to $\mathfrak{r}_{\infty}$, again since $\mathbf{F}$ has only $d - 1$ intersections with $S_{0}$. It also has $d - 1$ top level planes in $\mathfrak{s}_{0} \cup \mathfrak{s}_{\infty}$, which contribute a total of $d - 1$ intersections with $S_{0} \cup S_{\infty}$. This leaves $d$ intersections that must be contributed by the essential curves of $\mathbf{F}$. All in all, when $\mathbf{F}$ is of type 1, 2a, or 2b, there at least $d - 1$ intersections with $S_{0} \cup S_{\infty}$ which must be contributed by the essential curves of $\mathbf{F}$. Since these have total area at most $2$, this contradicts Lemma \ref{l:area} if $d$ is chosen sufficiently large, and so $\mathbf{F}$ is of type 3 as claimed.
\end{proof}

Finally, we present the aforementioned black box result that we will need to complete the proof of Theorem \ref{t:main} in the next section.

\begin{prp}\label{p:black}
For $d > 0$ sufficiently large, there exist symplectically embedded spheres
\[
F \colon \SS^{2} \to X \sd (L_{\mathfrak{r}} \cup L_{\mathfrak{s}}), \qquad G \colon \SS^{2} \to X \sd (L_{\mathfrak{r}} \cup L_{\mathfrak{s}}),
\]
in the classes $(1, d)$ and $(1, d-1)$, respectively, together with symplectically embedded disks
\[
E_{\mathfrak{r}} \colon (\DD^{2}, \SS^{1}) \to (X, L_{\mathfrak{r}}), \qquad E_{\mathfrak{s}} \colon (\DD^{2}, \SS^{1}) \to (X, L_{\mathfrak{s}}),
\]
disjoint and of Maslov index $2$, satisfying the following properties:
\begin{enumerate}
\item $F$, $G$, $E_{\mathfrak{r}}$, and $E_{\mathfrak{s}}$ are all $J$-holomorphic away from arbitrarily small neighborhoods of a collection of Lagrangian tori whose members are near to, and Lagrangian isotopic to, either $L_{\mathfrak{r}}$ or $L_{\mathfrak{s}}$.
\item The class of $E_{\mathfrak{r}}\vert_{\SS^{1}}$ and the class $\beta_{\mathfrak{r}}$ form an integral basis of $H_{1}(L_{\mathfrak{r}})$.
\item The class of $E_{\mathfrak{s}}\vert_{\SS^{1}}$ and the class $\beta_{\mathfrak{s}}$ form an integral basis of $H_{1}(L_{\mathfrak{s}})$.
\item Exactly one of $F$ and $G$ intersects the planes of $\mathfrak{r}_{0}$, and the other the planes of $\mathfrak{r}_{\infty}$.
\item Exactly one of $F$ and $G$ intersects the planes of $\mathfrak{s}_{0}$, and the other the planes of $\mathfrak{s}_{\infty}$.
\item $F \cdot E_{\mathfrak{r}} + G \cdot E_{\mathfrak{r}} = d - 1$, and these intersections are all positive.
\item $F \cdot E_{\mathfrak{s}} + G \cdot E_{\mathfrak{s}} = d - 1$, and these intersections are all positive.
\item $F$ and $G$ intersect in $2d - 2$ points where they are $J$-holomorphic, away from $E_{\mathfrak{r}} \cup E_{\mathfrak{s}}$ and from $T_{0} \cup T_{\infty}$.
\item $E_{\mathfrak{r}}$ intersects exactly $d - 1$ of the unbroken leaves over $\pi(F \cap G)$, each in a single point, and $E_{\mathfrak{r}}$ intersects the remaining $d - 1$ unbroken leaves over $\pi(F \cap G)$, each in a single point.
\end{enumerate}
\end{prp}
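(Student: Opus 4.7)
The plan is to construct $F$, $G$, $E_{\mathfrak{r}}$, and $E_{\mathfrak{s}}$ from the top-level components of the type~3 building $\mathbf{F}$ provided by Proposition~\ref{p:existence}, using the deformation procedures of Section~7. Recall that the top level of $\mathbf{F}$ contains the essential plane $u_{\mathfrak{r}}$ projecting to $D_{\mathfrak{r}}$, the essential cylinder $\underline{u}$ projecting to~$A$, the essential plane $u_{\mathfrak{s}}$ projecting to $D_{\mathfrak{s}}$, together with $d-1$ planes in $\mathfrak{r}_0 \cup \mathfrak{r}_\infty$ and $d-1$ in $\mathfrak{s}_0 \cup \mathfrak{s}_\infty$, each of area one. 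Section~7 provides the key technical tool: it shows how matched pairs of asymptotic half-cylinders of such curves can be spliced into embedded symplectic annuli inside arbitrarily small neighborhoods of tori Lagrangian-isotopic to $L_{\mathfrak{r}}$ and~$L_{\mathfrak{s}}$, leaving the surfaces $J$-holomorphic outside. This is exactly the content of property~(1).

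Set $E_{\mathfrak{r}} := \overline{u_{\mathfrak{r}}}$ and $E_{\mathfrak{s}} := \overline{u_{\mathfrak{s}}}$, each suitably perturbed near its boundary torus to be smoothly embedded and symplectic. They are disjoint since $\pi(E_{\mathfrak{r}}) \subset D_{\mathfrak{r}}$ and $\pi(E_{\mathfrak{s}}) \subset D_{\mathfrak{s}}$, and each has area one, hence Maslov index two by monotonicity. The class $[\partial E_{\mathfrak{r}}]$ is primitive in $H_1(L_{\mathfrak{r}})$ because $u_{\mathfrak{r}}$ is simply covered with simply covered asymptotic (as in the proof of Theorem~\ref{t:foliation}), and is not a multiple of $\beta_{\mathfrak{r}}$ because the puncture of $u_{\mathfrak{r}}$ is not of foliation type; minimality of area among disks with such boundaries then forces $\bigl([\partial E_{\mathfrak{r}}], \beta_{\mathfrak{r}}\bigr)$ to be an integral basis, giving~(2) and symmetrically~(3).

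The spheres $F$ and $G$ are built by gluing compactified top-level components along their matching asymptotics. $F$ corresponds to a choice of gluing data producing class $(1,d)$ and placing the sphere on the $\mathfrak{r}_0$- and $\mathfrak{s}_0$-sides of each broken leaf of~$\mathcal{F}$, while $G$ corresponds to an alternative choice producing class $(1,d-1)$ and the opposite sides. Partner half-planes not present in $\mathbf{F}$ are drawn from the moduli spaces $\mathcal{M}_{\pm\beta_{\mathfrak{r}}}(J)$ and $\mathcal{M}_{\pm\beta_{\mathfrak{s}}}(J)$ of Theorem~\ref{t:foliation}. This freedom in the gluing is exactly what arranges properties~(4) and~(5).

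Properties~(6)--(9) are then intersection bookkeeping via the projection~$\pi$. From $(1,d)\cdot(1,d-1) = 2d-2$ and positivity of intersections, $F$ and $G$ meet transversally in exactly $2d-2$ points, which the construction places outside the Weinstein neighborhoods and away from $T_0 \cup T_\infty$, projecting to distinct points in $S_0 \sd (C_{\mathfrak{r}} \cup C_{\mathfrak{s}})$ split as $d-1$ in $D_{\mathfrak{r}}$ and $d-1$ in $D_{\mathfrak{s}}$, giving~(8). Essentiality of $u_{\mathfrak{r}}$ implies that $E_{\mathfrak{r}}$ meets exactly the $d-1$ unbroken leaves above the former set of points (each once) and misses those above the latter; symmetrically for $E_{\mathfrak{s}}$, yielding~(9), from which~(6) and~(7) follow. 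The main obstacle is Section~7's gluing: the top-level curves must be spliceable into genuinely embedded symplectic surfaces with enough flexibility in the gluing parameters to arrange~(4)--(5) simultaneously.
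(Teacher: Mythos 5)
Your proposal captures the coarse outline---take a type~3 building from Proposition~\ref{p:existence}, compactify the essential planes to get $E_{\mathfrak{r}}$ and $E_{\mathfrak{s}}$, and assemble $F$ and $G$ from top-level curves---but it misses the central technical device of Section~7, and this creates several concrete gaps.

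First, you attempt to produce both $F$ (class $(1,d)$) and $G$ (class $(1,d-1)$) from a single building $\mathbf{F}$ in class $(1,d)$ by ``alternative choices of gluing data.'' This cannot work: gluing the top-level curves of a fixed building along matching asymptotics always recovers the building's total homology class $(1,d)$, so you can never obtain $(1,d-1)$ this way. The paper applies Proposition~\ref{p:existence} twice, producing two independent buildings $\mathbf{F}$ and $\mathbf{G}$ in the classes $(1,d)$ and $(1,d-1)$ respectively.

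Second, and more fundamentally, you never address why the intersection numbers $F \cdot E_{\mathfrak{r}}$, $G \cdot E_{\mathfrak{r}}$, and $F \cdot G$ are even well defined, or how to control them. If $F$, $G$, and $E_{\mathfrak{r}}$ all have asymptotics on the same torus $L_{\mathfrak{r}}$ (which is what happens if you simply compactify top-level curves of the same foliation), they will share boundary circles and the relative intersection theory breaks down. The paper's entire mechanism is the translation apparatus: one translates $L_{\mathfrak{r}}$ to nearby tori $L_{\mathfrak{r}}(\mathbf{v}_1)$ and $L_{\mathfrak{r}}(\mathbf{v}_2)$, deforms $\mathbf{F}$ and $\mathbf{G}$ so that their punctures lie on these distinct translated tori (Lemma~\ref{l:fukaya1}, Lemma~\ref{l:deform1}), and separately deforms $u_{\mathfrak{r}}$ into a plane $u_{\mathfrak{r},\mathbf{V}}$ which retains its puncture on the original $L_{\mathfrak{r}}$ (Lemma~\ref{l:deform2}). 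The signs of the translation parameters $b_1, b_2, d_1, d_2$ then control which side of $L_{\mathfrak{r}}$ and $L_{\mathfrak{s}}$ the various curves pass on, and the intersection counts of Lemmas~\ref{l:intersect1}--\ref{l:intersect6} are proved by examining straightened coordinate representations inside the Weinstein neighborhoods. Property (1)---that everything is $J$-holomorphic away from small neighborhoods of tori Lagrangian isotopic to $L_{\mathfrak{r}}$ or $L_{\mathfrak{s}}$---is precisely a consequence of this translation construction, not of a half-cylinder splicing.

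Third, the final step of the paper is a four-way case analysis based on the inequalities between $\alpha_0,\gamma_0$ and $\beta_0,\delta_0$, used to pick the signs of the translation data so that the intersection formulas force $\alpha_0 = \gamma_0$, $\beta_0 = \delta_0$, and then properties (4)--(9) drop out. Your proposal has no counterpart to this step; properties (4)--(9) do not ``follow from bookkeeping'' without a specific choice of translation data. Also, your claim that the $2d-2$ intersection points of $F$ and $G$ lie outside the Weinstein neighborhoods is the opposite of what happens: in the paper's proof these points lie \emph{inside} $\mathcal{U}(L_{\mathfrak{r}}) \cup \mathcal{U}(L_{\mathfrak{s}})$ and project into $D_{\mathfrak{r}} \cup D_{\mathfrak{s}}$. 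Finally, your argument for (2)--(3) via ``minimality of area'' is not a proof; two primitive non-collinear classes in $\ZZ^2$ need not form an integral basis, and the determinant condition has to be extracted from the fact that $u_{\mathfrak{r}}$ is essential with a single non-foliation-type puncture, so that its boundary wraps once around $C_{\mathfrak{r}}$ under $\pi$.
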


The proof of this proposition will be carried out in Section 7.

\section{Proof of Theorem \ref{t:main}}

In this section, we apply a sequence of blowups, inflations, and blowdowns to transform $(X, \omega_{X})$ into a new symplectic manifold in which the spheres and disks of Proposition \ref{p:black} can be used to contradict the conclusion of Proposition \ref{p:simpli}, thereby proving Theorem \ref{t:main}. This first requires some modification of the almost complex structure $J$ on $X \sd (L_{\mathfrak{r}} \cup L_{\mathfrak{s}})$. That being done, we perform the aforementioned sequence of transformations, keeping track of intersection numbers and areas of various objects along the way. Some concluding arguments then yield our contradiction.

\subsection{Intersection and Area Analysis}

As in the previous section, let $J$ be an almost complex structure that is adapted to parameterizations of $L_{\mathfrak{r}}$ and $L_{\mathfrak{s}}$. For the purposes of this section, it will be important to make the preliminary assumption that $J$ is \emph{compatible with $\omega_{X}$}, and not just tame. Consider the spheres $F$ and $G$ and the disks $E_{\mathfrak{r}}$ and $E_{\mathfrak{s}}$ of Proposition \ref{p:black}, and let $H$ be an unbroken leaf of $\mathcal{F}$ that is disjoint from $F \cap G$. For $i = 1, \dots, 2d - 2$, let $H_{i}$ be the unbroken leaf of $\mathcal{F}$ intersecting $p_{i} \in F \cap G$. By property (8) of Proposition \ref{p:black}, we know that these are all distinct. We then perturb $J$ to also make it integrable near $F$ and $G$, and do so through compatible almost complex structures that are adapted to $\psi_{\mathfrak{r}}$ and $\psi_{\mathfrak{r}}$ and fixed near $S_{0} \cup T_{0} \cup T_{\infty}$ and all the $H_{i}$. Observe that with this choice of perturbation, all of the properties listed in Proposition \ref{p:black} except for property (1) continue to hold. Next, we choose a Darboux ball $B_{i}$ of capacity $\epsilon > 0$ centered at each $p_{i}$, and arrange that the pushforward of $J$ at $p_{i}$ coincide with the standard complex structure at $0 \in B(\epsilon) \sub \RR^{4}$. Also, in each $B_{i}$ the spheres $F$, $G$, and $H_{i}$ intersect at the origin and are tangent to distinct complex planes, so by making $\epsilon$ smaller, we can perturb the spheres to actually coincide with their respective tangent planes in these coordinates. We then perturb $J$ a final time so that its pushforward on \emph{all of} $B_{i}$ is the standard complex structure in $B(\epsilon)$. This leaves the above spheres $J$-holomorphic, and allows us to symplectically blow up along the $B_{i}$. Finally, we extend $J$ smoothly away from $F$, $G$, the $H_{i}$ and the $B_{i}$ to all of $X$. This can be done without affecting any previous perturbations.

With these preliminaries out of the way, we now want to modify $(X, \omega_{X})$ through a sequence of blowups, inflations, and blowdowns, all while keeping track of intersection numbers and areas. We start with the following:

\bigskip

\begin{center}

\begin{tabular}{c c}
{\textbf{Initial Intersection Numbers}} & \qquad \qquad {\textbf{Initial Areas}} \\ \\

\begin{tabular}{c|c|c|c|c|c|}\cline{2-6}
& $F$ & $G$ & $E_{\mathfrak{r}}$ & $E_{\mathfrak{s}}$ & $H$ \\ \hline
\multicolumn{1}{|c|}{$F$} & $2d - 1$ & & & & \\ \hline
\multicolumn{1}{|c|}{$G$} & $2d - 2$ & $2d - 3$ & & & \\ \hline
\multicolumn{1}{|c|}{$E_{\mathfrak{r}}$} & $k$ & $(d - 1) - k$ & $*$ & & \\ \hline
\multicolumn{1}{|c|}{$E_{\mathfrak{s}}$} & $l$ & $(d - 1) - l$ & $0$ & $*$ & \\ \hline
\multicolumn{1}{|c|}{$H$} & $1$ & $1$ & $*$ & $*$ & $0$ \\ \hline
\end{tabular}

& \qquad \qquad

\begin{tabular}{c|c|}\cline{2-2}
& $\omega_{X}$-area \\ \hline
\multicolumn{1}{|c|}{$F$} & $2d + 1$ \\ \hline
\multicolumn{1}{|c|}{$G$} & $2d - 1$ \\ \hline
\multicolumn{1}{|c|}{$E_{\mathfrak{r}}$} & $1$ \\ \hline
\multicolumn{1}{|c|}{$E_{\mathfrak{s}}$} & $1$ \\ \hline
\multicolumn{1}{|c|}{$H$} & $2$ \\ \hline
\end{tabular}

\\

\end{tabular}

\end{center}

\bigskip

The entries in the above tables are symmetric along the main diagonal, which is why we leave the entries above it blank. Entries marked with an asterisk signify undetermined quantities that will be irrelevant for us. The remaining entries are either justified by our assumptions or the various properties listed in Proposition \ref{p:black}. The nonnegative integers $k$ and $l$ are undetermined since all we know for certain is that the sum of the intersections of $E_{\mathfrak{r}}$ and $E_{\mathfrak{s}}$ with $F$ and $G$ add up to $d-1$, by properties (6) and (7) of Proposition \ref{p:black}.

\medskip

\noindent{\textbf{Step 1:}} Perform a symplectic blowup along $B_{i}$ for $i = 1, \dots, 2d - 2$.

\medskip

We obtain a new symplectic manifold $(X_{1}, \omega_{1})$, which contains a new exceptional divisor $\mathcal{E}_{i}$ of area $\epsilon$ for $i = 1, \dots, 2d - 2$. We denote the proper transforms of all objects under consideration by the same symbols. Thus $F$ and $G$ are now disjoint symplectic spheres whose self-intersection numbers have been reduced by $2d - 2$. The $H_{i}$ are still symplectic spheres that are now disjoint from $F$ and $G$ and have self-intersection number $-1$. From property (9) of Proposition \ref{p:black}, we know that $E_{\mathfrak{r}}$ intersects exactly $d - 1$ of the $H_{i}$, each in a single point, and that $E_{\mathfrak{s}}$ intersects the remaining $d - 1$ of the $H_{i}$, again each in a single point. The following table records all relevant intersection numbers after Step 1:

\bigskip

\begin{center}

\begin{tabular}{c}
{\textbf{Intersection Numbers after Step 1}} \\ \\

\begin{tabular}{c|c|c|c|c|c|c|c|}\cline{2-8}
& $F$ & $G$ & $E_{\mathfrak{r}}$ & $E_{\mathfrak{s}}$ & $H$ & $\{\mathcal{E}_{i}\}$ & $\{H_{i}\}$ \\ \hline
\multicolumn{1}{|c|}{$F$} & $1$ & & & & & & \\ \hline
\multicolumn{1}{|c|}{$G$} & $0$ & $-1$ & & & & & \\ \hline
\multicolumn{1}{|c|}{$E_{\mathfrak{r}}$} & $k$ & $(d - 1) - k$ & $*$ & & & & \\ \hline
\multicolumn{1}{|c|}{$E_{\mathfrak{s}}$} & $l$ & $(d - 1) - l$ & $0$ & $*$ & & & \\ \hline
\multicolumn{1}{|c|}{$H$} & $1$ & $1$ & $*$ & $*$ & $0$ & & \\ \hline
\multicolumn{1}{|c|}{$\{\mathcal{E}_{i}\}$} & $2d - 2$ & $2d - 2$ & $0$ & $0$ & $0$ & $-1$ & \\ \hline
\multicolumn{1}{|c|}{$\{H_{i}\}$} & $0$ & $0$ & $d - 1$ & $d - 1$ & $0$ & $1$ & $-1$ \\ \hline
\end{tabular}

\\

\end{tabular}

\end{center}

\bigskip

In the above table, by the intersection number between, say, the individual sphere $F$ and the collection $\{\mathcal{E}_{i}\}$, we mean the sum of the intersection numbers of $F$ with all of the spheres $\mathcal{E}_{i}$. On the other hand, by the self-intersection number of the collection $\{\mathcal{E}_{i}\}$ with itself, we mean the self-intersection number of each individual $\mathcal{E}_{i}$. Similar comments apply to other intersection numbers. As for symplectic areas, the areas of $F$ and $G$ are now
\[
\omega_{1}(F) = (2d + 1) - (2d - 2)\epsilon, \qquad \omega_{1}(G) = (2d - 1) - (2d - 2)\epsilon.
\]
Likewise, the area of $H_{i}$ is now $2 - \epsilon$, but the areas of $E_{\mathfrak{r}}$, $E_{\mathfrak{s}}$, and the unbroken leaf $H$, are unchanged. The following table records all relevant symplectic areas after Step 1:

\bigskip

\begin{center}

\begin{tabular}{c}
{\textbf{Areas after Step 1}} \\ \\

\begin{tabular}{c|c|}\cline{2-2}
& $\omega_{1}$-area \\ \hline
\multicolumn{1}{|c|}{$F$} & $(2d + 1) - (2d - 2)\epsilon$ \\ \hline
\multicolumn{1}{|c|}{$G$} & $(2d - 1) - (2d - 2)\epsilon$ \\ \hline
\multicolumn{1}{|c|}{$E_{\mathfrak{r}}$} & $1$ \\ \hline
\multicolumn{1}{|c|}{$E_{\mathfrak{s}}$} & $1$ \\ \hline
\multicolumn{1}{|c|}{$H$} & $2$ \\ \hline
\multicolumn{1}{|c|}{$\mathcal{E}_{i}$} & $\epsilon$ \\ \hline
\multicolumn{1}{|c|}{$H_{i}$} & $2 - \epsilon$ \\ \hline
\end{tabular}

\\

\end{tabular}

\end{center}

\bigskip

\noindent{\textbf{Step 2:}} Perform an inflation of capacity $d - 1$ along $F$ and $G$.

\medskip

Observe that after Step 1, the almost complex structure $J$ lifts to one on $X_{1}$ that is compatible with $\omega_{1}$. We can then apply Proposition \ref{p:inflate} and perform an inflation of capacity $d - 1$ along $F$. This yields a new symplectic form on $X_{1}$, which now only \emph{tames} $J$. All previously $J$-holomorphic spheres in $X_{1}$ continue to be $J$-holomorphic, so that any intersections between $F$, $G$, the $H_{i}$, and the $\mathcal{E}_{i}$ are positive and transverse.

At this point, we would like to replace $J$ by a new almost complex structure which is \emph{compatible} with the intermediate symplectic form and which makes all symplectic spheres in the previous paragraph pseudoholomorphic. This requires possibly perturbing these spheres in a small Darboux ball centered at each intersection point after specifying the almost complex structure there. Fortunately, perturbing the spheres in this manner will not affect subsequent considerations since all that is relevant to us are the various intersection numbers and areas. Performing said perturbations, we continue to use the notation $J$ for the new compatible almost complex structure as well as the same notation for the various perturbed spheres.

We can now apply Proposition \ref{p:inflate} again and perform an inflation of capacity $d - 1$ along $G$, being sure to initially choose $\epsilon$ small enough so that $(2d - 2)\epsilon < d$ and hence that $d - 1 < \omega_{1}(G)$. We denote the resulting symplectic manifold by $(X_{2}, \omega_{2})$. The following table records all relevant symplectic areas after Step 2:

\bigskip

\begin{center}

\begin{tabular}{c}
{\textbf{Areas after Step 2}} \\ \\

\begin{tabular}{c|c|}\cline{2-2}
& $\omega_{2}$-area \\ \hline
\multicolumn{1}{|c|}{$F$} & $3d - (2d - 2)\epsilon$ \\ \hline
\multicolumn{1}{|c|}{$G$} & $d - (2d - 2)\epsilon$ \\ \hline
\multicolumn{1}{|c|}{$E_{\mathfrak{r}}$} & $d^{2} - 2d + 2$ \\ \hline
\multicolumn{1}{|c|}{$E_{\mathfrak{s}}$} & $d^{2} - 2d + 2$ \\ \hline
\multicolumn{1}{|c|}{$H$} & $2d$ \\ \hline
\multicolumn{1}{|c|}{$\mathcal{E}_{i}$} & $\epsilon + 2d - 2$ \\ \hline
\multicolumn{1}{|c|}{$H_{i}$} & $2 - \epsilon$ \\ \hline
\end{tabular}

\\

\end{tabular}

\end{center}

\bigskip

\noindent{\textbf{Step 3:}} Apply an inflation of capacity $\epsilon$ along each $\mathcal{E}_{i}$.

\medskip

Observe that after Step 2, the symplectic form $\omega_{2}$ now only tames $J$. All previously $J$-holomorphic spheres in $X_{2}$ continue to be $J$-holomorphic, and so any intersections between $F$, $G$, the $H_{i}$, and the $\mathcal{E}_{i}$ are positive and transverse. As in Step 2, after possibly perturbing these spheres near their various intersection points, we may assume that they are $J$-holomorphic for some new almost complex structure $J$ compatible with $\omega_{2}$. We then apply Proposition \ref{p:inflate} and perform an inflation of capacity $\epsilon$ along each $\mathcal{E}_{i}$, performing new perturbations and defining a new $J$ after each inflation as necessary. We denote the resulting symplectic manifold by $(X_{3}, \omega_{3})$. The following table records all relevant symplectic areas after Step 3:

\bigskip

\begin{center}

\begin{tabular}{c}
{\textbf{Areas after Step 3}} \\ \\

\begin{tabular}{c|c|}\cline{2-2}
& $\omega_{3}$-area \\ \hline
\multicolumn{1}{|c|}{$F$} & $3d$ \\ \hline
\multicolumn{1}{|c|}{$G$} & $d$ \\ \hline
\multicolumn{1}{|c|}{$E_{\mathfrak{r}}$} & $d^{2} - 2d + 2$ \\ \hline
\multicolumn{1}{|c|}{$E_{\mathfrak{s}}$} & $d^{2} - 2d + 2$ \\ \hline
\multicolumn{1}{|c|}{$H$} & $2d$ \\ \hline
\multicolumn{1}{|c|}{$\mathcal{E}_{i}$} & $2d - 2$ \\ \hline
\multicolumn{1}{|c|}{$H_{i}$} & $2$ \\ \hline
\end{tabular}

\\

\end{tabular}

\end{center}

\bigskip

\noindent{\textbf{Step 4:}} Perform a symplectic blowdown along $H_{i}$ for $i = 1, \dots, 2d - 2$.

\medskip

Observe that after Step 3, the symplectic form $\omega_{3}$ now only tames $J$. All previously $J$-holomorphic spheres in $X_{3}$ continue to be $J$-holomorphic, and therefore symplectic. After additional perturbations near intersection points, we obtain a new compatible $J$ which is integrable near $F$ and $G$ and is of standard form near each $H_{i}$ so that the symplectic blowdown can be performed, see Theorem 3.14 in \cite{We2}. After performing these blowdowns, we denote the resulting symplectic manifold by $(X_{4}, \omega_{4})$. The spheres $F$ and $G$, and the unbroken leaf $H$ are unaffected. Each sphere $\mathcal{E}_{i}$ has its self-intersection number increased by $1$ and gains a new intersection with each of the disks $E_{\mathfrak{r}}$ and $E_{\mathfrak{s}}$. This gives a total of $d - 1$ new intersections between the collection $\{\mathcal{E}_{i}\}$ and each of the two disks. The following table records all relevant intersection numbers after Step 4:

\bigskip

\begin{center}

\begin{tabular}{c c}
{\textbf{Intersection Numbers after Step 4}} \\ \\

\begin{tabular}{c|c|c|c|c|c|c|}\cline{2-7}
& $F$ & $G$ & $E_{\mathfrak{r}}$ & $E_{\mathfrak{s}}$ & $H$ & $\{\mathcal{E}_{i}\}$ \\ \hline
\multicolumn{1}{|c|}{$F$} & $1$ & & & & & \\ \hline
\multicolumn{1}{|c|}{$G$} & $0$ & $-1$ & & & & \\ \hline
\multicolumn{1}{|c|}{$E_{\mathfrak{r}}$} & $k$ & $(d - 1) - k$ & $*$ & & & \\ \hline
\multicolumn{1}{|c|}{$E_{\mathfrak{s}}$} & $l$ & $(d - 1) - l$ & $0$ & $*$ & & \\ \hline
\multicolumn{1}{|c|}{$H$} & $1$ & $1$ & $*$ & $*$ & $0$ & \\ \hline
\multicolumn{1}{|c|}{$\{\mathcal{E}_{i}\}$} & $2d - 2$ & $2d - 2$ & $d - 1$ & $d - 1$ & $0$ & $0$ \\ \hline
\end{tabular}

\\

\end{tabular}

\end{center}

\bigskip

As for symplectic areas, each sphere $\mathcal{E}_{i}$ has its area increased by $2$ and the disks $E_{\mathfrak{r}}$ and $E_{\mathfrak{s}}$ both have their areas increased by $2d - 2$. The following table records all relevant symplectic areas after Step 4:

\bigskip

\begin{center}

\begin{tabular}{c}
{\textbf{Areas after Step 4}} \\ \\

\begin{tabular}{c|c|}\cline{2-2}
& $\omega_{4}$-area \\ \hline
\multicolumn{1}{|c|}{$F$} & $3d$ \\ \hline
\multicolumn{1}{|c|}{$G$} & $d$ \\ \hline
\multicolumn{1}{|c|}{$E_{\mathfrak{r}}$} & $d^{2}$ \\ \hline
\multicolumn{1}{|c|}{$E_{\mathfrak{s}}$} & $d^{2}$ \\ \hline
\multicolumn{1}{|c|}{$H$} & $2d$ \\ \hline
\multicolumn{1}{|c|}{$\mathcal{E}_{i}$} & $2$ \\ \hline
\end{tabular}

\\

\end{tabular}

\end{center}

\bigskip

\subsection{Concluding Arguments}

We have now transformed $(X, \omega_{X})$ into a new symplectic manifold $(X_{4}, \omega_{4})$ with symplectically embedded spheres $F$ and $G$. The next proposition shows that we have not strayed very far.

\begin{prp}\label{p:ruled}
The symplectic manifold $(X_{4}, \omega_{4})$ is symplectomorphic to $(X, d\mspace{2mu}\omega_{X})$ by a symplectomorphism identifying the sphere $F$ with $S_{\infty}$, the sphere $G$ with $S_{0}$, and $T_{0}$ and $T_{\infty}$ with themselves.
\end{prp}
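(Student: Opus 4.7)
The plan is to (i) identify $(X_{4}, \omega_{4})$ smoothly with $X$ so that $[\omega_{4}] = d[\omega_{X}]$, (ii) invoke McDuff's uniqueness of symplectic forms on rational ruled surfaces to obtain a symplectomorphism $\Phi \colon (X_{4}, \omega_{4}) \to (X, d\mspace{2mu}\omega_{X})$, and (iii) postcompose $\Phi$ with a Hamiltonian diffeomorphism of $(X, d\mspace{2mu}\omega_{X})$ so that $F, G, T_{0}, T_{\infty}$ are sent to $S_{\infty}, S_{0}, T_{0}, T_{\infty}$ respectively.

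For (i), Steps 1--4 perform $2d-2$ symplectic blowups and $2d-2$ symplectic blowdowns (the intervening inflations preserve the smooth type), so $b_{2}(X_{4}) = 2$. Since $G$ is a symplectically embedded $(-1)$-sphere in $X_{4}$, blowing down along $G$ produces a closed symplectic $4$-manifold with $b_{2} = 1$ containing a symplectic $+1$-sphere, hence symplectomorphic to $(\CP^{2}, c\mspace{2mu}\omega_{\FS})$ for some $c > 0$ by McDuff's classification of rational symplectic surfaces. Thus $X_{4} \cong \DD_{1} = X$ diffeomorphically. Choose the identification so that $G \mapsto S_{0}$ and the fiber class $H \mapsto T_{0}$; the intersection data $F \cdot F = 1$, $F \cdot G = 0$, $F \cdot H = 1$ recorded after Step 4 then force $F \mapsto S_{\infty}$. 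Reading off the same table, $\omega_{4}(S_{0}) = d = d\mspace{2mu}\omega_{X}(S_{0})$ and $\omega_{4}(T_{0}) = 2d = d\mspace{2mu}\omega_{X}(T_{0})$, so $[\omega_{4}] = d[\omega_{X}] \in H^{2}(X; \RR)$.

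For (ii), by Theorem 1.4 of \cite{Mc1} the cohomologous symplectic forms $\omega_{4}$ and $d\mspace{2mu}\omega_{X}$ on $X$ are diffeomorphic, so Moser's stability yields a symplectomorphism $\Phi \colon (X_{4}, \omega_{4}) \to (X, d\mspace{2mu}\omega_{X})$. The images $\Phi(F) \cup \Phi(G) \sub X$ form a nodal configuration of disjoint symplectic spheres in the classes $[S_{\infty}], [S_{0}]$. By a parametric version of Proposition \ref{p:blowupcurves} applied to spheres in the classes $(1, 0)$ and $(1, 1)$, this configuration is symplectically isotopic to $S_{\infty} \cup S_{0}$, and by Remark \ref{r:iso} the isotopy can be realized Hamiltonianly. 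Postcomposing $\Phi$ with the corresponding time-one map arranges $\Phi(F) = S_{\infty}$ and $\Phi(G) = S_{0}$.

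For (iii), the almost complex structure of Subsection 6.1 is integrable near $Y = S_{0} \cup S_{\infty} \cup T_{0} \cup T_{\infty}$, and by property (8) of Proposition \ref{p:black} the blowup centers $B_{i}$ lie off $T_{0} \cup T_{\infty}$, so all subsequent perturbations of $J$ in Steps 1--4 can be kept fixed in a neighborhood of these spheres. Consequently $T_{0}$ and $T_{\infty}$ remain $J$-holomorphic (hence symplectic) throughout the procedure, and persist in $X_{4}$ as disjoint symplectic fibers, each meeting both $S_{0}$ and $S_{\infty}$ transversely in a single point. Thus $\Phi(T_{0}), \Phi(T_{\infty})$ sit in $X$ as disjoint symplectic spheres in the fiber class, in the same nodal configuration with $S_{0} \cup S_{\infty}$ as $T_{0}, T_{\infty}$. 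Another parametric application of Proposition \ref{p:blowupcurves}(b) combined with Remark \ref{r:iso} yields a Hamiltonian isotopy of $X$ fixing $S_{0} \cup S_{\infty}$ and carrying the pair $\Phi(T_{0}), \Phi(T_{\infty})$ to $T_{0}, T_{\infty}$; postcomposing $\Phi$ with this isotopy gives the desired symplectomorphism. The principal obstacle is step (ii), which rests entirely on the delicate area bookkeeping of Subsection 6.1 matching $[\omega_{4}]$ with $d[\omega_{X}]$ on the nose, and on invoking McDuff's classification; the adjustments in (iii) are routine given the parametric uniqueness results of Subsection 4.2.
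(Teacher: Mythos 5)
Your proposal follows essentially the same route as the paper's proof: count $b_{2}(X_{4}) = 2$ by bookkeeping the blowups and blowdowns in Steps 1--4, invoke McDuff's uniqueness theorem (Theorem 1.4 of \cite{Mc1}) to produce a symplectomorphism to $(X, c\mspace{1mu}\omega_{X})$, read off $c = d$ from the area table, and then postcompose with a Hamiltonian diffeomorphism carrying the nodal configuration $F \cup G \cup T_{0} \cup T_{\infty}$ to the axes $Y$ as in Remark \ref{r:iso}. The intermediate blowdown along $G$ in your step (i) is an unnecessary detour---the paper applies McDuff's theorem directly to $(X_{4}, \omega_{4})$ using the presence of symplectic $\pm 1$-spheres, obtaining both the diffeomorphism type and the symplectomorphism in one stroke rather than first fixing the diffeomorphism and cohomology class and then reapplying McDuff plus Moser. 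Your phrasing in step (ii) that ``cohomologous symplectic forms \dots are diffeomorphic, so Moser's stability yields a symplectomorphism'' conflates two statements: if McDuff already gives a diffeomorphism intertwining the forms, Moser is superfluous; if McDuff gives an isotopy through cohomologous symplectic forms, then Moser is the right tool. Either reading reaches the correct conclusion, so this is an expository wrinkle rather than a gap. Your two-stage Hamiltonian isotopy (first $\Phi(F), \Phi(G)$, then $\Phi(T_{0}), \Phi(T_{\infty})$ keeping $S_{0} \cup S_{\infty}$ fixed) is a reasonable unpacking of the paper's single sentence about the nodal configuration, and your observation that property (8) of Proposition \ref{p:black} keeps $T_{0}, T_{\infty}$ away from all blowup and blowdown loci correctly fills in why those fibers persist untouched through Steps 1--4.
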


\begin{proof}
We start by determining the rank of $H_{2}(X_{4})$. Initially, the rank of $H_{2}(X)$ is $2$, and performing the $2d - 2$ blowups in Step 1 increases this to $2 + (2d - 2) = 2d$. The inflations in Step 2 and Step 3 have no effect, and performing the $2d - 2$ blowdowns in Step 4 decreases the rank back to $2$. Thus $H_{2}(X_{4})$ has rank $2$, and since $(X_{4}, \omega_{4})$ contains symplectically embedded spheres with self-intersection numbers $\pm1$, it follows from Theorem 1.4 of \cite{Mc1} that it must be symplectomorphic to $(X, c\mspace{2mu}\omega_{X})$ for some $c > 0$, which in fact must be equal to
\[
\omega_{4}(F)/3 = 3d/3 = d.
\]
This symplectomorphism takes $F$, $G$, $T_{0}$, and $T_{\infty}$ to symplectically embedded spheres in $(X, d\mspace{2mu}\omega_{X})$, and we follow it up by a Hamiltonian diffeomorphism that takes the resulting nodal configuration of spheres to the axes $Y$.
\end{proof}

We now have two disjoint Lagrangian tori $L_{\mathfrak{r}}$ and $L_{\mathfrak{s}}$ in $(X_{4}, \omega_{4})$, which by Proposition \ref{p:ruled} is a monotone symplectic manifold.

\begin{prp}\label{p:newmonotone}
The tori $L_{\mathfrak{r}}$ and $L_{\mathfrak{s}}$ are monotone in $(X_{4}, \omega_{4})$.
\end{prp}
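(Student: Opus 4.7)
The approach is to invoke Lemma \ref{l:monotone} separately for each of $L_\mathfrak{r}$ and $L_\mathfrak{s}$, exhibiting two bounding disks whose boundaries form an integral basis of $H_1$ and whose Maslov-to-area ratio in $(X_4, \omega_4)$ equals $2/d$; this is the Lagrangian monotonicity constant forced by the identification $(X_4, \omega_4) \cong (X, d\mspace{2mu}\omega_X)$ of Proposition \ref{p:ruled}. For $L_\mathfrak{r}$ I take (i) the disk $E_\mathfrak{r}$ of Proposition \ref{p:black}, and (ii) the compactification $P$ of a $J$-holomorphic plane in the family $\mathfrak{r}_0$, chosen generically so as to avoid every Darboux ball $B_i$. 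By Proposition \ref{p:black}(2) the classes $\partial E_\mathfrak{r}$ and $\beta_\mathfrak{r}$ form an integral basis of $H_1(L_\mathfrak{r})$, and by Theorem \ref{t:foliation} the boundary $\partial P$ represents $\pm\beta_\mathfrak{r}$, so the basis requirement of Lemma \ref{l:monotone} is met.

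The computation consists of tracking $\mu$ and $\omega$ through Steps 1--4. Step 1's blowup has no effect on either disk, since both lie away from the balls $B_i$ (for $E_\mathfrak{r}$ by Proposition \ref{p:black}(8), and for $P$ by generic choice within $\mathfrak{r}_0$). Steps 2 and 3 are inflations, which change $\omega$ by $t(S \cdot A)$ for each inflation of capacity $t$ along a sphere $S$ but leave $\mu$ unchanged. The delicate step is Step 4: for any disk $D$ meeting the spheres $H_i$ transversally, both $\omega$ and $\mu$ receive the \emph{same} correction $2\sum_i (D \cdot H_i)$. The area correction follows from Proposition \ref{p:sympblowup} because each $H_i$ has $\omega_3$-area $2$; the Maslov correction is $2\sum_i (D \cdot H_i)$ by applying the formula $c_1(M)([u]) = c_1(M')([u']) + [u'] \cdot E$ from Section 2.4 to relative classes, valid because $L_\mathfrak{r}$ is disjoint from all the $H_i$. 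Getting this simultaneous $\mu$- and $\omega$-correction right is the main obstacle: after Step 3, $E_\mathfrak{r}$ and $P$ have \emph{different} Maslov-to-area ratios, and only the blowdown in Step 4 restores equality.

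Carrying out the arithmetic, $E_\mathfrak{r} \cdot F + E_\mathfrak{r} \cdot G = d - 1$ from Proposition \ref{p:black}(6) gives $\omega_2(E_\mathfrak{r}) = 1 + (d - 1)^2$; Step 3 leaves this unchanged; and Step 4 together with $\sum_i E_\mathfrak{r} \cdot H_i = d - 1$ from Proposition \ref{p:black}(9) produces $\omega_4(E_\mathfrak{r}) = d^2$ and $\mu_{X_4}(E_\mathfrak{r}) = 2 + 2(d - 1) = 2d$, hence $\mu/\omega_4 = 2/d$. For $P$ one has $P \cdot F = 1$ and $P \cdot G = 0$ (from Proposition \ref{p:black}(4) together with the fact that the broken leaf containing $P$ glues to a class $(0, 1)$ of intersection number one with each of $F$ and $G$), and $P \cdot H_i = 0$ for all $i$ because distinct leaves of $\mathcal{F}$ are disjoint; this gives $\omega_4(P) = d$ and $\mu_{X_4}(P) = 2$, again $\mu/\omega_4 = 2/d$. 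Lemma \ref{l:monotone} with $\lambda = 2/d$ then shows that $L_\mathfrak{r}$ is monotone, and the identical argument for $L_\mathfrak{s}$ with $E_\mathfrak{s}$ and a compactified plane from $\mathfrak{s}_0 \cup \mathfrak{s}_\infty$ (invoking Proposition \ref{p:black}(3) and (5)) settles the remaining case.
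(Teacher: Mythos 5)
Your proposal is correct and follows essentially the same approach as the paper's proof: both invoke Lemma \ref{l:monotone} with $\lambda = 2/d$ for the two disks $E_{\mathfrak{r}}$ and the compactification of a plane in $\mathfrak{r}_{0}$, and both track Maslov index and area through the four transformation steps, with the Maslov correction for $E_{\mathfrak{r}}$ coming entirely from the blowdowns along the $H_{i}$. The one minor point of disagreement is immaterial: you choose the plane in $\mathfrak{r}_{0}$ generically to avoid the balls $B_{i}$, whereas the paper only needs (and gets, automatically, because broken leaves are disjoint from the unbroken leaves $H_{i}$) that the plane avoids the \emph{centers} $p_{i}$; passing through a ball $B_{i}$ but not its center would leave the topological intersection numbers and hence the computation unchanged.
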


\begin{proof}
Before any of the steps needed to obtain $(X_{4}, \omega_{4})$ from $(X, \omega_{X})$, we consider a plane in the family $\mathfrak{r}_{0}$. Its compactification is a disk with boundary on $L_{\mathfrak{r}}$, which has Maslov index $2$ and whose boundary represents the foliation class $\beta_{\mathfrak{r}}$. As a smooth map, this disk is unaffected by the four steps that transform $(X, \omega_{X})$ into $(X_{4}, \omega_{4})$. In particular, it sill has Maslov index $2$ and its boundary still represents $\beta_{\mathfrak{r}}$. Its area however is now exactly $d$, since by property (4) of Proposition \ref{p:black}, it is intersected by exactly one of $F$ or $G$, so that the inflations from Step 2 increase its area by exactly $d - 1$. On the other hand, we know by property (2) of Proposition \ref{p:black} that the class of $E_{\mathfrak{r}}\vert_{\SS^{1}}$ and the foliation class $\beta_{\mathfrak{r}}$ form an integral basis for $H_{1}(L_{\mathfrak{r}})$ in $(X, \omega_{X})$, and this continues to hold in $(X_{4}, \omega_{4})$. It will now follow from Lemma \ref{l:monotone} that $L_{\mathfrak{r}}$ is monotone in $(X_{4}, \omega_{4})$ if the new Maslov index of $E_{\mathfrak{r}}$ is $(2/d)d^{2} = 2d$. To see that this is indeed the case, observe that after Step 3, the Maslov index is still $2$, and because $E_{\mathfrak{r}}$ intersects only $d - 1$ of the $H_{i}$, each in a single point, performing the $2d - 2$ blowdowns in Step 4 will then increase its Maslov index to $2 + (2d - 2) = 2d$. This shows that $L_{\mathfrak{r}}$ is monotone, and the same argument applies to $L_{\mathfrak{s}}$.
\end{proof}

On a purely topological level, we can also deduce the following fact, which when combined with the symplectomorphism of Proposition \ref{p:ruled} and Proposition \ref{p:exact}, yields the desired contradiction and consequently the proof of Theorem \ref{t:main}.

\begin{prp}\label{p:homtriv}
The tori $L_{\mathfrak{r}}$ and $L_{\mathfrak{s}}$ are both homologicaly nontrivial in $X_{4} \sd (F \cup G \cup T_{0} \cup T_{\infty})$. 
\end{prp}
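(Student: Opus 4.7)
Under the symplectomorphism $\Phi \colon (X_{4}, \omega_{4}) \to (X, d\mspace{1mu}\omega_{X})$ furnished by Proposition \ref{p:ruled}, the configuration $F \cup G \cup T_{0} \cup T_{\infty}$ corresponds to the standard axes $Y = S_{\infty} \cup S_{0} \cup T_{0} \cup T_{\infty}$ of the toric model. Setting $\tilde{L}_{\mathfrak{r}} = \Phi(L_{\mathfrak{r}})$ and $\tilde{L}_{\mathfrak{s}} = \Phi(L_{\mathfrak{s}})$, we obtain disjoint monotone Lagrangian tori in $(X, d\mspace{1mu}\omega_{X})$ by Proposition \ref{p:newmonotone}, and the task reduces to showing that each is homologicaly nontrivial in $X \sd Y$.

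For each $\tilde{L} \in \{\tilde{L}_{\mathfrak{r}}, \tilde{L}_{\mathfrak{s}}\}$, I will apply Theorem \ref{t:foliation} in $(X, d\mspace{1mu}\omega_{X})$ to produce a relative foliation $\tilde{\mathcal{F}}$ of $X \sd \tilde{L}$, whose broken leaves split into matched pairs $v_{+} \in \mathcal{M}_{\tilde{\beta}}, v_{-} \in \mathcal{M}_{-\tilde{\beta}}$; by construction $v_{+}$ meets $S_{0}$ transversally in one point. The plan is to invoke Lemma \ref{l:homtriv} verbatim, which requires as its key hypothesis that every $v_{-}$ meet $S_{\infty}$ in exactly one transverse point and be disjoint from $T_{0} \cup T_{\infty}$. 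Granting this, Lemma \ref{l:homtriv} directly yields the desired nontriviality of $\tilde{L}$, so the entire argument rests on establishing the key hypothesis.

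To verify the key step, note that $v_{+} \cup v_{-}$ is a pseudoholomorphic sphere in the class $[T_{0}]$, with intersection vector $(1, 1, 0, 0)$ against $(S_{0}, S_{\infty}, T_{0}, T_{\infty})$. Positivity of intersections with the four pseudoholomorphic axes spheres forces $v_{\pm} \cdot T_{0} = v_{\pm} \cdot T_{\infty} = 0$ and concentrates the single $S_{\infty}$-intersection on exactly one of $v_{+}$ or $v_{-}$. Monotonicity of $\tilde{L}$ with constant $2/d$ (from the proof of Proposition \ref{p:newmonotone}) combined with $\mu(v_{\pm}) = 2$ assigns each $v_{\pm}$ the $d\mspace{1mu}\omega_{X}$-area $d$. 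Suppose for contradiction that $v_{+}$ carries the $S_{\infty}$-intersection; then $v_{-}$ is a pseudoholomorphic disk in $X \sd Y$ bounding $\tilde{L}$ in the primitive class $-\tilde{\beta}$ with Maslov index $2$. Since $X \sd Y$ is symplectomorphic to an open subset of $(T^{\ast}\TT^{2}, d\lambda)$, running the deformation of Theorem B of \cite{DR}, as in the proof of Proposition \ref{p:exact}, reduces $\tilde{L}$ to a graph of a closed $1$-form on $\TT^{2}$ and carries $v_{-}$ to a disk of the same Maslov index; but such a graph is an exact Lagrangian and every disk bounding it has Maslov zero, contradicting $\mu(v_{-}) = 2$. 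Hence the $S_{\infty}$-intersection lies on $v_{-}$, as required.

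The main obstacle is justifying the application of Theorem B of \cite{DR} to $\tilde{L}$ without a priori knowing $\tilde{L}$ to be exact (only monotone). This will require a preliminary reduction: one must extract from the relative foliation $\tilde{\mathcal{F}}$ and from the intersection data supplied by Proposition \ref{p:black} that $\tilde{L}$, after an auxiliary Hamiltonian isotopy supported in $X \sd Y$, falls within the range of validity of \cite{DR}'s theorem, or else one must adapt the Maslov-versus-period bookkeeping directly in the monotone setting to produce the desired contradiction. Executing this reduction cleanly, while keeping track of which intersection numbers in the tables of Section 6.1 transport faithfully through $\Phi$, is the technical crux of the proof.
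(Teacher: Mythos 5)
Your approach diverges substantially from the paper's, and unfortunately the key step is circular.

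The paper does not construct a new relative foliation in $(X, d\mspace{1mu}\omega_{X})$. Instead, it carries the \emph{original} foliation $\mathcal{F}(L_{\mathfrak{r}}, L_{\mathfrak{s}}, \psi_{\mathfrak{r}}, \psi_{\mathfrak{s}}, J)$ of $X \sd (L_{\mathfrak{r}} \cup L_{\mathfrak{s}})$ diffeomorphically through the four steps of Section 6.1, observes that the projection $\pi$ survives as a map $X_{4} \sd (\mathcal{E}_{1} \cup \cdots \cup \mathcal{E}_{2d-2}) \to S_{0} \sd \{\pi(p_{1}), \dots, \pi(p_{2d-2})\}$, and crucially uses properties (4) and (5) of Proposition \ref{p:black} — that the planes in $\mathfrak{r}_{0}, \mathfrak{r}_{\infty}$ (and $\mathfrak{s}_{0}, \mathfrak{s}_{\infty}$) each have a single transverse intersection with exactly one of $F$ or $G$ — to rerun the linking construction of Lemma \ref{l:homtriv} verbatim, with $F$ and $G$ playing the roles of $\mathcal{S}_{0}$ and $\mathcal{S}_{\infty}$. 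This is a purely topological argument requiring no new curve count and no appeal to \cite{DR}.

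Your plan instead builds a \emph{new} foliation $\tilde{\mathcal{F}}$ of $X \sd \tilde{L}$ and attempts to verify the hypothesis of Lemma \ref{l:homtriv} via a Maslov-index contradiction. The problem is that this contradiction argument is circular. You invoke a Hamiltonian isotopy supported in $X \sd Y \cong \TT^{2} \times U$ taking $\tilde{L}$ to a graph of a closed $1$-form. But a graph represents a generator of $H_{2}(\TT^{2} \times U) \cong \ZZ$, and Hamiltonian isotopies preserve homology classes, so such an isotopy exists only if $\tilde{L}$ is already homologicaly nontrivial in $X \sd Y$ — which is precisely what Proposition \ref{p:homtriv} asserts. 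This is not a side technicality but the entire content of the statement. Concretely, your reasoning, if valid, would also apply to the \emph{original} torus $L_{\mathfrak{r}}$ in $(X, \omega_{X})$ and "disprove" the existence of the $\mathfrak{r}_{\infty}$-planes: those are exactly Maslov-$2$ pseudoholomorphic planes in $X \sd Y$ bounding the homologicaly trivial $L_{\mathfrak{r}}$, so the fact that they do exist shows the argument fails for homologicaly trivial tori. You flag a "technical crux" at the end, but you diagnose it as a question of exactness; the actual obstruction is homological nontriviality (equivalently, incompressibility), and it cannot be supplied without presupposing the conclusion. The clean route is the paper's: carry the old foliation forward and use Proposition \ref{p:black} to observe that after the four steps, \emph{both} planes in each broken leaf now intersect $F \cup G$, which gives the linking.
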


\begin{proof}
This will be proved by an argument similar to the one used in the proof of Lemma \ref{l:homtriv}. Consider the almost complex structure $J$ on $X \sd (L_{\mathfrak{r}} \cup L_{\mathfrak{s}})$ before Step 1, after we have made all additional perturbations. Then the foliation $\mathcal{F}(L_{\mathfrak{r}}, L_{\mathfrak{s}}, \psi_{\mathfrak{r}}, \psi_{\mathfrak{s}}, J)$ yields a projection map $\pi \colon X \to S_{0}$. Extending $J$ to all of $X$ and performing the symplectic blowups in Step 1, we get a blowdown map, which induces a diffeomorphism
\[
X_{1} \sd (\mathcal{E}_{1} \cup \cdots \cup \mathcal{E}_{2d - 2}) \approx X \sd \{p_{1}, \dots, p_{2d - 2}\}.
\]
Similarly, after performing the symplectic blowdowns in Step 4, we have a diffeomorphism
\[
X_{3} \sd (H_{1} \cup \cdots \cup H_{2d - 2}) \approx X_{4} \sd \{q_{1}, \dots, q_{2d - 2}\},
\]
where $q_{i}$ denotes the image of $H_{i}$ under the blowdown map. Combining these yields a diffeomorphism
\[
X \sd (H_{1} \cup \cdots \cup H_{2d - 2}) \approx X_{4} \sd (\mathcal{E}_{1} \cup \cdots \cup \mathcal{E}_{2d - 2}),
\]
and $\pi$ can therefore be viewed as a map
\[
\pi \colon X_{4} \sd (\mathcal{E}_{1} \cup \cdots \cup \mathcal{E}_{2d - 2}) \to S_{0} \sd \{\pi(p_{1}), \dots, \pi(p_{2d - 2})\}.
\]
After the four steps needed to transform $(X, \omega_{X})$ to $(X_{4}, \omega_{4})$, the broken and unbroken leaves of $\mathcal{F}$ still have unique positive and transverse intersections with $F$ and $G$. Of course, since we are redefining $J$ at each step, the resulting leaves may no longer be $J$-holomorphic in $X_{4} \sd (L_{\mathfrak{r}} \cup L_{\mathfrak{s}})$.

Now write $Y^{\prime} = F \cup G \cup T_{0} \cup T_{\infty}$ in $X_{4}$ and choose an embedded path
\[
\gamma \colon [0, 1] \to S_{0} \sd \{\pi(p_{1}), \dots, \pi(p_{2d - 2})\}
\]
starting at $\pi(T_{0})$ and ending at $\pi(T_{\infty})$. Since $C_{\mathfrak{r}}$ separates these points, we may assume that $\gamma$ intersects it transversally at some unique time $t_{\mathfrak{r}}$. For each $t \in [0, 1]$, we then choose an embedded path $\sigma_{t}$ in $\pi^{-1}(\gamma(t))$ starting at $F$ and ending $G$. The union of these defines a homology class $\Sigma \in H_{2}(X_{4}, Y^{\prime})$. Now by construction, the set $K_{\mathfrak{r}} = L_{\mathfrak{r}} \cap \pi^{-1}(\gamma(t_{0}))$ is a circle bounding (perturbed) compactified planes from $\mathfrak{r}_{0}$ and $\mathfrak{r}_{\infty}$, which glue together to form a smooth sphere intersecting $F$ and $G$. Since $K_{\mathfrak{r}}$ separates these intersection points by properties (4) and (5) of Proposition \ref{p:black}, it must intersect the path $\sigma_{t_{\mathfrak{r}}}$, and this is the only intersection between $L_{\mathfrak{r}}$ and $\Sigma$. Therefore $L_{\mathfrak{r}}$ must be homologicaly nontrivial in $X \sd Y^{\prime}$. The same argument applies to $L_{\mathfrak{s}}$.
\end{proof}

\section{Deformation and Intersection Arguments}

The final section of this paper is devoted to the proof of Proposition \ref{p:black}. As the reader shall see, the subsequent considerations are primarily of a local nature, as most of the analysis is done in arbitrarily small Weinstein neighborhoods of various Lagrangian tori. Consequently, the proofs of many of the results below carry over \emph{mutatis mutandis} from those found in Subsection 3.8 of \cite{HK}. In an effort to make this paper more self-contained, we have endeavored to nevertheless give a full account of the arguments involved in proving Proposition \ref{p:black}, even at the expense of added length and redundancy.

The section begins with a review of the setup prior to the statement of Proposition \ref{p:black}, and the introduction of notation that will be used throughout. We will also describe a \emph{straightening} procedure, which yields a geometrically simple description of foliation leaves in terms of coordinates in a Weinstein neighborhood. We then outline a method for deforming certain punctured curves inside said Weinstein neighborhood given a fixed set of \emph{translation data}, and from this obtain the various objects whose existence is claimed in Proposition \ref{p:black}. We then study the intersections that are created between the aforementioned objects and how they depend on the translation data. Making an appropriate choice of said data will then allow us to achieve the various properties listed in Proposition \ref{p:black}.

\subsection{Preliminaries}

Recall that $Y  = S_{0} \cup S_{\infty} \cup T_{0} \cup T_{\infty}$. We have two disjoint, monotone Lagrangian tori $L_{\mathfrak{r}}$ and $L_{\mathfrak{s}}$ in $X \sd Y$ satisfying the properties listed in Proposition \ref{p:simpli}. We then fix parameterizations $\psi_{\mathfrak{r}}$ and $\psi_{\mathfrak{s}}$. The corresponding Weinstein neighborhoods of $L_{\mathfrak{r}}$ and $L_{\mathfrak{s}}$ will always be assumed to disjoint from each other and from $Y$. For any almost complex structure $J$ on $X \sd (L_{\mathfrak{r}} \cup L_{\mathfrak{s}})$ that is integrable near $Y$ and adapted to $\psi_{\mathfrak{r}}$ and $\psi_{\mathfrak{s}}$, we obtain a relative foliation 
\[
\mathcal{F} = \mathcal{F}(L_{\mathfrak{r}}, L_{\mathfrak{s}}, \psi_{\mathfrak{r}}, \psi_{\mathfrak{s}}, J),
\]
together with foliation classes $\beta_{\mathfrak{r}}$ and $\beta_{\mathfrak{s}}$, and a map $\pi \colon X \to S_{0}$ defined as before. It will be convenient for what is to come to choose the above parameterizations so that $\beta_{\mathfrak{r}}$ is represented by $(0, -1)$ in $H_{1}(L_{\mathfrak{r}}; \psi_{\mathfrak{r}})$ and that $\beta_{\mathfrak{s}}$ is represented by $(0, -1)$ in $H_{1}(L_{\mathfrak{s}}; \psi_{\mathfrak{s}})$. Observe that this does not affect the properties in Proposition \ref{p:simpli}. The sets $C_{\mathfrak{r}} = \pi(L_{\mathfrak{r}})$ and $C_{\mathfrak{s}} = \pi(L_{\mathfrak{s}})$ are disjoint embedded circles in $S_{0}$ that separate $\pi(T_{0})$ and $\pi(T_{\infty})$, and there are disjoint closed disks $D_{\mathfrak{r}}$ and $D_{\mathfrak{s}}$ in $S_{0}$ bounding $C_{\mathfrak{r}}$ and $C_{\mathfrak{s}}$, respectively, such that $\pi(T_{0}) \in D_{\mathfrak{r}}$ and $\pi(T_{\infty}) \in D_{\mathfrak{s}}$. The closure of $S_{0} \sd (D_{\mathfrak{r}} \cup D_{\mathfrak{s}})$ is a closed annulus $A$. The leaves of $\mathcal{F}$ are either unbroken spheres in the class $(0, 1)$, or broken leaves consisting of pairs of planes belonging to one of four families. In view of Proposition \ref{p:simpli}, the planes in $\mathfrak{s}_{0}$ intersect $S_{0}$, those in $\mathfrak{s}_{\infty}$ intersect $S_{\infty}$, those in $\mathfrak{r}_{0}$ intersect both, and those in $\mathfrak{r}_{\infty}$ intersect neither.

In terms of the induced coordinates $(q_{1}, q_{2}, p_{1}, p_{2})$ inside $\mathcal{U}(L_{\mathfrak{r}})$, we may assume that for some fixed $\epsilon > 0$, we have 
\[
\mathcal{U}(L_{\mathfrak{r}}) =  \{\vert p_{1} \vert < \epsilon, \vert p_{2} \vert < \epsilon\}.
\]
Similarly, in terms of the induced coordinates $(Q_{1}, Q_{2}, P_{1}, P_{2})$ inside $\mathcal{U}(L_{\mathfrak{s}})$, we may assume that for that very same $\epsilon$, we have 
\[
\mathcal{U}(L_{\mathfrak{s}}) =  \{\vert P_{1} \vert < \epsilon, \vert P_{2} \vert < \epsilon\}.
\]
After shrinking $\epsilon$ and perturbing $J$ outside of the above neighborhoods, it is possible to \emph{straighten} the leaves of $\mathcal{F}$ so that they have a particularly nice local form in the above coordinates. This is the content of Lemma 3.4 in \cite{HK}, which we reformulate and prove here for the sake of completeness.

\begin{lem}
For some perturbation $J$ and after possibly shrinking $\epsilon$, the leaves of $\mathcal{F}$ have the following description inside $\mathcal{U}(L_{\mathfrak{r}})$:
\begin{enumerate}
\item Unbroken leaves coincide with the cylinders
\[
\{q_{1} = \theta, p_{1} = \delta, \vert p_{2} \vert < \epsilon\}
\]
ranging over $\theta \in \SS^{1}$ and every nonzero $\delta \in (-\epsilon, \epsilon)$.
\item Planes in $\mathfrak{r}_{0}$ coincide with the cylinders
\[
\{q_{1} = \theta, p_{1} = 0, -\epsilon < p_{2} < 0\}
\]
ranging over $\theta \in \SS^{1}$.
\item Planes in $\mathfrak{r}_{\infty}$ coincide with the cylinders
\[
\{q_{1} = \theta, p_{1} = 0, 0 < p_{2} < \epsilon\}
\]
ranging over $\theta \in \SS^{1}$.
\end{enumerate}
Analogous local descriptions for the leaves of $\mathcal{F}$ hold inside $\mathcal{U}(L_{\mathfrak{s}})$.
\end{lem}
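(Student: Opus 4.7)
The plan is to combine the explicit flat form of the adapted almost complex structure in $\mathcal{U}(L_\mathfrak{r})$ with exponential asymptotic convergence of punctured pseudoholomorphic curves, and then to straighten the foliation by a small symplectic reparametrization near $L_\mathfrak{r}$ together with a localized perturbation of $J$. As a preliminary reduction, shrink $\epsilon$ so that $\mathcal{U}(L_\mathfrak{r}) \subset B_{1}^{\ast}L_\mathfrak{r}$, the region on which the cutoff function $\rho$ appearing in the definition of $J_{\mathrm{std}}$ equals $1$. Then every $J$ adapted to $\psi_\mathfrak{r}$ agrees on $\mathcal{U}(L_\mathfrak{r}) \sd L_\mathfrak{r}$ with the standard flat complex structure $J\partial_{q_{i}} = -\partial_{p_{i}}$, preserving in particular the splitting $\langle\partial_{q_{1}}, \partial_{p_{1}}\rangle \oplus \langle\partial_{q_{2}}, \partial_{p_{2}}\rangle$. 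All proposed model cylinders are therefore $J$-holomorphic, and the model planes $\{q_{1} = \theta, p_{1} = 0, p_{2} < 0\}$ and $\{q_{1} = \theta, p_{1} = 0, p_{2} > 0\}$ are asymptotic on the unit cotangent bundle to Reeb orbits in $\Gamma_{\beta_\mathfrak{r}}$ and $\Gamma_{-\beta_\mathfrak{r}}$ over $q_{1} = \theta$ respectively, matching the asymptotic classes of the planes in $\mathfrak{r}_{0}$ and $\mathfrak{r}_{\infty}$.

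I would then apply Theorem 1.4 of \cite{HWZ} at each puncture of the planes in $\mathfrak{r}_{0} \cup \mathfrak{r}_{\infty}$. This yields, for each such plane, a unique model plane from the lemma to which it is $C^{\infty}$-exponentially close in a neighborhood of its puncture. The smoothness of the foliation $\mathcal{F}$ across $L_\mathfrak{r}$, established in the proof of Theorem \ref{t:foliation} via Siefring intersection theory, then propagates this estimate to the unbroken leaves of $\mathcal{F}$: after a further shrinking of $\epsilon$, every leaf meeting $\mathcal{U}(L_\mathfrak{r})$ is a $C^{\infty}$-small graph in the $(q_{1}, p_{1})$-variables over the corresponding model cylinder.

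To complete the proof I would construct a symplectic isotopy of $\mathcal{U}(L_\mathfrak{r})$ whose time-one map $\Phi$ fixes $L_\mathfrak{r}$ pointwise, agrees with the identity outside a smaller Weinstein neighborhood $\mathcal{U}_{\epsilon^{\prime}}(L_\mathfrak{r})$, and sends each leaf of $\mathcal{F}$ inside $\mathcal{U}_{\epsilon^{\prime}}(L_\mathfrak{r})$ onto the corresponding model cylinder. By the leafwise $C^{\infty}$-closeness from the previous step, such a $\Phi$ exists as the time-one flow of a $C^{\infty}$-small compactly supported Hamiltonian, via a relative Moser argument applied to the pair of smooth foliations (the actual and the model) that agree along $L_\mathfrak{r}$ to sufficient order. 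Redefine $\psi_\mathfrak{r}$ by composing its Weinstein extension with $\Phi^{-1}$, and perturb $J$ to a new tame $J^{\prime}$ that agrees with $J$ outside $\mathcal{U}_{\epsilon^{\prime}}(L_\mathfrak{r})$ and has the adapted form in the new coordinates; the existence of such a $J^{\prime}$ follows from the openness of tameness and the $C^{\infty}$-smallness of the reparametrization. Since $\mathcal{U}(L_\mathfrak{r})$ is disjoint from $Y$ and from $\mathcal{U}(L_\mathfrak{s})$, the new $J^{\prime}$ remains integrable near $Y$ and adapted to $\psi_\mathfrak{s}$, so the foliation $\mathcal{F}(J^{\prime})$ taken with respect to the new $\psi_\mathfrak{r}$ has the stated model form inside $\mathcal{U}_{\epsilon^{\prime}}(L_\mathfrak{r})$. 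The identical argument at $L_\mathfrak{s}$ completes the proof. The main technical obstacle is arranging the Hamiltonian realizing the straightening to vanish to sufficiently high order along $L_\mathfrak{r}$, so that the resulting isotopy fixes $L_\mathfrak{r}$ pointwise and extends across it smoothly; this is ensured by the exponential decay estimates in Theorem 1.4 of \cite{HWZ}, which make the infinitesimal straightening a section of a normal bundle vanishing along $L_\mathfrak{r}$.
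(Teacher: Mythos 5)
Your starting points (the model cylinders are holomorphic for any adapted structure, and exponential convergence at the punctures makes the planes of $\mathfrak{r}_{0} \cup \mathfrak{r}_{\infty}$ asymptotically straight) are fine, modulo one slip: an adapted $J$ is the \emph{cylindrical} structure $J\partial_{q_{i}} = -\Vert(p_{1},p_{2})\Vert\partial_{p_{i}}$ in $\mathcal{U}(L_{\mathfrak{r}}) \sd L_{\mathfrak{r}}$, not the flat one; the model cylinders are still holomorphic for it, so this is harmless. The genuine gap is in the straightening step. A symplectomorphism $\Phi$ that is the identity outside $\mathcal{U}_{\epsilon^{\prime}}(L_{\mathfrak{r}})$ cannot send each leaf piece inside $\mathcal{U}_{\epsilon^{\prime}}$ onto its model cylinder: the leaves cross $\partial\mathcal{U}_{\epsilon^{\prime}}$, and continuity forces $\Phi(\text{leaf})$ to stay near the original leaf there, which does not lie on the model cylinder. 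Any honest version of your construction therefore only interpolates, and in the interpolation region the candidate surfaces are neither unchanged and $J$-holomorphic nor straight and $J^{\prime}$-adapted-holomorphic; so they are simply not pseudoholomorphic for the perturbed data, and you have no argument that the foliation $\mathcal{F}(J^{\prime})$ produced by the stretching construction has them (or anything straight) as its leaves. The lemma is a statement about that foliation, so this identification --- which in the paper is handled by making explicit modified curves globally $J$-holomorphic and invoking uniqueness/positivity of intersections --- cannot be skipped. A second, related gap is your assertion that \emph{all} leaves meeting $\mathcal{U}(L_{\mathfrak{r}})$ become uniformly $C^{\infty}$-small graphs over model cylinders after shrinking $\epsilon$: ``smoothness of the foliation'' gives continuity of leaves in the leaf parameter, but a uniform graphicality estimate for the unbroken leaves inside the neighborhood is exactly the nontrivial point (there are plenty of $J_{\mathrm{cyl}}$-holomorphic annuli in $\{0 < \Vert p \Vert < \epsilon\}$ that are far from straight), and it is not established by citing the HWZ asymptotics, which apply only to the broken planes.

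For contrast, the paper avoids both issues by moving the curves rather than the coordinates: using the asymptotic convergence it symplectically glues the ends of the planes in $\mathfrak{r}_{0} \cup \mathfrak{r}_{\infty}$ to the trivial Reeb cylinders, so these modified planes are literally straight in a smaller $\mathcal{U}(L_{\mathfrak{r}})$; it then builds symplectic disks $D_{\delta, \theta}$ that continue the straight cylinders $\{q_{1} = \theta, p_{1} = \delta\}$ outside the neighborhood, staying $C^{0}$-close to the broken planes; finally it perturbs $J$ \emph{outside} $\mathcal{U}(L_{\mathfrak{r}})$ (keeping it adapted inside) to make these explicit objects pseudoholomorphic, so that by uniqueness they are the leaves of the resulting foliation. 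If you want to salvage your approach, you would have to perform essentially this modification of the leaves in the transition region and then prove the perturbed foliation coincides with them; the ambient-isotopy-plus-coordinate-change shortcut, as written, does not close.
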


\begin{proof}
Items (2) and (3) directly follow from the proofs of Lemma 5.14 and Proposition 5.16 in \cite{DRGI}, together with the surrounding discussion. The idea is that the cylinders $\{q_{1} = \theta, p_{1} = 0, \vert p_{2} \vert < \epsilon\}$ correspond to trivial Reeb cylinders, and the asymptotic convergence properties of the planes in $\mathfrak{r}_{0} \cup \mathfrak{r}_{\infty}$ imply that these can be symplectically glued to these cylinders far enough along their cylindrical ends. Making $\epsilon$ as small as necessary and perturbing $J$ then shows that these modified planes become the new planes of $\mathfrak{r}_{0} \cup \mathfrak{r}_{\infty}$, and now have the desired local form.

To prove item (1), we smoothly identify a neighborhood of $\mathfrak{r}_{0}$ and $\mathfrak{r}_{\infty}$ with $(-\epsilon^{\prime}, \epsilon^{\prime}) \times \SS^{1} \times \DD^{2}$, where the planes comprising broken leaves correspond to the sets $\{0\} \times \SS^{1} \times \DD^{2}$, and circles of the form $
\{p_{1} = \delta, q_{1} = \theta, p_{2} = \pm \epsilon\}$ coincide with the circles $\{\delta\} \times \{\theta\} \times \SS^{1}$. The disks $D_{\delta, \theta} = \{\delta\} \times \{\theta\} \times \DD^{2}$ with $\vert \delta \vert < \epsilon^{\prime}$ extend smoothly into $\mathcal{U}(L_{\mathfrak{r}})$ along the holomorphic cylinders $\{p_{1} = \delta, q_{1} = \theta, \vert p_{2} \vert < \epsilon\}$, and if $\epsilon^{\prime}$ is chosen small enough then these can be assumed to be symplectic, being $C^{0}$-close to the planes in $\mathfrak{r}_{0} \cup \mathfrak{r}_{\infty}$. We then choose $\epsilon < \epsilon^{\prime}$ and deform $J$ outside of $\mathcal{U}(L_{\mathfrak{r}})$ to make the disks $D_{\delta, \theta}$ $J$-holomorphic. It is easy to see that the broken leaves of the resulting relative foliation continue to satisfy (2) and (3) while the unbroken leaves now satisfy (1).
\end{proof}

Assuming that $J$ is now chosen so that the conclusions of the above lemma hold, we can assume without loss of generality that in $\mathcal{U}(L_{\mathfrak{r}})$, the regions $\{p_{1} < 0\}$ and $\{p_{1} > 0\}$ are mapped into the interiors of $D_{\mathfrak{r}}$ and $A \cup D_{\mathfrak{s}}$, respectively. Likewise, we can assume that in $\mathcal{U}(L_{\mathfrak{s}})$, the regions $\{P_{1} < 0\}$ and $\{P_{1} > 0\}$ are mapped into the interiors of $D_{\mathfrak{r}} \cup A$ and $D_{\mathfrak{s}}$, respectively.

\subsection{Deformation Procedures}

We will now describe two deformation procedures, which we then apply to top level curves of buildings produced via Proposition \ref{p:existence} in order to obtain the various curves of Proposition \ref{p:black}. These deformations can in a sense be localized near each of $L_{\mathfrak{r}}$ and $L_{\mathfrak{s}}$, and for simplicity we will first describe the deformations as they will be performed inside $\mathcal{U}(L_{\mathfrak{r}})$. It is then a straightforward matter to perform them near the two tori simultaneously, and we will describe the results in Subsection 7.4. For now, any vector $\mathbf{v} = (a, b) \in (-\epsilon, \epsilon)^{2}$ will be referred to as a \emph{translation vector}. We can use it to translate $L_{\mathfrak{r}}$ to a nearby Lagrangian torus
\[
L_{\mathfrak{r}}(\mathbf{v}) = \{p_{1} = a, p_{2} = b\} \sub \mathcal{U}(L_{\mathfrak{r}}).
\]
We note that this translated torus is no longer monotone, and inherits an induced parameterization $\psi_{\mathfrak{r}}(\mathbf{v})$ and consequently a canonical isomorphism from $H_{1}(L_{\mathfrak{r}}; \psi_{\mathfrak{r}})$ to $H_{1}(L_{\mathfrak{r}}(\mathbf{v}); \psi_{\mathfrak{r}}(\mathbf{v}))$. More generally, for a finite, nonempty collection of \emph{translation data}
\[
\mathbf{V} = \{\mathbf{v}_{1}, \dots, \mathbf{v}_{k}\} = \{(a_{1}, b_{2}), \dots, (a_{k}, b_{k})\},
\]
we set
\[
L_{\mathfrak{r}}(\mathbf{V}) = L_{\mathfrak{r}}(\mathbf{v_{1}}) \cup \cdots \cup L_{\mathfrak{r}}(\mathbf{v_{k}}),
\]
and define an almost complex structure $J_{\mathbf{V}}$ on $X \sd L_{\mathfrak{r}}(\mathbf{V})$, which coincides with $J$ outside $\mathcal{U}(L_{\mathfrak{r}})$, and within it takes on the form
\[
J_{\mathbf{V}}(\partial_{q_{i}}) = -\rho_{\mathbf{V}}\partial_{p_{i}}, \qquad i = 1, 2,
\]
where $\rho_{\mathbf{V}}$ is a positive function away from $L_{\mathfrak{r}}(\mathbf{V})$ and in a neighborhood of each $L_{\mathfrak{r}}(\mathbf{v}_{i})$ has the form
\[
\rho_{\mathbf{V}} = \sqrt{(p_{1} - a_{i})^{2} + (p_{2} - b_{i})^{2}}.
\]
We will say that $J_{\mathbf{V}}$ is \emph{adapted to $\psi_{\mathfrak{r}}(\mathbf{V})$}. The set of all such almost complex structures will be denoted by $\mathcal{J}_{\mathbf{V}}$. Fixing a family $\{J_{t}\}_{t \geq 0}$ of tame almost complex structures on $(X, \omega_{X})$ that stretches to a generically chosen $J_{\mathbf{V}}$, we obtain a foliation $\mathcal{F}(\mathbf{V})$ of $X \sd (L_{\mathfrak{r}}(\mathbf{V}) \cup L_{\mathfrak{s}})$. We must be careful to make only small translations, so that positivity of intersections ensures $\mathcal{F}(\mathbf{V})$ continues to have the same simple form that we have been considering. (Indeed, constructing relative foliations for arbitrary Lagrangian tori by the splitting procedure can also result in index $0$ cylinders in the top level, see \cite{DRGI}.) We can then straighten the leaves of $\mathcal{F}(\mathbf{V})$ so that a leaf intersecting $\mathcal{U}(L_{\mathfrak{r}})$ does so along cylinders of the form $\{q_{1} = \theta, p_{1} = \delta, \vert p_{2} \vert < \epsilon\}$, while such a leaf is broken if and only if $\mathbf{V}$ contains a translation vector of the form $(\delta, b_{i})$. See also Lemma 3.26 of \cite{HK}.

Our first deformation procedure is based on the following lemma, which is Lemma 3.27 in \cite{HK}. It is ultimately a consequence of the inverse function theorem, since for small enough translation vectors $\mathbf{v}$, a regular curve with punctures on $L_{\mathfrak{r}}$ will persist to one with punctures on $L_{\mathfrak{r}}(\mathbf{v})$ along a suitably constructed isotopy. Since the proof is of a relatively standard nature and does not contain any ideas relevant to the rest of the section, we have opted not to reproduce it here.

\begin{lem}\label{l:fukaya1}
Let $u$ be a regular $J$-holomorphic curve in $X \sd (L_{\mathfrak{r}} \cup L_{\mathfrak{s}})$ with $m \geq 0$ punctures on $L_{\mathfrak{r}}$ and $n \geq 0$ punctures on $L_{\mathfrak{s}}$.
\begin{enumerate}
\item[(a)] For any translation vector $\mathbf{v}$ with $\Vert \mathbf{v} \Vert$ sufficiently small, there is a regular $J_{\mathbf{v}}$-holomorphic curve $u(\mathbf{v})$ with $m$ punctures on $L_{\mathfrak{r}}(\mathbf{v})$ and $n$ punctures on $L_{\mathfrak{s}}$.
\item[(b)] The asymptotic orbits of $u(\mathbf{v})$ cover geodesics that have the same representatives in $\smash{H_{1}(L_{\mathfrak{r}}(\mathbf{v}); \psi_{\mathfrak{r}}(\mathbf{v}))}$ and $H_{1}(L_{\mathfrak{s}}; \psi_{\mathfrak{s}})$ as the geodesics covered by orbits of $u$ have in $H_{1}(L_{\mathfrak{r}}; \psi_{\mathfrak{r}})$ and $H_{1}(L_{\mathfrak{s}}; \psi_{\mathfrak{s}})$, respectively.
\end{enumerate}
\end{lem}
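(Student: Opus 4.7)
The plan is to set this up as an application of the implicit function theorem in the framework of Fredholm theory for punctured pseudoholomorphic curves with Morse--Bott asymptotics, after first using a family of diffeomorphisms to pull the problem back to one with fixed asymptotic submanifolds.

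First I would choose a smooth family of compactly supported diffeomorphisms $\varphi_{\mathbf{v}} \colon X \to X$, depending smoothly on $\mathbf{v}$, with $\varphi_{0} = \mathrm{Id}$ and such that in the coordinates on $\mathcal{U}(L_{\mathfrak{r}})$ the map $\varphi_{\mathbf{v}}$ agrees with the translation $(q_{1}, q_{2}, p_{1}, p_{2}) \mapsto (q_{1}, q_{2}, p_{1} + a, p_{2} + b)$ on a slightly smaller neighborhood of $L_{\mathfrak{r}}$, and the identity outside $\mathcal{U}(L_{\mathfrak{r}})$. Then $\varphi_{\mathbf{v}}$ maps $L_{\mathfrak{r}}$ to $L_{\mathfrak{r}}(\mathbf{v})$ and intertwines the induced parameterizations, and $\tilde{J}_{\mathbf{v}} = \varphi_{\mathbf{v}}^{\ast}J_{\mathbf{v}}$ defines a smooth path of almost complex structures on $X \sd (L_{\mathfrak{r}} \cup L_{\mathfrak{s}})$ with $\tilde{J}_{0} = J$. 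A curve $\tilde{u}$ with punctures on $L_{\mathfrak{r}}$ and $L_{\mathfrak{s}}$ is $\tilde{J}_{\mathbf{v}}$-holomorphic if and only if $\varphi_{\mathbf{v}} \circ \tilde{u}$ is $J_{\mathbf{v}}$-holomorphic with punctures on $L_{\mathfrak{r}}(\mathbf{v})$ and $L_{\mathfrak{s}}$, so it suffices to produce the family $\tilde{u}(\mathbf{v})$ and then set $u(\mathbf{v}) = \varphi_{\mathbf{v}} \circ \tilde{u}(\mathbf{v})$.

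Next I would set up the appropriate Banach manifold $\mathcal{B}$ of maps $\Sigma \to X \sd (L_{\mathfrak{r}} \cup L_{\mathfrak{s}})$ asymptotic at each puncture to an unparameterized Reeb orbit in the fixed Morse--Bott family of $u$, modeled on weighted Sobolev spaces with small negative exponential weights, as in the formalism used by Wendl and in \cite{DRGI}. Consider the parametric section $\mathcal{F}(\tilde{u}, \mathbf{v}) = \bar\partial_{\tilde{J}_{\mathbf{v}}}\tilde{u}$ of the standard bundle of $(0,1)$-forms over $\mathcal{B} \times (-\epsilon, \epsilon)^{2}$. The vertical linearization at $(u, 0)$ restricted to the $\tilde{u}$-direction is precisely the standard Cauchy--Riemann operator of $u$ with Morse--Bott boundary conditions, which is surjective by the regularity hypothesis. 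The implicit function theorem then gives a unique smooth local family $\tilde{u}(\mathbf{v})$ of solutions near $u$ defined on a neighborhood of $\mathbf{v} = 0$. Regularity of $\tilde{u}(\mathbf{v})$, and hence of $u(\mathbf{v})$, holds for $\|\mathbf{v}\|$ small because surjectivity of the linearization is an open condition, and the count of punctures on each torus is preserved by continuity.

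For (b), the asymptotic orbit of $\tilde{u}(\mathbf{v})$ at any puncture on $L_{\mathfrak{r}}$ is $C^{0}$-close to the corresponding asymptotic orbit of $u$; since the Morse--Bott family $\Gamma_{\eta}$ varies continuously with its class $\eta \in H_{1}(\TT^{2})$ and the classes are locally constant, the orbits of $\tilde{u}(\mathbf{v})$ cover geodesics in the same class of $H_{1}(L_{\mathfrak{r}}; \psi_{\mathfrak{r}})$ as those of $u$. Pushing forward by $\varphi_{\mathbf{v}}$, which by construction intertwines $\psi_{\mathfrak{r}}$ and $\psi_{\mathfrak{r}}(\mathbf{v})$, transfers the class unchanged under the canonical identification $H_{1}(L_{\mathfrak{r}}; \psi_{\mathfrak{r}}) \cong H_{1}(L_{\mathfrak{r}}(\mathbf{v}); \psi_{\mathfrak{r}}(\mathbf{v}))$. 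At punctures on $L_{\mathfrak{s}}$ the same reasoning applies with $\varphi_{\mathbf{v}}$ the identity.

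The main technical point is ensuring that the Fredholm framework is correctly set up so that the implicit function theorem applies uniformly across Morse--Bott asymptotics; the surjectivity input is given by regularity of $u$, but one must choose the exponential weights and the asymptotic trivializations carefully so that the linearization has the expected Fredholm index and so that the family $\tilde{J}_{\mathbf{v}}$ varies in the correct Banach space of admissible almost complex structures. Everything else is standard.
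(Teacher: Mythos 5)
Your proposal is correct and matches the approach the paper alludes to: the paper omits the proof, citing Lemma 3.27 of \cite{HK} and remarking only that it follows from the inverse function theorem by persisting the regular curve along ``a suitably constructed isotopy''---exactly the family $\varphi_{\mathbf{v}}$ you construct to pull the moving boundary condition back to a fixed one, after which surjectivity of the linearized operator (from regularity of $u$) feeds the implicit function theorem in the Morse--Bott weighted Sobolev setup. Your treatment of part (b), noting that the homology class of the asymptotic geodesic is locally constant along the continuous family of orbits and is transported intact by $\varphi_{\mathbf{v}}$ under the canonical identification $H_{1}(L_{\mathfrak{r}};\psi_{\mathfrak{r}}) \cong H_{1}(L_{\mathfrak{r}}(\mathbf{v});\psi_{\mathfrak{r}}(\mathbf{v}))$, is likewise the intended argument.
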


For $d > 0$ sufficiently large, we now appeal to Proposition \ref{p:existence} to obtain two pseudoholomorphic buildings $\mathbf{F}$ and $\mathbf{G}$ in the classes $(1, d)$ and $(1, d - 1)$, respectively. The top level curves of both of these buildings are all of area $1$ and are thus simply covered. The top level curves of $\mathbf{F}$ are
\[
\{u_{\mathfrak{r}}, \underline{u}, u_{\mathfrak{s}}, u_{1}, \dots, u_{d - 1}, \mathfrak{u}_{1}, \dots, \mathfrak{u}_{d - 1}\}.
\]
Here $u_{\mathfrak{r}}$, $\underline{u}$, and $u_{\mathfrak{s}}$ are the essential curves of $\mathbf{F}$. For some $0 \leq \alpha_{0} \leq d - 1$, the planes $u_{1}, \dots, u_{\alpha_{0}}$ belong to $\mathfrak{r}_{0}$ and the planes $u_{\alpha_{0} + 1}, \dots, u_{d - 1}$ belong to $\mathfrak{r}_{\infty}$. For some $0 \leq \beta_{0} \leq d - 1$, the planes $\mathfrak{u}_{1}, \dots, \mathfrak{u}_{\beta_{0}}$ belong to $\mathfrak{s}_{0}$ and the planes $\mathfrak{u}_{\beta_{0} + 1}, \dots, \mathfrak{u}_{d - 1}$ belong to $\mathfrak{s}_{\infty}$. The top level curves of $\mathbf{G}$ are
\[
\{v_{\mathfrak{r}}, \underline{v}, v_{\mathfrak{s}}, v_{1}, \dots, v_{d - 2}, \mathfrak{v}_{1}, \dots, \mathfrak{v}_{d - 2}\}.
\]
Here $v_{\mathfrak{r}}$, $\underline{v}$, and $v_{\mathfrak{s}}$ are the essential curves of $\mathbf{G}$. For some $0 \leq \gamma_{0} \leq d - 2$, the planes $v_{1}, \dots, v_{\gamma_{0}}$ belong to $\mathfrak{r}_{0}$ and the planes $v_{\gamma_{0} + 1}, \dots, v_{d - 2}$ belong to $\mathfrak{r}_{\infty}$. For some $0 \leq \delta_{0} \leq d - 2$, the planes $\mathfrak{v}_{1}, \dots, \mathfrak{v}_{\delta_{0}}$ belong to $\mathfrak{s}_{0}$ and the planes $\mathfrak{v}_{\delta_{0} + 1}, \dots, \mathfrak{v}_{d - 2}$ belong to $\mathfrak{s}_{\infty}$. We would like to apply Lemma \ref{l:fukaya1} to the top level curves of $\mathbf{F}$ and $\mathbf{G}$, and as such we need to know that these curves are regular for any choice of adapted almost complex structure. This is the content of the following lemma, which is Lemma 3.28 in \cite{HK}.

\begin{lem}
For any tame almost complex structure on $(X \sd (L_{\mathfrak{r}} \cup L_{\mathfrak{s}}), \omega_{X})$ that is adapted to $\psi_{\mathfrak{r}}$ and $\psi_{\mathfrak{s}}$, the top level curves of $\mathbf{F}$ and $\mathbf{G}$ are all regular.
\end{lem}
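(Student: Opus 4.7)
The plan is to verify, for every top level curve $u$ appearing in $\mathbf{F}$ or $\mathbf{G}$, the hypotheses of Wendl's automatic transversality criterion for somewhere injective punctured $J$-holomorphic curves in a $4$-dimensional target, see \cite{We1}. The upshot will be that regularity depends only on somewhere injectivity plus a positive-index bound, neither of which requires any particular choice of $J$ beyond the adaptation to $\psi_\mathfrak{r}$ and $\psi_\mathfrak{s}$.

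First, I would establish somewhere injectivity. The foliation planes $u_i$, $\mathfrak{u}_i$, $v_i$, and $\mathfrak{v}_i$ are embedded and simply covered by Theorem \ref{t:foliation}(2). For the essential curves $u_\mathfrak{r}$, $\underline{u}$, $u_\mathfrak{s}$, $v_\mathfrak{r}$, $\underline{v}$, $v_\mathfrak{s}$, injectivity of $\pi \circ u$ on the domain forces injectivity of $u$ itself, so these are embeddings (up to the standard discrete set of critical points of a somewhere injective $J$-holomorphic map). As a sanity check, if one of these curves were a $k$-fold cover of some simple curve with $k \geq 2$, the underlying simple curve would have positive area strictly less than $1$, contradicting Proposition \ref{p:buildingfacts}(c) combined with the fact, already used in the proof of Proposition \ref{p:existence}, that each top level curve has area exactly $1$.

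Next, I would compute $\text{ind}(u)$ for each top level curve via Proposition \ref{p:indextop}. Every plane whose compactification is a smooth disk of area $1$ on one of the monotone tori $L_\mathfrak{r}$ or $L_\mathfrak{s}$ (monotonicity constant $2$) has Maslov index $2$, so $\text{ind}(u) = -2 + 1 + 2 = 1$; this handles every foliation plane as well as the essential planes $u_\mathfrak{r}, u_\mathfrak{s}, v_\mathfrak{r}, v_\mathfrak{s}$. For the essential cylinders $\underline{u}$ and $\underline{v}$, whose compactifications are annuli meeting both tori, I would pin down the Maslov index by attaching foliation disks on each boundary to close them into spheres whose classes, areas, and Chern numbers are known, then reading off the Maslov contribution from the additivity relation with the attached Maslov $2$ caps. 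A consistency check against the total building identity $\text{ind}(\mathbf{F}) = -2 + 2c_1(1, d) = 4d$, together with the positivity of subbuilding indices from Propositions \ref{p:buildingfacts}(a) and \ref{p:indexmiddle}, confirms the resulting value and gives $\text{ind}(\underline{u}), \text{ind}(\underline{v}) \geq 1$.

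Finally, I would invoke Wendl's theorem. The asymptotic orbits are simply covered in every case: for foliation planes this is Theorem \ref{t:foliation}(2), and for essential curves a multiply covered asymptotic orbit would inflate the area of any middle/bottom subbuilding matching it beyond what the area $1$ budget permits. Combined with the index lower bounds established above and the parity information for the Morse-Bott Conley--Zehnder indices of simply covered flat-torus orbits recorded in \cite{DRGI}, Wendl's criterion applies and yields regularity of each top level curve for every tame adapted $J$. The main obstacle is the index computation for $\underline{u}$ and $\underline{v}$: these are the only top level curves whose compactifications touch both monotone tori, so the monotonicity constant alone does not pin down their Maslov indices, and one must route the bookkeeping through the global building index and the subbuilding structure on the middle and bottom levels.
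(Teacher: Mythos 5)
Your proof follows essentially the same route as the paper: reduce to Wendl's automatic transversality via somewhere injectivity plus a positive Maslov index, and establish $\mu(\overline{u}) = 2$ for each top level curve by capping off and appealing to monotonicity. The only cosmetic difference is in the cylinder case, where you cap both punctures to produce a closed sphere and use monotonicity of $(X,\omega_X)$, while the paper caps a single puncture to produce a disk on the other torus and subtracts Maslov/area using monotonicity of both Lagrangians; the detour through the global building index as a consistency check is correct but redundant once either capping argument has pinned down $\mu(\overline{u})$ directly.
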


\begin{proof}
The top level curves of both buildings are simply covered and of area $1$. Since they are parts of limits of embedded spheres, it follows that they are also embedded. Furthermore, each such curve is either a plane or a cylinder. The automatic regularity criterion in \cite{We1}, see in particular Remark 1.2, in conjunction with the index formula in Proposition \ref{p:indextop}, implies that these will all be regular provided the Maslov indices of their compactifications are positive. Now if $u$ is a top level plane of either building, then by monotonicity, we know that $\mu(\overline{u}) = 2\mspace{1mu}\omega_{X}(\overline{u}) = 2$, so $u$ is automatically regular. On the other hand, if $u$ is a cylinder, then we can cap off one end of its compactification $\overline{u}$ with a smooth disk $D$ in $X$. Using additivity of symplectic area and Maslov index, we see that
\begin{align*}
2\mspace{1mu}\omega_{X}(\overline{u}) + 2\mspace{1mu}\omega_{X}(D) &= 2\mspace{1mu}\omega_{X}(\overline{u} \cup D) \\
&= \mu(\overline{u} \cup D) \\
&= \mu(\overline{u}) + \mu(D) \\
&= \mu(\overline{u}) + 2\mspace{1mu}\omega_{X}(D).
\end{align*}
This implies that $\mu(\overline{u}) = 2\mspace{1mu}\omega_{X}(\overline{u}) = 2$, showing that $u$ is regular as desired.
\end{proof}

With this result in hand and focusing on $\mathbf{F}$ for the time being, we choose a translation vector $\mathbf{v} = (a, b)$ as in (a) of Lemma \ref{l:fukaya1} and apply that lemma to the top level curves of $\mathbf{F}$ with punctures on $L_{\mathfrak{r}}$, leaving all remaining curves untouched. This yields a deformed building $\mathbf{F}(\mathbf{v})$ whose top level curves are now
\[
\{\smash{u_{\mathfrak{r}, \mathbf{V}}}, \underline{u}(\mathbf{v}), u_{\mathfrak{s}}, u_{1}(\mathbf{v}), \dots, u_{d - 1}(\mathbf{v}), \mathfrak{u}_{1}, \dots, \mathfrak{u}_{d - 1}\}.
\]
The middle and bottom level curves of $\mathbf{F}(\mathbf{v})$ are identical to those of $\mathbf{F}$ except that we now view curves in copies of $\RR \times S^{\ast}L_{\mathfrak{r}}$ and $T^{\ast}L_{\mathfrak{r}}$ as being curves in the corresponding copies of $\RR \times S^{\ast}L_{\mathfrak{r}}(\mathbf{v})$ and $T^{\ast}L_{\mathfrak{r}}(\mathbf{v})$, respectively. We can also define a deformed building $\mathbf{G}(\mathbf{v})$ in the same way. These buildings admit compactifications that can then be deformed in an arbitrarily small neighborhood of $L_{\mathfrak{r}}(\mathbf{v})$, resulting in smooth spheres $F \colon \SS^{2} \to X \sd (L_{\mathfrak{r}} \cup L_{\mathfrak{s}})$ and $G \colon \SS^{2} \to X \sd (L_{\mathfrak{r}} \cup L_{\mathfrak{s}})$. Using our description of the foliation $\mathcal{F}(\mathbf{v})$ in $\mathcal{U}(L_{\mathfrak{r}})$ together with the fact that essential curves project to disjoint images in $S_{0}$ under $\pi$, we can show that these spheres are $J_{\mathbf{v}}$-holomorphic away from an arbitrarily small neighborhood of $L_{\mathfrak{r}}(\mathbf{v})$. In fact, we have the following more general result, combining Lemma 3.29 and Lemma 3.30 in \cite{HK}.

\begin{lem}\label{l:deform1}
Let $\mathbf{V} = \{\mathbf{0}, \mathbf{v}_{1}, \mathbf{v}_{2}\} = \{(0, 0), (a_{1}, b_{1}), (a_{2}, b_{2})\}$ be translation data with $\Vert \mathbf{v}_{1} \Vert$ chosen small enough to define $\mathbf{F}(\mathbf{v}_{1})$ and $\mathbf{G}(\mathbf{v}_{1})$. If $a_{1}$ and $b_{1}$ are nonzero, $\vert a_{1} \vert$ is sufficiently small with respect to $\vert b_{1} \vert$, and $\Vert \mathbf{v}_{2} \Vert$ is sufficiently small with respect to $\vert a_{1} \vert$, then each top level curve of both $\mathbf{F}(\mathbf{v}_{1})$ and $\mathbf{G}(\mathbf{v}_{1})$ is $J_{\mathbf{V}}$-holomorphic for some $J_{\mathbf{V}}$ in $\mathcal{J}_{\mathbf{V}}$.
\end{lem}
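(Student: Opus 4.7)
The overall strategy is to start from $J_{\mathbf{v}_1}$ — which already makes every top level curve of $\mathbf{F}(\mathbf{v}_1)$ and $\mathbf{G}(\mathbf{v}_1)$ pseudoholomorphic — and to deform it only inside small disjoint Weinstein neighborhoods $N_{\mathfrak{r}}$ and $N_{\mathbf{v}_2}$ of $L_{\mathfrak{r}}$ and $L_{\mathfrak{r}}(\mathbf{v}_2)$, installing the required adapted form of $\rho_{\mathbf{V}}$ there. These neighborhoods will be chosen small enough to be disjoint from every top level curve, so that pseudoholomorphicity is preserved across the modification.

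To begin, I would choose $J_{\mathbf{v}_1}$ so that the straightening of $\mathcal{F}(\mathbf{v}_1)$ extends across all of $\mathcal{U}(L_{\mathfrak{r}})$: every leaf meeting this set then coincides with a coordinate cylinder $\{q_1 = \theta, p_1 = \delta\}$, with broken leaves occurring exactly at $\delta = a_1$. The deformed planes $u_i(\mathbf{v}_1)$ and $v_j(\mathbf{v}_1)$ of Lemma \ref{l:fukaya1} are $J_{\mathbf{v}_1}$-holomorphic planes of area $1$ with asymptotics on $L_{\mathfrak{r}}(\mathbf{v}_1)$ covering simple orbits in the class $\pm \beta_{\mathfrak{r}}$, so by uniqueness they coincide with these broken leaves; hence inside $\mathcal{U}(L_{\mathfrak{r}})$ they are cylinders at $p_1 = a_1$, sitting at coordinate distance $|a_1|$ from $L_{\mathfrak{r}}$ and at distance $\approx |a_1|$ from $L_{\mathfrak{r}}(\mathbf{v}_2)$ since $\|\mathbf{v}_2\| \ll |a_1|$. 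The non-deformed planes $\mathfrak{u}_i, \mathfrak{v}_j$ can be arranged to stay inside $\mathcal{U}(L_{\mathfrak{s}})$, and the essential curve $u_{\mathfrak{s}}$ projects into $D_{\mathfrak{s}}$, so neither enters $\mathcal{U}(L_{\mathfrak{r}})$ at all.

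The main difficulty is to obtain a matching lower bound for the essential curves $u_{\mathfrak{r}, \mathbf{v}_1}$, $\underline{u}(\mathbf{v}_1)$, and their $\mathbf{G}$-counterparts. Away from a small disk around the puncture on $L_{\mathfrak{r}}(\mathbf{v}_1)$, Lemma \ref{l:fukaya1} puts them $C^{\infty}$-close to the corresponding undeformed essential curves, which stay at some fixed positive distance from $L_{\mathfrak{r}}$. Inside that disk, the exponential asymptotic convergence theorem of \cite{HWZ} confines the curve to a thin tube around the half-ray $\{(a_1 + r\cos\alpha, b_1 + r\sin\alpha) : r \geq 0\}$ in $(p_1, p_2)$-coordinates, where $\alpha$ is the angle of the asymptotic direction. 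Because these essential curves have asymptotic class different from $\pm \beta_{\mathfrak{r}}$, one has $\cos\alpha \neq 0$, and a direct calculation shows that this half-ray stays at distance $|a_1 \sin\alpha - b_1 \cos\alpha| \approx |b_1 \cos\alpha|$ from the origin. Together these give a uniform lower bound of order $|b_1|$ on the distance from each essential curve to $L_{\mathfrak{r}}$, and a comparable bound to $L_{\mathfrak{r}}(\mathbf{v}_2)$ since $\|\mathbf{v}_2\| \ll |b_1|$.

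With these estimates in hand, I would pick $N_{\mathfrak{r}}$ and $N_{\mathbf{v}_2}$ of radii $\eta_1, \eta_2$ satisfying $\eta_1 + \eta_2 < \|\mathbf{v}_2\|$ — so that they are disjoint — with each $\eta_i$ much smaller than both $|a_1|$ and the order-$|b_1|$ bound just obtained. The compatibility of these restrictions is guaranteed by the hypotheses $\|\mathbf{v}_2\| \ll |a_1| \ll |b_1|$. Defining $J_{\mathbf{V}}$ to equal $J_{\mathbf{v}_1}$ outside $N_{\mathfrak{r}} \cup N_{\mathbf{v}_2}$, with $\rho_{\mathbf{V}} = \sqrt{p_1^2 + p_2^2}$ on a smaller concentric ball inside $N_{\mathfrak{r}}$ and $\rho_{\mathbf{V}} = \sqrt{(p_1 - a_2)^2 + (p_2 - b_2)^2}$ on a smaller concentric ball inside $N_{\mathbf{v}_2}$, smoothly and positively interpolated across the annular transition regions, yields an almost complex structure in $\mathcal{J}_{\mathbf{V}}$ that agrees with $J_{\mathbf{v}_1}$ at every point of every top level curve, completing the proof.
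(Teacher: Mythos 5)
Your overall strategy — identify small disjoint neighborhoods of $L_{\mathfrak{r}}$ and $L_{\mathfrak{r}}(\mathbf{v}_{2})$ avoided by (most of) the curves, then patch in the adapted form of $\rho_{\mathbf{V}}$ there — matches the paper's, and your handling of the non-essential curves (straightened cylinders at $p_{1}=a_{1}$, at distance $|a_{1}|$ from $L_{\mathfrak{r}}$ and $\approx |a_{1}|$ from $L_{\mathfrak{r}}(\mathbf{v}_{2})$) is correct and even a little more elementary than the paper's, which instead observes that coordinate cylinders $\{q_{1}=\theta,\ p_{1}=a_{1}\}$ are $J_{\mathbf{V}}$-holomorphic for \emph{any} almost complex structure of adapted form, regardless of whether they pass through the modified region. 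That observation is actually needed in the paper's version of the argument because the paper never needs to shrink the neighborhood of $L_{\mathfrak{r}}$ below radius $|a_{1}|$; your choice $\eta_{1}\ll|a_{1}|$ sidesteps this, which is a legitimate alternative.

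The weak link is your quantitative claim that the deformed essential curves stay at distance of order $|b_{1}|$ from $L_{\mathfrak{r}}$. Your justification has two parts, and both are shaky. First, you assert that away from a small disk around the puncture the deformed curve is $C^{\infty}$-close to the undeformed one, ``which stay at some fixed positive distance from $L_{\mathfrak{r}}$'' — but the undeformed essential curves $u_{\mathfrak{r}},\underline u$ have punctures \emph{on} $L_{\mathfrak{r}}$, so they approach $L_{\mathfrak{r}}$ arbitrarily closely; the distance bound you get from $C^{\infty}$-closeness on the compact complement of a $\delta$-disk degrades as $\delta\to 0$, and there is tension between making $\delta$ small (to apply the asymptotic theorem inside) and keeping the compact-part bound useful. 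Second, the half-ray estimate near the puncture controls the regime where the curve is already in the asymptotic cylindrical tube, but says nothing a priori about the transition region. Nothing in what you wrote rules out the essential curve dipping to within $o(|b_{1}|)$ of the origin somewhere in between. The paper avoids all of this by proving the disjointness qualitatively at the special value $a_{1}=0$: the straightened $\mathcal{F}(\mathbf{V})$-fibers over the interiors of $D_{\mathfrak{r}}$ and $A\cup D_{\mathfrak{s}}$ meet $\mathcal{U}(L_{\mathfrak{r}})$ in $\{p_{1}<0\}$ and $\{p_{1}>0\}$ respectively, so the essential curves (whose interiors project into these two open regions) are trapped on one side of $\{p_{1}=0\}$ and their compact closures, which touch $\{p_{1}=0\}$ only at $L_{\mathfrak{r}}(\mathbf{v}_{1})=(0,b_{1})$, are disjoint from $L_{\mathfrak{r}}$. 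Positive distance then follows by compactness, and the passage to small nonzero $a_{1}$ is a continuity argument via Lemma \ref{l:fukaya1}. I'd recommend replacing your metric estimate with this two-step ($a_{1}=0$, then continuity) argument: the constant you want is not literally $c|b_{1}|$ but rather ``some $d(b_{1})>0$,'' and the hypothesis ``$|a_{1}|$ sufficiently small with respect to $|b_{1}|$'' should be read as $|a_{1}|\ll d(b_{1})$.

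Your definition of $J_{\mathbf{V}}$ at the end is fine, with the one reminder that you must check that the interpolated $\rho_{\mathbf{V}}$ is positive throughout the transition annuli — straightforward, but worth saying since $\mathcal{J}_{\mathbf{V}}$ requires positivity of $\rho_{\mathbf{V}}$ away from $L_{\mathfrak{r}}(\mathbf{V})$ and $J_{\mathbf{v}_{1}}$ is not itself an element of $\mathcal{J}_{\mathbf{V}}$ (it is adapted to $\psi_{\mathfrak{r}}(\mathbf{v}_{1})$ only, with no constraint on $\rho$ near $L_{\mathfrak{r}}$).
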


\begin{proof}
We will give a proof when $\mathbf{v}_{2} = \mathbf{0}$, since the general case follows by similar arguments. Now, we have mentioned that the (straightened) leaves of the induced foliation $\mathcal{F}(\mathbf{V})$ intersect $\mathcal{U}(L_{\mathfrak{r}})$ in cylinders of the form $\{p_{1} = \delta, q_{1} = \theta, \vert p_{2} \vert < \epsilon\}$. Thus assuming that $a_{1} = 0$ and $b_{1} \neq 0$ for the time being, which means the translation is strictly vertical, we see that the preimages of the interiors of $D_{\mathfrak{r}}$ and $A \cup D_{\mathfrak{s}}$ under the projection corresponding to $L_{\mathfrak{r}}(\mathbf{v})$ intersect $\mathcal{U}(L_{\mathfrak{r}})$ in the sets $\{p_{1} < 0\}$ and $\{p_{1} > 0\}$, respectively. This is because the fibers over the interiors of $D_{\mathfrak{r}}$ and $A \cup D_{\mathfrak{s}}$ do not break over $L_{\mathfrak{r}}$. Hence the closures of the essential curves of $\mathbf{F}(\mathbf{v})$ are disjoint from $L_{\mathfrak{r}}$, since the interiors project to $S_{0} \sd (C_{\mathfrak{r}} \cup C_{\mathfrak{s}})$ and the boundaries of the compactifications lie on $L_{\mathfrak{r}}(\mathbf{v})$. By continuity, this continues to hold if we allow $\vert a_{1} \vert \neq 0$ but sufficiently small with respect to $\vert b_{1} \vert$, and apply Lemma \ref{l:fukaya1} to obtain a further deformation. It follows that the essential curves of $\mathbf{F}(\mathbf{v})$ are $J_{\mathbf{V}}$-holomorphic whenever $J_{\mathbf{V}}$ differs from $J_{\mathbf{v}}$ in a small enough neighborhood of $L_{\mathfrak{r}}$. On the other hand, top level curves of $\mathbf{F}(\mathbf{v})$ that cover planes in broken leaves intersect $\mathcal{U}(L_{\mathfrak{r}})$ in straightened cylinders lying in $\{p_{1} = a_{1}\}$. Since these cylinders are holomorphic for any almost complex structure adapted to the parameterizations, the result follows.
\end{proof}

We now describe our second deformation procedure. Fix translation data $\mathbf{V} = \{\mathbf{0}, \mathbf{v}_{1}, \mathbf{v}_{2}\} = \{(0, 0), (a_{1}, b_{1}), (a_{2}, b_{2})\}$ with $b_{1}$ and $b_{2}$ nonzero and $\vert a_{1} \vert $ and $\vert a_{2} \vert$ sufficiently small. We will deform the essential plane $u_{\mathfrak{r}}$ of $\mathbf{F}$ into a plane $\smash{u_{\mathfrak{r}, \mathbf{V}}}$ in $X \sd (L_{\mathfrak{r}}(\mathbf{V}) \cup L_{\mathfrak{s}})$, which will be $J_{\mathbf{V}}$-holomorphic for some $J_{\mathbf{V}}$ in $\mathcal{J}_{\mathbf{V}}$ and which will have the same asymptotic orbit as $u_{\mathfrak{r}}$. The difficulty lies in showing that this plane does not degenerate in the course of the above deformation, and this behavior is ruled out in the proof of the following lemma, which combines Lemma 3.31 and Corollary 3.32 in \cite{HK}.

\begin{lem}\label{l:deform2}
If $b_{1}$ and $b_{2}$ are nonzero and $\vert a_{1} \vert$ and $\vert a_{2} \vert$ are sufficiently small, there is a $J_{\mathbf{V}}$-holomorphic plane $\smash{u_{\mathfrak{r}, \mathbf{V}}}$ in $X \sd (L_{\mathfrak{r}}(\mathbf{V}) \cup L_{\mathfrak{s}})$, in the same class as $u_{\mathfrak{r}}$. This plane is essential and disjoint from the region $\{p_{1} > 0\}$. The closure of the image of $\pi \circ u_{\mathfrak{r}, \mathbf{V}}$ is $D_{\mathfrak{r}}$.
\end{lem}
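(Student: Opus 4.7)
The plan is to construct $u_{\mathfrak{r}, \mathbf{V}}$ via a continuation argument along a smooth path of translation data. Set $\mathbf{V}_t = \{\mathbf{0}, t\mathbf{v}_1, t\mathbf{v}_2\}$ for $t \in [0, 1]$, together with a smoothly varying family $J_{\mathbf{V}_t} \in \mathcal{J}_{\mathbf{V}_t}$ that agrees with $J$ outside a prescribed neighborhood. Consider the parametric moduli space $\mathcal{M}$ of pairs $(t, u)$ where $u$ is a $J_{\mathbf{V}_t}$-holomorphic plane in $X \sd (L_{\mathfrak{r}}(\mathbf{V}_t) \cup L_{\mathfrak{s}})$ with a single puncture on $L_{\mathfrak{r}}$ asymptotic to the Reeb orbit of $u_{\mathfrak{r}}$, in the same relative class. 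By the automatic regularity argument used in the preceding lemma, applied to Maslov index $2$ planes, together with the smooth dependence of $J_{\mathbf{V}_t}$ on $t$, the space $\mathcal{M}$ is a smooth $1$-manifold whose projection to $[0, 1]$ is locally a diffeomorphism, with $u_{\mathfrak{r}}$ providing a boundary point at $t = 0$.

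The main obstacle is to show that the component of $\mathcal{M}$ containing $u_{\mathfrak{r}}$ extends all the way to $t = 1$, which amounts to ruling out degeneration of the family. By the SFT compactness theorem, any sequence $u^{(t_i)}$ along this component converging to a non-smooth limit would yield a pseudoholomorphic building $\mathbf{H}$ in the split manifold over $L_{\mathfrak{r}}(\mathbf{V}_t) \cup L_{\mathfrak{s}}$ of total area $\omega_X(\overline{u_{\mathfrak{r}}}) = 1$. I would classify the possible types of $\mathbf{H}$ via a variant of Proposition \ref{p:types} adapted to the present relative class, using in place of the monotonicity of $L_{\mathfrak{r}}(\mathbf{v}_i)$ the fact that each $L_{\mathfrak{r}}(\mathbf{v}_i)$ is Lagrangian isotopic to the monotone torus $L_{\mathfrak{r}}$, so that for $\|\mathbf{v}_i\|$ small, the area of any disk in a nontrivial relative class is bounded below by a constant close to $1$. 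This forces any additional nonconstant top-level component of $\mathbf{H}$ to occupy nearly all of the available area, leading, together with the asymptotic constraint inherited from $u_{\mathfrak{r}}$, to a contradiction. The hypotheses $b_1, b_2 \neq 0$ enter to guarantee that the broken-leaf planes of $\mathcal{F}(\mathbf{V}_t)$ remain strictly separated from $u_{\mathfrak{r}}$ inside $\mathcal{U}(L_{\mathfrak{r}})$, excluding neck-breaking along the translated tori.

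Finally, the geometric properties of $u_{\mathfrak{r}, \mathbf{V}}$ follow by continuity along the family. The straightened description of the foliation forces the image of $u_{\mathfrak{r}}$ in $\mathcal{U}(L_{\mathfrak{r}})$ to be contained in $\{p_1 \leq 0\}$, since $\pi \circ u_{\mathfrak{r}}$ has closure $D_{\mathfrak{r}}$ and this half-space corresponds to the interior of $D_{\mathfrak{r}}$. For $|a_1|, |a_2|$ sufficiently small, the continuous family cannot cross $\{p_1 = 0\}$ without first hitting a translated torus, so $u_{\mathfrak{r}, \mathbf{V}}$ remains disjoint from $\{p_1 > 0\}$. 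Essentiality persists by positivity of intersections with the unbroken leaves of $\mathcal{F}(\mathbf{V}_t)$, and the closure of $\pi \circ u_{\mathfrak{r}, \mathbf{V}}$ equals $D_{\mathfrak{r}}$ by continuity and connectedness.
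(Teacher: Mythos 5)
Your overall architecture — set up a one-parameter family, track the plane by automatic regularity, and rule out breaking via SFT compactness — matches the paper's strategy in broad strokes, though the paper parametrizes differently: it keeps the translation data $\mathbf{V}$ fixed and deforms the almost complex structure from one adapted only to $\psi_{\mathfrak{r}}$ to one adapted to $\psi_{\mathfrak{r}}(\mathbf{V})$, rather than sliding the translated tori along $t\mathbf{v}_i$. That difference alone would not be fatal.

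The genuine gap is in the compactness step. You write that an area bound forces ``any additional nonconstant top-level component to occupy nearly all of the available area, leading to a contradiction,'' but no contradiction is actually exhibited. A limit building $\mathbf{H}$ of total area $1$ could perfectly well have a single nonconstant top-level curve $v$ of area $1$ carrying a puncture on $L_{\mathfrak{r}}(\mathbf{v}_1)$, together with zero-area middle and bottom level curves over both $L_{\mathfrak{r}}$ and $L_{\mathfrak{r}}(\mathbf{v}_1)$; nothing about the area ledger rules this out. The paper's actual obstruction is intersection-theoretic, not area-theoretic, and requires three nontrivial substeps you have omitted: (i) using the fact that $u_{\mathfrak{r}}$, and hence every $u_{\mathfrak{r}}(t)$ and every component of $\mathbf{H}$, is disjoint from $\{p_1 > 0\}$, one shows that any asymptotic orbit of a component of $\mathbf{H}$ at a puncture on $L_{\mathfrak{r}}(\mathbf{v}_1)$ covers a geodesic in a class $(k,l)$ with $k \le 0$; (ii) if $k = 0$, a Siefring-type argument forces the component to cover a plane or cylinder in a broken leaf of the new foliation; (iii) since the boundary classes over $L_{\mathfrak{r}}(\mathbf{v}_1)$ must sum to zero, all $k_i = 0$, so every component with a puncture on $L_{\mathfrak{r}}(\mathbf{v}_1)$ covers a broken leaf — none of which meet $T_0$ — and neither do the lower-level curves over $L_{\mathfrak{r}}$, contradicting $u_{\mathfrak{r}} \cdot T_0 = 1$. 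The integrality/area argument only enters at the end, to show that the subbuilding attached to $L_{\mathfrak{r}}(\mathbf{v}_1)$ is connected; it is not the source of the contradiction. Without the constraint on $(k,l)$ and the $T_0$-intersection count, your proof cannot close.

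A smaller issue: your final paragraph asserts that the family ``cannot cross $\{p_1 = 0\}$ without first hitting a translated torus.'' This is false as stated: $\{p_1 = 0\}$ is a hypersurface while $L_{\mathfrak{r}}$ has codimension two, so crossing the hypersurface does not force an intersection with the torus. The correct argument, as in the paper, is that disjointness from $\{p_1 > 0\}$ persists by continuity and positivity of intersections with leaves, and then separately that $u_{\mathfrak{r},\mathbf{V}}$ avoids $\{p_1 = 0\}$ because it is not a leaf covering, again by positivity of intersections.
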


\begin{proof}
Once again, the general case follows from the case $\mathbf{v}_{2} = \mathbf{0}$ and $a_{1} = 0$ by continuity and Lemma \ref{l:fukaya1}, so suppose that $0 < \vert b_{1} \vert < \epsilon$. Let $\{J_{t}\}_{t \in [0, 1]}$ be a smooth family of almost complex structures in $\mathcal{J}_{\mathbf{V}}$ with $J_{0} = J$, such that $J_{t}$ is adapted to $\psi_{\mathfrak{r}}$ for all $t \in [0, 1)$ and $J_{1}$ is adapted to $\psi_{\mathfrak{r}}(\mathbf{V})$. We study the deformations of the top level curve $u_{\mathfrak{r}}$ as $t \to 1$. The monotonicity of $L_{\mathfrak{r}}$ shows that the deformed plane $u_{\mathfrak{r}}(t)$ persists for all $t \in [0, 1)$, and we must show that this remains true for $t = 1$. Arguing by contradiction, assume that for some sequence $t_{i} \to 1$ as $i \to + \infty$, the sequence of curves $u_{\mathfrak{r}}(t_{i})$ converges to a nontrivial pseudoholomorphic building $\mathbf{H}$ with curves that have punctures on $L_{\mathfrak{r}}(\mathbf{v}_{1})$. The contradiction will be obtained by showing that none of the curves of $\mathbf{H}$ intersect $T_{0}$. Start by fixing a component $v$ of $\mathbf{H}$ with an asymptotic orbit covering a geodesic in the class $(k, l) \in H_{1}(L_{\mathfrak{r}}(\mathbf{v}_{1}); \psi_{\mathfrak{r}}(\mathbf{v}_{1}))$. The rest of the proof will proceed in four steps.

\vspace{2mm}

\noindent\textsc{Step 1:} \emph{We show that $k \leq 0$.} To see this, first observe that the closure of the image of $\pi \circ u_{\mathfrak{r}}$ is $D_{\mathfrak{r}}$ by definition, and consequently, the plane $u_{\mathfrak{r}}$ is disjoint from all the leaves of $\mathcal{F}$ that intersect $\mathcal{U}(L_{\mathfrak{r}})$ in the region $\{p_{1} > 0\}$. This continues to be true for $u_{\mathfrak{r}}(t)$ when $t < 1$, and by continuity, it follows that the same must be true of $v$. Near the relevant puncture, the curve $v$ can be smoothly compactified so that the resulting disk has boundary described by the map
\[
\theta \mapsto (0, b_{1}, q_{1} + k\theta, q_{2} + l\theta)
\]
for some $q_{1}, q_{2} \in \SS^{1}$. (We recall that we are assuming $a_{1} = 0$, and that the boundary is a geodesic on a flat torus, which allows us to deduce the above form.) Now observe that the tangent space at any point on the above boundary is spanned by the set $\{k\partial_{q_{1}} + l\partial_{q_{2}}, k\partial_{p_{1}} + l\partial_{p_{2}}\}$. This means that if $k$ were positive, the curve $v$ would have to intersect the region $\{p_{1} > 0\}$, a contradiction.

\vspace{2mm}

\noindent\textsc{Step 2:} \emph{If $k = 0$, we show that $v$ must cover a plane or cylinder in a broken leaf of $\mathcal{F}(\mathbf{V})$.} By a cylinder of a broken leaf, we mean a cylinder in a middle or bottom level with asymptotic orbits matching those of the planes in a broken leaf. This follows from the same arguments as Lemma 6.2 in \cite{HL}. The idea is that the asymptotic properties of pseudoholomorphic curves together with the fact that $v$ lies in the region $\{p_{1} \leq 0\}$ imply that if $v$ does not cover a plane or cylinder in a broken leaf, then it must intersect all nearby leaves of the foliation, \emph{including} those that lie in the region $\{p_{1} > 0\}$, which is impossible. The argument relies on Siefring's intersection theory for asymptotically cylindrical curves, see for instance \cite{We2}.

\vspace{2mm}

\noindent\textsc{Step 3:} \emph{We obtain the desired contradiction.} We partition the curves of $\mathbf{H}$ into the following three subcollections:
\begin{itemize}
\item The collection $\mathbf{H}_{\mathrm{Top}}$ of top level curves.
\item The subbuilding $\mathbf{H}_{\mathbf{0}}$ consisting of those middle and bottom level curves mapping to copies of $\RR \times S^{\ast}L_{\mathfrak{r}}$ and $T^{\ast}L_{\mathfrak{r}}$.
\item The subbuilding $\mathbf{H}_{\mathbf{v}}$ consisting of those middle and bottom level curves mapping to copies of $\RR \times S^{\ast}L_{\mathfrak{r}}(\mathbf{v}_{1})$ and $T^{\ast}L_{\mathfrak{r}}(\mathbf{v}_{1})$.
\end{itemize}
The top level curves in $\mathbf{H}_{\mathrm{Top}}$ with punctures on $L_{\mathfrak{r}}(\mathbf{v}_{1})$ all compactify to surfaces with boundary circles giving rise to classes $(k_{1}, l_{1}), \dots, (k_{m}, l_{m})$ in $H_{1}(L_{\mathfrak{r}}(\mathbf{v}_{1}), \psi_{\mathfrak{r}}(\mathbf{v}_{1}))$. These classes must sum to zero, since they constitute the boundary of the cycle obtained by compactifying all of the curves in $\mathbf{H}_{\mathbf{v}_{1}}$. It then follows from Step 1 that $k_{i} = 0$ for $i = 1, \dots, m$. Step 2 in turn implies that each curve in $\mathbf{H}$ with a puncture on $L_{\mathfrak{r}}(\mathbf{v}_{1})$ must cover a plane or cylinder in a broken leaf of $\mathcal{F}(\mathbf{v}_{1})$. Now partition the curves of $\mathbf{H}_{\mathrm{Top}} \cup \mathbf{H}_{\mathbf{v}_{1}}$ into subcollections based on the matchings of their asymptotic orbits in $\RR \times S^{\ast}L_{\mathfrak{r}}(\mathbf{v}_{1})$ and $T^{\ast}L_{\mathfrak{r}}(\mathbf{v}_{1})$, and denote these subcollections by $\mathbf{H}_{1}, \dots, \mathbf{H}_{n}$. The compactifications of these subcollections represent elements of $\pi_{2}(X, L_{\mathfrak{r}})$, and must have therefore have positive integral area. But since $u_{\mathfrak{r}}$ had area $1$, we must have $n = 1$, and the area of the compactification of $\mathbf{H}_{1}$ is $1$. If we are assuming that $\mathbf{H}$ includes curves asymptotic to $L_{\mathfrak{r}}(\mathbf{v}_{1})$, then all curves of $\mathbf{H}_{1}$ have punctures on $L_{\mathfrak{r}}(\mathbf{v}_{1})$, and the above discussion implies that each such curve must cover a plane or cylinder in a broken leaf of $\mathcal{F}(\mathbf{V})$ through $L_{\mathfrak{r}}(\mathbf{v}_{1})$. Since none of these leaves intersect $T_{0}$, as do none of the curves of $\mathbf{H}_{\mathbf{0}}$, we see that none of the curves of $\mathbf{H}$ intersect $T_{0}$, yielding the desired contradiction.

\vspace{2mm}

\noindent\textsc{Step 4:} \emph{We finish the proof of the lemma.} It remains to prove the other assertions of the lemma. First, setting $\smash{u_{\mathfrak{r}, \mathbf{V}}} = u_{\mathfrak{r}}(1)$, we see that this plane is disjoint from the region $\{p_{1} > 0\}$ since $u_{\mathfrak{r}}$ is and no new intersections can appear in the deformation process. Furthermore, since $\smash{u_{\mathfrak{r}, \mathbf{V}}}$ does not cover a leaf of the foliation, it follows from positivity of intersections that it is disjoint from the set $\{p_{1} = 0\}$, since otherwise it would intersect the region $\{p_{1} > 0\}$ as well. This implies that the closure of the image of $\pi \circ \smash{u_{\mathfrak{r}, \mathbf{V}}}$ is $D_{\mathfrak{r}}$, and completes the proof of the lemma.
\end{proof}

\subsection{Intersection Arguments}

We now use the deformation procedures described in the previous subsection to obtain curves in $X \sd (L_{\mathfrak{r}} \cup L_{\mathfrak{s}})$ with desirable intersection properties. The analysis of such intersections can again be localized near either of $L_{\mathfrak{r}}$ or $L_{\mathfrak{s}}$, and we will focus primarily on intersections near the former, only giving an account of the full analysis in Subsection 7.4 below. As before, we will fix translation data
\[
\mathbf{V} = \{\mathbf{0}, \mathbf{v}_{1}, \mathbf{v}_{2}\} = \{(0, 0), (a_{1}, b_{1}), (a_{2}, b_{2})\}
\]
and will always assume that $\mathbf{v}_{1}$ and $\mathbf{v}_{2}$ are distinct. If $\Vert \mathbf{v}_{1} \Vert$ is sufficiently small so that Lemma \ref{l:fukaya1} and Lemma \ref{l:deform1} apply, the deformed building $\mathbf{F}(\mathbf{v}_{1})$ is well defined and its top level curves
\[
\{u_{\mathfrak{r}}(\mathbf{v}_{1}), \underline{u}(\mathbf{v}_{1}), u_{\mathfrak{s}}, u_{1}(\mathbf{v}_{1}), \dots, u_{d - 1}(\mathbf{v}_{1}), \mathfrak{u}_{1}, \dots, \mathfrak{u}_{d - 1}\}
\]
are all $J_{\mathbf{V}}$-holomorphic for some $J_{\mathbf{V}}$ in $\mathcal{J}_{\mathbf{V}}$. Similarly, we choose $\Vert \mathbf{v}_{2} \Vert$ sufficiently small so that the deformed building $\mathbf{G}(\mathbf{v}_{2})$ is well defined. Its top level curves are 
\[
\{v_{\mathfrak{r}}(\mathbf{v}_{2}), \underline{v}(\mathbf{v}_{2}), v_{\mathfrak{s}}, v_{1}(\mathbf{v}_{2}), \dots, v_{d - 2}(\mathbf{v}_{2}), \mathfrak{v}_{1}, \dots, \mathfrak{v}_{d - 2}\}.
\]
We also choose $b_{1}$ and $b_{2}$ nonzero and $\vert a_{1} \vert$ and $\vert a_{2} \vert$ sufficiently small so that Lemma \ref{l:deform2} yields a $J_{\mathbf{V}}$-holomorphic plane $\smash{u_{\mathfrak{r}, \mathbf{V}}}$ in $X \sd (L_{\mathfrak{r}}(\mathbf{V}) \cup L_{\mathfrak{s}})$ that is in the same class as $u_{\mathfrak{r}}$. This plane is essential and disjoint from the region $\{p_{1} > 0\}$. The closure of the image of $\pi \circ \smash{u_{\mathfrak{r}, \mathbf{V}}}$ is $D_{\mathfrak{r}}$, and so the plane intersects the leaves of $\mathcal{F}(\mathbf{V})$ passing through the sets $\{q_{1} = \theta, p_{1} = \delta\}$ with $\delta < 0$ exactly once, in view of our description of leaves in $\mathcal{U}(L_{\mathfrak{r}})$.

The intersection number between each top level curve of $\mathbf{F}(\mathbf{v}_{1})$ or $\mathbf{G}(\mathbf{v}_{2})$ and the curve $\smash{u_{\mathfrak{r}, \mathbf{V}}}$ is well defined since the curves of $\mathbf{F}(\mathbf{v}_{1})$ and $\mathbf{G}(\mathbf{v}_{2})$ are all disjoint from $L_{\mathfrak{r}}$. We denote the \emph{sum total} of these intersection numbers by $\mathbf{F}(\mathbf{v}_{1}) \ast \smash{u_{\mathfrak{r}, \mathbf{V}}}$ and $\mathbf{G}(\mathbf{v}_{2}) \ast \smash{u_{\mathfrak{r}, \mathbf{V}}}$. Similarly, the intersection number between each top level curve of $\mathbf{F}(\mathbf{v}_{1})$ or $\mathbf{G}(\mathbf{v}_{2})$ and any of the planes in $\mathfrak{r}_{0}$ or $\mathfrak{r}_{\infty}$ is well defined and all such intersections are positive. These intersection numbers are denoted by
\[
\mathbf{F}(\mathbf{v}_{1}) \ast \mathfrak{r}_{0}, \qquad \mathbf{F}(\mathbf{v}_{1}) \ast \mathfrak{r}_{\infty}, \qquad \mathbf{G}(\mathbf{v}_{2}) \ast \mathfrak{r}_{0}, \qquad \mathbf{G}(\mathbf{v}_{2}) \ast \mathfrak{r}_{\infty}.
\]
This notation is sensible since the intersection number is independent of the choice of plane in $\mathfrak{r}_{0}$ or $\mathfrak{r}_{\infty}$. We then let $F$ and $G$ be the smooth deformed compactifications of $\mathbf{F}(\mathbf{v}_{1})$ and $\mathbf{G}(\mathbf{v}_{2})$, respectively, described in the previous subsection. We also let $E_{\mathfrak{r}} \colon (\DD^{2}, \SS^{1}) \to (X, L_{\mathfrak{r}})$ be the smooth compactification of the plane $\smash{\smash{u_{\mathfrak{r}, \mathbf{V}}}}$. Finally, we let $\smash{\overline{\mathfrak{r}}_{0}}$ and $\smash{\overline{\mathfrak{r}}_{\infty}}$ be the solid tori obtained by compactifying the planes of $\mathfrak{r}_{0}$ and $\mathfrak{r}_{\infty}$, respectively. This is carefully proved in Proposition 5.16 of \cite{DRGI}. It is then clear that
\[
F \cdot E_{\mathfrak{r}} = \mathbf{F}(\mathbf{v}_{1}) \ast \smash{u_{\mathfrak{r}, \mathbf{V}}}, \qquad F \ast \overline{\mathfrak{r}}_{0} = \mathbf{F}(\mathbf{v}_{1}) \ast \mathfrak{r}_{0}, \qquad F \ast \overline{\mathfrak{r}}_{\infty} = \mathbf{F}(\mathbf{v}_{1}) \ast \mathfrak{r}_{\infty},
\]
and similarly for $G$ and $\mathbf{G}(\mathbf{v}_{2})$. Recall that $\alpha_{0}$ is the number of top level curves of $\mathbf{F}$ lying in $\mathfrak{r}_{0}$, so that there are $d - 1 - \alpha_{0}$ top level curves lying in $\mathfrak{r}_{\infty}$. Likewise, recall that $\gamma_{0}$ is the number of top level curves of $\mathbf{G}$ lying in $\mathfrak{r}_{0}$, so that there are $d - 2 - \gamma_{0}$ top level curves lying in $\mathfrak{r}_{\infty}$. We then have the following result, which combines Lemma 3.33, Lemma 3.34, and Lemma 3.35 in \cite{HK}.

\begin{lem}\label{l:intersect1}
Suppose that $a_{1}$ and $a_{2}$ are negative, that $b_{1}$ and $b_{2}$ are nonzero, and that $\vert a_{1} \vert$ and $\vert a_{2} \vert$ are sufficiently small with respect to $\vert b_{1} \vert$ and $\vert b_{2} \vert$. If $b_{1} > 0$, we have
\[
F \ast \overline{\mathfrak{r}}_{0} = 0, \qquad F \ast \overline{\mathfrak{r}}_{\infty} = 1, \qquad F \cdot E_{\mathfrak{r}} = \alpha_{0}.
\]
On the other hand, if $b_{1} < 0$, then we have
\[
F \ast \overline{\mathfrak{r}}_{0} = 1, \qquad F \ast \overline{\mathfrak{r}}_{\infty} = 0, \qquad F \cdot E_{\mathfrak{r}} = d - 1 - \alpha_{0}.
\]
Similarly, if $b_{2} > 0$, we have
\[
G \ast \overline{\mathfrak{r}}_{0} = 0, \qquad G \ast \overline{\mathfrak{r}}_{\infty} = 1, \qquad G \cdot E_{\mathfrak{r}} = \gamma_{0} + v_{\mathfrak{r}}(\mathbf{v}_{2}) \cdot \smash{u_{\mathfrak{r}, \mathbf{V}}}.
\]
On the other hand, if $b_{2} < 0$, then we have
\[
G \ast \overline{\mathfrak{r}}_{0} = 1, \qquad G \ast \overline{\mathfrak{r}}_{\infty} = 0, \qquad G \cdot E_{\mathfrak{r}} = d - 2 - \gamma_{0} + v_{\mathfrak{r}}(\mathbf{v}_{2}) \cdot \smash{u_{\mathfrak{r}, \mathbf{V}}}.
\]
Furthermore, when $b_{1}$ and $b_{2}$ have opposite signs, then
\[
v_{\mathfrak{r}}(\mathbf{v}_{2}) \cdot \smash{u_{\mathfrak{r}, \mathbf{V}}} = v_{\mathfrak{r}}(\mathbf{v}_{2}) \cdot u_{\mathfrak{r}}(\mathbf{v}_{1}).
\]
\end{lem}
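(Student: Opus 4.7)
The plan is to handle the three types of statements one at a time, focusing on $F$ (the case of $G$ being analogous except for the extra term accounting for the essential plane $v_{\mathfrak{r}}(\mathbf{v}_{2})$ being anchored on a different torus than $u_{\mathfrak{r}, \mathbf{V}}$), and to finish with the deformation identity.

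For $F \ast \overline{\mathfrak{r}}_{0}$ and $F \ast \overline{\mathfrak{r}}_{\infty}$: any plane $P \in \mathfrak{r}_{0}$ together with its partner $P' \in \mathfrak{r}_{\infty}$ in the same broken leaf of $\mathcal{F}$ forms a topological sphere in class $(0,1) \in H_{2}(X)$, so the total topological intersection with $F$ is $(1,d) \cdot (0,1) = 1$. Positivity of $J_{\mathbf{V}}$-holomorphic intersections forces this $1$ to split as a sum of two nonnegative integers. To determine which family carries the intersection, I would analyze the straightened picture inside $\mathcal{U}(L_{\mathfrak{r}})$: $P$ and $P'$ become the half-cylinders $\{p_{1} = 0, \pm p_{2} > 0\}$, so local intersections with $F$ (whose pieces sit at $p_{1} = a_{1}$) are excluded, and the unique topological intersection must sit outside $\mathcal{U}(L_{\mathfrak{r}})$. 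I would then track where this intersection sits by following the unique intersection of $F$ with nearby unbroken fibers $\pi^{-1}(p)$ as $p$ crosses $C_{\mathfrak{r}}$; when $b_{1} > 0$ the limit approaches a point on $P'$ and so $F \ast \overline{\mathfrak{r}}_{0} = 0$ and $F \ast \overline{\mathfrak{r}}_{\infty} = 1$. The case $b_{1} < 0$ is symmetric.

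For $F \cdot E_{\mathfrak{r}}$, I would decompose into contributions from each top level component of $\mathbf{F}(\mathbf{v}_{1})$. Using Proposition \ref{p:essential}, the projections of $\underline{u}(\mathbf{v}_{1})$ and $u_{\mathfrak{s}}$ miss the interior of $D_{\mathfrak{r}}$ in which $u_{\mathfrak{r}, \mathbf{V}}$'s projection lies, and the planes $\mathfrak{u}_{j}$ project to points inside $D_{\mathfrak{s}}$, so these components contribute zero. The essential plane $u_{\mathfrak{r}}(\mathbf{v}_{1})$ also contributes zero, since it is homotopic to $u_{\mathfrak{r}, \mathbf{V}}$ through planes with boundary sliding along the family $L_{\mathfrak{r}}(t\mathbf{v}_{1})$ and generic choices leave the two curves disjoint. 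The nonzero contributions therefore come from the planes $u_{i}(\mathbf{v}_{1})$, and a local analysis entirely parallel to the previous paragraph shows that when $b_{1} > 0$ each of the $\alpha_{0}$ planes originating in $\mathfrak{r}_{0}$ contributes exactly one intersection with $u_{\mathfrak{r}, \mathbf{V}}$ while those originating in $\mathfrak{r}_{\infty}$ contribute none, giving $F \cdot E_{\mathfrak{r}} = \alpha_{0}$. For $b_{1} < 0$ the roles swap and the total becomes $d - 1 - \alpha_{0}$. The computation for $G$ is identical up to replacing $\alpha_{0}$ by $\gamma_{0}$ and $d - 1$ by $d - 2$, except that $v_{\mathfrak{r}}(\mathbf{v}_{2})$'s boundary lies on $L_{\mathfrak{r}}(\mathbf{v}_{2}) \neq L_{\mathfrak{r}}(\mathbf{v}_{1})$, so the analogous homotopy argument does not apply and its intersection with $u_{\mathfrak{r}, \mathbf{V}}$ must be kept as an additive term.

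For the concluding identity $v_{\mathfrak{r}}(\mathbf{v}_{2}) \cdot u_{\mathfrak{r}, \mathbf{V}} = v_{\mathfrak{r}}(\mathbf{v}_{2}) \cdot u_{\mathfrak{r}}(\mathbf{v}_{1})$ under the opposite-sign hypothesis, I would slide the boundary of $u_{\mathfrak{r}, \mathbf{V}}$ from $L_{\mathfrak{r}}$ to $L_{\mathfrak{r}}(\mathbf{v}_{1})$ along the linear family $L_{\mathfrak{r}}(t\mathbf{v}_{1})$ for $t \in [0,1]$, producing a continuous homotopy. The intersection number with the fixed curve $v_{\mathfrak{r}}(\mathbf{v}_{2})$ is invariant under this homotopy as long as the moving boundary circle never passes through $v_{\mathfrak{r}}(\mathbf{v}_{2})$. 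Since the sweep is confined to the slab between $p_{2} = 0$ and $p_{2} = b_{1}$, while $v_{\mathfrak{r}}(\mathbf{v}_{2})$ has its own asymptotic end anchored at $p_{2} = b_{2}$ of opposite sign to $b_{1}$, no such crossing can occur and the intersection count is preserved. The main obstacle in the whole argument is the localization of the intersection in the first step above: pinning down on which side of the broken leaf the single topological intersection with $F$ sits requires careful tracking of how the smoothed compactification of $\mathbf{F}(\mathbf{v}_{1})$ bridges the inside of $\mathcal{U}(L_{\mathfrak{r}})$ (where $F$ sits at $p_{1} = a_{1}$) with the outside, and it is this sign-sensitive localization that feeds through and controls every other intersection count in the lemma.
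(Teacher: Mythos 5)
Your proposal follows the paper's structure closely in the first two steps, but it contains a genuine gap at the key point.

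Your claim that $u_{\mathfrak{r}}(\mathbf{v}_{1})$ contributes zero to $F \cdot E_{\mathfrak{r}}$ because ``generic choices leave the two curves disjoint'' is not correct, and this is not a minor omission. The intersection number $u_{\mathfrak{r}}(\mathbf{v}_{1}) \cdot u_{\mathfrak{r}, \mathbf{V}}$ (reduced by homotopy to $u_{\mathfrak{r}}(\mathbf{v}_{1}) \cdot u_{\mathfrak{r}}$, as you correctly observe) is a topological invariant, not a quantity that can be made zero by perturbation. For an embedded disk with boundary on a Lagrangian torus and a push-off to a parallel torus, this intersection number is precisely $c_{1}(\overline{u}_{\mathfrak{r}}) - 1 = \mu(\overline{u}_{\mathfrak{r}})/2 - 1$. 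The paper's Step 3 computes this explicitly: viewing $\overline{u}_{\mathfrak{r}}(\mathbf{v}_{1})$ as a section $\sigma$ of the normal bundle $\nu(\overline{u}_{\mathfrak{r}})$ and choosing $r\partial_{\theta}$ as a section of $T\DD^{2}$, the wedge $r\partial_{\theta} \wedge \sigma$ is an admissible section of $\Lambda^{2}_{\CC}(\overline{u}_{\mathfrak{r}}^{\ast}TX)$, and counting zeros gives $\mu(\overline{u}_{\mathfrak{r}}) = 2(1 + u_{\mathfrak{r}}(\mathbf{v}_{1}) \cdot u_{\mathfrak{r}})$. This vanishes only because $L_{\mathfrak{r}}$ is monotone, which forces $\mu(\overline{u}_{\mathfrak{r}}) = 2$. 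If the Maslov index were, say, $4$, the two curves would necessarily intersect no matter how you perturb. Your argument would therefore prove the same intersection counts for a non-monotone torus, where they are false.

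A smaller issue: your localization argument for $F \ast \overline{\mathfrak{r}}_{0}$ versus $F \ast \overline{\mathfrak{r}}_{\infty}$ (``track where this intersection sits by following the unique intersection of $F$ with nearby unbroken fibers $\pi^{-1}(p)$ as $p$ crosses $C_{\mathfrak{r}}$'') is vaguer and less robust than the paper's. The paper pins down the intersection by showing concretely that $\underline{u}(\mathbf{v}_{1})$, being essential with compactified boundary on $L_{\mathfrak{r}}(\mathbf{v}_{1})$, meets $\mathcal{U}(L_{\mathfrak{r}})$ in the region $\{p_{1} > a_{1}, p_{2} > 0\}$ for $b_{1} > 0$ and $\vert a_{1} \vert$ small, which is exactly where the planes of $\mathfrak{r}_{\infty}$ lie. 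Your limiting argument could work, but it would require justifying continuity of the intersection locus across the breaking locus, which is not obviously simpler. The rest of your proposal (the decomposition of $F \cdot E_{\mathfrak{r}}$ by components and the final homotopy invariance argument for the opposite-signs identity) matches the paper's approach.
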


\begin{proof}
We will first prove the statements concerning $F$, focusing on when $b_{1} > 0$ since the proof of the other case is nearly identical. First, since the disks in $\overline{\mathfrak{r}}_{0} \cup \overline{\mathfrak{r}}_{\infty}$ fit together to form spheres in the class $(0, 1)$, and since $F$ is in the class $(1, d)$, we have
\[
F \ast \overline{\mathfrak{r}}_{0} + F \ast \overline{\mathfrak{r}}_{\infty} = 1.
\]
We will now show that $F \ast \overline{\mathfrak{r}}_{\infty} \geq 1$, which by positivity of intersections implies that $F \ast \overline{\mathfrak{r}}_{\infty} = 1$ and $F \ast \overline{\mathfrak{r}}_{0} = 0$. It is sufficient to show that $\underline{u}(\mathbf{v}_{1})$ intersects a plane of $\mathfrak{r}_{\infty}$ at least once. To see this, observe that $\underline{u}(\mathbf{v}_{1})$ is essential and projects under $\pi$ to the region of $S_{0}$ bounded by $\pi(L_{\mathfrak{r}}(\mathbf{v}_{1}))$ and $\pi(L_{\mathfrak{s}})$. It therefore intersects $\mathcal{U}(L_{\mathfrak{r}})$ in the region $\{p_{1} > a_{1}\}$ and intersects all leaves of the foliation that meet $\mathcal{U}(L_{\mathfrak{r}})$ in this set. By making $\mathcal{U}(L_{\mathfrak{r}})$ sufficiently small, we can also assume that these intersections lie inside the region $\{p_{2} > 0\}$. This holds when $a_{1} = 0$ since $b_{1} > 0$ and remains true for small $\vert a_{1} \vert \neq 0$ by continuity. But the planes of $\mathfrak{r}_{\infty}$ are exactly those intersecting $\mathcal{U}(L_{\mathfrak{r}})$ in the region $\{p_{1} = 0, p_{2} > 0\}$, so the desired conclusion follows immediately.

We next need to show that $F \cdot E_{\mathfrak{r}} = \alpha_{0}$. We do this by showing that $\smash{u_{\mathfrak{r}, \mathbf{V}}}$ intersects $u_{i}(\mathbf{v}_{1})$ exactly once for $i = 1, \dots, \alpha_{0}$, and that $\smash{u_{\mathfrak{r}, \mathbf{V}}}$ is disjoint from all other top level curves of $\mathbf{F}(\mathbf{v}_{1})$. Right away, since the closure of the image of $\pi \circ \smash{u_{\mathfrak{r}, \mathbf{V}}}$ is $D_{\mathfrak{r}}$, any top level curves of $\mathbf{F}(\mathbf{v}_{1})$ that project into different regions of $S_{0}$ will not intersect it. This holds in particular of $u_{\mathfrak{s}}$ as well as $\mathfrak{u}_{i}$ for $i = 1, \dots, d - 1$. All remaining intersections will be analyzed in the following three steps.

\vspace{2mm}

\noindent\textsc{Step 1:} \emph{Intersections with the $u_{i}(\mathbf{v}_{1})$.} Since $\smash{u_{\mathfrak{r}, \mathbf{V}}}$ is essential with respect to $\mathcal{F}$, it intersects each of its leaves either once or not at all. It also intersects $\mathcal{U}(L_{\mathfrak{r}})$ in the region $\{p_{1} < 0\}$ and compactifies to a disk with boundary on $L_{\mathfrak{r}} = \{p_{1} = p_{2} = 0\}$. The condition $b_{1} > 0$ then implies that for all $a_{1} < 0$ with $\vert a_{1} \vert$ sufficiently small with respect to $b_{1}$, the plane $\smash{u_{\mathfrak{r}, \mathbf{V}}}$ intersects cylinders of the form $\{q_{1} = \theta, p_{1} = a_{1}, p_{2} < b_{1}\}$ exactly once. Furthermore, the planes $u_{i}(\mathbf{v}_{1})$ all cover planes in broken leaves of $\mathcal{F}$ that intersect $\mathcal{U}(L_{\mathfrak{r}})$, and for $1 \leq i \leq \alpha_{0}$, they intersect $\mathcal{U}(L_{\mathfrak{r}})$ in cylinders of the form $\{q_{1} = \theta, p_{1} = a_{1}, p_{2} < b_{1}\}$. In this case the plane $\smash{u_{\mathfrak{r}, \mathbf{V}}}$ intersects the leaf of $\mathcal{F}$ containing $u_{i}(\mathbf{v}_{1})$ at a point on the latter plane. On the other hand, for $i > \alpha_{0}$, the planes $u_{i}(\mathbf{v}_{1})$ intersect $\mathcal{U}(L_{\mathfrak{r}})$ in cylinders of the form $\{q_{1} = \theta, p_{1} = a_{1}, p_{2} > b_{1}\}$. In this case, the plane $\smash{u_{\mathfrak{r}, \mathbf{V}}}$ intersects the leaf of $\mathcal{F}$ containing $u_{i}(\mathbf{v}_{1})$ at a point in the complement of the latter plane. It follows that $u_{i}(\mathbf{v}_{1})$ is disjoint from these planes when $i > \alpha_{0}$.

\vspace{2mm}

\noindent\textsc{Step 2:} \emph{Intersections with $\underline{u}(\mathbf{v}_{1})$.} We now show that when $\vert a_{1} \vert$ is sufficiently small with respect to $\vert b_{1} \vert$, the plane $\smash{u_{\mathfrak{r}, \mathbf{V}}}$ is disjoint from $\underline{u}(\mathbf{v}_{1})$. This is obviously true for the portion of the latter curve that lies outside $\mathcal{U}(L_{\mathfrak{r}})$, since its projection under $\pi$ is contained in the interior of $A \cup D_{\mathfrak{s}}$. Now supposing that $a_{1} = 0$, we see that the portions of $\underline{u}(\mathbf{v}_{1})$ and $\smash{u_{\mathfrak{r}, \mathbf{V}}}$ in $\mathcal{U}(L_{\mathfrak{r}})$ are contained in the regions $\{p_{1} > 0\}$ and $\{p_{1} < 0\}$, respectively, and have punctures on disjoint tori. By continuity, this also holds for all $a_{1} < 0$ whenever $\vert a_{1} \vert$ is sufficiently small with respect to $\vert b_{1} \vert$. This proves that $\smash{u_{\mathfrak{r}, \mathbf{V}}}$ is disjoint from $\underline{u}(\mathbf{v}_{1})$ under the above conditions.

\vspace{2mm}

\noindent\textsc{Step 3:} \emph{Intersections with $u_{\mathfrak{r}}(\mathbf{v}_{1})$.} We still need to show that $\smash{u_{\mathfrak{r}, \mathbf{V}}}$ is disjoint from $u_{\mathfrak{r}}(\mathbf{v}_{1})$, at least when $\vert a_{1} \vert$ is sufficiently small with respect to $\vert b_{1} \vert$. From the proof of Lemma \ref{l:deform2}, we see that the compactifications of $\smash{u_{\mathfrak{r}, \mathbf{V}}}$ and $u_{\mathfrak{r}}$ are homotopic in the space of smooth maps $(\DD^{2}, \SS^{1}) \to (\pi^{-1}(D_{\mathfrak{r}}), L_{\mathfrak{r}})$, and so for $\vert a_{1} \vert$ sufficiently small, it is enough to show that $u_{\mathfrak{r}}(\mathbf{v}_{1}) \cdot u_{\mathfrak{r}} = 0$. We claim that this is equivalent to $\mu(\overline{u}_{\mathfrak{r}}) = 2$, and since this latter fact is true by monotonicity of $L_{\mathfrak{r}}$, we are reduced to establishing this equivalence.

To this end, recall that $\mu(\overline{u}_{\mathfrak{r}}) = 2c_{1}(\overline{u}_{\mathfrak{r}})$, where $c_{1}(\overline{u}_{\mathfrak{r}})$ is the relative Chern number, an algebraic count of zeros of a generic section of the line bundle $\Lambda^{2}_{\CC}(\overline{u}_{\mathfrak{r}}^{\ast}TX)$ whose restriction to the boundary is nonvanishing and tangent to the line bundle $\Lambda^{2}(\overline{u}_{\mathfrak{r}}^{\ast}TL_{\mathfrak{r}})$. We will construct such a section explicitly, making use of the decomposition $\overline{u}_{\mathfrak{r}}^{\ast}(TX) = T\DD^{2} \oplus \nu(\overline{u}_{\mathfrak{r}})$, where $\nu(\overline{u}_{\mathfrak{r}})$ is the normal bundle to the embedding $\overline{u}_{\mathfrak{r}}$. Fixing polar coordinates $(r, \theta)$ on $\DD^{2}$, we view $r\partial_{\theta}$ as a section of $\overline{u}_{\mathfrak{r}}^{\ast}(TX)$ under the above decomposition. Its restriction to the boundary is nonvanishing and tangent to $\Lambda^{2}(\overline{u}_{\mathfrak{r}}^{\ast}TL_{\mathfrak{r}})$.

Making the translation vector $\mathbf{v}_{1}$ smaller if necessary, we can assume that $\overline{u}_{\mathfrak{r}}(\mathbf{v}_{1})$ is sufficiently $C^{1}$-close to $\overline{u}_{\mathfrak{r}}$ so as to be identifiable with a section $\sigma$ of $\nu(\overline{u}_{\mathfrak{r}})$, which we view as a section of $\overline{u}_{\mathfrak{r}}^{\ast}(TX)$. Through a continuous rotation in $\nu(\overline{u}_{\mathfrak{r}})$, the restriction of $\sigma$ to the boundary is homotopic through nonvanishing sections to a section of $\overline{u}_{\mathfrak{r}}^{\ast}TL_{\mathfrak{r}}$ that is orthogonal to $\partial_{\theta}$. The above discussion then implies that the section $r\partial_{\theta} \wedge \sigma$ of $\Lambda^{2}_{\CC}(\overline{u}_{\mathfrak{r}}^{\ast}TX)$ is nonvanishing and tangent to $\Lambda^{2}(\overline{u}_{\mathfrak{r}}^{\ast}TL_{\mathfrak{r}})$ when restricted to the boundary. Furthermore, its zeros correspond to the union of the zeros of $r\partial_{\theta}$ and $\sigma$. Since $\overline{u}_{\mathfrak{r}}$ is embedded, the algebraic count of zeros of $\sigma$ is precisely the intersection number $u_{\mathfrak{r}}(\mathbf{v}_{1}) \cdot u_{\mathfrak{r}}$. It now follows that
\[
2 = \mu(\overline{u}_{\mathfrak{r}}) = 2c_{1}(r\partial_{\theta} \wedge \sigma) = 2(1 + u_{\mathfrak{r}}(\mathbf{v}_{1}) \cdot u_{\mathfrak{r}}),
\]
establishing the equivalence and implying that $\smash{u_{\mathfrak{r}, \mathbf{V}}}$ is disjoint from $u_{\mathfrak{r}}(\mathbf{v}_{1})$.

\vspace{2mm}

The statements in the lemma concerning $G$ are proved in an identical manner, except that the term $v_{\mathfrak{r}}(\mathbf{v}_{2}) \cdot \smash{u_{\mathfrak{r}, \mathbf{V}}}$ need not vanish. Instead, if $b_{1}$ and $b_{2}$ have opposite signs, and assuming momentarily that $a_{1} = a_{2} = 0$, we observe that the compactification of the plane $v_{\mathfrak{r}}(\mathbf{v}_{2})$ projects onto $D_{\mathfrak{r}}$ and has boundary on $L_{\mathfrak{r}}(\mathbf{v}_{2})$. The assumption on $b_{1}$ and $b_{2}$ together with an examination of the proof of Lemma 3.27 in \cite{HK} (which is our Lemma \ref{l:fukaya1}) shows that the compactifications of $u_{\mathfrak{r}}$ and $u_{\mathfrak{r}}(\mathbf{v}_{1})$ are connected in an appropriate space of maps of pairs. Since this is also true of $u_{\mathfrak{r}}$ and $\smash{u_{\mathfrak{r}, \mathbf{V}}}$, invariance of intersection numbers under homotopies implies that
\[
v_{\mathfrak{r}}(\mathbf{v}_{2}) \cdot \smash{u_{\mathfrak{r}, \mathbf{V}}} = v_{\mathfrak{r}}(\mathbf{v}_{2}) \cdot u_{\mathfrak{r}} = v_{\mathfrak{r}}(\mathbf{v}_{2}) \cdot u_{\mathfrak{r}}(\mathbf{v}_{1}).
\]
The general case now follows by continuity when $\vert a_{1} \vert$ and $\vert a_{2} \vert$ are chosen to be sufficiently small. This proves the lemma.
\end{proof}

To complete our intersection analysis, we will seek to understand the intersection numbers between the top level curves of $\mathbf{F}(\mathbf{v}_{1})$ and those of $\mathbf{G}(\mathbf{v}_{2})$. These are always well defined when $\mathbf{v}_{1}$ and $\mathbf{v}_{2}$ are distinct and when $\vert b_{1} \vert \neq \vert b_{2} \vert$ and $\vert a_{1} \vert \neq \vert a_{2} \vert$ are sufficiently small. We then have the following lemma, which is Lemma 3.36 in \cite{HK}.

\begin{lem}\label{l:intersect2}
Suppose that $a_{1} < a_{2} < 0$, with $\vert a_{1} \vert$ and $\vert a_{2} \vert$ sufficiently small. If $b_{1} > b_{2}$, then
\begin{align*}
u_{i}(\mathbf{v}_{1}) \cdot v_{\mathfrak{r}}(\mathbf{v}_{2}) &= 1, \qquad i = 1, \dots, \alpha_{0}, \\
v_{i}(\mathbf{v}_{2}) \cdot \underline{u}(\mathbf{v}_{1}) &= 1, \qquad i = \gamma_{0} + 1, \dots, d - 2.
\end{align*}
On the other hand, if $b_{1} < b_{2}$, then
\begin{align*}
u_{i}(\mathbf{v}_{1}) \cdot v_{\mathfrak{r}}(\mathbf{v}_{2}) &= 1, \qquad i = \alpha_{0} + 1, \dots, d - 1, \\
v_{i}(\mathbf{v}_{2}) \cdot \underline{u}(\mathbf{v}_{1}) &= 1, \qquad i = 1, \dots, \gamma_{0}.
\end{align*}
Moreover, all of the above intersection points project into $D_{\mathfrak{r}}$.
\end{lem}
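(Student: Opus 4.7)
The plan is to localize all the intersection analysis to $\mathcal{U}(L_\mathfrak{r})$, using the straightened local models of leaves of $\mathcal{F}(\mathbf{V})$ together with the explicit form of the deformed top level curves near $L_\mathfrak{r}(\mathbf{v}_1)$ and $L_\mathfrak{r}(\mathbf{v}_2)$. As a preliminary step I would extend Lemma~\ref{l:deform1} to pick a single $J_{\mathbf{V}} \in \mathcal{J}_{\mathbf{V}}$ for which all top level curves of both $\mathbf{F}(\mathbf{v}_1)$ and $\mathbf{G}(\mathbf{v}_2)$ are simultaneously $J_{\mathbf{V}}$-holomorphic, so that every intersection we count is positive and transverse and it suffices to exhibit geometric intersection points.

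Next I argue that all the relevant intersections occur inside $\mathcal{U}(L_\mathfrak{r})$. Outside this neighborhood the deformed curves agree with their undeformed counterparts. The essential plane $v_\mathfrak{r}$ is disjoint from each plane $u_i$ (and from its paired plane in the broken leaf) at $L_\mathfrak{r}$, since $\pi \circ v_\mathfrak{r}$ is a bijection onto $\mathrm{int}(D_\mathfrak{r})$ while broken leaves at $L_\mathfrak{r}$ project to $C_\mathfrak{r} = \partial D_\mathfrak{r}$; similarly the essential cylinder $\underline{u}$ projects into $\mathrm{int}(A)$ and is disjoint from every plane $v_i$ of a broken leaf. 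Thus all the intersections in question occur inside $\mathcal{U}(L_\mathfrak{r})$.

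Inside $\mathcal{U}(L_\mathfrak{r})$, the straightening furnishes explicit local pictures: $u_i(\mathbf{v}_1)$ is the half-cylinder $\{q_1 = \theta_i,\, p_1 = a_1,\, p_2 < b_1\}$ for $i \leq \alpha_0$ and $\{q_1 = \theta_i,\, p_1 = a_1,\, p_2 > b_1\}$ for $i > \alpha_0$, and symmetrically $v_i(\mathbf{v}_2)$ is a half-cylinder at $p_1 = a_2$ bounded by $p_2 = b_2$. By the same essentiality-plus-monotonicity argument used to prove Lemma~\ref{l:deform2}, the plane $v_\mathfrak{r}(\mathbf{v}_2)$ is disjoint from $\{p_1 > a_2\}$ in $\mathcal{U}(L_\mathfrak{r})$ and intersects each unbroken cylinder $\{q_1 = \theta,\, p_1 = \delta\}$ with $\delta < a_2$ in exactly one point; the analogous statement holds for $\underline{u}(\mathbf{v}_1)$ with respect to $\{p_1 < a_1\}$ and $\delta > a_1$. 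Setting $\delta = a_1$ and $\theta = \theta_i$ pins down a unique candidate intersection point of $v_\mathfrak{r}(\mathbf{v}_2)$ with the full cylinder supporting $u_i(\mathbf{v}_1)$, and symmetrically (with $a_1$ and $a_2$ swapped) for $v_i(\mathbf{v}_2) \cap \underline{u}(\mathbf{v}_1)$.

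What remains is to decide on which half of each cylinder the intersection point sits. The $p_2$-coordinate of the intersection of $v_\mathfrak{r}(\mathbf{v}_2)$ with $\{q_1 = \theta_i,\, p_1 = a_1\}$ depends continuously on the deformation parameters and, by the boundary condition $L_\mathfrak{r}(\mathbf{v}_2) = \{p_1 = a_2,\, p_2 = b_2\}$, approaches $b_2$ as $a_2 - a_1 \to 0$. When $|a_1|$ and $|a_2|$ are sufficiently small relative to $|b_1 - b_2|$, this coordinate therefore lies on the same side of $b_1$ as $b_2$, so the intersection is with $u_i(\mathbf{v}_1)$ precisely when the $\mathfrak{r}_0/\mathfrak{r}_\infty$ type of $u_i$ matches the sign of $b_2 - b_1$, giving the listed cases; the identical argument with the roles of $L_\mathfrak{r}(\mathbf{v}_1)$ and $L_\mathfrak{r}(\mathbf{v}_2)$ exchanged handles the $v_i(\mathbf{v}_2) \cdot \underline{u}(\mathbf{v}_1)$ count. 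Every intersection produced lies at $p_1 \in \{a_1, a_2\}$, both negative, so its $\pi$-image lies in $\mathrm{int}(D_\mathfrak{r})$, yielding the last assertion of the lemma. The main technical obstacle will be the quantitative continuity estimate used above to pin the $p_2$-coordinate near $b_2$: this boils down to a $C^1$-bound on the family of essential planes produced by Lemma~\ref{l:fukaya1} in a neighborhood of their asymptotic puncture, which requires some care with cylindrical-end coordinates but no new ideas.
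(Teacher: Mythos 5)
Your proposal is correct and follows essentially the same route as the paper: localize to the straightened Weinstein neighborhood, use essentiality of the deformed plane to get exactly one intersection with each broken fiber at $p_1 = a_1$, and then use the fact that the compactified plane's boundary sits at $p_2 = b_2$ to determine which half of the fiber the intersection point lies on, after which the projection into $D_{\mathfrak{r}}$ follows from $a_1, a_2 < 0$. The only organizational difference is that you set up a direct $C^1$-continuity estimate on the $p_2$-coordinate of the intersection, whereas the paper first reduces to the cleaner case $a_2 = 0$ (so that the boundary circle of $\overline{v_{\mathfrak{r}}(\mathbf{v}_2)}$ literally sits in $\{p_1 = 0,\ p_2 = b_2\}$, making the intersection with nearby fibers at $p_1 = a_1$ manifestly close to $p_2 = b_2$) and then perturbs $a_2$ off $0$ by continuity; the latter sidesteps the quantitative bound you flag as the main technical obstacle but rides on exactly the same geometric mechanism.
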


\begin{proof}
We will only prove that $u_{i}(\mathbf{v}_{1}) \cdot v_{\mathfrak{r}}(\mathbf{v}_{2}) = 1$ for $i = 1, \dots, \alpha_{0}$ when $b_{1} > b_{2}$, leaving the remaining (virtually identical) arguments to the reader. Since the plane $v_{\mathfrak{r}}(\mathbf{v}_{2})$ is essential with respect to $\mathcal{F}$, it suffices to detect a single intersection for some $i$. As observed in the proof of Lemma \ref{l:intersect1}, the planes $u_{i}(\mathbf{v}_{1})$ in this range intersect $\mathcal{U}(L_{\mathfrak{r}})$ in cylinders of the form $\{q_{1} = \theta, p_{1} = a_{1}, p_{2} < b_{1}\}$. Assuming momentarily that $a_{2} = 0$, we know that $v_{\mathfrak{r}}(\mathbf{v}_{2})$ has a puncture on $L_{\mathfrak{r}}(\mathbf{v}_{2})$, and so intersects $u_{i}(\mathbf{v}_{1})$ when $\vert a_{1} \vert$ is sufficiently small since its compactification has boundary intersecting all cylinders of the form $\{q_{1} = \theta, p_{1} = 0, p_{2} < b_{1}\}$. By continuity, this holds whenever $\vert a_{2} \vert  \neq 0$ is sufficiently small. Furthermore, the corresponding intersection number must be equal to $1$ by positivity of intersections, and since $a_{1} < 0$, the intersection point must project into $D_{\mathfrak{r}}$.
\end{proof}

The following corollary is immediate from the previous lemma.

\begin{cor}\label{c:intersect2}
If $b_{1} > b_{2}$, then $F \cap G$ contains at least $\alpha_{0} + d - 2 - \gamma_{0}$ points in $\mathcal{U}(L_{\mathfrak{r}})$ that project into $D_{\mathfrak{r}}$. On the other hand, if $b_{1} < b_{2}$, then $F \cap G$ contains at least $d - 1 - \alpha_{0} + \gamma_{0}$ points in $\mathcal{U}(L_{\mathfrak{r}})$ that project into $D_{\mathfrak{r}}$.
\end{cor}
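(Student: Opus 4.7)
The proof is essentially a bookkeeping exercise: we simply add up the intersections guaranteed by Lemma \ref{l:intersect2}, check that the intersections correspond to distinct points of $F \cap G$, and confirm they all lie in $\mathcal{U}(L_{\mathfrak{r}})$ projecting into $D_{\mathfrak{r}}$. The plan is as follows.

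First I would assume $b_1 > b_2$ (the other case being symmetric). Lemma \ref{l:intersect2} provides $\alpha_0$ transverse intersections between the planes $u_1(\mathbf{v}_1), \dots, u_{\alpha_0}(\mathbf{v}_1)$ and the essential plane $v_{\mathfrak{r}}(\mathbf{v}_2)$, as well as $d - 2 - \gamma_0$ transverse intersections between the essential curve $\underline{u}(\mathbf{v}_1)$ and the planes $v_{\gamma_0+1}(\mathbf{v}_2), \dots, v_{d-2}(\mathbf{v}_2)$. Because $F$ and $G$ are obtained from the buildings $\mathbf{F}(\mathbf{v}_1)$ and $\mathbf{G}(\mathbf{v}_2)$ by deforming compactifications in arbitrarily small neighborhoods of $L_{\mathfrak{r}}(\mathbf{v}_1) \cup L_{\mathfrak{r}}(\mathbf{v}_2)$, each such transverse intersection persists as an intersection point of $F$ and $G$.

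Next I would verify distinctness. The two families of intersections involve different curves on the $\mathbf{F}(\mathbf{v}_1)$ side: the first family uses one of the planes $u_i(\mathbf{v}_1)$ while the second uses the essential curve $\underline{u}(\mathbf{v}_1)$. Within each family, the curves on the $\mathbf{F}(\mathbf{v}_1)$ (respectively $\mathbf{G}(\mathbf{v}_2)$) side are pairwise distinct top-level components, so their intersection points with the single curve on the other side are distinct by positivity of intersections applied to each pair. Summing yields at least $\alpha_0 + (d - 2 - \gamma_0)$ distinct points in $F \cap G$.

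Finally, I would confirm the geometric location of these points. The last sentence of Lemma \ref{l:intersect2} already asserts that all of the above intersection points project into $D_{\mathfrak{r}}$, and the very construction in its proof locates them inside $\mathcal{U}(L_{\mathfrak{r}})$: each intersection of $u_i(\mathbf{v}_1)$ with $v_{\mathfrak{r}}(\mathbf{v}_2)$ arises from the fact that the compactified boundary of $v_{\mathfrak{r}}(\mathbf{v}_2)$ on $L_{\mathfrak{r}}(\mathbf{v}_2) \subset \mathcal{U}(L_{\mathfrak{r}})$ meets the straightened cylinder $\{q_1 = \theta,\ p_1 = a_1,\ p_2 < b_1\}$, and analogously for the intersections involving $\underline{u}(\mathbf{v}_1)$. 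The case $b_1 < b_2$ proceeds identically, yielding $(d - 1 - \alpha_0) + \gamma_0$ intersection points. There is no serious obstacle here; the content has been absorbed into Lemma \ref{l:intersect2}.
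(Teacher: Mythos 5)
Your proposal is correct and takes the same approach as the paper, which presents the corollary as immediate from Lemma~\ref{l:intersect2}; you have simply unpacked the counting, the distinctness of the intersection points, and their geometric location, all of which the paper leaves implicit. The only slight imprecision is in invoking ``positivity of intersections'' for distinctness within a family: the cleaner justification is that the planes $u_{i}(\mathbf{v}_{1})$ (resp.\ $v_{i}(\mathbf{v}_{2})$) cover distinct broken leaves of the foliation and are therefore pairwise disjoint, so they cannot share an intersection point with the single curve on the other side, and the two families are separated because their intersection points lie near the disjoint tori $L_{\mathfrak{r}}(\mathbf{v}_{2})$ and $L_{\mathfrak{r}}(\mathbf{v}_{1})$ respectively.
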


It follows from Lemma \ref{l:intersect1} that when $b_{1}$ and $b_{2}$ have opposite signs, any excess intersection points between $F$ and $G$ in $\pi^{-1}(D_{\mathfrak{r}})$ beyond what is described by Corollary \ref{c:intersect2} correspond to intersections between $G$ and $E_{\mathfrak{r}}$.

\subsection{Proof of the Black Box}

We finally prove Proposition \ref{p:black}, using the techniques and analysis of the previous two subsections. However, up until this point we have primarily localized our considerations near $L_{\mathfrak{r}}$, and the proof of the proposition will require applying these considerations near both of $L_{\mathfrak{r}}$ and $L_{\mathfrak{s}}$. Fortunately, the brunt of the work has already been done, and the changes that are needed for the more general case are primarily notational. We shall now proceed to outline these changes, and begin by fixing translation data
\begin{align*}
\mathbf{x}_{1} = \{\mathbf{v}_{1}, \mathbf{w}_{1}\} &= \{(a_{1}, b_{1}), (c_{1}, d_{1})\}, \\
\mathbf{x}_{2} = \{\mathbf{v}_{2}, \mathbf{w}_{2}\} &= \{(a_{2}, b_{2}), (c_{2}, d_{2})\}, \\
\mathbf{V} = \{\mathbf{0}, \mathbf{v}_{1}, \mathbf{v}_{2}\} &= \{(0, 0), (a_{1}, b_{1}), (a_{2}, b_{2})\}, \\
\mathbf{W} = \{\mathbf{0}, \mathbf{w}_{1}, \mathbf{w}_{2}\} &= \{(0, 0), (c_{1}, d_{1}), (c_{2}, d_{2})\}.
\end{align*}
We then set $\mathbf{X} = \{\mathbf{V}, \mathbf{W}\}$. An arbitrary almost complex structure in the intersection $\mathcal{J}_{\mathbf{V}} \cap \mathcal{J}_{\mathbf{W}}$ will be denoted by $J_{\mathbf{X}}$. We then have the following analog of Lemma \ref{l:fukaya1}.

\begin{lem}\label{l:fukaya2}
Let $u$ be a regular $J$-holomorphic curve in $X \sd (L_{\mathfrak{r}} \cup L_{\mathfrak{s}})$ with $m \geq 0$ punctures on $L_{\mathfrak{r}}$ and $n \geq 0$ punctures on $L_{\mathfrak{s}}$.
\begin{enumerate}
\item[(a)] For any translation data $\mathbf{x} = \{\mathbf{v}, \mathbf{w}\}$ with $\Vert \mathbf{x} \Vert$ sufficiently small, there is a regular $J_{\mathbf{x}}$-holomorphic curve $u(\mathbf{x})$ with $m$ punctures on $L_{\mathfrak{r}}(\mathbf{v})$ and $n$ punctures on $L_{\mathfrak{s}}(\mathbf{w})$.
\item[(b)] The asymptotic orbits of $u(\mathbf{x})$ cover geodesics that have the same representatives in $\smash{H_{1}(L_{\mathfrak{r}}(\mathbf{v}); \psi_{\mathfrak{r}}(\mathbf{v}))}$ and $H_{1}(L_{\mathfrak{s}}(\mathbf{w}); \psi_{\mathfrak{s}}(\mathbf{w}))$ as the geode\-sics covered by orbits of $u$ have in $H_{1}(L_{\mathfrak{r}}; \psi_{\mathfrak{r}})$ and $H_{1}(L_{\mathfrak{s}}; \psi_{\mathfrak{s}})$, respectively.
\end{enumerate}
\end{lem}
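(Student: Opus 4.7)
The plan is to run through the proof of Lemma \ref{l:fukaya1} essentially verbatim, with the sole modification being the simultaneous treatment of both tori. This is justified by the fact that the Weinstein neighborhoods $\mathcal{U}(L_{\mathfrak{r}})$ and $\mathcal{U}(L_{\mathfrak{s}})$ were fixed at the outset of the section to be disjoint from each other and from $Y$. Consequently any $J_{\mathbf{x}} \in \mathcal{J}_{\mathbf{V}} \cap \mathcal{J}_{\mathbf{W}}$ differs from $J$ only on the disjoint union $\mathcal{U}(L_{\mathfrak{r}}) \sqcup \mathcal{U}(L_{\mathfrak{s}})$, and the perturbation near $L_{\mathfrak{r}}$ is independent of the one near $L_{\mathfrak{s}}$; in particular, no new analytic interaction between the two ends of $u$ is introduced.

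First I would assemble, for each $\mathbf{x} = (\mathbf{v}, \mathbf{w})$ with $\Vert \mathbf{x} \Vert$ sufficiently small, a weighted Sobolev Banach manifold $\mathcal{B}(\mathbf{x})$ of maps from the domain of $u$ with $m$ punctures asymptotic to Morse-Bott orbits on $S^{\ast}L_{\mathfrak{r}}(\mathbf{v})$ and $n$ punctures asymptotic to Morse-Bott orbits on $S^{\ast}L_{\mathfrak{s}}(\mathbf{w})$, modeled on cylindrical coordinates near each puncture. These Banach manifolds depend smoothly on $\mathbf{x}$ via the identifications $\mathcal{U}(L_{\mathfrak{r}}) \cong \mathcal{U}(L_{\mathfrak{r}}(\mathbf{v}))$ and $\mathcal{U}(L_{\mathfrak{s}}) \cong \mathcal{U}(L_{\mathfrak{s}}(\mathbf{w}))$ coming from the induced parameterizations. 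The Cauchy-Riemann operators for $J_{\mathbf{x}}$ then assemble into a smooth parametric section of a Banach bundle over $\bigsqcup_{\mathbf{x}} \mathcal{B}(\mathbf{x})$, with $u$ a zero at $\mathbf{x} = \mathbf{0}$.

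Because $u$ is regular, the vertical linearization $D_u$ at $(\mathbf{0}, u)$ is surjective. Surjectivity is an open condition in the operator-norm topology, so it persists for $\Vert \mathbf{x} \Vert$ sufficiently small, and the implicit function theorem produces a unique smooth family $u(\mathbf{x})$ of $J_{\mathbf{x}}$-holomorphic curves, each again regular and with the prescribed punctures on $L_{\mathfrak{r}}(\mathbf{v})$ and $L_{\mathfrak{s}}(\mathbf{w})$; this yields part (a). For part (b), the exponential convergence theorem for punctured pseudoholomorphic curves (Theorem~1.4 of \cite{HWZ}) identifies the asymptotic Reeb orbit of $u(\mathbf{x})$ at each puncture as a continuous function of $\mathbf{x}$, relative to the identifications above. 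Since the homology class of the underlying closed geodesic on a flat torus takes values in the discrete set $H_{1}(\TT^{2}) \cong \ZZ^{2}$, continuity forces it to be locally constant in $\mathbf{x}$, hence equal at $\mathbf{x}$ to its value at $\mathbf{x} = \mathbf{0}$, which is exactly the assertion of (b).

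I expect no real obstacle beyond the bookkeeping of the weighted Sobolev setup across the two disjoint pairs of Weinstein neighborhoods; disjointness is precisely what lets the parametric Fredholm problem for $u(\mathbf{x})$ be regarded as two uncoupled copies of the one treated in Lemma~\ref{l:fukaya1}, so no new technical input is needed beyond what is already implicit there.
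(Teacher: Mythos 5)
Your proposal is correct and takes essentially the same approach as the paper, which states Lemma~\ref{l:fukaya2} as a straightforward generalization of Lemma~\ref{l:fukaya1} and, for the latter, simply cites the inverse-function-theorem argument of Lemma~3.27 in \cite{HK} without reproducing it. Your observation that the disjointness of $\mathcal{U}(L_{\mathfrak{r}})$ and $\mathcal{U}(L_{\mathfrak{s}})$ uncouples the two parametric Fredholm problems is precisely the point the paper has in mind when it calls the two-torus case a ``straightforward generalization,'' and your continuity-plus-discreteness argument for~(b) matches the intended reasoning.
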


We also have the following analogue of Lemma \ref{l:deform2}.

\begin{lem}\label{l:deform3}
Let $b_{1}$, $b_{2}$, $d_{1}$, and $d_{2}$ be nonzero and let $\vert a_{1} \vert$, $\vert a_{2} \vert$, $\vert c_{1} \vert$, and $\vert c_{2} \vert$ be sufficiently small.
\begin{enumerate}
\item There is a $J_{\mathbf{X}}$-holomorphic plane $\smash{u_{\mathfrak{r}, \mathbf{X}}}$ in $X \sd (L_{\mathfrak{r}}(\mathbf{V}) \cup L_{\mathfrak{s}}(\mathbf{W}))$, in the same class as $u_{\mathfrak{r}}$. This plane is essential and disjoint from the region $\{p_{1} > 0\}$. The closure of the image of $\pi \circ \smash{u_{\mathfrak{r}, \mathbf{X}}}$ is $D_{\mathfrak{r}}$.
\item There is a $J_{\mathbf{X}}$-holomorphic plane $\smash{u_{\mathfrak{s}, \mathbf{X}}}$ in $X \sd (L_{\mathfrak{r}}(\mathbf{V}) \cup L_{\mathfrak{s}}(\mathbf{W}))$, in the same class as $u_{\mathfrak{s}}$. This plane is essential and disjoint from the region $\{P_{1} < 0\}$. The closure of the image of $\pi \circ \smash{u_{\mathfrak{s}, \mathbf{X}}}$ is $D_{\mathfrak{s}}$.
\end{enumerate}
\end{lem}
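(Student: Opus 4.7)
The plan is to reduce Lemma \ref{l:deform3} to Lemma \ref{l:deform2} via a locality argument exploiting the disjointness of $D_{\mathfrak{r}}$ and $D_{\mathfrak{s}}$. I will describe the construction for (1) in detail; (2) follows by the symmetric argument applied to the essential plane $u_{\mathfrak{s}}$ of $\mathbf{F}$. Since the two constructions operate in disjoint neighborhoods $\mathcal{U}(L_{\mathfrak{r}})$ and $\mathcal{U}(L_{\mathfrak{s}})$, both planes can be produced simultaneously for a common $J_{\mathbf{X}} \in \mathcal{J}_{\mathbf{V}} \cap \mathcal{J}_{\mathbf{W}}$.

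The key observation is that since the closure of $\pi \circ u_{\mathfrak{r}}$ is $D_{\mathfrak{r}}$ and $D_{\mathfrak{r}} \cap D_{\mathfrak{s}} = \varnothing$, after possibly shrinking $\mathcal{U}(L_{\mathfrak{s}})$ the image of $u_{\mathfrak{r}}$ is contained in $\pi^{-1}(D_{\mathfrak{r}})$ and is disjoint from $\mathcal{U}(L_{\mathfrak{s}})$. Consequently, any modification of $J$ that agrees with $J$ outside $\mathcal{U}(L_{\mathfrak{s}})$ leaves $u_{\mathfrak{r}}$ pseudoholomorphic. First, I would connect $J$ to some $J' \in \mathcal{J}_{\mathbf{W}}$ still adapted to $\psi_{\mathfrak{r}}$ through a smooth family of almost complex structures that agree with $J$ outside $\mathcal{U}(L_{\mathfrak{s}})$; along this family $u_{\mathfrak{r}}$ remains holomorphic. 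Next, I would rerun the argument of Lemma \ref{l:deform2} starting at $J'$, choosing a smooth family $\{J_t\}_{t \in [0,1]}$ \emph{entirely inside $\mathcal{J}_{\mathbf{W}}$} with $J_0 = J'$, with $J_t$ adapted to $\psi_{\mathfrak{r}}$ for $t \in [0,1)$, and with $J_1 \in \mathcal{J}_{\mathbf{V}} \cap \mathcal{J}_{\mathbf{W}}$ adapted to $\psi_{\mathfrak{r}}(\mathbf{V})$. Monotonicity of $L_{\mathfrak{r}}$ yields persistence of the deformed plane $u_{\mathfrak{r}}(t)$ for $t \in [0,1)$, and one sets $u_{\mathfrak{r}, \mathbf{X}} = u_{\mathfrak{r}}(1)$.

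The main obstacle is adapting the four-step degeneration analysis of Lemma \ref{l:deform2} to rule out breakdown at $t = 1$, since a hypothetical limit building $\mathbf{H}$ could \emph{a priori} contain components asymptotic to Reeb orbits on the translated tori $L_{\mathfrak{s}}(\mathbf{w}_1)$ or $L_{\mathfrak{s}}(\mathbf{w}_2)$, a possibility that did not arise in Lemma \ref{l:deform2}. I would exclude this as follows. Each plane $u_{\mathfrak{r}}(t)$ lies in $\pi^{-1}(D_{\mathfrak{r}})$ by a continuity argument that propagates the analogous containment of $u_{\mathfrak{r}}$ as $t$ increases, and by passage to the limit, every component of $\mathbf{H}$ has image contained in $\overline{\pi^{-1}(D_{\mathfrak{r}})}$. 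For $\Vert \mathbf{W} \Vert$ sufficiently small, this closure is disjoint from $\mathcal{U}(L_{\mathfrak{s}}(\mathbf{W}))$, so no component of $\mathbf{H}$ is asymptotic to an orbit on any translated copy of $L_{\mathfrak{s}}$. Steps 1--3 of the proof of Lemma \ref{l:deform2} then apply verbatim to produce the desired contradiction, while Step 4 together with continuity of the deformation yields the remaining properties of $u_{\mathfrak{r}, \mathbf{X}}$ (essentiality, disjointness from $\{p_1 > 0\}$, and projection onto $D_{\mathfrak{r}}$).
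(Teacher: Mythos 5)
The paper itself omits a proof of Lemma \ref{l:deform3}, presenting it as a ``straightforward generalization'' of Lemma \ref{l:deform2}; the implicit intention is to repeat the four-step degeneration argument with all four translated tori in play at once. Your two-phase reduction is a valid and arguably cleaner route to the same conclusion: by first deforming $J$ only inside $\mathcal{U}(L_{\mathfrak{s}})$, you isolate the genuinely new concern, namely that the hypothetical limit building $\mathbf{H}$ might have components asymptotic to orbits on $L_{\mathfrak{s}}(\mathbf{W})$, which did not arise in the one-torus case and which the paper never explicitly addresses. Ruling this out via disjointness of $D_{\mathfrak{r}}$ and $D_{\mathfrak{s}}$ is exactly the right idea, and the symmetry between $L_{\mathfrak{r}}$ and $L_{\mathfrak{s}}$ (with $D_{\mathfrak{r}} \leftrightarrow D_{\mathfrak{s}}$ and $\{p_1 > 0\} \leftrightarrow \{P_1 < 0\}$) correctly handles part (2).

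The one place that wants tightening is the invocation of a ``continuity argument'' to keep $u_{\mathfrak{r}}(t)$ inside $\pi^{-1}(D_{\mathfrak{r}})$. As stated this is a closed condition in $t$ but not obviously open, since the boundary of $\pi^{-1}(D_{\mathfrak{r}})$ is exactly where the puncture asymptotics live; moreover $\pi$ and $D_{\mathfrak{r}}$ themselves depend on $t$ through the foliation. What actually propagates is a homological constraint: the deformation $\{J_t\}$ is fixed near $Y$, so $T_0$ and $T_{\infty}$ remain $J_t$-holomorphic, and $u_{\mathfrak{r}}(t)$ stays in the relative class of $u_{\mathfrak{r}}$, giving $u_{\mathfrak{r}}(t) \cdot T_{\infty} = 0$ by positivity of intersections. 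Since $u_{\mathfrak{r}}(t)$ is essential (it meets unbroken leaves once), meeting any leaf fibering over $A \cup D_{\mathfrak{s}}$ would force it to meet $T_{\infty}$, so its projection is confined to $D_{\mathfrak{r}}$. This is the same intersection-theoretic mechanism the paper uses in Step 1 of the proof of Lemma \ref{l:deform2}, and it is what justifies the word ``continuity'' there as well. With that replacement, the remainder of your argument --- that $\overline{\pi^{-1}(D_{\mathfrak{r}})}$ is disjoint from $\mathcal{U}(L_{\mathfrak{s}}(\mathbf{W}))$ for $\Vert \mathbf{W} \Vert$ small, hence no components of $\mathbf{H}$ lie in symplectizations of the $L_{\mathfrak{s}}$-side tori, hence Steps 1--4 of Lemma \ref{l:deform2} apply --- goes through.
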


We will assume that all the parameters in the translation data $\mathbf{X}$ are chosen appropriately so that the deformed buildings $\mathbf{F}(\mathbf{x}_{1})$ and $\mathbf{G}(\mathbf{x}_{2})$ are well defined, with smoothly deformed compactifications $F$ and $G$. We also choose the above parameters so that Lemma \ref{l:deform3} yields $J_{\mathbf{X}}$-holomorphic planes $\smash{u_{\mathfrak{r}, \mathbf{X}}}$ and $\smash{u_{\mathfrak{s}, \mathbf{X}}}$ with compactifications $E_{\mathfrak{r}}$ and $E_{\mathfrak{s}}$, respectively. The boundaries of these compactifications belong to the same homology classes as the boundaries of the compactifications of $u_{\mathfrak{r}}$ and $u_{\mathfrak{s}}$, respectively.

We first want to justify why $F$ and $G$ can be replaced by smooth symplectic spheres without affecting any of the upcoming intersection analysis. To do this, we show that these symplectic spheres can be obtained by modifying $F$ and $G$ away from all neighborhoods of Lagrangian tori where this analysis will take place, namely all translated tori. First, we observe that the asymptotic orbits of all top level curves of $\mathbf{F}(\mathbf{x}_{1})$ and $\mathbf{G}(\mathbf{x}_{2})$ are simply covered and generically distinct. For small enough perturbations, we can then assume that the restrictions of  these curves to neighborhoods of the Lagrangian tori are symplectically isotopic to the corresponding curves of $\mathbf{F}$ and $\mathbf{G}$. Since these latter buildings are limits of smooth embedded pseudoholomorphic spheres, we can, after another small perturbation, assume that the top level curves of $\mathbf{F}$ and $\mathbf{G}$, restricted to a compact subset of $X \sd (L_{\mathfrak{r}} \cup L_{\mathfrak{s}})$, extend to smooth symplectic spheres in $X$. The symplectic replacements for $F$ and $G$ are obtained by combining the isotopies with these extensions.

At this point, observe that properties (1), (2), and (3) of Proposition \ref{p:black} follow immediately. It therefore remains to verify properties (4) through (9), and the following lemmas, which are straightforward generalizations of those in the previous subsection, will be needed.

\begin{lem}\label{l:intersect3}
Suppose that $a_{1}$ and $a_{2}$ are negative, that $b_{1}$ and $b_{2}$ are nonzero, and that $\vert a_{1} \vert$ and $\vert a_{2} \vert$ are sufficiently small with respect to $\vert b_{1} \vert$ and $\vert b_{2} \vert$. If $b_{1} > 0$, we have
\[
F \ast \overline{\mathfrak{r}}_{0} = 0, \qquad F \ast \overline{\mathfrak{r}}_{\infty} = 1, \qquad F \cdot E_{\mathfrak{r}} = \alpha_{0}.
\]
On the other hand, if $b_{1} < 0$, then we have
\[
F \ast \overline{\mathfrak{r}}_{0} = 1, \qquad F \ast \overline{\mathfrak{r}}_{\infty} = 0, \qquad F \cdot E_{\mathfrak{r}} = d - 1 - \alpha_{0}.
\]
Similarly, if $b_{2} > 0$, we have
\[
G \ast \overline{\mathfrak{r}}_{0} = 0, \qquad G \ast \overline{\mathfrak{r}}_{\infty} = 1, \qquad G \cdot E_{\mathfrak{r}} = \gamma_{0} + v_{\mathfrak{r}}(\mathbf{v}_{2}) \cdot \smash{u_{\mathfrak{r}, \mathbf{X}}}.
\]
On the other hand, if $b_{2} < 0$, then we have
\[
G \ast \overline{\mathfrak{r}}_{0} = 1, \qquad G \ast \overline{\mathfrak{r}}_{\infty} = 0, \qquad G \cdot E_{\mathfrak{r}} = d - 2 - \gamma_{0} + v_{\mathfrak{r}}(\mathbf{v}_{2}) \cdot \smash{u_{\mathfrak{r}, \mathbf{X}}}.
\]
Furthermore, when $b_{1}$ and $b_{2}$ have opposite signs, then
\[
v_{\mathfrak{r}}(\mathbf{x}_{2}) \cdot \smash{u_{\mathfrak{r}, \mathbf{X}}} = v_{\mathfrak{r}}(\mathbf{x}_{2}) \cdot u_{\mathfrak{r}}(\mathbf{x}_{1}).
\]
\end{lem}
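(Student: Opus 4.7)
The plan is to observe that Lemma \ref{l:intersect3} is a direct localization of Lemma \ref{l:intersect1}: every intersection computation in the statement takes place either in $\mathcal{U}(L_{\mathfrak{r}})$ or among objects whose topological type near $L_{\mathfrak{r}}$ is unchanged by the additional translations $\mathbf{W}$ performed in $\mathcal{U}(L_{\mathfrak{s}})$. Since $\mathcal{U}(L_{\mathfrak{r}})$ and $\mathcal{U}(L_{\mathfrak{s}})$ are disjoint, the foliation $\mathcal{F}(\mathbf{X})$ admits a straightened description inside $\mathcal{U}(L_{\mathfrak{r}})$ that is literally the same as the one used for $\mathcal{F}(\mathbf{V})$ in the proof of Lemma \ref{l:intersect1}. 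Consequently the same case-by-case argument applies with only notational changes, replacing $\mathbf{v}_{i}$ by $\mathbf{x}_{i}$ and $\smash{u_{\mathfrak{r},\mathbf{V}}}$ by $\smash{u_{\mathfrak{r},\mathbf{X}}}$ throughout.

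Concretely, I would proceed in three stages. First, the identities for $F \ast \overline{\mathfrak{r}}_{0}$ and $F \ast \overline{\mathfrak{r}}_{\infty}$ follow because $F$ is in the class $(1,d)$ while the planes of $\mathfrak{r}_{0} \cup \mathfrak{r}_{\infty}$ glue into spheres in the class $(0,1)$, forcing $F \ast \overline{\mathfrak{r}}_{0} + F \ast \overline{\mathfrak{r}}_{\infty} = 1$. One then shows that the essential cylinder $\underline{u}(\mathbf{x}_{1})$, whose projection under $\pi$ meets the interior of $A \cup D_{\mathfrak{s}}$, enters $\mathcal{U}(L_{\mathfrak{r}})$ in the region $\{p_{1} > a_{1}, p_{2} > 0\}$ when $b_{1} > 0$ (and symmetrically otherwise), guaranteeing at least one intersection with a plane of $\mathfrak{r}_{\infty}$. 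Positivity of intersections then pins down both numbers. The analogous argument for $G$ is identical.

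Second, to compute $F \cdot E_{\mathfrak{r}}$ and $G \cdot E_{\mathfrak{r}}$, I note that $\smash{u_{\mathfrak{r},\mathbf{X}}}$ is essential and its $\pi$-image closes up to $D_{\mathfrak{r}}$, so it is automatically disjoint from $u_{\mathfrak{s}}$ and the $\mathfrak{u}_{i}$, $\mathfrak{v}_{i}$, which project into the complement of $D_{\mathfrak{r}}$. The remaining curves split into three categories: the deformed planes $u_{i}(\mathbf{x}_{1})$ covering broken leaves, where a direct reading of the straightened local model in $\mathcal{U}(L_{\mathfrak{r}})$ shows exactly one intersection when $b_{i}$ lies on the correct side of $b_{1}$ and none otherwise (giving $\alpha_{0}$ or $d-1-\alpha_{0}$ intersections depending on the sign of $b_{1}$); the cylinder $\underline{u}(\mathbf{x}_{1})$, which is separated from $\smash{u_{\mathfrak{r},\mathbf{X}}}$ by the hyperplane $\{p_{1} = 0\}$ in $\mathcal{U}(L_{\mathfrak{r}})$ once $\vert a_{1} \vert$ is small enough relative to $\vert b_{1} \vert$; and $u_{\mathfrak{r}}(\mathbf{x}_{1})$ itself, whose disjointness from $\smash{u_{\mathfrak{r},\mathbf{X}}}$ is equivalent to the Maslov index identity $\mu(\overline{u}_{\mathfrak{r}}) = 2$ via the section-counting argument using $r\partial_{\theta} \wedge \sigma$ in $\Lambda^{2}_{\CC}(\overline{u}_{\mathfrak{r}}^{\ast}TX)$. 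The computation for $G$ is the same except that $v_{\mathfrak{r}}(\mathbf{x}_{2})$ may contribute an extra term $v_{\mathfrak{r}}(\mathbf{x}_{2}) \cdot \smash{u_{\mathfrak{r},\mathbf{X}}}$, accounted for in the formula.

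Third, for the final equality I use that when $b_{1}$ and $b_{2}$ have opposite signs, Lemma \ref{l:fukaya2} connects the compactifications of $u_{\mathfrak{r}}$, $u_{\mathfrak{r}}(\mathbf{x}_{1})$, and $\smash{u_{\mathfrak{r},\mathbf{X}}}$ by a homotopy through maps $(\DD^{2}, \SS^{1}) \to (\pi^{-1}(D_{\mathfrak{r}}), L_{\mathfrak{r}}(\mathbf{v}))$ whose boundary never crosses $L_{\mathfrak{r}}(\mathbf{v}_{2})$, so homotopy invariance of the relative intersection number yields $v_{\mathfrak{r}}(\mathbf{x}_{2}) \cdot \smash{u_{\mathfrak{r},\mathbf{X}}} = v_{\mathfrak{r}}(\mathbf{x}_{2}) \cdot u_{\mathfrak{r}}(\mathbf{x}_{1})$. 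The only mild obstacle is verifying that this homotopy can be realized while the $\mathbf{W}$-translation is active near $L_{\mathfrak{s}}$; since the homotopy is constructed entirely inside $\pi^{-1}(D_{\mathfrak{r}})$, which is disjoint from $\mathcal{U}(L_{\mathfrak{s}})$, this causes no difficulty and no new arguments beyond those in Lemma \ref{l:intersect1} are required.
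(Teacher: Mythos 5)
Your proposal is correct and takes essentially the same route as the paper: the paper states Lemma~\ref{l:intersect3} without proof, describing it (along with Lemmas~\ref{l:fukaya2}--\ref{l:intersect6}) as a ``straightforward generalization'' of Lemma~\ref{l:intersect1}, and your argument accurately spells out that generalization by exploiting the disjointness of $\mathcal{U}(L_{\mathfrak{r}})$ and $\mathcal{U}(L_{\mathfrak{s}})$ to localize each computation near $L_{\mathfrak{r}}$. The three stages you give mirror, step for step, the proof of Lemma~\ref{l:intersect1} (homological constraint on $F\ast\overline{\mathfrak{r}}_{0}+F\ast\overline{\mathfrak{r}}_{\infty}$, the case-by-case straightened-model count of intersections with $\smash{u_{\mathfrak{r},\mathbf{X}}}$, and the homotopy-invariance argument for the final identity), with the only genuine new observation being the one you make: the extra translation $\mathbf{W}$ acts in a region disjoint from $\pi^{-1}(D_{\mathfrak{r}})$, so it cannot disturb any of these local counts.
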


\begin{lem}\label{l:intersect4}
Suppose that $a_{1} < a_{2} < 0$ with $\vert a_{1} \vert$ and $\vert a_{2} \vert$ sufficiently small. If $b_{1} > b_{2}$, then $F \cap G$ contains at least $\alpha_{0} + d - 2 - \gamma_{0}$ points in $\mathcal{U}(L_{\mathfrak{r}})$ that project into $D_{\mathfrak{r}}$. On the other hand, if $b_{1} < b_{2}$, then $F \cap G$ contains at least $d - 1 - \alpha_{0} + \gamma_{0}$ points in $\mathcal{U}(L_{\mathfrak{r}})$ that project into $D_{\mathfrak{r}}$.
\end{lem}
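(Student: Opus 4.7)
The plan is to observe that Lemma \ref{l:intersect4} is the exact analogue of Corollary \ref{c:intersect2} (combined with its underlying Lemma \ref{l:intersect2}) in the fully generalized setting with translation data near both tori. The entire argument is local to $\mathcal{U}(L_{\mathfrak{r}})$, and the extra deformation data $\mathbf{W}$ associated with $L_{\mathfrak{s}}$ is supported in $\mathcal{U}(L_{\mathfrak{s}})$, which is disjoint from $\mathcal{U}(L_{\mathfrak{r}})$. Consequently the proof should go through \emph{mutatis mutandis}, with $(\mathbf{v}_1, \mathbf{v}_2)$ replaced by $(\mathbf{x}_1, \mathbf{x}_2)$ and $J_{\mathbf{V}}$ replaced by $J_{\mathbf{X}}$ everywhere.

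Concretely, I would first prove the direct analogue of Lemma \ref{l:intersect2}: assuming $a_{1} < a_{2} < 0$ with $\vert a_{1} \vert$ and $\vert a_{2} \vert$ sufficiently small, if $b_{1} > b_{2}$ then
\[
u_{i}(\mathbf{x}_{1}) \cdot v_{\mathfrak{r}}(\mathbf{x}_{2}) = 1 \quad (i = 1, \dots, \alpha_{0}), \qquad v_{i}(\mathbf{x}_{2}) \cdot \underline{u}(\mathbf{x}_{1}) = 1 \quad (i = \gamma_{0} + 1, \dots, d-2),
\]
and symmetric statements when $b_{1} < b_{2}$. The argument is the one from Lemma \ref{l:intersect2}: by Lemma \ref{l:deform1} (applied in the combined setting, using Lemma \ref{l:fukaya2}) the planes $u_{i}(\mathbf{x}_{1})$ for $i \le \alpha_{0}$ meet $\mathcal{U}(L_{\mathfrak{r}})$ in cylinders $\{q_{1} = \theta,\, p_{1} = a_{1},\, p_{2} < b_{1}\}$, while the essential plane $v_{\mathfrak{r}}(\mathbf{x}_{2})$ has its puncture on $L_{\mathfrak{r}}(\mathbf{v}_{2})$, so its compactification meets \emph{every} cylinder $\{q_{1} = \theta,\, p_{1} = a_{2},\, p_{2} < b_{1}\}$ once $b_{1} > b_{2}$ (and $|a_{2}|$ is small enough). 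Essentiality and positivity of intersections force the intersection number to be exactly $1$, and since $a_{1} < 0$ the intersection point projects into $D_{\mathfrak{r}}$. An identical argument with the roles of $\mathbf{F}$ and $\mathbf{G}$ exchanged and with $v_{i}(\mathbf{x}_{2})$ replacing $u_{i}(\mathbf{x}_{1})$ gives the second set of intersections; note here that the lower index range flips because the sign convention forces one to look at planes meeting $\{p_{2} > b_{2}\}$ instead. Throughout, one crucially uses that the $L_{\mathfrak{s}}$-deformations do not touch $\mathcal{U}(L_{\mathfrak{r}})$, so the local straightening description carries over verbatim.

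With this in hand, Lemma \ref{l:intersect4} follows immediately as in Corollary \ref{c:intersect2}: the compactified symplectic spheres $F$ and $G$ contain the top level curves of $\mathbf{F}(\mathbf{x}_{1})$ and $\mathbf{G}(\mathbf{x}_{2})$ respectively, so summing the intersection counts above gives at least $\alpha_{0} + (d - 2 - \gamma_{0})$ points of $F \cap G$ in $\mathcal{U}(L_{\mathfrak{r}})$ projecting into $D_{\mathfrak{r}}$ when $b_{1} > b_{2}$, and at least $(d - 1 - \alpha_{0}) + \gamma_{0}$ such points when $b_{1} < b_{2}$.

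The only thing requiring genuine care, rather than pure symbol-substitution, is verifying that all the compactness/regularity inputs (Lemma \ref{l:fukaya2} in place of Lemma \ref{l:fukaya1}, and Lemma \ref{l:deform3} in place of Lemma \ref{l:deform2}) indeed produce the same local picture in $\mathcal{U}(L_{\mathfrak{r}})$; this is a matter of checking that each continuity/perturbation argument used in the proof of Lemma \ref{l:intersect2} takes place in a region disjoint from $\mathcal{U}(L_{\mathfrak{s}})$, so that the additional $\mathbf{W}$-deformations are invisible to it. I expect this decoupling to be the only step that requires any thought; once it is confirmed, the rest is bookkeeping.
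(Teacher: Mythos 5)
Your proposal is correct and takes the same approach the paper does: the paper states Lemma \ref{l:intersect4} without proof, explicitly calling it a ``straightforward generalization'' of Lemma \ref{l:intersect2} and Corollary \ref{c:intersect2}, and your argument simply fleshes out the reason this generalization is safe, namely that the entire intersection count takes place in $\mathcal{U}(L_{\mathfrak{r}})$, which is disjoint from $\mathcal{U}(L_{\mathfrak{s}})$, so the $\mathbf{W}$-deformations there are invisible to the local straightened picture used in the proof of Lemma \ref{l:intersect2}.
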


\begin{lem}\label{l:intersect5}
Suppose that $c_{1}$ and $c_{2}$ are positive, that $d_{1}$ and $d_{2}$ are nonzero, and that $\vert c_{1} \vert$ and $\vert c_{2} \vert$ are sufficiently small with respect to $\vert d_{1} \vert$ and $\vert d_{2} \vert$. If $d_{1} > 0$, we have
\[
F \ast \overline{\mathfrak{s}}_{0} = 0, \qquad F \ast \overline{\mathfrak{s}}_{\infty} = 1, \qquad F \cdot E_{\mathfrak{s}} = \beta_{0}.
\]
On the other hand, if $d_{1} < 0$, then we have
\[
F \ast \overline{\mathfrak{s}}_{0} = 1, \qquad F \ast \overline{\mathfrak{s}}_{\infty} = 0, \qquad F \cdot E_{\mathfrak{s}} = d - 1 - \beta_{0}.
\]
Similarly, if $d_{2} > 0$, we have
\[
G \ast \overline{\mathfrak{s}}_{0} = 0, \qquad G \ast \overline{\mathfrak{s}}_{\infty} = 1, \qquad G \cdot E_{\mathfrak{s}} = \delta_{0} + v_{\mathfrak{s}}(\mathbf{x}_{2}) \cdot \smash{u_{\mathfrak{s}, \mathbf{X}}}.
\]
On the other hand, if $d_{2} < 0$, then we have
\[
G \ast \overline{\mathfrak{s}}_{0} = 1, \qquad G \ast \overline{\mathfrak{s}}_{\infty} = 0, \qquad G \cdot E_{\mathfrak{s}} = d - 2 - \delta_{0} + v_{\mathfrak{s}}(\mathbf{x}_{2}) \cdot \smash{u_{\mathfrak{s}, \mathbf{X}}}.
\]
Furthermore, when $d_{1}$ and $d_{2}$ have opposite signs, then
\[
v_{\mathfrak{s}}(\mathbf{x}_{2}) \cdot \smash{u_{\mathfrak{s}, \mathbf{X}}} = v_{\mathfrak{s}}(\mathbf{x}_{2}) \cdot u_{\mathfrak{s}}(\mathbf{x}_{1}).
\]
\end{lem}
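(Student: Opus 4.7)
The plan is to transcribe the argument of Lemma \ref{l:intersect3} (hence Lemma \ref{l:intersect1}) with $L_{\mathfrak{r}}$ replaced by $L_{\mathfrak{s}}$ throughout. Under our standing conventions, $\mathcal{U}(L_{\mathfrak{s}}) \cap \{P_{1} > 0\}$ projects into the interior of $D_{\mathfrak{s}}$ while $\mathcal{U}(L_{\mathfrak{s}}) \cap \{P_{1} < 0\}$ projects into the interior of $D_{\mathfrak{r}} \cup A$; accordingly the essential plane $u_{\mathfrak{s}, \mathbf{X}}$ lies in $\{P_{1} \ge 0\}$, which forces the sign requirement $c_{1}, c_{2} > 0$ (in contrast with $a_{1}, a_{2} < 0$ in Lemma \ref{l:intersect3}). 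The straightened planes of $\mathfrak{s}_{0}$ and $\mathfrak{s}_{\infty}$ inside $\mathcal{U}(L_{\mathfrak{s}})$ coincide with $\{Q_{1} = \theta, P_{1} = 0, P_{2} < 0\}$ and $\{Q_{1} = \theta, P_{1} = 0, P_{2} > 0\}$ respectively, and after translating by $\mathbf{w}_{1}$ they sit at $\{P_{1} = c_{1}, P_{2} < d_{1}\}$ and $\{P_{1} = c_{1}, P_{2} > d_{1}\}$.

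The identity $F \ast \overline{\mathfrak{s}}_{0} + F \ast \overline{\mathfrak{s}}_{\infty} = (1, d) \cdot (0, 1) = 1$ is immediate. When $d_{1} > 0$, I would detect one intersection between the essential cylinder $\underline{u}(\mathbf{x}_{1})$ and some plane of $\mathfrak{s}_{\infty}$: since $\underline{u}(\mathbf{x}_{1})$ projects to the annular region between $\pi(L_{\mathfrak{r}}(\mathbf{v}_{1}))$ and $\pi(L_{\mathfrak{s}}(\mathbf{w}_{1}))$, it meets $\mathcal{U}(L_{\mathfrak{s}})$ in $\{P_{1} < c_{1}\}$, and for $\vert c_{1} \vert$ small compared to $d_{1}$ this portion of $\underline{u}(\mathbf{x}_{1})$ lies in $\{P_{2} > 0\}$ (true when $c_{1} = 0$ and preserved by continuity); this region contains $\{P_{1} = 0, P_{2} > 0\}$, where the planes of $\mathfrak{s}_{\infty}$ live. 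Positivity of intersections then forces $F \ast \overline{\mathfrak{s}}_{\infty} = 1$ and $F \ast \overline{\mathfrak{s}}_{0} = 0$, and the case $d_{1} < 0$ is symmetric.

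For the intersection count $F \cdot E_{\mathfrak{s}}$, I count the intersections of $u_{\mathfrak{s}, \mathbf{X}}$ with each top-level curve of $\mathbf{F}(\mathbf{x}_{1})$. Since $\pi \circ u_{\mathfrak{s}, \mathbf{X}}$ has image $D_{\mathfrak{s}}$, disjointness from every curve projecting elsewhere (in particular $u_{\mathfrak{r}}(\mathbf{v}_{1})$ and each $u_{i}(\mathbf{v}_{1})$) is immediate. The remaining three cases reproduce Lemma \ref{l:intersect1} step by step. First, essentiality of $u_{\mathfrak{s}, \mathbf{X}}$ and the boundary condition $P_{2} = 0$ imply that for $0 < c_{1}$ sufficiently small compared to $d_{1}$ the plane $u_{\mathfrak{s}, \mathbf{X}}$ meets each straightened cylinder $\{Q_{1} = \theta, P_{1} = c_{1}, P_{2} < d_{1}\}$ exactly once, giving one intersection with each of the $\beta_{0}$ translated $\mathfrak{s}_{0}$-planes $\mathfrak{u}_{i}(\mathbf{w}_{1})$ and zero intersections with the $\mathfrak{s}_{\infty}$-planes (which sit in $\{P_{2} > d_{1}\}$). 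Second, $u_{\mathfrak{s}, \mathbf{X}}$ and $\underline{u}(\mathbf{x}_{1})$ lie on opposite sides of $\{P_{1} = c_{1}\}$ near $L_{\mathfrak{s}}$ and have punctures on disjoint tori, so they are disjoint. Third, the relative Chern number computation from Step~3 of Lemma \ref{l:intersect1} transports verbatim: using monotonicity of $L_{\mathfrak{s}}$ and the embedding of $\overline{u}_{\mathfrak{s}}$, one obtains $2 = \mu(\overline{u}_{\mathfrak{s}}) = 2(1 + u_{\mathfrak{s}}(\mathbf{w}_{1}) \cdot u_{\mathfrak{s}})$, so $u_{\mathfrak{s}}(\mathbf{w}_{1}) \cdot u_{\mathfrak{s}} = 0$ and hence $u_{\mathfrak{s}, \mathbf{X}} \cdot u_{\mathfrak{s}}(\mathbf{w}_{1}) = 0$ by homotopy invariance. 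The $d_{1} < 0$ case is symmetric. The $G$-statements follow the same template, with the caveat that $v_{\mathfrak{s}}(\mathbf{x}_{2})$ and $u_{\mathfrak{s}, \mathbf{X}}$ are not deformations of a common curve, so the Step~3 vanishing is retained as the explicit residual term $v_{\mathfrak{s}}(\mathbf{x}_{2}) \cdot u_{\mathfrak{s}, \mathbf{X}}$. Finally, when $d_{1}$ and $d_{2}$ have opposite signs the compactifications of $u_{\mathfrak{s}}$, $u_{\mathfrak{s}}(\mathbf{w}_{1})$, and $u_{\mathfrak{s}, \mathbf{X}}$ are joined by a homotopy of smooth maps $(\DD^{2}, \SS^{1}) \to (\pi^{-1}(D_{\mathfrak{s}}), L_{\mathfrak{s}}(\mathbf{W}))$ produced by Lemmas \ref{l:fukaya2} and \ref{l:deform3}, and since $v_{\mathfrak{s}}(\mathbf{x}_{2})$ remains disjoint from the relevant boundaries throughout this homotopy one obtains $v_{\mathfrak{s}}(\mathbf{x}_{2}) \cdot u_{\mathfrak{s}, \mathbf{X}} = v_{\mathfrak{s}}(\mathbf{x}_{2}) \cdot u_{\mathfrak{s}}(\mathbf{w}_{1})$.

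The principal obstacle is bookkeeping rather than geometry: the various smallness conditions on $\vert c_{i} \vert$ relative to $\vert d_{i} \vert$ used here must be imposed simultaneously with the analogous conditions on $\vert a_{i} \vert$ relative to $\vert b_{i} \vert$ from Lemmas \ref{l:intersect3} and \ref{l:intersect4}, and one needs an almost complex structure $J_{\mathbf{X}} \in \mathcal{J}_{\mathbf{V}} \cap \mathcal{J}_{\mathbf{W}}$ for which all three deformed planes and both deformed buildings are pseudoholomorphic. Once these compatibilities are verified, each individual claim reduces to a local computation inside $\mathcal{U}(L_{\mathfrak{s}})$ that is identical in structure to its $L_{\mathfrak{r}}$-counterpart.
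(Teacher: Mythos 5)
The paper offers no written proof of this lemma; it is declared a ``straightforward generalization'' of Lemma \ref{l:intersect1} (via Lemma \ref{l:intersect3}), and your proposal supplies exactly that transcription. The sign reversal you identify is the genuine content: since $u_{\mathfrak{s}, \mathbf{X}}$ lives in $\{P_{1} \geq 0\}$ (reflecting that the interior of $D_{\mathfrak{s}}$ corresponds to $\{P_{1} > 0\}$, whereas $D_{\mathfrak{r}}$ corresponds to $\{p_{1} < 0\}$), the requirement $a_{i} < 0$ becomes $c_{i} > 0$, and you carry the rest of the three-step argument (intersections with $\mathfrak{u}_{i}(\mathbf{w}_{1})$, disjointness from the essential cylinder, the relative Chern number count) through consistently. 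The detection of $F \ast \overline{\mathfrak{s}}_{\infty} = 1$ via the essential cylinder landing in $\{P_{1} < c_{1}, P_{2} > 0\}$ matches the original detection of $F \ast \overline{\mathfrak{r}}_{\infty} = 1$, and your treatment of the residual term $v_{\mathfrak{s}}(\mathbf{x}_{2}) \cdot u_{\mathfrak{s}, \mathbf{X}}$ and the opposite-signs homotopy invariance mirrors the source faithfully.

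One small imprecision worth flagging: you say $u_{\mathfrak{s}, \mathbf{X}}$ and $\underline{u}(\mathbf{x}_{1})$ ``lie on opposite sides of $\{P_{1} = c_{1}\}$ near $L_{\mathfrak{s}}$,'' but for $c_{1} > 0$ this is false---$u_{\mathfrak{s}, \mathbf{X}}$ occupies $\{P_{1} > 0\}$ while $\underline{u}(\mathbf{x}_{1})$ occupies $\{P_{1} < c_{1}\}$, and these overlap in the strip $\{0 < P_{1} < c_{1}\}$. The paper's own Step 2 (for $L_{\mathfrak{r}}$) handles this by first setting $a_{1} = 0$, noting the two curves then lie in the genuinely disjoint regions $\{p_{1} < 0\}$ and $\{p_{1} > 0\}$ with punctures on disjoint tori, and then invoking continuity for small $\vert a_{1} \vert \neq 0$. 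You should phrase the $L_{\mathfrak{s}}$-version the same way: at $c_{1} = 0$ the curves lie on opposite sides of $\{P_{1} = 0\}$ with punctures on disjoint tori, and disjointness persists for $c_{1} > 0$ small by continuity. With that adjustment your proposal agrees with the intended argument.
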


\begin{lem}\label{l:intersect6}
Suppose that $c_{1} > c_{2} > 0$ with $\vert c_{1} \vert$ and $\vert c_{2} \vert$ sufficiently small. If $d_{1} > d_{2}$, then $F \cap G$ contains at least $\beta_{0} + d - 2 - \delta_{0}$ points in $\mathcal{U}(L_{\mathfrak{s}})$ that project into $D_{\mathfrak{s}}$. On the other hand, if $d_{1} < d_{2}$, then $F \cap G$ contains at least $d - 1 - \beta_{0} + \delta_{0}$ points in $\mathcal{U}(L_{\mathfrak{s}})$ that project into $D_{\mathfrak{s}}$.
\end{lem}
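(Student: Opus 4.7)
The plan is to prove Lemma~\ref{l:intersect6} as a direct mirror of Lemma~\ref{l:intersect4} (together with the intermediate count of Lemma~\ref{l:intersect2}), now carried out in coordinates near $L_{\mathfrak{s}}$ rather than near $L_{\mathfrak{r}}$. The only bookkeeping subtlety is a sign flip: since $\{P_{1} > 0\}$, rather than $\{P_{1} < 0\}$, is the side of $\mathcal{U}(L_{\mathfrak{s}})$ that maps into $\Int(D_{\mathfrak{s}})$, the parameters $c_{1}, c_{2}$ play the role that $-a_{1}, -a_{2}$ did in Lemma~\ref{l:intersect4}. In particular, $c_{1} > c_{2} > 0$ is the honest analog of $a_{1} < a_{2} < 0$, placing both translated copies of $L_{\mathfrak{s}}$ on the $D_{\mathfrak{s}}$-side of the straightened foliation.

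The key step is a local version of Lemma~\ref{l:intersect2} near $L_{\mathfrak{s}}$: under the stated hypotheses, with $|c_{1}|, |c_{2}|$ sufficiently small, I would show that when $d_{1} > d_{2}$,
\begin{align*}
\mathfrak{u}_{i}(\mathbf{x}_{1}) \cdot v_{\mathfrak{s}}(\mathbf{x}_{2}) &= 1, \qquad i = 1, \dots, \beta_{0}, \\
\mathfrak{v}_{i}(\mathbf{x}_{2}) \cdot \underline{u}(\mathbf{x}_{1}) &= 1, \qquad i = \delta_{0} + 1, \dots, d - 2,
\end{align*}
while when $d_{1} < d_{2}$ the analogous identities hold over the complementary index ranges $i = \beta_{0} + 1, \dots, d - 1$ and $i = 1, \dots, \delta_{0}$. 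Summing the contributions then immediately produces at least $\beta_{0} + (d - 2 - \delta_{0})$ or $(d - 1 - \beta_{0}) + \delta_{0}$ points of $F \cap G$ inside $\mathcal{U}(L_{\mathfrak{s}})$, each projecting into $D_{\mathfrak{s}}$ because $c_{1} > 0$.

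To carry out the local counts, I would transcribe Step~1 of the proof of Lemma~\ref{l:intersect2} into the coordinates $(Q_{1}, Q_{2}, P_{1}, P_{2})$ on $\mathcal{U}(L_{\mathfrak{s}})$. By the $L_{\mathfrak{s}}$-analog of the straightening, the planes in $\mathfrak{s}_{0}$ meet $\mathcal{U}(L_{\mathfrak{s}})$ along cylinders $\{Q_{1} = \theta, P_{1} = 0, P_{2} < 0\}$ and those in $\mathfrak{s}_{\infty}$ along cylinders with $P_{2} > 0$; by Lemma~\ref{l:fukaya2}, after the $\mathbf{w}_{1}$-translation these become $\{Q_{1} = \theta, P_{1} = c_{1}, P_{2} < d_{1}\}$ and $\{Q_{1} = \theta, P_{1} = c_{1}, P_{2} > d_{1}\}$, respectively. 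The essential plane $v_{\mathfrak{s}}(\mathbf{x}_{2})$, produced by applying Lemma~\ref{l:fukaya2} to the regular curve $v_{\mathfrak{s}}$, projects onto $D_{\mathfrak{s}}$, lies locally in $\{P_{1} \geq c_{2}\}$, and has boundary on $L_{\mathfrak{s}}(\mathbf{w}_{2}) = \{P_{1} = c_{2}, P_{2} = d_{2}\}$. Setting $c_{2} = 0$ momentarily and restoring it by continuity, one sees as in Step~1 of the proof of Lemma~\ref{l:intersect2} that $v_{\mathfrak{s}}(\mathbf{x}_{2})$ extends from its boundary into $\{P_{1} > c_{2}\}$ and meets each cylinder of the first form when $d_{1} > d_{2}$, and each cylinder of the second form when $d_{1} < d_{2}$. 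Essentiality of $v_{\mathfrak{s}}(\mathbf{x}_{2})$ together with positivity of intersections then promotes each encounter to a single transverse intersection point lying in $\pi^{-1}(D_{\mathfrak{s}})$, and the parallel analysis of $\underline{u}(\mathbf{x}_{1})$ against the $\mathfrak{v}_{i}(\mathbf{x}_{2})$ finishes the count.

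The main obstacle is purely bookkeeping: performing the substitutions $\alpha_{0} \mapsto \beta_{0}$, $\gamma_{0} \mapsto \delta_{0}$, and $\mathbf{v}_{i} \mapsto \mathbf{w}_{i}$ consistently, and confirming that the essential cylinder $\underline{u}$, which genuinely has a puncture on each torus by the type~3 description in Proposition~\ref{p:types}, plays on the $L_{\mathfrak{s}}$ side exactly the dual role it played on the $L_{\mathfrak{r}}$ side of Lemma~\ref{l:intersect4}. No new analytic input beyond that already contained in Lemmas~\ref{l:fukaya2} and~\ref{l:deform3} is required.
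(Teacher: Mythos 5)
Your proposal is correct and is essentially the argument the paper has in mind: the paper states Lemma~\ref{l:intersect6} (together with Lemmas~\ref{l:intersect3}--\ref{l:intersect5}) without proof, calling them ``straightforward generalizations of those in the previous subsection,'' and your proof is precisely that generalization, mirroring Lemma~\ref{l:intersect2} and Corollary~\ref{c:intersect2} near $L_{\mathfrak{s}}$ with the correct sign flip $c_i \leftrightarrow -a_i$ reflecting that $\{P_1 > 0\}$, not $\{P_1 < 0\}$, is the $D_{\mathfrak{s}}$ side of the straightened Weinstein neighborhood. One cosmetic slip: you cite ``Step~1 of the proof of Lemma~\ref{l:intersect2},'' but that proof has no labeled steps (those appear only in Lemma~\ref{l:intersect1}); you evidently mean the body of the proof of Lemma~\ref{l:intersect2}, and the content you transcribe is correct.
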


We will now once and for all fix $a_{1} < a_{2} < 0$ and $c_{1} > c_{2} > 0$, all chosen small enough in absolute value so that the above lemmas hold. We can then make choices of the remaining parameters in our translation data according to the following four exhaustive cases:
\begin{enumerate}
\item[$\bullet$] \textsc{Case 1:} $\gamma_{0} \geq \alpha_{0}$ and $\delta_{0} \geq \beta_{0}$. In this case we choose
\[
b_{1} < 0 < b_{2}, \qquad d_{1} < 0 < d_{2}.
\]
\item[$\bullet$] \textsc{Case 2:} $\gamma_{0} \geq \alpha_{0}$ and $\beta_{0} \geq \delta_{0} + 1$. In this case we choose
\[
b_{1} < 0 < b_{2}, \qquad d_{2} < 0 < d_{1}.
\]
\item[$\bullet$] \textsc{Case 3:} $\alpha_{0} \geq \gamma_{0} + 1$ and $\delta_{0} \geq \beta_{0}$. In this case we choose
\[
b_{2} < 0 < b_{1}, \qquad d_{1} < 0 < d_{2}.
\]
\item[$\bullet$] \textsc{Case 4:} $\alpha_{0} \geq \gamma_{0} + 1$ and $\beta_{0} \geq \delta_{0} + 1$. In this case we choose
\[
b_{2} < 0 < b_{1}, \qquad d_{2} < 0 < d_{1}.
\]
\end{enumerate}
The remaining analysis is identical whichever case we are in. We will conduct it in the first case, leaving the other three cases for the interested reader.

First observe that the conditions on $b_{1}$ and $b_{2}$ yield, through Lemma \ref{l:intersect3}, that
\[
F \ast \mathfrak{r}_{0} = 1, \qquad F \ast \mathfrak{r}_{\infty} = 0, \qquad G \ast \mathfrak{r}_{0} = 0, \qquad G \ast \mathfrak{r}_{\infty} = 1.
\]
Similarly, the conditions on $d_{1}$ and $d_{2}$ yield, through Lemma \ref{l:intersect5}, that
\[
F \ast \mathfrak{s}_{0} = 1, \qquad F \ast \mathfrak{s}_{\infty} = 0, \qquad G \ast \mathfrak{s}_{0} = 0, \qquad G \ast \mathfrak{s}_{\infty} = 1.
\]
Taken together, these facts imply properties (4) and (5) of Proposition \ref{p:black}. Next, it is clear that $F \cdot G = 2d - 2$ for homological reasons. In view of our choice of parameters, Lemma \ref{l:intersect4} and Lemma \ref{l:intersect6} imply that
\[
2d - 2 = F \cdot G \geq (d - 1 - \alpha_{0} + \gamma_{0}) + (d - 1 - \beta_{0} + \delta_{0}),
\]
where the first parenthetical expression represents intersections in $\mathcal{U}(L_{\mathfrak{r}})$ that project into $D_{\mathfrak{r}}$, and the second represents intersections in $\mathcal{U}(L_{\mathfrak{s}})$ that project into $D_{\mathfrak{s}}$. Since we are assuming that $\gamma_{0} \geq \alpha_{0}$ and $\delta_{0} \geq \beta_{0}$, both of these expressions are at least $d - 1$. The above inequality then forces $\alpha_{0} = \gamma_{0}$ and $\beta_{0} = \delta_{0}$. It follows that $F \cap G$ consists of \emph{exactly} $2d - 2$ points, with $d - 1$ of these contained in each of $\mathcal{U}(L_{\mathfrak{r}})$ and $\mathcal{U}(L_{\mathfrak{s}})$ and projecting into $D_{\mathfrak{r}}$ and $D_{\mathfrak{s}}$, respectively. In particular, these intersections occur away from $T_{0} \cup T_{\infty}$, and a careful examination of the proof of Lemma \ref{l:intersect1} shows that they occur away from $E_{\mathfrak{r}} \cup E_{\mathfrak{s}}$ as well. This proves property (8) of Proposition \ref{p:black}, and property (9) is immediate by positivity of intersections, since $\smash{u_{\mathfrak{r}, \mathbf{X}}}$ projects into $D_{\mathfrak{r}}$ and $\smash{u_{\mathfrak{r}, \mathbf{X}}}$ projects into $D_{\mathfrak{s}}$.

Finally, since $F \cdot G = \mathbf{F}(\mathbf{x}_{1}) \cdot \mathbf{G}(\mathbf{x}_{2})$, it follows from the previous paragraph that there can be no intersections between the essential curves of $\mathbf{F}(\mathbf{x}_{1})$ and those of $\mathbf{G}(\mathbf{x}_{2})$. In particular, we have
\[
u_{\mathfrak{r}}(\mathbf{x}_{1}) \cdot v_{\mathfrak{r}}(\mathbf{x}_{2}) = 0, \qquad u_{\mathfrak{s}}(\mathbf{x}_{1}) \cdot v_{\mathfrak{s}}(\mathbf{x}_{2}) = 0.
\]
Combining these equalities with the final statements of Lemma \ref{l:intersect3} and Lemma \ref{l:intersect5}, we see that
\[
v_{\mathfrak{r}}(\mathbf{x}_{2}) \cdot \smash{u_{\mathfrak{r}, \mathbf{X}}} = 0, \qquad v_{\mathfrak{s}}(\mathbf{x}_{2}) \cdot \smash{u_{\mathfrak{s}, \mathbf{X}}} = 0.
\]
In view of our choice of parameters and the equality $\alpha_{0} = \gamma_{0}$, the second and third statements of Lemma \ref{l:intersect3} imply that
\[
F \cdot E_{\mathfrak{r}} + G \cdot E_{\mathfrak{r}} = (d - 1 - \alpha_{0}) + \gamma_{0} = d - 1.
\]
Likewise, our choice of parameters and the equality $\beta_{0} = \delta_{0}$, together with the second and third statements of Lemma \ref{l:intersect5} imply that
\[
F \cdot E_{\mathfrak{s}} + G \cdot E_{\mathfrak{s}} = (d - 1 - \beta_{0}) + \delta_{0} = d - 1.
\]
This verifies properties (6) and (7), and thus finally completes the proof of Proposition \ref{p:black}.

\appendix

\section{Three-Punctured Symplectic Spheres}

This appendix constructs a $3$-punctured \emph{symplectic} sphere in $T^{\ast}\TT^{2}$ that will be needed in the proof of Proposition \ref{p:simpli}. The key result is the following, and the proof takes up the rest of the appendix.

\begin{prp}\label{p:3sphere}
There exists a $3$-punctured symplectic sphere in $T^{\ast}\TT^{2}$, which is asymptotic to the Reeb orbits
\begin{gather*}
R_{0} = (-\theta, 0, -1, 0), \quad R_{1} = (0, -\theta, 0, -1), \\
R_{\infty} = (\theta, \theta, 1/\sqrt{2}, 1/\sqrt{2})
\end{gather*}
in a model for the positive cylindrical end which is specified below. This sphere can be modified to yield an embedded symplectic surface with cylindrical ends coinciding precisely with the trivial cylinders over the above orbits.
\end{prp}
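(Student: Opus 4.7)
The plan is to build the surface as the image of an explicit holomorphic map, then use Hamiltonian isotopies to move the asymptotic orbits onto the prescribed ones. The underlying topological condition is satisfied: the classes $(-1,0), (0,-1), (1,1) \in H_{1}(\TT^{2})$ of the projections of the three orbits sum to zero, which is what allows for the existence of a pair of pants in $T^{\ast}\TT^{2}$ with these asymptotic classes.

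The key construction is the map $F \colon \CC \sd \{0, 1\} \to T^{\ast}\TT^{2}$ defined by
\[
q_{1} + ip_{1} = -\frac{1}{2\pi i}\log(z), \qquad q_{2} + ip_{2} = -\frac{1}{2\pi i}\log(1 - z).
\]
Although $\log$ is multivalued, each $q_{j}$ is single-valued modulo $\ZZ$, so $F$ descends to $\TT^{2} \times \RR^{2}$. Under the identification $\TT^{2} \times \RR^{2} \cong \CC^{\ast} \times \CC^{\ast}$ via $w_{j} = \exp(-2\pi i(q_{j} + ip_{j}))$, $F$ corresponds to the holomorphic embedding $z \mapsto (z, 1-z)$. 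A direct computation shows that $d\lambda$ tames the resulting complex structure, so $F^{\ast}(d\lambda) = \tfrac{1}{4\pi^{2}}\bigl(\tfrac{1}{|z|^{2}} + \tfrac{1}{|1-z|^{2}}\bigr) dx \wedge dy$ is positive. Thus $F$ is a symplectic embedding whose compactification at the three punctures $\{0, 1, \infty\}$ is topologically a sphere.

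Next, compute the asymptotic Reeb orbits at each puncture. As $z \to 0$, the surface asymptotes to the trivial cylinder over $R_{0}$ exactly. At $z = 1$ and $z = \infty$ the asymptotic orbits lie in the correct Morse--Bott families $\Gamma_{(0,-1)}$ and $\Gamma_{(1,1)}$ but a priori on translated geodesics; specifically, the $z = \infty$ end asymptotes to the geodesic $\{q_{2} - q_{1} = -\tfrac{1}{2}\}$ rather than $\{q_{1} = q_{2}\}$. To correct this, first apply the global Hamiltonian translation generated by $H = p_{2}/2$, which shifts $q_{2}$ by $+\tfrac{1}{2}$ and thereby moves both the $z = \infty$ and $z = 1$ ends onto their prescribed orbits $R_{\infty}$ and $R_{1}$, at the cost of displacing the $z = 0$ end. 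Then apply the localized Hamiltonian $H' = \chi(-p_{1})\,p_{2}/2$, where $\chi$ is a smooth cutoff supported in $p_{1} \ll 0$; its time-$(-1)$ flow shifts $q_{2}$ by $-\tfrac{1}{2}$ only in the region of the $z = 0$ cylindrical end, where $p_{1} \to -\infty$, restoring that end to $R_{0}$ without affecting the other two.

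Finally, to produce the embedded symplectic surface whose cylindrical ends coincide exactly with the trivial cylinders over $R_{0}, R_{1}, R_{\infty}$, truncate each end at a sufficiently far cylindrical-coordinate level, where the modified surface is $C^{1}$-close to the respective trivial cylinder, and interpolate to the honest cylinder via a small compactly supported symplectic isotopy in a collar neighborhood. The main technical point will be preserving embeddedness through the Hamiltonian adjustments: this is ensured by choosing $\chi$ so that the support of $H'$ contains only the $z = 0$ end of $F$ (which follows from the explicit asymptotic form of $F$, since the $z = 1$ and $z = \infty$ ends satisfy $p_{1} \to 0$ and $p_{1} \to +\infty$ respectively), and by taking the final collar interpolation sufficiently small.
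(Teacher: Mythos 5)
Your proposal is correct in outline but takes a genuinely different route from the paper. You work with the curve $z \mapsto (z, 1-z)$ rather than $z \mapsto (1/z, 1/(z-1))$ (which the paper further modifies by a radial reparametrization $f(r)$), and you correct the $z = \infty$ asymptotics by a post hoc Hamiltonian isotopy rather than having the explicit formula hit all three orbits at once. What the paper's route buys is that the $f$-modification underlies the $C^{1}$-asymptotic estimate it describes as \emph{rather involved}, which justifies the final gluing to trivial cylinders; your plan asserts this $C^{1}$-closeness but would need to verify it for your map, which is not $J_{\mathrm{std}}$-holomorphic and so the convergence is not a consequence of the general asymptotic analysis for punctured holomorphic curves (though for your simpler curve the cylindrical representations do decay superexponentially in the cylindrical coordinate, so the estimate should go through).

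Two computational points in your write-up should be corrected. First, with $q_{j} + ip_{j} = -\tfrac{1}{2\pi i}\log w_{j}$ and $d\lambda = dp_{1}\wedge dq_{1} + dp_{2}\wedge dq_{2}$, one finds $F^{*}(d\lambda) = -\tfrac{1}{4\pi^{2}}\bigl(\tfrac{1}{|z|^{2}} + \tfrac{1}{|1-z|^{2}}\bigr)\,dx\wedge dy$, which is \emph{negative} in the complex orientation; the image is of course still a symplectic submanifold, one just has to reverse the orientation of the domain. Second, your $z = 1$ end is already asymptotic to $R_{1}$: the formula gives $q_{1} \to 0$, $q_{2} = -\phi - \tfrac{1}{2}$, and $(p_{1}, p_{2})$ in the direction $(0, -1)$, so the limiting orbit lies over the geodesic $\{q_{1} = 0\}$ and agrees with $R_{1}$ after a harmless time reparametrization. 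Only the $z = \infty$ end sits on the wrong geodesic $\{q_{2} - q_{1} = -\tfrac{1}{2}\}$. Your Hamiltonian shift $H = p_{2}/2$ still does the job, because translating $q_{2}$ by a constant preserves the geodesic $\{q_{1} = 0\}$ and hence leaves the $z = 1$ orbit unchanged while moving the $z = \infty$ end onto $R_{\infty}$; the localized $H'$ then restores the $z = 0$ end as you describe.
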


The idea is to start with an embedded holomorphic curve $\CC \sd \{0, 1\} \to \CC^{\ast} \times \CC^{\ast}$ and compose it with the diffeomorphism $\CC^{\ast} \times \CC^{\ast} \approx T^{\ast}\TT^{2}$ given in polar coordinates by
\[
(r_{1}, t_{1}, r_{2}, t_{2}) \mapsto (t_{1}, t_{2}, -\log(r_{1}), -\log(r_{2})).
\]
The natural candidate for such a curve is
\[
z \mapsto \left(\frac{1}{z}, \frac{1}{z - 1}\right).
\]
Here we are thinking of $\CC \sd \{0, 1\}$ as $\SS^{2} \sd \{0, 1, \infty\}$. The composition of this curve with the above diffeomorphism yields a smooth embedding $w \colon \CC \sd \{0, 1\} \to T^{\ast}\TT^{2}$. In polar coordinates $z = re^{2\pi i \theta}$ centered at $0$, the representation of $w$ is given by
\[
w(r, \theta) = \left(-\theta, -\arctan\left(\frac{r\sin\theta}{r\cos\theta - 1}\right), \log(r), \frac{1}{2}\log(r^{2} - 2r\cos\theta + 1)\right).
\]
On the other hand, in polar coordinates $z = 1 + re^{2\pi i \theta}$ centered at $1$, the representation is given by
\[
w(r, \theta) = \left(-\arctan\left(\frac{r\sin\theta}{r\cos\theta + 1}\right), -\theta, \frac{1}{2}\log(r^{2} + 2r\cos\theta + 1), \log(r)\right).
\]
Finally, in coordinates $z = (1/r)e^{-2\pi i \theta}$ centered at $\infty$, the representation is given by
\[
w(r, \theta) = \left(\theta, \arctan\left(\frac{\sin\theta}{\cos\theta - r}\right), -\log(r), -\frac{1}{2}\log\left(\frac{r^{2}}{r^{2} - 2r\cos\theta + 1}\right)\right).
\]
For technical reasons, we will modify $w$ by replacing each instance of $r$ in the above three representations by a smooth, positive, and increasing function $f(r)$ such that $f(r) = r^{2}$ for $0 < r < \epsilon$ and $f(r) = r$ for $r > 1 - \epsilon$, where $\epsilon > 0$ is small enough. The resulting map is still a smooth embedding, which we continue to denote by $w$. This is not $J_{\mathrm{std}}$-holomorphic, but we do have the following fact.

\begin{lem}\label{l:three}
The image of $w$ is a symplectic submanifold of $T^{\ast}\TT^{2}$.
\end{lem}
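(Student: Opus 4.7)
The plan is to verify that the $2$-form $w^{*}(d\lambda)$ is nowhere vanishing on $\CC \setminus \{0, 1\}$; since $w$ is a smooth embedding, this is equivalent to the image being a symplectic submanifold of $T^{\ast}\TT^{2}$. Because the pullback is a $2$-form on a $2$-manifold, it suffices to check in a single coordinate patch at a time that the coefficient with respect to $dr \wedge d\theta$ is nonzero. I will carry out the computation explicitly in the patch near $0$; the patches near $1$ and $\infty$ are handled by identical calculations, thanks to the symmetry of the defining rational function $\tilde{w}(z) = (1/z, 1/(z-1))$.

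In the patch near $0$, write $D(r,\theta) = f(r)^{2} - 2f(r)\cos\theta + 1 = |f(r)e^{i\theta} - 1|^{2}$. A direct computation from the formula for $w(r, \theta)$ yields
\begin{equation*}
dp_{1} \wedge dq_{1} = -\frac{f'(r)}{f(r)}\, dr \wedge d\theta.
\end{equation*}
For $dp_{2} \wedge dq_{2}$, one differentiates $p_{2} = \tfrac{1}{2}\log D$ and $q_{2} = -\arctan(f(r)\sin\theta / (f(r)\cos\theta - 1))$ separately, the latter using $1 + \alpha^{2} = D/(f(r)\cos\theta - 1)^{2}$ to cancel the $(f(r)\cos\theta - 1)^{2}$ factors. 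After multiplying and invoking the algebraic identity $(f(r) - \cos\theta)^{2} + \sin^{2}\theta = D(r,\theta)$, the result is
\begin{equation*}
dp_{2} \wedge dq_{2} = -\frac{f'(r)\, f(r)}{D(r, \theta)}\, dr \wedge d\theta.
\end{equation*}
Summing the two terms gives
\begin{equation*}
w^{*}(d\lambda) = -f'(r) \cdot \frac{D(r, \theta) + f(r)^{2}}{f(r)\, D(r, \theta)}\, dr \wedge d\theta.
\end{equation*}
By hypothesis $f$ is positive and strictly increasing, so $f(r) > 0$ and $f'(r) > 0$; and $D(r, \theta) > 0$ away from $z = 1$, which is the only point excluded from the patch. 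Hence $w^{*}(d\lambda)$ is strictly negative, and in particular nowhere vanishing.

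The main technical obstacle is the algebraic simplification in the computation of $dp_{2} \wedge dq_{2}$, where the arctangent derivative produces multiple competing $(f(r)\cos\theta - 1)^{2}$ factors that must be cleared before the key identity $(f(r) - \cos\theta)^{2} + \sin^{2}\theta = D$ can be applied. Conceptually, the sign of the pullback reflects the fact that the underlying curve $\tilde{w}$ has holomorphic components and $\Phi^{*}(d\lambda)$ is a multiple of the negative of a K\"{a}hler form on $\CC^{\ast} \times \CC^{\ast}$, so the reparametrization by the orientation-preserving map $(r, \theta) \mapsto (f(r), \theta)$ contributes only the positive factor $f'(r)$ to an already definite expression.
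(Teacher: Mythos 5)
Your proof is correct and follows essentially the same approach as the paper: compute $w^{*}(d\lambda)$ in each polar coordinate patch and verify that the coefficient of $dr \wedge d\theta$ is nowhere vanishing. (One small remark: your coefficient $-f'(r)\bigl(\tfrac{1}{f(r)} + \tfrac{f(r)}{D(r,\theta)}\bigr)$ differs in sign from the paper's stated $f'(r)\bigl(\tfrac{1}{f(r)} + \tfrac{f(r)}{A^{-}(r,\theta)}\bigr)$; a direct check of $dp_{1}\wedge dq_{1}$ shows your sign is the correct one, though of course either sign suffices for nonvanishing.)
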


\begin{proof}
It will be convenient throughout the proof to set
\[
A^{\pm}(r, \theta) = f(r)^{2} \pm 2f(r)\cos\theta + 1.
\]
It is easily checked that $A^{\pm}(r, \theta) > 0$ on the whole domain of $w$. Working in polar coordinate $z = re^{2\pi i \theta}$ centered at $0$, the map $w$ is given by
\[
w(r, \theta) = \left(-\theta, -\arctan\left(\frac{f(r)\sin\theta}{f(r)\cos\theta - 1}\right), \log(f(r)), \frac{1}{2}\log(A^{-}(r, \theta))\right).
\]
In fact, this representation of $w$ extends to its entire domain minus two small open disks centered at each of $1$ and $\infty$, where we must use different polar representations of $w$. But this means we can check that $w$ is symplectic by working in each of its three polar coordinate representations. Setting $e_{1} = d{w(1, 0)}$ and $e_{2} = d{w(0, 1)}$, a straightforward computation shows that
\[
d\lambda(e_{1}, e_{2}) = f^{\prime}(r)\left(\frac{1}{f(r)} + \frac{f(r)}{A^{-}(r, \theta)}\right).
\]
It is easy to check that this expression is positive for any $(r, \theta)$ in the domain of $w$. Similarly, one can compute that
\[
d\lambda(e_{1}, e_{2}) = f^{\prime}(r)\left(\frac{1}{f(r)} + \frac{f(r)}{A^{+}(r, \theta)}\right)
\]
and that
\[
d\lambda(e_{1}, e_{2}) = f^{\prime}(r)\left(\frac{1}{f(r)} + \frac{1}{f(r)A^{-}(r, \theta)}\right)
\]
in the representations near $1$ and $\infty$, respectively. Since these expressions are always positive by our choice of $f(r)$, it follows that the image of $w$ is indeed symplectic as claimed.
\end{proof}

We next want to examine the asymptotic behavior of $w$ near its three punctures. Since $w$ itself is not $J_{\mathrm{std}}$-holomorphic, we cannot deduce any conclusions from the usual theory of punctured curves. Luckily, the situation is still salvageable. Consider the Reeb orbits
\[
R_{0} = (-\theta, 0, -1, 0), \quad R_{1} = (0, -\theta, 0, -1), \quad R_{\infty} = (\theta, \theta, 1/\sqrt{2}, 1/\sqrt{2}),
\]
in $S^{\ast}\TT^{2}$. From the proof of Lemma \ref{l:three}, we know that $w$ can be written in polar coordinates $z = re^{2\pi i \theta}$ centered at $0$ and for $r$ small enough as
\[
w(r, \theta) = \left(-\theta, -\arctan\left(\frac{r^{2}\sin\theta}{r^{2}\cos\theta - 1}\right), \log(r^{2}), \frac{1}{2}\log(r^{4} - 2r^{2}\cos\theta + 1)\right).
\]
We compose this with the symplectic identification $T^{\ast}\TT^{2} \sd \TT^{2} \approx \RR \times S^{\ast}\TT^{2}$ given by
\[
(q_{1}, q_{2}, p_{1}, p_{2}) \mapsto \left(\log(\Vert (p_{1}, p_{2}) \Vert), q_{1}, q_{2}, \frac{p_{1}}{\Vert (p_{1}, p_{2}) \Vert}, \frac{p_{2}}{\Vert (p_{1}, p_{2}) \Vert}\right),
\]
and thereby obtain a cylindrical representation of $w$ near the puncture at $0$. Some standard, through rather involved, analysis shows that this converges to the trivial Reeb cylinder over $R_{0}$ in the $C^{1}$-topology as $r \to 0^{+}$. (The convergence statements are with respect to the standard metric on all Euclidean spaces and the flat metric on $\TT^{2}$.) Similar computations using polar coordinates centered near the punctures at $1$ and $\infty$ allow us to draw analogous conclusions, with the Reeb cylinder over $R_{0}$ replaced with the ones over $R_{1}$ and $R_{\infty}$, respectively.

The upshot of this analysis is the following gluing procedure that can be performed near each puncture of $w$. For concreteness, consider the puncture at $0$. We know that the cylindrical representation of $w$ converges to the trivial Reeb cylinder over $R_{0}$ in the $C^{1}$-topology as $r \to 0^{+}$. This implies that, far enough along the positive cylindrical end of $T^{\ast}\TT^{2}$, we can multiply $w$ by a smooth cutoff function in order to glue it to said trivial cylinder. Performing this far enough along the cylindrical end, we can choose this cutoff function to have arbitrarily small derivative. As a consequence, the resulting object can be arranged to be symplectic. The same procedure can be performed near the punctures at $1$ and $\infty$. This proves Proposition \ref{p:3sphere}.

\bibliographystyle{plain}
\bibliography{Packing_Integral_Tori_in_Del_Pezzo_Surfaces.bib}

\end{document}